\renewcommand{\a}{\alpha}
\newcommand{\g}{\gamma}
\newcommand{\G}{\Gamma}
\renewcommand{\d}{\delta}
\renewcommand{\k}{\kappa}
\renewcommand{\l}{\lambda}
\newcommand{\m}{\mu}
\newcommand{\e}{\varepsilon}
\newcommand{\f}{\varphi}
\newcommand{\N}{{\mathbb N}}
\newcommand{\Z}{{\mathbb Z}}
\newcommand{\comp}{\mathsf{c}}
\newcommand{\ie}[0]{\emph{i.e.} }
\newcommand{\diam}[0]{\mathrm{Diam}}
\newcommand{\del}{\partial}
\newcommand{\rd}[0]{\mathrm{rd}}
\newcommand{\wid}[0]{\mathrm{wd}}
\newtheorem*{claimn}{Claim}
\newtheorem{theorem}{Theorem}[section]
\newtheorem{proposition}[theorem]{Proposition}
\newtheorem{lemma}[theorem]{Lemma}
\newtheorem{corollary}[theorem]{Corollary}
\theoremstyle{definition}
\newtheorem{definition}[theorem]{Definition}
\newtheorem{notation}[theorem]{Notation}
\theoremstyle{remark}
\newtheorem{remark}[theorem]{Remark}
\newtheorem{question}[theorem]{Question}
\newtheorem{conjecture}[theorem]{Conjecture}
\begin{document}

\title[Connectedness of spheres]
      {Connectedness of spheres in Cayley graphs}
      \author{J\'{e}r\'{e}mie Brieussel \& Antoine Gournay}

\begin{abstract}
We introduce the notion of connection thickness of spheres in a Cayley graph, related to dead-ends and their retreat depth. It was well-known that connection thickness is bounded for finitely presented one-ended groups. We compute that for natural generating sets of lamplighter groups on a line or on a tree, connection thickness is linear or logarithmic respectively. We show that it depends strongly on the generating set. We give an example where the metric induced at the (finite) thickness of connection gives diameter of order $n^2$ to the sphere of radius $n$. We also discuss the rarity of dead-ends and the relationships of connection thickness with cut sets  in percolation theory and with almost-convexity.  Finally, we present a list of open questions about spheres in Cayley graphs.
\end{abstract}
\maketitle

\section{Introduction}

Spheres are simply beautiful. Long ago Plato considered the world was given the shape of a (euclidean) sphere for it is the most perfect (symmetric) of all, Timaeus 34b \cite{Pla}. Nowadays the most popular game in the world consists in two teams playing with a sphere and not allowed to touch it by the hands. As mathematicians we also love to play with spheres, and as geometric group theorists we shall focus on spheres in Cayley graphs of infinite groups.

Spheres have not received much attention for themselves in geometric group theory, at the notable exception of a study by Duchin, Leli\`evre and Mooney \cite{DLMfag}, \cite{DLM}, \cite{DLM3}, somewhat focused on the Abelian case. The reason is probably that spheres are often either too easy or too complicated to describe. For instance the Cayley graph structure restricted to a sphere gives a graph with no edges for such elementary examples as free Abelian groups. To bypass this, we rather consider thickenings of spheres, and denote $S(n,r)$ the subgraph obtained by keeping vertices at distance from identity between $n$ and $n+r$ as well as edges between them.

The first natural question is that of connectedness: does there exist a thickness $r$ such that the graphs $S(n,r)$ are connected for all $n$ ? A first obstruction is given by the topological notion of ends. Recall that a graph has at most $k$ ends if the complement of any finite set has at most $k$ connected components. By a result of Stallings \cite{Sta}, the number of ends (the least such $k$) of a Cayley graph of an infinite group is either $2$, for virtually cyclic groups, or $\infty$, for free products with amalgamation and HNN-extensions over finite groups, or $1$ for any other group. For instance in a free group with free generating set the number of connected components of $S(n,r)$ is always the number of elements in the sphere of radius $n$. But as groups in the two first cases of Stallings' classification are well-understood, we may focus on the generic case of one-ended groups.

The geometric notion of dead-ends gives a second natural obstruction to spheres, or rather their finite thickenings, being connected. As defined by Bogopol'skii \cite{Bog}, a vertex of a Cayley graph is called a dead-end if it is not adjacent to a vertex further away from the identity. A dead-end $g$ in the sphere of radius $n$ can be measured by two different means : its width (often called depth in the litterature) is the distance between $g$ and the infinite component of the complement of the ball $B(n-1)$ and its retreat depth (also sometimes called depth) is the least $d\geq 0$ such that $g$ belongs to the infinite component of the complement of $B(n-d-1)$. There are examples of finitely generated groups with dead-ends of arbitrarily large width and retreat-depth (see \cite{CT}, \cite{CR}, \cite{War-Heis} and \S{}\ref{ssdef}). If the Cayley graph contains a dead-end on the sphere of radius $n+r$ of retreat-depth $d \geq r$, then the complement of the ball $B(n-1)$ is not connected. So the thickened sphere $S(n,r)$ is not connected either.

One naturally wonders if dead-ends give the only obstruction to connectedness of (thickened) spheres for one-ended groups. Let $S(n,r)^\infty$ be the sphere of radius $n$ and thickness $r$ where the dead-end components have been removed. Namely, this is the intersection of $S(n,r)$ with the only infinite connected component of the complement of $B(n-1)$. It is easily checked (see \S{}\ref{ssdef}) that for each $n$ there exists an integer $r$ such that $S(n,r)^\infty$ is connected. This justifies the following 

\begin{definition}\label{defth}
Let $G$ be a group together with a finite generating set $S$. We define the {\it connection thickness} $\textrm{th}_{G,S}(n)$ of the $n^\textrm{th}$ sphere of $(G,S)$ to be the minimal $r$ such that $S(n,r)^\infty$ is connected. 
\end{definition}
We will be interested in the connection thickness function $\textrm{th}_{G,S}: \Z_{\geq 0} \to \Z_{\geq 0} \cup \{ \infty \}$.
When $G$ has more than one end, $\textrm{th}_{G,S}(n)$ is infinite for large enough $n$. When $G$ is one-ended, $\textrm{th}_{G,S}(n)$ is necessarily finite for each $n$.  When the connection thickness is bounded, we say that the Cayley graph of $(G,S)$ has {\it connected spheres}.

For finitely presented groups, it is known that spheres are connected.
\begin{theorem}\label{casfinpres}
Let $G$ be a finitely presented group, $R$ a set of words generating all relations, and $\ell = \tfrac{1}{2} \max_{w \in R} |w|$. Then $S(n,r)^\infty$ is connected for any $r \geq \lfloor \ell \rfloor$, i.e. the connection thickness is bounded above by $\lfloor\ell\rfloor$.
\end{theorem}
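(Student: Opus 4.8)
The plan is to lift everything into the Cayley $2$-complex $X$ of the presentation $\langle S \mid R\rangle$: its $1$-skeleton is the Cayley graph $\Gamma$ of $(G,S)$, and one glues a $2$-cell along every translate of every relator $w\in R$. Since $R$ generates all relations, $X$ is simply connected, and this is the only place finite presentability enters. Record at the outset the single metric fact that drives the bound: each $2$-cell has boundary a closed edge-loop of length at most $2\ell$, so any two of its vertices are joined along the boundary by an arc of at most $\ell$ edges and hence lie at graph-distance at most $\lfloor\ell\rfloor$ in $\Gamma$; as the word length $|\cdot|$ is $1$-Lipschitz for the graph metric, the height $|\cdot|$ varies by at most $\lfloor\ell\rfloor$ over the vertices of any single $2$-cell.

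Now fix $n$, write $C$ for the infinite component of $\Gamma\setminus B(n-1)$, so that $S(n,r)^\infty$ is the full subgraph of $C$ on vertices of height in $[n,n+r]$, and take two vertices $u,v$ of $S(n,r)^\infty$. By definition of $C$ they are joined by an edge-path $p$ lying in $C$, i.e. with every vertex of height at least $n$. Writing $M$ for the maximal height attained by $p$, if $M\le n+\lfloor\ell\rfloor$ then $p$ already lies in $S(n,r)^\infty$ (as $\lfloor\ell\rfloor\le r$) and we are done; otherwise $M>n+\lfloor\ell\rfloor$ and I would argue by descent on the pair (maximal height $M$, number of vertices of $p$ at height $M$). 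First I would isolate a top excursion of $p$: a maximal subpath whose interior vertices all have height $M$ and whose two endpoints (``shoulders'') have height $M-1\ge n+\lfloor\ell\rfloor\ge n$.

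The reduction step is to replace this excursion by a path between the same shoulders of strictly smaller height, using simple connectivity of $X$. Closing the top arc of $p$ by a suitable return path between the shoulders yields a loop, which bounds a van Kampen diagram in $X$; reading the diagram as a finite sequence of elementary moves --- deletion of backtracks and pushes of the path across a single $2$-cell, each replacing a boundary arc by the complementary one --- rewrites the excursion into a lower path while fixing the shoulders. Because every cell pushed across meets a vertex of height $M>n+\lfloor\ell\rfloor$, the metric fact above forces all of that cell's vertices to have height at least $M-\lfloor\ell\rfloor\ge n+1$; hence no move ever lowers the evolving path to height $n-1$, so it never leaves $C$, and after each move it still joins $u$ to $v$ inside $C$. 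Each such move lowers the complexity, so the descent terminates at a path of height at most $n+\lfloor\ell\rfloor\le n+r$, proving that $u$ and $v$ are connected in $S(n,r)^\infty$.

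The main obstacle is precisely this height bookkeeping: a van Kampen diagram filling the excursion loop is only guaranteed to exist, not to be confined to high altitude, so the genuine work is to organise the sequence of $2$-cell pushes ``from the top downward'' so that every cell actually used abuts the current top and therefore stays above height $n$. It is exactly here that the hypothesis $r\ge\lfloor\ell\rfloor$ is consumed --- and seen to be sharp --- since the $\lfloor\ell\rfloor$-thick buffer is what absorbs the at-most-$\lfloor\ell\rfloor$ vertical spread of a single $2$-cell. A secondary point requiring care is to choose the return path closing the excursion (for instance via descents of the two shoulders toward the identity, or a geodesic triangle) so that the resulting loop is short enough to fill without the diagram dipping below height $n$, and to check that the rerouted path remains in the same infinite component rather than escaping into a finite dead-end component of $\Gamma\setminus B(n-1)$.
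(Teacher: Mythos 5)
Your overall strategy is the right one and matches the argument the paper invokes (the paper does not prove Theorem \ref{casfinpres} itself but cites the note [Gou], whose proof likewise works in the simply connected Cayley $2$-complex and uses exactly your metric observation that the vertex heights on a single $2$-cell spread by at most $\lfloor\ell\rfloor$). However, your descent step has a genuine gap, and it sits precisely where you yourself flag ``the genuine work''. First, to close the top excursion into a loop you need a return path between the shoulders: if you take it low (through the identity, or a geodesic triangle), then the van Kampen diagram filling the loop contains cells near the origin, and your key claim that ``every cell pushed across meets a vertex of height $M$'' is simply false for such a filling; if instead you insist on a high return path, its existence is essentially the statement being proved, so the induction is circular. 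Second, even for a cell that does meet height $M$, pushing the path across it only guarantees the new vertices have height $\geq M-\lfloor\ell\rfloor\geq n+1$; nothing prevents the complementary boundary arc from containing vertices at heights up to $M+\lfloor\ell\rfloor$, so your complexity pair (maximal height, number of top vertices) can \emph{increase} under a move, and no termination argument survives. As written, the descent cannot be run.

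The standard repair, which is how the cited argument actually goes, avoids the induction entirely: take the whole path $p\subset C$ from $u$ to $v$ and close it by a single path $q$ through the identity (so $q$ lies in $B(n)$), fill the loop $p\ast q^{-1}$ by one van Kampen diagram $D$, and let $Y\subset D$ be the union of closed cells containing a vertex of height $\geq n+\lfloor\ell\rfloor+1$. Every vertex of a cell in $Y$ has height $\geq n+1$ (your metric fact), so $Y$ is disjoint from $q$, while every vertex of a cell \emph{not} in $Y$ has height $\leq n+\lfloor\ell\rfloor$; hence each vertex on the frontier of $Y$ in the disc $D$ has height in $[n+1,\,n+\lfloor\ell\rfloor]$. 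Rerouting each maximal arc of $p$ that enters $Y$ along the frontier of the corresponding component of $Y$ (a planarity argument in the disc) produces in one shot a path from $u$ to $v$ with all heights in $[n,\,n+\lfloor\ell\rfloor]$, staying in $B(n-1)^{\comp}$ and hence in $C$ since it shares the endpoint $u$. This one-shot ``level-set'' rerouting is the missing idea: the height control comes from the dichotomy between cells in $Y$ and cells outside $Y$ in a \emph{fixed} diagram, not from ordering elementary pushes from the top downward. Alternatively, one can quote the coarse connectedness of minimal cutsets from [BB] or [Timar], as the paper notes.
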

This is the content of the small note \cite{Gou}. However, R.~Lyons pointed out to the second author that the above theorem also follows from results of Babson \& Benjamini \cite{BB}. See also Tim{\'a}r \cite[Theorem 5.1]{Timar} and the book by Lyons and Peres \cite[Lemma 7.28]{Lyons} for slightly different arguments (on more general graphs).

For some one-ended but not finitely presented groups, spheres are not connected. More precisely, we prove

\begin{theorem}\label{nonconnected-intro} Let $L$ be a finite group.
\begin{enumerate}
\item For the lamplighter group $\Z \wr L$ on a line with respect to the ``switch-walk-switch'' generating set $L\{\pm1\} L$,the connection thickness is  \[
\mathrm{th}_{\Z \wr L,L \{\pm1\} L}(n)=n+2.
\]
Moreover, the set $S(n,r)^\infty$ has at least $2|L|^{n-r}$ connected components while $1 \leq r+1 \leq n$.
\item Let $T_d$ be a group with Cayley graph the $d$-regular tree. For the lamplighter group $T_d \wr L$ on a tree with ``switch-walk-switch'' generating set $LS_TL$, there are constants $c>0$ and $K>1$ such that the connection thickness satisfies 
\[
|\mathrm{th}_{T_d \wr L,L S_{T_d} L}(n)-\log_{d-1}(n)| \leq c,
\]
and for any $r \leq \log_{d-1}(n)-c$ the number of connected components in $S(n,r)^\infty$ is at least $K^{n}$.
\end{enumerate} 
\end{theorem}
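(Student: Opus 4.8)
The plan is to reduce both assertions to the combinatorics of geodesics in these wreath products and then to isolate, in each case, an invariant of the lamp configuration that cannot change along any path confined to the annulus $S(n,r)^\infty$.

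For $\Z\wr L$ I would first record the word length for the switch-walk-switch generators. Writing an element as $(f,k)$ with $f\colon\Z\to L$ finitely supported and $k$ the position of the lamplighter, and setting $a=\max(0,-\min\mathrm{supp}\,f)$, $b=\max(0,\max\mathrm{supp}\,f)$, a geodesic is a shortest walk on $\Z$ that starts at $0$, visits every lit site, and ends at $k$; since each generator performs one step and switches the two lamps at its endpoints, the word length equals this walk length, namely $2(a+b)-|k|$ when $k\in[-a,b]$. Hence $S(n)$ consists of the configurations with $2(a+b)-|k|=n$. The decisive observation is that, with the support momentarily fixed, the membership constraints $n\le 2(a+b)-|i|\le n+r$ confine the lamplighter to two short position-windows, one on each side of the origin; moving inward raises the length and moving outward lowers it, so to enlarge the support on one side the lamplighter must ride out to that extreme, which stays in the annulus only when the opposite extent is already large enough. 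This turns the admissible motion into a budget game in which support-extent can be traded against position-range, but only within a total slack governed by $r$.

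Granting this picture, I would prove the lower bound by showing that the lamps lying in a central core of width about $n-r$ never enter either reachable position-window, for any support configuration attainable within the thickness budget; such core lamps are therefore frozen, their values in $L$ are a connected-component invariant, and every combination is realized, giving at least $2\,|L|^{\,n-r}$ components (the extra factor reflecting a left/right orientation that is likewise preserved). Specializing to $r+1=n$ yields $\mathrm{th}_{\Z\wr L,\,L\{\pm1\}L}(n)\ge n+2$ after a short parity and boundary count. For the matching upper bound I would exhibit, for $r=n+2$, an explicit route: with this much slack the lamplighter can sweep the whole support, temporarily enlarging it to keep the length above $n$, and thereby drive any configuration to a canonical one, so $S(n,n+2)^\infty$ is connected. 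Part (2) follows the same scheme with the line replaced by the $d$-regular tree. The geodesic length of $(f,k)$ is now $2E-d_\Theta(e,k)$, where $\Theta$ is the Steiner tree spanning $\mathrm{supp}\,f$, the root $e$, and $k$, and $E$ is its number of edges; thus $S(n)$ consists of configurations whose Steiner tree has about $n/2$ edges. The new ingredient is purely metric: a rooted subtree of depth $\rho$ carries up to $(d-1)^{\rho}$ edges, so a length-$n$ configuration has depth only $\approx\log_{d-1}(n)$, while a budget of thickness $r$ lets the lamplighter descend or ascend to a lamp sitting at Steiner-depth $\lesssim r$ from its current position. Since every lamp lies at depth at most $\log_{d-1}(n)+O(1)$ this gives $\mathrm{th}\le\log_{d-1}(n)+c$; conversely, configurations whose Steiner tree is a near-complete $(d-1)$-ary tree have a proportion of their $\approx n$ lamps frozen at depth $\ge\log_{d-1}(n)-c$, which yields both $\mathrm{th}\ge\log_{d-1}(n)-c$ and, since the number of independent frozen lamps grows linearly in $n$, the lower bound $K^{n}$ on the number of components.

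The main obstacle, in both parts, is the freezing step: proving rigorously that the core (respectively deep) lamps are genuinely immovable, i.e. that no path confined to $S(n,r)^\infty$ can alter them. This demands the right potential function on admissible paths, balancing the lamplighter's position against the current support extent on the line, and against the current Steiner-tree size and depth on the tree, together with a verification that reaching a frozen lamp necessarily forces the length below $n$ or above $n+r$. Extracting the exact constant $n+2$, rather than mere linear growth, is what makes the line case delicate, as it requires tight control of the $\pm1$ steps and the end switches rather than order-of-magnitude estimates.
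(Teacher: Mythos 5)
Your component-count lower bounds do follow the paper's route (lamps frozen outside the lamplighter's reachable window on the line; deep lamps frozen in a lit ball-minus-one-branch on the tree, giving $|L|^{cn}$ components as in Proposition \ref{lowlog} and Corollary \ref{cornoncon}), but there are two genuine gaps in the rest.

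The most serious one is in the upper bound of part (2), which rests on a false premise. It is not true that a length-$n$ configuration has depth $\approx\log_{d-1}(n)$: the support can be a geodesic segment of length about $n/2$, and the corresponding element, with the lamplighter at the origin, lies on the boundary of its Steiner tree and hence belongs to $S(n)^\infty$ (Lemma \ref{treedeadend}). So the inference ``every lamp lies at depth at most $\log_{d-1}(n)+O(1)$, hence $\mathrm{th}\leq\log_{d-1}(n)+c$'' collapses, and your plan has no mechanism for connecting such deep, path-like configurations. What actually makes logarithmic thickness suffice is a pigeonhole on the norm: with $n=2|B_T(e,R)|-R$ one has $R\approx\log_{d-1}n$, and a norm-$n$ configuration can never fill the radius-$(R+1)$ ball under the $(R+2)$-ancestor of the lamplighter (Lemma \ref{loff}); there is therefore always an unlit site within distance $O(\log_{d-1}n)$, and lighting it banks $+2$ of norm, giving exactly the slack needed to carry deep lamps up one level at a time. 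Even granting this, one needs the paper's potential function (the excess height $\e(\d)$ over $R$ together with the number of deepest lit vertices in each subtree) to show the local procedure terminates at the canonical lit-ball element; your closing paragraph asks for ``the right potential function'' but does not supply it, and your budget heuristic alone does not.

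In part (1), specializing your component count to $r+1=n$ gives only that $S(n,n-1)^\infty$ is disconnected, i.e. $\mathrm{th}(n)\geq n$; for $r\geq n$ the frozen core is empty and the invariant disappears, so no ``short parity and boundary count'' bridges to $n+2$ --- indeed parity plays no role here. The paper needs a separate, quantitative argument (first paragraph of the proof of Theorem \ref{llconn}) that $S(n,n+1)^\infty$ still has three components: starting from $a=0$, $b=z=n$, any move left of $n$ forces a lamp on at some $b_1\geq n$, so the norm is $2b_1-z\geq 2n-z$ throughout; the lamplighter can reach position $-1$ at norm $2n+1$ but cannot return without switching that lamp off, so the two extremal components stay isolated at thickness $n+1$. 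Connectivity at thickness $n+2$ is then an explicit construction: your ``temporarily enlarging the support to keep the length above $n$'' is indeed the paper's marker-lamp trick (plant a lamp at $a_1\approx a-\tfrac{b-z}{2}$, extend $b$ by one, then erase the marker), but verifying that the norm stays within $[n,2n+2]$ along every stage, and that the induction on $b$ terminates, is the substance of the proof rather than a detail to be deferred.
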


For the lamplighter on a line, the non-connectedness is maximal in the following quantitative sense: the entropy of the partition into connected components with respect to the counting measure is asymptotically (in $n$) maximal while the thickness $r$ is less than $\frac{n}{8}$, see Proposition \ref{entmax} for a precise statement. This implies in particular that there are no ``gigantic'' connected components in the spheres of $\Z \wr L$.

It is not surprising that lamplighter groups provide interesting examples of connection thickness, as they already provided natural examples of Cayley graphs with dead-ends of unbounded retreat depth \cite{CT}. More generally, they provide very interesting examples of groups, see among many others \cite{KV}, \cite{LPP} \cite{GZ}, \cite{Ers}.

Note that by \cite[Proposition 5]{FGO} (see also \S{}\ref{sec:rarde} below), the retreat depth of an element $x$ is at most $|x|_S/2$. This implies that dead-ends can prevent connectedness on a thickness at most $n$. So it would be tempting to believe there is a uniform linear upper bound $\textrm{th}_{G,S} (n) \leq n$. However, Theorem \ref{nonconnected-intro}.(1) shows this is not the case. We do not know examples of one-ended Cayley graphs with connection thickness bigger than $n+2$, see Question \ref{thgrowth}.

Both examples in Theorem \ref{nonconnected-intro}  are groups with dead-ends of arbitrarily large retreat depth. This raises questions on how connected spheres and retreat depth are related. 

It is particularly easy to estimate connection thickness and the retreat depth for direct products $G_1 \times G_2$, where $G_1$ and $G_2$ are infinite groups with respective generating sets $S_1$ and $S_2$. 
For the ``product'' generating set $S_\vee=\left(S_1\cup\{e_1\}\right)\times \left(S_2\cup\{e_2\}\right)$, one has $|(g_1,g_2)|_{S_\vee}=\max\left(|g_1|,|g_2| \right)$ and both the connection thickness and the retreat depth vanish. For the ``summed'' generating set $S_\perp=\left(S_1\times\{e_2\}\right) \cup \left(\{e_1\}\times S_2\right)$, one has $|(g_1,g_2)|_{S_\perp}=|g_1|+|g_2|$, the connection thickness is $1$ and the retreat depth is the minimum of the retreat depth of $G_1$ and $G_2$ (see \S{} \ref{sdirprod}). Applied to the direct product of two copies of the lamplighter group on a line, this gives:

\begin{proposition}\label{prop-simple-intro}
Let $L$ be a finite group and $G = (\Z \wr L) \times (\Z \wr L)$. Then $G$ admits a generating set with unbounded retreat depth and connected spheres of thickness $1$ and $G$ admits another generating set with both retreat depth zero and connected spheres of thickness zero.
\end{proposition}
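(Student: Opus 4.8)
The plan is to apply the two general direct-product constructions recalled just above to the specific case $G_1 = G_2 = \Z \wr L$, each equipped with the switch-walk-switch generating set $S = L\{\pm1\}L$. First I would record the single input we need about one factor: the lamplighter $\Z \wr L$ with this generating set is an infinite group (it surjects onto the cursor $\Z$) whose Cayley graph has dead-ends of arbitrarily large retreat depth. This is the classical Cleary--Taback phenomenon \cite{CT}, and it is also forced by Theorem \ref{nonconnected-intro}.(1), since a connection thickness growing like $n+2$ cannot occur without retreat depths tending to infinity. Concretely, for each $k$ one produces an element $h_k \in \Z \wr L$ (a suitable configuration with lamps on over an interval of length comparable to $k$) with $\mathrm{rd}(h_k) \geq k$.

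For the first assertion I would take the summed generating set $S_\perp = (S \times \{e\}) \cup (\{e\} \times S)$ on $G$, where $e$ denotes the identity of $\Z \wr L$. By the computation in \S{}\ref{sdirprod}, the word metric is $|(g_1,g_2)|_{S_\perp} = |g_1| + |g_2|$, the connection thickness of $(G, S_\perp)$ equals $1$, and the retreat depth of an element factorizes as $\mathrm{rd}_{S_\perp}(g_1,g_2) = \min\left(\mathrm{rd}_S(g_1), \mathrm{rd}_S(g_2)\right)$. It then remains only to observe that this minimum is unbounded: the diagonal elements $(h_k, h_k)$ satisfy $\mathrm{rd}_{S_\perp}(h_k, h_k) = \mathrm{rd}_S(h_k) \geq k$. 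Hence $(G, S_\perp)$ has connected spheres of thickness $1$ together with unbounded retreat depth.

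For the second assertion I would take the product generating set $S_\vee = (S \cup \{e\}) \times (S \cup \{e\})$. Again by \S{}\ref{sdirprod}, the metric is $|(g_1,g_2)|_{S_\vee} = \max(|g_1|, |g_2|)$, and both the connection thickness and the retreat depth of $(G, S_\vee)$ vanish identically. This immediately yields the second generating set, with retreat depth zero and connected spheres of thickness zero, completing the proof.

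The only point that genuinely requires care, and which I would treat as the main obstacle, is the passage from \emph{each factor has unbounded retreat depth} to \emph{the product with $S_\perp$ has unbounded retreat depth}. Since retreat depth is a property of individual vertices rather than a single scalar invariant, one must realize large retreat depth simultaneously in both coordinates; this is automatic here because the two factors coincide, so the diagonal choice $(h_k, h_k)$ works without any extra work. Everything else is a direct invocation of the direct-product formulas of \S{}\ref{sdirprod}, so no new estimates are needed.
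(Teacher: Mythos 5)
Your proposal is correct and is essentially the paper's own proof: Proposition \ref{prop-simple-intro} is obtained in \S{}\ref{sdirprod} by applying Lemmas \ref{tconnprodsomm-l}, \ref{tconnprodprod-l} and \ref{tretdeprod-l} to two copies of the lamplighter, exactly as you do, and your diagonal elements $(h_k,h_k)$ are precisely the pairs of maximally deep dead-ends used in the proof of Lemma \ref{tretdeprod-l} (that lemma only states the $\sup$ version and proves the element-wise equality $\rd(g_1,g_2)=\min(\rd(g_1),\rd(g_2))$ for such pairs, which is all your argument uses). One warning: your parenthetical claim that Theorem \ref{nonconnected-intro}.(1) by itself \emph{forces} unbounded retreat depth is not a valid implication --- whether unbounded connection thickness can coexist with bounded retreat depth is precisely the open Question \ref{qnatur} --- so you should rely only on \cite{CT}, or on the explicit dead-ends visible in the proof of Lemma \ref{connectedcomp} (all lamps on over $[-m,m]$, lamplighter at $0$, retreat depth $m$), which is what the paper does.
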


In particular, this gives a simple example of a group where the retreat depth of dead-ends varies greatly with the generating set (according to \cite{War}, this is already the case for $\Z \wr \Z_2$, but our example is simpler). Moreover, Proposition \ref{prop-simple-intro} gives an exemple of a group with dead-ends of arbitrarily large retreat depth and connected spheres. We know no example of a group with bounded retreat depth and not-connected spheres, i.e. unbounded connection thickness (Question~\ref{qnatur}).

It is also natural to wonder how much connection thickness may depend on the generating set for a given group. We show this dependence can be quite strong, answering Question (iii) in \cite{Gou}.

\begin{theorem}\label{teo-echelle-intro}
Let $L$ be a finite group and $G=\left( \Z \times \Z_2 \right) \wr L$. Then 
\begin{enumerate}
 \item the group $G$ admits a generating set with connection thickness $\leq 24$ (spheres are connected) and retreat depth of  dead-ends at most $5$.
 \item the group $G$ admits a generating set with connection thickness $\mathrm{th}_{G,S}(n)=n+2$ (spheres are not connected) and dead-ends of unbounded retreat depth. 
\end{enumerate}
\end{theorem}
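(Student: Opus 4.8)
The plan is to construct two explicit generating sets of $G=(\Z\times\Z_2)\wr L$ realizing the two behaviours and to treat them by opposite techniques: for part (2) I reduce to the line computation of Theorem~\ref{nonconnected-intro}.(1), while for part (1) I exploit the second ``rail'' $\Z\times\{1\}$ of the base as a universal detour. Throughout I picture the base $\Z\times\Z_2$ as an infinite ladder with rails $R_0=\Z\times\{0\}$ and $R_1=\Z\times\{1\}$ joined by rungs $(k,0)$--$(k,1)$, and I describe an element by its finitely supported lamp configuration $\phi$ on the ladder together with the cursor position $p$.

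For part (2) I would take a switch--walk--switch set $S_2=L\,T_2\,L$ whose walking part $T_2$ generates the $\Z$--direction while treating the two rails symmetrically enough that reaching the second rail never shortens the cursor's travel; the aim is that the word length of $(\phi,p)$ be governed by the same ``travelling--cursor'' formula as in $\Z\wr L$, with the extra rail providing no shortcut. First I would verify this word--length formula, and in particular that the trapped configurations of Theorem~\ref{nonconnected-intro}.(1) — lamps lit so as to fill the length budget with the cursor caught in the middle — remain dead-ends of retreat depth of order $n/2$, hence unbounded. The equality $\mathrm{th}_{G,S_2}(n)=n+2$ then follows along the same lines as the proof of Theorem~\ref{nonconnected-intro}.(1), the coincidence of the constant $n+2$ reflecting that, under $S_2$, the ladder spheres have the same bottleneck as the line.

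For part (1) I would take the ladder switch--walk--switch set $S_1=L\,\{(\pm1,0),(0,1)\}\,L$, so that a single generator lets the cursor flip rails at cost one. The bound $\rd(x)\le 5$ would come from an explicit escape: from any $x$, flip the cursor onto whichever rail is unlit far ahead in some direction and walk outward, lighting a lamp at each step; this produces an infinite ray along which the word length, after a bounded initial dip caused by the flip-and-reorient maneuver, strictly increases to infinity, and bounding that dip gives the constant $5$ (well below the general bound retreat depth $\le|x|_S/2$). The bound $\mathrm{th}_{G,S_1}(n)\le 24$ would follow by showing $S(n,24)^\infty$ is connected: I connect any two of its vertices to a common normal form while staying inside this graph, realizing each elementary modification (one lamp, or one cursor step) by a detour that uses the clean rail as scratch space to keep the word length within $[n,n+24]$ and never below $n$.

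The main obstacle is the quantitative control in part (1): one must prove that the rerouting paths provably stay in the thickness band and that the escape ray's word length never dips by more than $5$, uniformly in $n$. This is where a uniform bound is genuinely delicate, since $G$ is not finitely presented and so the bound cannot come from quasi-isometry invariance; it requires the exact lamplighter word--length formula together with a careful case analysis of where the lit region sits relative to the cursor and which rail is clean in each direction. Designing $S_2$ so that the second rail truly fails to help — so that the reduction to $\Z\wr L$ is faithful — is the secondary technical point, and it is settled once the word--length formula for $S_2$ is checked.
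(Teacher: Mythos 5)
Your part (1) plan follows the same route as the paper: it uses the same generating set $L\{(\pm 1,0),(0,1)\}L$, proves retreat depth $\leq 5$ by an escape ray with a bounded initial dip, and proves connectedness by normalizing every element of $S(n)^\infty$ to a canonical configuration while staying in a band of bounded thickness. But the entire mathematical content of that argument is the quantitative control you explicitly flag as ``the main obstacle'' and leave open. The paper supplies it through a chain of lemmas you would need to discover and prove: Lemma \ref{3edge} (a geodesic representative can be chosen so that the projected path on $\Z$ uses no edge three times and solves the projected travelling salesman problem), Lemma \ref{obs1} (two elements with the same projected path and the same lamps over $]0,z[$ have lengths differing by at most $4$), Lemma \ref{obs2} (a salesman path through a full block $F$ of the ladder has length in $[|F|-1,|F|+1]$), and the rewriting Lemmas \ref{banane} and \ref{tank}, which show the lamp configuration on $[z,b]\times\Z_2$, resp.\ on all of $[0,\infty)$, can be replaced by a constant block while the length stays within an explicit window of width $8$, resp.\ $10$. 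The constants $5$ and $24$ are exactly the accumulated additive errors of these lemmas, so deferring them defers the proof.

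The genuine gap is in part (2): you never construct $S_2$, and the property you postulate for it --- that the second rail ``never shortens the cursor's travel'' --- cannot hold for any switch-walk-switch set whose switches act on a \emph{single} lamp. With independent single-lamp switches the cursor must visit both vertices of every lit column, the word length is governed by the two-dimensional salesman problem on the ladder, and that is precisely the regime of part (1), where spheres \emph{are} connected; so the reduction to $\Z \wr L$ as you describe it would fail. (Note also that if the walking part $T_2$ only generates the $\Z$-direction and switches act at the cursor's vertex, $S_2$ does not even generate $G$.) The missing idea, which the paper uses, is to couple the two rails inside a single generator: switch the \emph{pair} of lamps at the current $\Z$-coordinate, walk by $(\pm 1,\varepsilon)$ simultaneously in $\Z$ and $\Z_2$, switch the pair again. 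With this set, $\Z \wr (L^2)$ sits as an index-$2$ subgroup of $G$ and, with the sole exception of the element $(0,1,0)$, the word length of any element is computed by forgetting the $\Z_2$-coordinate of the base and reading the lamps as $L^2$-valued. The Cayley graph is then isometric, away from one vertex, to that of a lamplighter on a line, so Proposition \ref{llcomp} and Theorem \ref{llconn} apply verbatim and give $\mathrm{th}_{G,S}(n)=n+2$ together with unbounded retreat depth --- no new word-length verification or bottleneck analysis is needed. Your plan, by contrast, would require redoing the full analysis of \S{}\ref{llline} for an unspecified $S_2$ whose existence is exactly what is in question.
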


With its natural generating set, the Cayley graph of $\Z \times \Z_2$ is a ladder. The first generating set of $G$ in Theorem \ref{teo-echelle-intro} is the associated ``switch-walk-switch'' generating set. The second generating set of $G$ may be described as ``switch the two lamps at the current $\Z$-coordinate-walk-switch the two lamps'', which makes the associated Cayley graph perfectly similar to that of a lamplighter group on a line as in Theorem \ref{nonconnected-intro}(1).

The property of connected boundaries of a Cayley graph (see \cite{BB}, \cite{Timar} and \S{}\ref{ssconnbnd}) is stronger than that of connected spheres. It is an open question whether connected boundaries are invariant under change of generating set or quasi-isometry. Theorem \ref{teo-echelle-intro} made the lamplighter group on a ladder a natural candidate for a negative answer. However we check in \S{}\ref{ssconnbnd} that it does not have connected boundaries for the mentionned generating sets.

When the spheres are connected, it is natural to study the distortion between the metric induced on $S(n)^\infty$ by the ambient Cayley graph with the metric induced on it by restriction to $S(n,r)^\infty$ and we may consider the distortion is infinite when the spheres are not connected. In this direction, we simply observe the following.

\begin{theorem}\label{ZwrZ-intro}
In the Cayley graph of the group $\Z \wr \Z$ with generating set ``walk or switch'', the graph $S(n,2)$ is connected and has diameter $\asymp n^2$. Precisely, there exists $c_1,c_2>0$ such that $c_1n^2 \leq \mathrm{diam} S(n,2) \leq c_2 n^2$.
\end{theorem}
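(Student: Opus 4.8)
The plan is to work throughout with the explicit word metric. Write an element of $\Z\wr\Z$ as a pair $(\phi,x)$, where $\phi\colon\Z\to\Z$ is finitely supported (the lamps) and $x\in\Z$ is the position of the lamplighter, and set $W(\phi)=\sum_j|\phi(j)|$. For the ``walk or switch'' generators (shift the lamplighter, or add $\pm1$ to the lamp at the current site) a shortest word must switch each lamp to its value and travel from $0$ to $x$ through the whole support of $\phi$; the standard travelling-on-a-line computation gives
\[ \|(\phi,x)\| = W(\phi) + 2(L+R) - |x|, \]
where $-L=\min(\{0,x\}\cup\mathrm{supp}\,\phi)$ and $R=\max(\{0,x\}\cup\mathrm{supp}\,\phi)$, so $L,R\ge 0$. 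Since $x\in[-L,R]$ one has $|x|\le L+R$, whence the \emph{confinement inequality} $W(\phi)+|x|\le\|(\phi,x)\|$. In particular every vertex of $S(n,2)$ satisfies $W(\phi)+|x|\le n+2$: the lamplighter can only be far out when it carries little mass. This single inequality is the engine for everything that follows.

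For connectedness together with the upper bound I would produce, from an arbitrary vertex of $S(n,2)$, an explicit path of length $O(n^2)$ inside the shell to one fixed reference vertex, for which I would take the spike $(n\,\delta_0,0)$ (all mass piled at the origin), which has norm $n$. The procedure is an inward bucket brigade: relay each lamp unit toward the origin one site at a time, compensating each walk step that would push the norm outside $[n,n+2]$ by a single switch at the current site (the confinement inequality guarantees such a repairing switch is always available without leaving the shell). Moving one unit of mass in by one site costs $O(1)$ steps, so the total cost is of order the mass-weighted distance $\sum_j |j|\,|\phi(j)|\le (\max_j|j|)\,W(\phi)\le \tfrac{n}{2}\cdot n=O(n^2)$. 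Routing any two vertices through the reference then proves both that $S(n,2)$ is connected and that $\mathrm{diam}\,S(n,2)\le c_2 n^2$.

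The lower bound $\mathrm{diam}\,S(n,2)\ge c_1 n^2$ carries the real content, and I would establish it with the antipodal pair $u=\big((n-2b)\,\delta_{b},0\big)$ and $v=\big((n-2b)\,\delta_{-b},0\big)$ with $b=\lfloor n/4\rfloor$, both on $S(n)$, differing only in whether a stack of height $a:=n-2b\asymp n$ sits at $+b$ or at $-b$. The point is that the shell forbids the two obvious shortcuts. One cannot ``teleport'' the stack (destroy it at $+b$ and recreate it at $-b$): with the lamplighter at $+b$ the norm is already $a+b=n-b<n$, so clearing the stack in place leaves the shell, and the lamplighter can only gain the reach needed to operate there by first building up compensating mass. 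One also cannot keep comparable mass at both $\pm b$: then $L+R\ge 2b$ and, since $|x|\le b$, the norm is at least $n+b>n+2$. Hence the stack must be relayed across the origin gradually, and each unit is effectively displaced by $\asymp b$. Quantitatively, the first moment $\Phi(\phi)=\sum_j j\,\phi(j)$ changes from $+ab$ to $-ab$ along the path, it changes only at switches and only by the current position, and, grouping steps into maximal constant-position sojourns, the width-$2$ norm band forces the switched lamp to change by at most $2$ per sojourn.

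The hard part is exactly to upgrade these per-step and per-sojourn estimates to the quadratic bound. Every additive invariant I can write down gives only length $\gtrsim n$, not $n^2$: the moments $\sum_j j\,\phi(j)$ and $\sum_j j^2\phi(j)$, the mass $W$, the switch-invariant $\Phi-xW$, and the equivalent swept-area reading $\Delta\Phi=\oint W\,dx$ for the trajectory confined to the triangle $\{W\ge0,\ |x|+W\le n+2\}$ in the $(x,W)$-plane, all admit closed curves realizing $|\Delta\Phi|\asymp n^2$ with $x$-variation only $\asymp n$; indeed no $O(1)$-Lipschitz function on $S(n,2)$ of the form (linear in $\phi$) plus (function of $x$) can have range $\asymp n^2$. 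The missing factor of $n$ is a genuine realizability phenomenon: the lamplighter cannot pre-position the mass it uses as reach without travelling, and norm budget spent depositing a lamp far out must be repaid by a switch, hence a visit, nearer the origin. I would therefore make the bucket brigade rigorous by a crossing/charging argument: for each cut $-s-\tfrac12$ with $0\le s<b$, bound the net mass that can be deposited in $\{x\le -s\}$ during a single excursion of the lamplighter past the cut, using the confinement inequality and the behaviour of the reach $2(L+R)-|x|$ across the excursion, and then sum the resulting crossing counts over the $\asymp n$ cuts to reach $\asymp n^2$. Controlling the interference between cuts — ruling out that one deep excursion cheaply services many cuts at once by rearranging mass en route — is the delicate step, and is where I expect the argument to require the most care.
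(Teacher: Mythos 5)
Your upper bound and connectedness argument is essentially the paper's: the proof of Theorem \ref{distortion} relays one unit of lamp mass at a time from the endpoint of the travelling salesman path to the penultimate site, and summing the costs gives $\mathrm{diam}\,S(n,2) \leq 6n^2 + \tfrac{n}{2}$, matching your mass-times-distance accounting. One caveat: your claim that ``a repairing switch at the current site is always available'' is not literally true --- if the norm must be lowered at a site whose lamp is trivial, switching there raises the norm instead. The paper handles this by a three-case choreography (depending on whether the modified salesman path is longer, equal, or shorter), always performing the decreasing switches along a reduced word at the donor site and the increasing ones at the receiving site; your sketch is completable but needs exactly this care.

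The lower bound, however, has a genuine gap, and you flag it yourself: the per-cut crossing/charging program is left with the ``interference between cuts'' unresolved, and that is precisely the hard step. The idea you are missing is the paper's \emph{mass distribution}: to $g=(z,f)$ associate $\mu_x(g)$, for $x \geq 1$ the number of visits to $x$ by the solution of the travelling salesman problem plus $|f(x)|$, with everything on $x \leq 0$ lumped into $\mu_0(g)$, so that $|g| = \sum_{x \geq 0}\mu_x(g)$. Since $S(n,2)$ is bipartite between $S(n+1)$ and $S(n)\cup S(n+2)$, along any path the even-time elements lie in $S(n+1)$, and one checks that between consecutive even times $\mu$ either is unchanged or moves exactly one unit of mass to an adjacent index. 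Hence the length of any path in the shell dominates the $\ell^1$-transportation distance between the endpoint distributions; taking $g_{\mathrm{start}}=(n+1,\mathrm{Id})$ (one unit of mass on each of $1,\dots,n+1$) and $g_{\mathrm{end}}$ the lamp of size $n+1$ at the origin with the lamplighter at $0$ (all mass at $0$) gives at least $\sum_{x=1}^{n+1} x \geq \tfrac{n^2}{2}$. Note that this evades your no-go observation about Lipschitz invariants: $\mu_x(g)$ is \emph{not} of the form (linear in $f$) plus (function of $z$), because the visit-count terms couple the position with the extremes of the support; the bound is a transport estimate on the whole distribution, which is exactly the mechanism that rules out the cheap closed curves defeating your first-moment attempt and finesses the inter-cut interference you could not control.
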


The choice of $\Z$ is not important to show that 2-thickened spheres are connected. This holds for $\G\wr L$ whenever $\G$ and $L$ are infinite and $L$ has no dead-end, see Theorem \ref{distortion}. 

In contrast, the diameter of the sphere $S(n)$ with respect to the ambient group metric is necessarily $2n$ in an infinite group. This is related to the notion of sprawl of a sphere introduced in \cite{DLM}. The sprawl of a set is the average distance between two points chosen (uniformly) randomly and independently. The arguments that prove Theorem \ref{ZwrZ-intro} can be extended to show that the sprawl of the graph $S(n,2)$ is also $\asymp n^2$.

On the other hand, the sprawl of spheres with respect to the induced metric is $\asymp n$ for Abelian groups by \cite{DLM}, and at least exponential for hyperbolic groups by \cite{Ger} as explained in Remark \ref{disthyp}. This seems to indicate that for groups of exponential growth the notion of sprawl with respect to the induced metric is finer than with respect to the ambient metric.

{\bf Organisation of the paper.} 
 Precise definitions and notations, as well as elementary observations, are  given in \S{}\ref{sectiondef}, split between \S{}\ref{ssdef} about ends, dead-ends, spheres and connection thickness and \S{}\ref{sec:ll} about lamplighter groups.
The first part of Theorem \ref{nonconnected-intro} is established  in \S{}\ref{llline}, by concatenation of Proposition \ref{llcomp} and Theorem \ref{llconn}. We also establish estimates on the number of dead-ends in $\Z \wr L$ in Proposition \ref{relsize}, and on the entropy of the partition into connected components of $S(n,r)^\infty$ in Proposition~\ref{entmax}.
The second part of Theorem \ref{nonconnected-intro} is the object of \S{}\ref{slltree}. It follows from Corollary \ref{cornoncon} and Proposition \ref{treeconnection}. 
Theorem \ref{teo-echelle-intro} is derived in \S{}\ref{sec:ladder}. The main step takes the form of Theorem \ref{techelle}. \S{}\ref{sec:distortion} is devoted to Theorem \ref{ZwrZ-intro}.  A lot of elementary observations about dead-ends and connected spheres are gathered in \S{} \ref{sgl}, about direct products of groups in \S{}\ref{sdirprod}, where we derive Proposition \ref{prop-simple-intro}, and about rarity of dead-ends in groups in \S{}\ref{sec:rarde}. The relationship of connectedness of spheres with well-known topics in geometric group theory is discussed in \S{}\ref{other}. The property of connected boundaries is recalled in \S{}\ref{ssconnbnd}, and the relationship with almost-convexity is explained in \S{}\ref{ssalmcvx}. Finally, many open questions are presented in \S{}\ref{qu}.

Formally, all sections from \S{}\ref{llline} to \S{}\ref{other} can be read independently. However we recommend to read \S{}\ref{sec:description} and \S{}\ref{sec:nonconnline} before \S{}\ref{slltree} and \S{}\ref{sec:ladder}.

{\it Acknowledgments:} Antoine Gournay is supported by the ERC-StG 277728 ``GeomAnGroup''.

\section{Definitions and notations}\label{sectiondef}

\subsection{Ends, spheres  and dead-ends}\label{ssdef}

We are mostly interested in a group $G$ with a generating set $S$ by studying its Cayley graph $Cay(G,S)$. This enables us to endow the group with the word metric $d$, or equivalently, the combinatorial graph distance. 

{\bf Ends.} 
Finite graphs have no end. Let $\mathcal{P}_{\mathrm{fin}}(X)$ (resp. $\mathcal{P}_{\mathrm{inf}}(X)$) denote the finite (resp. infinite) subsets of $X$.
\begin{definition}
An {\bf end} of an infinite connected graph with vertex set $X$ is a function $\xi:\mathcal{P}_{\mathrm{fin}}(X) \to \mathcal{P}_{\mathrm{inf}}(X)$ so that, for any $F$ and $F'$, $\xi(F)$ is an infinite connected component and $\xi(F) \cap \xi(F') \neq \emptyset$. Equivalently, an end is a coset of the space of infinite {\it simple} rays in the graph where two rays are equivalent whenever there is another ray that contains infinitely many points of both of them.
\end{definition}
It follows that a graph $\mathcal G$ has {\bf at most $k$ ends} if for any finite subset $F \subset X$, the subgraph $\mathcal G \setminus F$ has at most $k$ infinite connected components. Therefore, the {\bf number of ends} is defined as follows. Take an increasing and exhausting sequence of finite sets $F_n$, and let $k_n$ to be the number of  infinite connected component of $F_n^\comp$. Then the number of ends is $\lim_n k_n \in [1,\infty]$.

A result of Stallings asserts that the number of ends of a Cayley graph of a group is either $0$, for finite groups, or $2$, for virtually cyclic groups, or $\infty$, for free products with amalgamation or HNN-extensions over finite groups, or $1$ for any other group \cite{Sta}.

{\bf Spheres.} In the Cayley graph of a group $G$ with respect to a generating set $S$, we denote $e$ the neutral element of $G$ and $d$ the graph distance.
\begin{notation}
Let $n,r\in \Z_{\geq 0}$.  Denote 
\[
\begin{array}{lll}
B(n) &= \{g\in G|d(g,e)\leq n\}  & \text{the {\bf ball} of radius } n,\\
S(n) & =\{g\in G|d(g,e)=n\}      & \text{the {\bf sphere} of radius} n,\\ 
S(n,r) &= B(n+r)\setminus B(n-1) & \text{the {\bf annulus} of radius $n$ and thickness } r, 
\end{array}
\]
When $G$ is one-ended, let $B(n)^{c,\infty}$ denote the infinite component of the complement of the ball of radius $n$, then 
\[
S(n,r)^\infty=B(n+r)\cap B(n-1)^{c,\infty}
\]
are the elements of the annulus connected to infinity. 
We will also often use the shorthand $S(n)^\infty:=S(n,0)^\infty$.
\end{notation}

For a group with infinitely many ends, the number of connected components of $S(n,r)$ obviously tends to infinity with $n$ for any $r \geq0$ (for instance $S(n,r)$ has at least $d(d-1)^{n-1}$ connected components in a $d$-regular tree). This justifies that we focus our study on groups with one end.

As mentioned in the introduction, there is another obstruction to the connectedness of $S(n,r)$: dead-ends.

{\bf Dead-ends.} 
Recall that as defined by Bogopol'skii \cite{Bog}, a vertex of a Cayley graph is called a dead-end if it is not adjacent to a vertex further away from the identity. Two notions of ``depth'' of these dead-ends have been introduced. The most commonly seen in the literature will be here called the width (although it usually bears the name ``depth'') and the other is the retreat depth.
\begin{definition}
The {\bf width} of a dead-end element $g$, denoted $\wid(g)$, is the distance between $g$ and $B(|g|)^{c,\infty}$. 

The {\bf retreat depth} of an element $g$, denoted $\rd(g)$ is the least $d \geq 0$ such that $g$ belongs to an infinite component of $B(|g|-d-1)^c$. 
\end{definition}
Occasionally, we will write $\rd(G,S)$ for $\sup_{g \in G} \rd(g)$ (the generating set being $S$). Likewise for $\wid(G)$.

Bogopol'skii \cite{Bog} showed that the width is always bounded by a constant in a hyperbolic group. Warshall \cite{War-Heis} has shown that the Heisenberg groups has ``large'' dead-ends which are ``shallow'', i.e. the width may be arbitrarily big but the retreat depth is at most $2$. Earlier examples go back to Cleary and Taback: they have shown that lamplighter groups on $\Z$ with finite lamps have (many) dead-ends of arbitrary large width \cite{CT}, a result also known to hold for some finitely-presented groups by Cleary and Riley \cite{CR}. Lastly, \cite{RW} have shown that having dead-ends of bounded width is not an invariant of the generating set.

According to Definition \ref{defth},
we say the group has {\bf connected spheres} if there is an integer $r \geq 0$ so that for all $n \in  \Z_{\geq 0}$, $S(n,r)^\infty$ is connected. 

The {\bf connection thickness} of $G$ with respect to $S$ is the function $\textrm{th}_{G,S}: \Z_{\geq 0} \to \Z_{\geq 0}\cup\{\infty\}$ defined by 
\[
\textrm{th}_{G,S}(n) := \min \{r \in \Z_{\geq 0} \mid S(n,r)^\infty \text{ is connected}\}. 
\]

When $G$ has more than one end, $\textrm{th}_{G,S}(n)$ is infinite for large enough $n$. On the other hand, when $G$ is one-ended, $\textrm{th}_{G,S}(n)$ is necessarily finite for each $n$: this is a consequence of the following lemma.
\begin{lemma}\label{monot}
Assume $0 \leq \ell \leq k $ and $n \in \Z_{\geq 0}$. Then $S(n,k)^\infty$ is connected if and only if any two elements of $S(n+\ell) \cap B(n-1)^{\comp,\infty}$ are connected by paths staying inside $S(n,k)^\infty$.
\end{lemma}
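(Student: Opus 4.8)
The plan is to prove the nontrivial ``if'' direction; the ``only if'' is immediate, since $S(n+\ell)\cap B(n-1)^{\comp,\infty}$ is a subset of the vertex set of the graph $S(n,k)^\infty$ (here I use $\ell\le k$), so if $S(n,k)^\infty$ is connected then in particular any two vertices of this sublayer are joined by a path inside it. For the converse, I would set $T:=S(n+\ell)\cap B(n-1)^{\comp,\infty}$ and reduce the problem to the following assertion: \emph{every} vertex $g\in S(n,k)^\infty$ is joined, by a path inside $S(n,k)^\infty$, to some vertex of $T$. Granting this and the hypothesis that any two vertices of $T$ are joined inside $S(n,k)^\infty$, two arbitrary vertices route to $T$, then across $T$, all within the slab, so $S(n,k)^\infty$ is connected. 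I would then establish the assertion by treating separately the vertices lying above and below the target radius $n+\ell$.

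For the descent, when $|g|\ge n+\ell$, I would follow a geodesic $g=u_0,u_1,\dots$ from $g$ towards $e$, so that $|u_i|=|g|-i$, and stop at $u_{i^*}$ with $i^*=|g|-(n+\ell)$, a vertex of radius exactly $n+\ell$. For $i\le i^*$ the radii satisfy $n+\ell\le|u_i|\le|g|\le n+k$, so each $u_i$ lies in $B(n+k)\cap B(n-1)^{\comp}$. The crucial observation is that this segment never leaves the infinite component: one has $u_0=g\in B(n-1)^{\comp,\infty}$, and since consecutive vertices $u_i,u_{i+1}$ are adjacent and all have radius $\ge n$, they all lie in $B(n-1)^{\comp}$ and hence in the \emph{same} component as $g$. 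Thus the whole geodesic segment sits in $S(n,k)^\infty$ and terminates at $u_{i^*}\in T$.

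For the ascent, when $|g|\le n+\ell$, a single upward edge need not exist, and this is where I expect the only genuine subtlety, since $g$ may be a dead-end and cannot be pushed one edge further from $e$. Here I would exploit the defining feature of $B(n-1)^{\comp,\infty}$, namely that it is an infinite, locally finite (finite generating set), connected subgraph: it therefore contains vertices of arbitrarily large radius, in particular one of radius $\ge n+\ell$, together with a path from $g$ to it. Since this path stays at radius $\ge n$ throughout while climbing from $|g|<n+\ell$ up to radius $\ge n+\ell$, it must pass through radius $n+\ell$; truncating it at the first such vertex $w$ keeps all intermediate radii in $[n,n+\ell-1]\subseteq[n,n+k]$, so that truncated path lies in $S(n,k)^\infty$ and reaches $w\in T$.

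The main obstacle is precisely this asymmetry between the two directions. Descending towards $e$ is essentially free, because adjacency preserves membership in the infinite component of $B(n-1)^{\comp}$ as long as the radius does not drop below $n$; no dead-end can interfere with moving inward. Ascending, by contrast, must circumvent possible dead-ends, which I would do by invoking connectedness to infinity of $B(n-1)^{\comp,\infty}$ and then controlling the radius band through a first-passage argument. Once both moves are in place, every vertex of $S(n,k)^\infty$ reaches $T$, and the hypothesis on $T$ finishes the proof.
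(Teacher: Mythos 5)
Your proposal is correct and follows essentially the same route as the paper: both reduce the lemma to showing that every vertex of $S(n,k)^\infty$ is joined inside $S(n,k)^\infty$ to the layer $S(n+\ell)\cap B(n-1)^{\comp,\infty}$, descending along a geodesic toward the identity when $|g|>n+\ell$ and, when $|g|<n+\ell$, ascending along a path to infinity (which exists since $g\in B(n-1)^{\comp,\infty}$) truncated at its first crossing of $S(n+\ell)$. The paper's proof is this same three-case argument, stated more tersely; your extra care about staying in the infinite component along the descent and about the radius band $[n,n+\ell-1]$ on the ascent just makes explicit what the paper leaves implicit.
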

\begin{proof}
The proof consists in noticing that any element of $S(n,k)^\infty$ is connected to an element of $S(n+\ell)$ by a path staying inside $S(n,k)^\infty$. Assume $x \in S(n,k)^\infty$. If $|x| = n+\ell$ there is nothing to prove.

If $n+k \geq |x|> n+\ell$, consider a geodesic path from $x$ to the identity. This path crosses $S(n+\ell)$ at an element $y$ and, between $x$ and $y$, stays inside $S(n,k)^\infty$. This implies $y \in S(n+\ell) \cap B(n-1)^{\comp,\infty}$. 

If $n \leq |x|< n+\ell$, consider a path from $x$ to infinity. This path crosses $S(n+\ell)$ at an element $y$. Taking $y$ minimal, the path stays inside $S(n,k)^\infty$ between $x$ and $y$.
\end{proof}

\subsection{Lamplighter groups}\label{sec:ll}

A group $G$ is called a lamplighter group, or a wreath product, if it has the form of a semi-direct product $G=\G \wr L=\G \ltimes (\Sigma_\G L)$, between a base group $\G$ and the finitely supported functions from $\G$ to a lamp group $L$. Its elements have the form $g=(\g, f)$, where $f:\G \rightarrow L$ is the lamp function with finite support. Let $S_\G,S_L$ be generating sets of the groups $\G,L$. By abuse of notation, we still denote $S_\G=\{(\g,Id)|\g \in S_\G\}$ and $S_L=\{(e_\G,\d_s)|s \in S_L\}$ the subsets of $G= \G \wr L$, where $e_\G$, $e_L$ denote the neutral elements of the groups $\G$ and $L$ respectively, $Id(x)=e_L$ for all $x \in \G$, $\d_s(x)=e_L$ for all $x \neq e_\G$ and $\d_s(e_\G)=s$. The set $S_\G \cup S_L$ is the canonical generating set of $G$, also called the ``switch or walk'' generating set. 

The name ``lamplighter group'' comes from the following interpretation. An element $g=(\g, f)$ of the group is described by a configuration of lamps taking values in $L$ on the base group $\G$ (this configuration is given by the function $f$), together with a position $\g$ of a ``lamplighter'' in the base group $\G$.

The action of a generator $\g'$ in $S_\G$ has the form $\left(\g, f\right)(\g',Id)=\left(\g\g', f\right)$
so can be interpreted as a walk of the lamplighter from position $\g$ to $\g\g'$. The action of a generator $s=(e_\G,\d_s)$ in $S_L$ has the form $ \left(\g, f\right)(e_\G,\d_s)=\left(\g,f'\right)$ where $f'(x)=f(x)$ for all $x \neq \g$ and $f'(\g)=f(\g)s$
so can be interpreted as a switch of the lamp in position $\g$, the position of the lamplighter, from intensity $f(\g)$ to $f(\g)s$. 

As both $S_\G$ and $S_L$ contain the identity $e$, the set  $S_LS_\G S_L$ is also generating, called the ``switch-walk-switch'' generating set. When $S_L =L$, this generating set is especially interesting as the word metric is computed by a travelling salesman problem in the Cayley graph of $\G$ with respect to $S_\G$.

\begin{definition}\label{defTSP}
Let $x,y$ be elements of a graph and $A$ a subset of vertices. The travelling salesman distance from $x$ to $y$ through $A$ noted $d_{TS}(x,A,y)$ is the length of the shortest path starting at $x$ and ending at $y$ which passes at least once through all the vertices of~$A$. 
\end{definition}

The distance in the lamplighter groups (or graphs) is given by this length. 

\begin{proposition}\label{propTSP}
Assume $S_L =L$ and endow the group $\G\wr L$ with the distance associated to the ``switch-walk-switch'' generating set $LS_{\Gamma}L$.
Let $g=(\g,f) \in \Gamma \wr L$. Assume $g$ is not of the form $(e_\G,  \delta_{s})$ for some $s\in L$ and denote $A = \{ y \in \G \mid f(y) \neq e_L\}$ (the set of lamps which are on), then $$|g| = d_{TS}(e_\G,A,\g).$$  
\end{proposition}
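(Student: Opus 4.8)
The plan is to establish the two inequalities $|g| \geq d_{TS}(e_\G, A, \g)$ and $|g| \leq d_{TS}(e_\G, A, \g)$ separately. The only genuine input is that, because $S_L = L$, a single switch at the current position multiplies the lamp there by an arbitrary element of $L$, so lamps can be set freely, at no extra cost, along any walk the lamplighter makes. This is the classical computation of the lamplighter metric as a travelling salesman problem, so I expect no deep difficulty, only careful bookkeeping.

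For the lower bound, I would write an arbitrary geodesic word for $g$ as a product $g_1 \cdots g_\ell$ with $\ell = |g|$ and $g_j = (e_\G, \d_{a_j})(t_j, Id)(e_\G, \d_{b_j})$, where $a_j, b_j \in L$ and $t_j \in S_\G$. The position component of this product is the walk $u_0 = e_\G, u_1, \ldots, u_\ell = \g$ with $u_j = u_{j-1} t_j$ in $Cay(\G, S_\G)$, and every modification of a lamp is performed by $g_j$ while the lamplighter occupies $u_{j-1}$ (via $a_j$) or $u_j$ (via $b_j$). Hence a lamp at $y$ can be on in the final configuration only if $y \in \{u_0, \ldots, u_\ell\}$, so $A \subseteq \{u_0, \ldots, u_\ell\}$. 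After deleting the trivial steps (those with $t_j = e_\G$), this walk becomes a genuine path from $e_\G$ to $\g$ through $A$ of length at most $\ell$, whence $d_{TS}(e_\G, A, \g) \leq |g|$.

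For the upper bound I would start from a shortest travelling salesman path $e_\G = v_0, v_1, \ldots, v_m = \g$ realizing $d_{TS}(e_\G, A, \g) = m$ and meeting every vertex of $A$; being shortest it uses no trivial step. Writing $t_i \in S_\G$ for its $i$-th step, I construct the word $g_1 \cdots g_m$ with $g_i = (e_\G, \d_{a_i})(t_i, Id)(e_\G, \d_{b_i})$ and choose the switches so that, at each visited vertex $y$, the ordered product of all switches performed there equals $f(y)$ (and equals $e_L$ for visited $y \notin A$). This is possible because every vertex of $A$ is visited at least once and each visit offers at least one freely-choosable switch, so one may, for instance, set a single switch at $y$ equal to $f(y)$ and all the others to $e_L$. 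The resulting word has length $m$ and represents $(\g, f) = g$, giving $|g| \leq d_{TS}(e_\G, A, \g)$.

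Finally I would dispose of the degenerate case $d_{TS}(e_\G, A, \g) = 0$, which forces $\g = e_\G$ and $A \subseteq \{e_\G\}$: then either $g = e$ (and both sides vanish) or $g = (e_\G, \d_s)$ with $s \neq e_L$, for which $|g| = 1 > 0$ since no switch is available without spending a generator — exactly the configurations excluded by hypothesis. The point demanding the most care, and the main obstacle, is the switch bookkeeping in the upper bound: at an interior vertex $v_i$ the available switches are the trailing switch $b_i$ of $g_i$ and the leading switch $a_{i+1}$ of $g_{i+1}$ (both acting at $v_i$), while the endpoints $v_0$ and $v_m$ carry only $a_1$ and $b_m$ respectively, and one must check that this still leaves at least one free switch at every visited vertex, including vertices the path meets several times. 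Granting the freedom $S_L = L$, this verification is routine.
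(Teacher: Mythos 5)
Your proof is correct and takes essentially the same route as the paper's: the lower bound by projecting a word in the generating set $LS_\G L$ onto its walk component in $S_\G$ (so that lamps in $A$ force the walk to visit them), and the upper bound by decorating a travelling-salesman solution with freely chosen switches, which is possible precisely because $S_L=L$. Your explicit switch bookkeeping and your treatment of the degenerate case $d_{TS}(e_\G,A,\g)=0$ merely spell out what the paper's brief argument leaves implicit.
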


In other words, if $\Gamma \wr L \twoheadrightarrow \Gamma$ is the natural projection, then the word length of $g = (\g,f) \in \G \wr L$ is the length of the shortest path in the Cayley graph of $\G$ for $S_\G$ which starts at $e_\G$, covers all elements in the support of $f$ and ends at $\g$.

Observe that this generating set is finite if and only if $L$ is finite, and that the length of an element of the form $(e_\G,  \delta_{s})$ is obviously $1$.

\begin{proof}
Let $w$ be a representative word of $g$. Denote $w_\G$ the word obtain by forgetting all generators of $S_L$ in $w$. The length of $w_\G$ is at least $d_{TS}(e_\G,A,\g)$ because only lamps at the sites in $\G$ visited by the path described by $w_\G$ can be switched on. This gives the lower bound.

To get the upper bound, consider $w_\G$ a word in $S_\G$ describing a solution to the associated travelling salesman problem. Then we can extend $w_\G$ to a word of the same length in $S_LS_\G S_L$ representing $g$. Indeed, at each step, as $S_L=L$ we can set the lamps at departure and arrival of the lighter to any chosen value in $L$.
\end{proof}

\section{The lamplighter on a line with finite lamps}\label{llline}

The aim of this section is to establish  Theorem \ref{nonconnected-intro}.(1).
Further, we establish in Proposition \ref{relsize} an upper bound on the number of dead-end elements in the group $\Z \wr L$, and show that the entropy of the partition into connected components of $S(n,r)^\infty$ is asymptotically maximal in Proposition~\ref{entmax}.

\subsection{Description of elements and their length} \label{sec:description}

Let $L$ be a finite group and consider the group $G=\Z \wr L$ together with its ``switch-walk-switch'' generating set $L\{\pm 1\} L$. An element of $G$ is described by a lamp function $f:\Z \rightarrow L$ and the position $z$ in $\Z$ of the lamplighter. We write $g=(z,f)$.

To an element $g$ are associated $a(g)=\min(\{t\in \Z|f(t)\neq e\}\cup \{0,z\})\leq0$ and $b(g)=\max(\{t\in \Z|f(t)\neq e\}\cup \{0,z\})\geq0$, where $a(g)$ (resp. $b(g)$) is the minimal (resp. maximal) lamp turned on or the position of the lamplighter or $0$. We have $a(g)=0$ (resp. $b(g)=0$) only if all negative (resp. positive) lamps are turned off. For simplicity, we write $a$ instead of $a(g)$, $a_k$ for $a(g_k)$ and so on.

\begin{lemma}
Let $L$ be a finite group. Consider the group $\Z \wr L$ endowed with ``switch-walk-switch'' generating set $L\{\pm 1\} L$.
Let $g=(z,f)$ be an element of $\Z \wr L$. If $g \notin S_L$ then the word length is $|g|=2b+2|a|-|z|$.
\end{lemma}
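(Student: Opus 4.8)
The plan is to apply the travelling salesman formula from Proposition \ref{propTSP} to the special case where the base group is $\Z$ with generating set $\{\pm 1\}$, whose Cayley graph is the bi-infinite line. Recall that the word length equals $d_{TS}(0, A, z)$, the length of the shortest walk in $\Z$ starting at the origin, visiting every element of the support $A = \{t \mid f(t) \neq e\}$ of the lamp function, and ending at the lamplighter position $z$. Since $\Z$ is totally ordered and all lamps that are on lie between $a$ and $b$ (and $0$, $z$ lie in $[a,b]$ as well by the definition of $a,b$), the salesman must reach both the leftmost relevant point $a \leq 0$ and the rightmost relevant point $b \geq 0$, and must terminate at $z$.

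First I would reduce the problem to a pure optimisation on the line: the quantity $d_{TS}(0,A,z)$ on $\Z$ equals $d_{TS}(0,\{a,b\},z)$, since any walk covering $A$ automatically covers everything in between once it reaches both endpoints $a$ and $b$, and conversely a walk reaching $a$ and $b$ covers all of $A \subseteq [a,b]$. So the task becomes: find the shortest walk from $0$ that touches both $a$ and $b$ and ends at $z$, where $a \leq 0 \leq b$. This is an elementary endpoint-covering problem on an interval.

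Next I would carry out the case analysis on which extreme point to visit first. A walk from $0$ touching both endpoints either goes left to $a$ first then right to $b$, or goes right to $b$ first then left to $a$. In the first case the total displacement cost before heading to the final position is $|a| + (b + |a|) = b + 2|a|$ to reach $b$ after dipping to $a$; in the second it is $b + (b + |a|) = 2b + |a|$ to reach $a$ after going out to $b$. One then appends the final leg to $z$, and since $z \in [a,b]$, the optimum is obtained by choosing the order so that the endpoint nearer to $z$ is visited last. A short computation comparing the two orderings shows the minimum equals $2b + 2|a| - |z|$: intuitively, one pays to traverse the interval $[a,b]$ and back once (cost $2(b+|a|)$) but saves the final stretch $|z|$ by ending at $z$ rather than returning to $0$. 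I expect the cleanest way to present this is to verify both the upper bound (exhibit the walk going to the farther endpoint first and ending at $z$) and the matching lower bound (any such walk must cover $[a,b]$, costing at least $2(b + |a|)$ of traversal, minus at most $|z|$ saved at the end).

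The main obstacle, though minor, will be handling the hypothesis $g \notin S_L$ cleanly and confirming the sign conventions $a \leq 0 \leq b$ so that the expression $2b + 2|a| - |z|$ is manifestly nonnegative and correct in the degenerate subcases (e.g. $a = 0$ or $b = 0$, meaning no negative or no positive lamps are on, or $z$ coinciding with an endpoint). The exclusion $g \notin S_L$ guarantees $g$ is not one of the trivial length-one elements of the form $(e_\G,\delta_s)$ to which Proposition \ref{propTSP} does not directly apply; I would check that in all remaining cases the TSP interpretation is valid and the formula holds verbatim.
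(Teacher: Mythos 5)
Your proposal is correct and takes essentially the same route as the paper: both rest on Proposition \ref{propTSP}, observing that on the line the optimal tour must reach $a$ and $b$ and hence is one of the two sweeps, of lengths $2b+2|a|-z$ and $2b+2|a|+z$, whose minimum is $2b+2|a|-|z|$. Your explicit reduction $d_{TS}(0,A,z)=d_{TS}(0,\{a,b\},z)$ and the matching lower bound simply spell out what the paper leaves implicit in the remark that ``one of them has to give a solution to the travelling salesman problem.''
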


\begin{proof}
Let $u$ denote the generator $+1$ of $\Z$ with multiplicative notation. We have $g=u^{a}f(a)uf(a+1)u\dots uf(b)u^{b-z}$, word of (switch-walk-switch) length  $2b+2|a|-z$, interpreted as ``go to $a$ without switching the lamps, then go to $b$ switching the lamps appropriately, then go back to $z$ without switching the lamps''. Similarly, we have $g=u^bf(b)u^{-1}f(b-1)u^{-1}\dots u^{-1}f(a)u^{-a+z}$, of length $2b+2|a|+z$. One of them has to give a solution to the travelling salesman problem of Proposition \ref{propTSP}, since the lamplighter must start from $0$ and end his walk at $z$ going meanwhile through $a$ and $b$.
\end{proof}

\subsection{Non-connectedness of spheres}\label{sec:nonconnline}

\begin{lemma}\label{connectedcomp}
Let $n,r \in \Z_{\geq 0}$ and consider two elements $g,g'$ in $S(n)$ with $z \geq 0$.
\begin{enumerate}
\item If $z-r>0$, then $g$ and $g'$ are in the same connected component of $S(n,r)$ if and only if $a'=a$, $b'=b$, $z'=z$ and $\forall t \notin [z-r,z]$, $f(t)=f'(t)$.
\item If $z-r\leq 0$ and $z<\min(b,|a|)$, then $g$ and $g'$ are in the same connected component of $S(n,r)$ if and only if $a'=a$, $b'=b$, $|z'|=z$ and $\forall t \notin [-z,z]$, $f(t)=f'(t)$. 
\end{enumerate}
\end{lemma}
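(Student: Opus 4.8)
The plan is to prove each equivalence by treating the two implications separately, with case (1) and case (2) handled in parallel. I write $g=(z,f)$, $g'=(z',f')$, and use the length formula from the previous lemma: for an element $h$ whose lamplighter sits at site $w$ one has $|h|=2b(h)-2a(h)-|w|$, and in particular $|g|=2b-2a-z=n$. Recall that one generator toggles the lamps at the current site $w$ and at $w\pm1$ and shifts the lamplighter by $\pm1$, and that membership in $S(n,r)$ is exactly the constraint $n\leq |h|\leq n+r$.

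For the forward implication I would build an explicit path. In case (2) the hypotheses $z<b$ and $z<|a|$ put the extreme lit lamps strictly outside $[-z,z]$, so I sweep the lamplighter across $[-z,z]$, setting each visited lamp to its target value $f'$ and finishing at $z$ or $-z$ according to $|z'|=z$; no lamp outside $[-z,z]$ is touched, so $a$ and $b$ are unchanged, and at a site $w$ with $|w|\leq z$ the length is $2b-2a-|w|\in[n,n+z]\subseteq[n,n+r]$ because $z\leq r$. In case (1) the window $[z-r,z]$ is positive and the same left-sweep works, provided there is a lit lamp at or to the right of $z$ to pin $b$; the one delicate configuration is the \emph{taut} one, $b=z$ with no lamp at $z$, where moving inward would drop the length below $n$. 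I handle it by first switching the lamp at $z$ on—an excursion to $z+1$ and back costs one unit of length and does not change $b=z$—which frees the inward motion, and I restore $f'(z)$ at the end.

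For the converse the engine is a \emph{confinement} statement: the lamplighter cannot leave a prescribed interval without leaving $[n,n+r]$. Case (2) is cleanest. As long as the lamps at $a<-z$ and $b>z$ are lit, a step to $z+1$ would give length $2b-2a-(z+1)=n-1$ (symmetrically on the left), so it is forbidden; being confined to $[-z,z]$, the lamplighter can never reach $a$ or $b$ to extinguish them, so the confinement is self-sustaining along the path, and with it the invariance of $a$, $b$ and of all lamps outside $[-z,z]$. The endpoint identity $2b-2a-|z'|=n$ then forces $|z'|=z$. Case (1) behaves the same way when $b>z$: a lamp beyond the lamplighter blocks every rightward step (length drops to $n-1$), the budget blocks steps left of $z-r$ (length would reach $n+r+1$), the lamplighter is trapped in the positive window $[z-r,z]$, the extremes and the off-window lamps are frozen, and $|g'|=n$ yields $z'=z$, hence $b'=b$.

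The main obstacle is the remaining taut sub-case of (1), $b=z$, where the lamplighter \emph{may} move right (parking lamps) and is not instantly confined. Here I would run a simultaneous induction showing that every element $h_i$ on a path issuing from $g$ satisfies $w_i\geq z-r$, $a(h_i)=a$, and $b(h_i)\geq z$. The crucial points are that a step lowering $b$ below $z$ must move the lamplighter from $z$ to $z-1$ while leaving no lamp at a site $\geq z$, producing length $n-1$ and so excluded; and that, $b(h_i)\geq z$ being thereby preserved, a step pushing $w$ below $z-r$ leaves a lamp at some site $\geq z$ untouched, so the length jumps to at least $2z-2a-(z-r-1)=n+r+1$, again impossible. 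Granting this induction, the negative and far-left lamps are frozen, giving $a'=a$ and the left half of the off-window condition; and if $g'$ carried a lamp beyond $z$, then $b'>z$, whereas $2b'-z'=2b-z=z$ gives $z'=2b'-z$, so $z'\leq b'$ forces $b'\leq z$, a contradiction—hence the right half holds and $b'\geq z$. Finally $2b'-z'=z$ with $b'\geq z$ gives $z'\geq z$, while $z'\leq b'=(z+z')/2$ gives $z'\leq z$; so $z'=z$ and $b'=b$, which completes the characterization.
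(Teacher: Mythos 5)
Your proposal is correct and follows essentially the same route as the paper: explicit lamplighter sweeps for the ``if'' directions, and for ``only if'' a confinement argument via a first-violation (minimal-time) analysis, with case (1) split into $b>z$ and the taut situation $z=b$ exactly as in the paper's proof. In fact your treatment of the taut sub-case ($b=z$ with the lamp at $z$ off) in the forward direction is a genuine improvement: the paper's displayed path $g_k=gu^{-k}$ is claimed to have norm $n+k$, but in that configuration $b$ drops with the lamplighter and the norm is $n-k$, so the path enters $B(n-1)$; your repair (first lighting the lamp at $z$, which costs nothing since $b=z$ already, then restoring $f'(z)$ at the end) is precisely what is needed to make that step airtight.
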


Of course a similar statement holds for $z \leq 0$. We could probably also give a complete description of the connected component of $g$ when $-z<a$, but it would be more complicated.

\begin{proof}
We present a detailed proof of case (1) when $z-r>0$. First assume $z=z'$ and $\forall t \notin [z-r,z]$, $f(t)=f'(t)$. Observe that:
$$g'=gu^{-r}f(z-r)^{-1}f'(z-r)uf(z-r+1)^{-1}f'(z-r+1)\dots uf(z)^{-1}f'(z). $$
This equality provides a path $g_k=gu^{-k}$ of norm $n+k$ for $0 \leq k \leq r$ and $g_k=gu^{-r}f(z-r)^{-1}f'(z-r)\dots uf(z-2r+k)^{-1}f'(z-2r+k)$ of norm $n+r-k$ for $r \leq k \leq 2r$, with $g_0=g$ and $g_{2r}=g'$. This path is interpreted as ``the lamplighter moves left to position $z-r$ without switching lamps, then he moves back to position $z$ switching on lamps appropriately''.

For the converse implication, we first treat the case $z<b$. Assume by contradiction that there exists $t \notin [z-r,z]$ with $f(t)\neq f'(t)$, and a path $g=g_0,g_1,\dots ,g_K=g'$. Let $k$ be minimal such that $z_k=z+1$ or $z_k=z-r-1$. By minimality and since $0<z-r\leq z<b$, we must have $a_k=a$ and $b_k=b$, so $|g_k|=n-1$ or $|g_k|=n+r+1$, raising a contradiction. This shows that $\forall t \notin [z-r,z]$, $f(t)=f'(t)$, hence $a'=a$ and $b'=b$. By equality of norms of $g$ and $g'$, this forces $z=z'$. We interpret this by saying that ``the lamplighter cannot move right of position $z$ or left of position $z-r$ without exiting the annulus $S(n,r)$''.

Now treat the case $z=b$. We first prove that $\forall t \leq z-r, f(t)=f'(t)$. Otherwise, there would be a path $g=g_0,g_1,\dots,g_K=g'$ in $S(n,r)$, and a $k$ minimal such that $z_k=z-r-1$. By minimality, $a_k=a$, which forces $b_k \leq b-1$. In particular, there exists a $k'$ minimal with $b_{k'} \leq b-1$. This forces $a_{k'}=a$ and $b_{k'}=z_{k'}=b-1$, so $|g_{k'}|=n-1$, contradiction. There remains to see that $z'=b'=b$. By norm, we must have $\g'\leq b' \leq b$. If we had $b'<b$, then as above there would exist a $k$ minimal with $b_k \leq b-1$, raising a contradiction. Thus $b'=b$ and by norm $z'=b'$. As an interpretation, ``staying in $S(n,r)$ forces the lamplighter to have a position in $[z-r,z+r]$, with the rightmost non-trivial lamp at position $b$ if the lamplighter is in $[z-r,z-1]$''.

In case (2) we have $a<-z$ and $z<b$, the lamplighter starts in position $z$. He cannot move right unless entering $S(n-1)$. Each step left increases the norm by one until  $z_{k_1}=0$, with $g_{k_1} \in S(n+z)$. Afterwards, each step left decreases the norm by one until $z_{k_2}=-z$, with $g_{k_2} \in S(n)$. At this position, the lamplighter cannot move left unless entering $S(n-1)$. So staying in $S(n,r)$, the lamplighter can move freely in the interval $[-z,z]$, switching lamps on appropriately. If we require the norm to equal $n$, then the position is $z$ or $-z$. Note that in this case, when $z_k=0$, the element $g_k$ of norm $n+z$ is a dead-end of $\Z\wr L$ of depth $\min(b,-a)$.
\end{proof}

The following lemma describes which elements of the sphere $S(n)$ belong to the infinite component of $B(n-1)^c$.

\begin{lemma}\label{infcomp}
Let $g$ belong to $S(n)$, with $z \geq 0$. Then:
\begin{enumerate}
\item If $a=0$, then $g$ belongs to $S(n)^\infty$.
\item If $a<0$ and $z=b$, then $g$ belongs to $S(n)^\infty$.
\item If $a<0\leq z <b$, then $g$ belongs to $S(n)^\infty$ iff $z \geq |a|$.
\end{enumerate}
\end{lemma}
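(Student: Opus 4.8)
The plan is to detect membership in $S(n)^\infty = S(n) \cap B(n-1)^{\comp,\infty}$ by exhibiting a path from $g$ out to infinity along which the word length never drops below $n$; non-membership will instead follow from showing that the connected component of $g$ in $B(n-1)^\comp$ is finite. Throughout I use the length formula $|g| = 2b + 2|a| - z$ (valid since we may assume $g \notin S_L$) together with the fact that $\Z \wr L$ is one-ended, so that $B(n-1)^\comp$ has a single infinite component. The basic mechanism is that moving the lamplighter left one generator at a time, \emph{without} switching any lamp, extends $a$ once the position goes below $a$, and moving it right likewise extends $b$; in both regimes the length rises by $1$ at each step.

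For (1), with $a=0$ the configuration lives in the nonnegative integers and $n = 2b - z$. I would walk the lamplighter left. While the position $p$ satisfies $p \geq 0$ we have $a=0$ and $b$ unchanged, so the length is $2b - p = n + (z-p) \geq n$; once $p<0$ the position becomes the new $a$, and each further left step raises both $|a|$ and $|p|$ by one, increasing the length by $1$. This is an escape to infinity inside $B(n-1)^\comp$, so $g \in S(n)^\infty$. For (2), with $z=b$ the lamplighter is at the right extreme while $a<0$ is anchored by a lit lamp; walking right moves the position, hence $b$, one unit right with $a$ fixed, so the length passes from $2b+2|a|-b$ to $2(b+1)+2|a|-(b+1)$, i.e. increases by $1$. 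Iterating gives an escape at length $\geq n$, so $g \in S(n)^\infty$.

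For (3) I split on the sign of $z - |a|$. Assume first $z \geq |a|$. Since $z < b$, the extreme $b$ is anchored by a lit lamp and $a<0$ by another, so as the lamplighter walks left from $p=z$ the values $a,b$ stay fixed until $p$ passes $a$. The length is $2b + 2|a| - |p|$, which rises from $n$ at $p=z$ to $n+z$ at $p=0$, then falls to its minimum $2b + |a| = n + (z-|a|)$ at $p=a$, after which each step again increases it by $1$. Hence along this left-escape the length stays $\geq \min(n,\, n+z-|a|) = n$, so $g \in S(n)^\infty$.

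Assume finally $z < |a|$; combined with $z<b$ this gives $z < \min(b,|a|)$, precisely the hypothesis of Lemma \ref{connectedcomp}(2). The genuine obstacle here is to exclude \emph{every} clever escape, not merely the two obvious ones; this is exactly what that lemma supplies. For each $r \geq z$ it identifies the component of $g$ in $S(n,r)$ with the fixed finite set of elements $g'$ having $a'=a$, $b'=b$, $|z'|=z$ and $f'=f$ outside $[-z,z]$, a set independent of $r$. Any path in $B(n-1)^\comp = \bigcup_r S(n,r)$ starting at $g$ has bounded length, hence lies in some $S(n,R)$, so its endpoint belongs to this finite set; therefore the component of $g$ in $B(n-1)^\comp$ is finite and cannot be the unique infinite one. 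Thus $g \notin S(n)^\infty$, which completes the characterization.
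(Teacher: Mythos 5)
Your proposal follows the paper's proof essentially step for step: cases (1), (2) and the subcase $z \geq |a|$ of case (3) are established by the very same explicit escape paths (straight left when $a=0$, straight right when $z=b$, and the leftward excursion past $a$ whose length profile is $n+k$, then $n+2z-k$, then $n+k-2|a|$, with minimum $n+z-|a|$), and the subcase $z<|a|$ is delegated to Lemma \ref{connectedcomp}(2), exactly as in the paper.

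One step in that last subcase does not hold as written. Lemma \ref{connectedcomp}(2) is a statement about \emph{pairs of elements of $S(n)$}: it characterizes which $g' \in S(n)$ lie in the component of $g$ inside $S(n,r)$, but it says nothing about elements of norm $>n$ in that component. So your sentence ``its endpoint belongs to this finite set'' fails for a path ending, say, at $gu^{-1}$, which has norm $n+1$ and satisfies none of the constraints $|z'|=z$, etc.; consequently the finiteness of the component of $g$ in $B(n-1)^\comp$ does not literally follow from the statement you quote. Two one-line repairs are available. Either invoke the \emph{proof} of Lemma \ref{connectedcomp}(2) --- which is what the paper does --- since that proof shows the lamplighter is confined to $[-z,z]$ with all lamps outside $[-z,z]$ frozen, so the entire component (elements of all norms included) is finite. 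Or keep your exhaustion $B(n-1)^\comp=\bigcup_r S(n,r)$ and argue by contradiction: if $g$ lay in the infinite component (unique, by one-endedness), it would be connected inside some $S(n,R)$ to the element $u^n \in S(n)^\infty$ supplied by your case (1), and Lemma \ref{connectedcomp}(2) would force $a(u^n)=a<0$, contradicting $a(u^n)=0$. With either patch the argument is complete and coincides with the paper's.
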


\begin{proof}
If $a=0$, the path $g_k=gu^{-k}$, interpreted as ``go straight to $-\infty$'', has length $n+k$, so $g$ belongs to the infinite component $S(n)^\infty$ of $B(n-1)^c$. If $a<0$ and $z=b$, then the path $g_k=gu^k$, ``go straight to $+\infty$'', has length $n+k$ so $g$ belongs to $S(n)^\infty$. If $a<0\leq z <b$, then $g_k=gu^{-k}$ has length:
$$|g_k|=\left\{\begin{array}{ll} n+k &\textrm{ if } 0 \leq k \leq z, \\ n+2z-k &\textrm{ if } z \leq k \leq z+|a|, \\ n+k-2|a| &\textrm{ if } k \geq z+|a|. \end{array} \right. $$
If $z \geq |a|$, we have $|g_k|\geq n$ for all $k \geq0$, so $g$ belongs to $S(n)^\infty$. If not, the proof of Lemma \ref{connectedcomp} shows that $g \notin S(n)^\infty$.
\end{proof}

These two Lemmas enable us to prove the following. 

\begin{proposition}\label{llcomp}
Let $L$ be a finite group. For the lamplighter group $\Z \wr L$ on a line with respect to the ``switch-walk-switch'' generating set $L\{\pm1\} L$, the set $S(n,r)^\infty$ has at least $2|L|^{n-r}$ connected components while $1 \leq r+1 \leq n$.
\end{proposition}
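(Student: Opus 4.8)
The plan is to exhibit an explicit family of $2|L|^{n-r}$ elements lying in $S(n)^\infty \subseteq S(n,r)^\infty$ that are pairwise in distinct connected components of $S(n,r)^\infty$, and then read off the count from Lemma~\ref{connectedcomp}. The guiding principle is that two elements sharing the same $a,b,z$ are connected in $S(n,r)$ precisely when their lamps agree outside a window of width $r+1$, so to produce many components I want to maximize the number of lamp sites \emph{outside} that window while staying in the infinite component.

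First I would push the lamplighter all the way to the right. Consider the elements $g=(n,f)$ with position $z=n$ and lamp function $f$ supported in $[0,n]$. For such $g$ one has $a(g)=0$ and $b(g)=n$, so by the length formula $|g|=2b+2|a|-|z|=2n-n=n$; hence $g\in S(n)$ (and $g\notin S_L$ since $n\geq 1$, as $n\geq r+1\geq1$). Because $a(g)=0$, Lemma~\ref{infcomp}(1) gives $g\in S(n)^\infty$, and since $S(n)^\infty=S(n,0)^\infty\subseteq S(n,r)^\infty$, all these elements live in $S(n,r)^\infty$. This choice of $z=n$ is what makes the count optimal: among elements with $a=0$, $z\geq 0$ and $|g|=n$ one has $b=(n+z)/2$, so the number of lamp sites outside the window, namely $b-r=(n+z)/2-r$, is largest exactly at $z=n$, where it equals $n-r$.

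Next I would separate them using Lemma~\ref{connectedcomp}. Since $z=n$ and $r+1\leq n$, we have $z-r=n-r>0$, so case (1) of the lemma applies: two such elements $g=(n,f)$ and $g'=(n,f')$ lie in the same component of $S(n,r)$ exactly when $f$ and $f'$ agree outside the window $[n-r,n]$, that is, when they coincide on $[0,n-r-1]$ (off $[0,n]$ both vanish). As $S(n,r)^\infty\subseteq S(n,r)$, elements in distinct components of $S(n,r)$ are \emph{a fortiori} in distinct components of $S(n,r)^\infty$. Letting $f$ range over the $|L|^{\,n-r}$ configurations supported on $[0,n-r-1]$ thus yields $|L|^{\,n-r}$ pairwise distinct components of $S(n,r)^\infty$.

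The factor $2$ comes from symmetry. The automorphism of $\Z\wr L$ induced by $z\mapsto -z$ on the base (sending $f$ to $t\mapsto f(-t)$) preserves the word length and the generating set $L\{\pm1\}L$, hence permutes the components of $S(n,r)^\infty$; applied to the family above it produces a second family of $|L|^{\,n-r}$ components, now with lamplighter position $z=-n$. Since Lemma~\ref{connectedcomp}(1) forces two elements in a common component to share the same lamplighter position and $n\neq -n$ for $n\geq1$, the two families are disjoint, giving at least $2|L|^{\,n-r}$ components. There is no serious obstacle here: the content is carried by Lemmas~\ref{connectedcomp} and \ref{infcomp}, and the only points needing care are (i) selecting $z=n$, which simultaneously forces $a=0$ (placing $g$ in the infinite component) and maximizes the number $n-r$ of frozen lamp sites, and (ii) confirming that distinctness of components survives restriction from $S(n,r)$ to $S(n,r)^\infty$ and that the two symmetric families do not overlap.
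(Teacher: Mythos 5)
Your proof is correct and follows essentially the same route as the paper's: you pick the elements with $a=0$ and $b=z=n$, place them in $S(n)^\infty$ via Lemma~\ref{infcomp}(1), separate them via Lemma~\ref{connectedcomp}(1) through the frozen lamp values on $[0,n-r-1]$, and double the count by the reflection symmetry. Your additional verifications (that distinctness of components of $S(n,r)$ passes to the subgraph $S(n,r)^\infty$, and that the two symmetric families are disjoint because a common component forces $z'=z$) are exactly the steps the paper leaves implicit.
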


This is the second statement in
Theorem \ref{nonconnected-intro}.(1).

\begin{proof} 
An element $g$ such that $a=0$ and $b=z=n$ belongs to $S(n)^\infty$ by Lemma \ref{infcomp}. Moreover, by Lemma \ref{connectedcomp}, all elements $g'$ in $S(n)^\infty$ in the connected component of $g$ in $S(n,r)^\infty$ have a lamp function $f'$ such that $f'(t)=f(t)$ for all $t$ in $[0,n-r-1]$. Since we have $|L|^{n-r}$ possibilities for the choice of the values of $f$ on this interval, there are at least this many connected components. In fact, at least twice more by symmetry.
\end{proof}

\subsection{Connection thickness}

\begin{theorem}\label{llconn}
Let $L$ be a finite group. Assume $n \geq 2$. In $\Z \wr L$ endowed with the ``switch-walk-switch'' generating set $L\{\pm 1\}L$, the annulus $S(n,n+2)^\infty$ is connected, whereas the annulus $S(n,n+1)^\infty$ has 3 connected components. Therefore the connection thickness is $$\mathrm{th}_{\Z \wr L, L\{\pm 1\} L}(n)=n+2.$$
\end{theorem}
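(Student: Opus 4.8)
The plan is to prove the two connectivity assertions separately and then read off the value of the thickness, using throughout the length formula $|g|=2b+2|a|-|z|$ together with the key observation that the word length depends only on the triple $(a,b,z)$ and not on the interior lamp values (so interior lamps may be toggled freely whenever the lamplighter reaches them). By Lemma~\ref{monot} (with $\ell=0$) it suffices, for each claim, to decide when two elements of $S(n)^\infty$ can be joined by a path staying in the annulus, i.e. with norm in $[n,n+r]$ and inside $B(n-1)^{\comp,\infty}$. The organizing principle is the norm profile along a lamplighter trajectory: for a fixed configuration with extents $a\le 0\le b$, as the position $z$ varies the norm $2b+2|a|-|z|$ has a local maximum $2b+2|a|$ at the equator $z=0$ and local minima at $z=b$ and $z=a$, and by Lemma~\ref{infcomp} membership in $B(n-1)^{\comp,\infty}$ is governed by these minima. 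The single crucial fact is that any path between a configuration with $z>0$ and one with $z<0$ must cross the equator $z=0$, where the norm equals $2(b+|a|)$.

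For the upper bound I want $S(n,n+2)^\infty$ connected. The heart of the matter is to connect the references $(n,e)=(0,n,n)$ and $(-n,e)=(-n,0,-n)$ by a \emph{staircase}: alternately (i) extend the left extent by one, moving the lamplighter to the new leftmost site, and (ii) retract the right extent by one, moving the lamplighter to the current rightmost site and clearing it, so that the pair $(b,|a|)$ travels from $(n,0)$ to $(0,n)$. The schedule can be arranged so that every equator crossing occurs with $b+|a|\le n+1$, hence at norm $\le 2(n+1)=2n+2$, and one checks that the lower norm bound and the $B(n-1)^{\comp,\infty}$ condition hold at the (cheaper) extreme positions. With the staircase available, every element of $S(n)^\infty$ is brought to one of the two references by analogous moves (clearing interior lamps and moving the extents monotonically, for which thickness $n+2$ gives ample room). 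This yields $\mathrm{th}\le n+2$.

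For the lower bound I want $S(n,n+1)^\infty$ disconnected with exactly three components, which forces $\mathrm{th}>n+1$. I take three representatives: $(n,e)$, $(-n,e)$, and a balanced central element (both extents of order $n/2$, lamplighter near the equator, which lies in $B(n-1)^{\comp,\infty}$ since $b+2|a|\ge n$). Separation comes from the equator obstruction with the sharper ceiling $n+r=2n+1$: starting from $(n,e)$ one cannot retract $b$ first (that would drop the norm below $n$), so one must first raise $|a|$ to $1$, and then, to make any further progress, return the lamplighter to the right while \emph{holding} the new left lamp, forcing an equator crossing with $b=n$ and $|a|=1$, at norm $2n+2>2n+1$; symmetrically for $(-n,e)$, while the central element can raise neither extent to $n$ without such a crossing. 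Conversely I must show these are the only obstructions, classifying each configuration by whether the capped staircase moves can reach $b=n$ (right class), $|a|=n$ (left class), or neither (central class), and checking each class is connected. Combined with the upper bound this gives $\mathrm{th}_{\Z\wr L,L\{\pm1\}L}(n)=n+2$.

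The step I expect to be the main obstacle is the combinatorial bookkeeping of the staircase: proving the extend/retract moves can always be scheduled so that $b+|a|\le n+1$ at every equator crossing while the lower bound and the infinite-component condition never fail at intermediate configurations, and dually, at thickness $n+1$, proving that the three classes are each connected and pairwise separated, so that the count is \emph{exactly} three rather than merely at least two. The clean conceptual reason for the value $n+2$ is that an equator crossing costs norm $2(b+|a|)$, and the first unavoidable crossing, made while one extent is still maximal, requires $b+|a|=n+1$.
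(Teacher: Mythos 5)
Your plan follows the same route as the paper in its essentials: the length formula $|g|=2b+2|a|-|z|$, the freedom to toggle interior lamps, and the observation that crossing the equator $z=0$ costs $2(b+|a|)$, so that the ceiling $2n+1$ forbids a crossing with $b+|a|=n+1$ while $2n+2$ permits exactly those; your staircase is in substance the paper's ``case (i)'' iteration combined with its bridge through a balanced element. But there is a genuine gap at the step you compress into ``every element of $S(n)^\infty$ is brought to one of the two references by analogous moves (clearing interior lamps and moving the extents monotonically, for which thickness $n+2$ gives ample room)''. By Lemma~\ref{infcomp}, $S(n)^\infty$ contains elements with $0\leq z=b<|a|$, and for these no monotone schedule works and the room is not ample. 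Concretely, take $n=10$ and $(a,b,z)=(-4,2,2)$, of norm $4+8-2=10$: sweeping the lamplighter left toward the left reference gives norm $2b+2|a|-|z'|$, which at $z'=a$ equals $n+b-|a|=8<n$, so the path exits the annulus from below; while heading for the right reference by raising $b$ all the way to $n$ first and only then clearing the left lamps forces an equator crossing at norm $2n+2|a|=28>2n+2=22$. The missing idea is precisely the paper's ``case (ii)'' maneuver: raise $b$ only to $|a|$, hold that lamp, sweep left to $a$, clear the left lamps up to the re-anchoring point $-z$, and return; this produces an element with $z'=|a'|$, after which the case~(i) induction on $b$ applies. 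Relatedly, ``ample room'' is wrong in spirit: the paper's path attains norm $2n+2$ exactly (its step G, when $b=n-1$ in the odd subcase), and this tightness is what the count of components at thickness $n+1$ hinges on.

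For the lower bound you correctly identify the two frozen classes and the separation mechanism (any exit from the component of $(0,n,n)$ requires holding a lamp at some $b_1\geq n$; the ceiling $2n+1$ forces $b_1=n$ and $a=0$ at crossings, and the crossing with $|a|=1$ would cost $2n+2$), which matches the paper's argument that this component is exactly $\{a'=0,\, b'=z'=n\}$. However your converse --- that everything outside the two extreme classes forms a \emph{single} component at thickness $n+1$ --- is declared rather than proved; the paper settles it by auditing its thickness-$(n+2)$ paths and locating the unique subcase where norm $2n+2$ is needed, namely $a=0$, $b=n-1$, $z=n-2$, whose lamp configuration can nonetheless be modified freely at thickness $n+1$, and by then connecting $g$ with $a=-n+1$, $z=-n+2$, $b=0$ to $g'$ with $a'=0$, $z'=n-2$, $b'=n-1$ through the balanced element with $-a''=b''=z''=\tfrac{n}{3}$. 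Finally, you use without comment that disconnection of $S(n,n+1)^\infty$ forces $\mathrm{th}(n)>n+1$; this needs monotonicity of connectedness in $r$, which does follow from Lemma~\ref{monot} with $\ell=0$ (connectedness at thickness $r$ implies it at $r+1$), but deserves a line. In short: right mechanism and right architecture, but the reduction to the references would fail as stated for the $z=b<|a|$ elements, and the ``exactly three'' count is an outline, not a proof.
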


This is the first statement of
Theorem \ref{nonconnected-intro}.(1).

\begin{proof}
First consider $S(n,n+1)^\infty$, and choose $g$ such that $a=0$ and $b=z=n$. The lamplighter cannot move left of position $n$ without letting a lamp turned on in $b_1 \geq b$ (otherwise, the path would enter $B(n-1)$). Then, while its position $z$ is between $0$ and $b_1$, the norm is $2b_1-z \geq 2n-z$. The lamplighter can reach position $-1$ (if $b_1=b$), but then the norm equals $2n+1$, and the lamplighter cannot come back to $0$ without switching off the lamp at $-1$ (otherwise, the path would enter $B(2n+2)$). This shows that $g'$ of norm $n$ belongs to the same connected component of $S(n,n+1)^\infty$ as $g$, if and only if $a'=0$ and $b'=z'=n$. By symmetry, there is another connected component containing elements with $a=z=-n$ and $b=0$. The annulus $S(n,n+1)^\infty$ has at least a third connected component since these two do not exhaust all $S(n)^\infty$. That there is no more than 3 connected components will be a consequence of the proof of the next statement.

Now we prove that $S(n,n+2)^\infty$ is connected. The first and main step is to prove that if $g$ belongs to $S(n)^\infty$ with $z \geq 0$, then there is a path in $S(n,n+2)^\infty$ from $g$ to $g'$ with $a'=0$ and $b'=z'=n$. If $b=n$, this is obvious, so we assume $b<n$. Moreover by Lemma \ref{infcomp}, one of the following two possibilities occurs: either 
$z \geq |a|$, or 
$z=b$.

This gives the two following cases:
\begin{itemize}
\item[(i)] $z \geq |a|$ and $b < n$.
\item[(ii)] $0 \leq z < |a|$, $z = b$ and $b <n$.
\end{itemize}
Start with case (i). Then $0 \leq |a| \leq z \leq b \leq n-1$. We prove that there is a path from $g$ to $g'$ satisfying the same relations, with $b'>b$, which implies the main step by induction. See Figure \ref{casi} for a picture which will hopefully make the argument easier to follow.

\begin{figure}
\begin{centering}
\begin{tikzpicture}
\draw[very thick,->] (-14/3,0) -- (30/3,0) node[below]{$\mathbb{Z}$};
\draw[very thick,->] (-14/3,0) -- (-14/3,28/3) node[above]{$|g_k|$};

\draw[very thick](0,1/10)--(0,-1/10) node[below]{$0$};
\draw[very thick](14/3,1/10)--(14/3,-1/10) node[below]{$z$};
\draw[very thick](26/3,1/10)--(26/3,-1/10) node[below]{$b$};
\draw[very thick](28/3,1/10)--(28/3,-1/10) node[below,cyan]{$b_1$};
\draw[very thick](-6/3,1/10)--(-6/3,-1/10) node[below]{$a$};
\draw[very thick](-12/3,1/10)--(-12/3,-1/10) node[below,blue]{$a_1$};
\draw[very thick](-10/3,1/10)--(-10/3,-1/10) node[below,red]{$a_2$};

\draw[very thick,dashed] (0,0)--(-2,1/4);
\draw[very thick,dashed] (-2,1/4)--(26/3,5/3);
\draw[very thick,dashed] (26/3,5/3)--(14/3,11/3) node[below]{$g$};
\draw[very thin,dashed] (28/3,11/3)--(-14/3,11/3) node[left]{$n$};
\draw[very thick] (14/3,11/3)--(0,18/3) node[below]{A};
\draw[very thick] (0,18/3)--(-6/3,15/3) node[below]{B};
\draw[very thick] (-6/3,15/3)--(-12/3,18/3) node[left]{C};
\draw[very thick,blue] (-12/3,18/3)--(0,24/3) node[below]{D};
\draw[very thick,blue] (0,24/3)--(26/3,11/3) node[below]{E};
\draw[very thick,blue] (26/3,11/3)--(28/3,12/3) node[right]{F};
\draw[very thick,cyan] (28/3,12/3)--(0,26/3) node[above]{G};
\draw[very thick,cyan] (0,26/3)--(-12/3,20/3) node[left]{H};
\draw[very thick,cyan] (-12/3,20/3)--(-10/3,19/3) node[below]{I};
\draw[very thick,red] (-10/3,19/3+1/10)--(0,24/3+1/10) node[above]{J};
\draw[very thick,red] (0,24/3+1/10)--(26/3,11/3+1/10) node[above]{K};
\end{tikzpicture}
\end{centering}
\caption{Illustration of case (i) (with $b-z$ even) in the proof of Proposition \ref{llconn}.}\label{casi}
\end{figure}
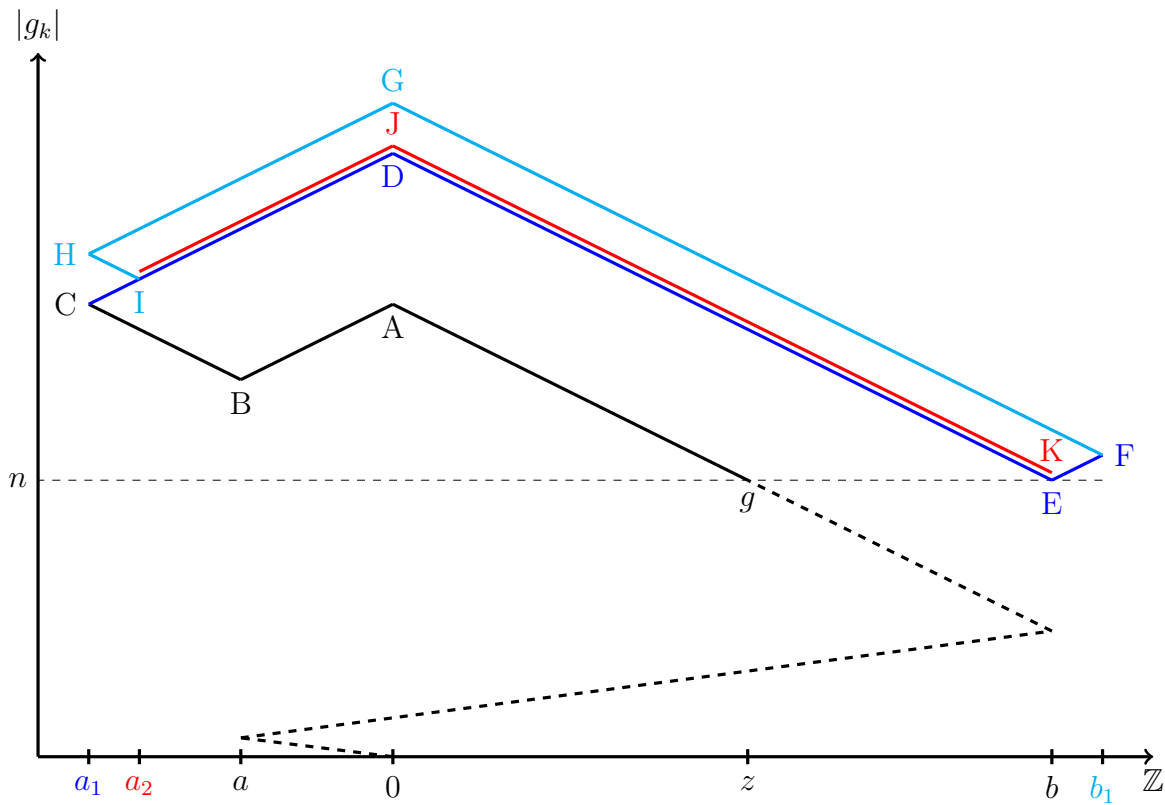

From $g$, the lamplighter keeps $b$ fixed and moves left to position $0$, where 
\[\tag{A}
|g_k|=n+z \leq 2n. 
\]
Then it continues left to position $a$, where 
\[\tag{B}
|g_k|=n+z+a\geq n, 
\]
and continues left again until position:
\[\tag{C}
a_1=\left\{ \begin{array}{lll} 
a-\frac{b-z}{2}   & \text{ if $b-z$ even,} & \text{ where }  |g_k|=n+\frac{b+z}{2}+a \in [n,2n],\\ 
a-\frac{b-z+1}{2} & \text{ if $b-z$ odd,}  & \text{  where } |g_k|=n+\frac{b+z+1}{2}+a \in [n,2n].
\end{array} \right. 
\]
\newcommand{\smdd}[2]{\Bigg\lbrace \begin{array}{ll} #1 & \text{if $b- z$ even} \\ #2 & \text{if $b - z$ odd} \end{array} }
Then the lamplighter keeps the lamp at $a_1$ on, and moves back right. At position $0$, we have 
\[\tag{D}
|g_k|=\smdd{n+b}{n+b+1}
\]
and at position $b$, 
\[\tag{E}
 |g_k|=\smdd{n}{n+1}
\]
Step right once more to $b_1=b+1$, where 
\[\tag{F}
|g_k|=\smdd{n+1}{n+2}
\]
Then keep the lamp at $b_1$ turned on and move left. At position $0$, we have 
\[\tag{G}
|g_k|=\smdd{n+b+2}{n+b+3}  
\]
At position $a_1$, we have:
\[\tag{H}
|g_k|=
\left\{ 
\begin{array}{llll} 
n+b+2+a_1 &= n+2+a+\frac{b+z}{2}     
& \text{ if $b-z$ even,}\\ 
n+b+3+a_1 &= n+3+a+\frac{b+z+1}{2} 
& \text{ if $b-z$ odd.}
\end{array} \right. 
\]
Observe that $a_1 <0$. Indeed, $a_1 =0$ implies, in the even case, $a=0$ and $b = z$ (which was already treated) or, in the odd case, $a=0$ and $b = z -1$ (which is not possible). Move right switching off the lamp at $a_1$ to set $a_2=a_1+1$ (so $a_2 \leq 0$), where 
\[\tag{I}
|g_k|=\smdd{n+b+1+a_1}{n+b+2+a_1} 
\]
Keep $a_2$ fixed and move right to position $0$, where 
\[\tag{J}
|g_k|=\smdd{n+b}{n+b+1} 
\]
then right to final position $b_1= b+1$, where 
\[\tag{K}
|g'|=n. 
\]
The end $g'$ of the path satisfies $a'=a_2=a_1+1$, $b'=b_1=b+1>b$ and $z'=b$ (even case) or $b+1$ (odd case). One can check that $g'$ satisfies case (i). 

One can check that the two steps where the norm is close to the admitted bounds is B (minimal norm) and G (maximal norm). In B, the norm is $n+|z|-|a| \geq n$. In G, the norm is (in the even case) $\leq n+b+2 < 2n+2$ or (in the odd case) $\leq n+b+3 \leq 2n+2$. 

There remains to treat case (ii): $0 \leq z = b < |a|$. See Figure \ref{casii} for a simplified picture of the path.

The lamplighter moves right to $b_1=|a|$ (A), where $|g_k|=n-a-z\in [n,2n]$. Keeping the lamp at $b_1=|a|$ on, he moves left to position $0$ (B), where $|g_k|=n-2a-z\in [n+|a|,2n]$, and then to position $a$ (C), where $|g_k|=n-a-z \in [n,2n]$. Switching off lamps, the lamplighter moves right to $a_1=-z$ (D), where $|g_k|=n$. Keeping the lamp at $a_1=-z$ on, he moves right to position $0$ (E), where $|g_k|=n+z$, and finally to position $z$ (F), where $|g'|=n$, with $a'=a_1=-\g$, $b'=b_1=|a|$ and $z'=z$. We have $z' = |a'|$, so $g'$ qualifies for case (i).

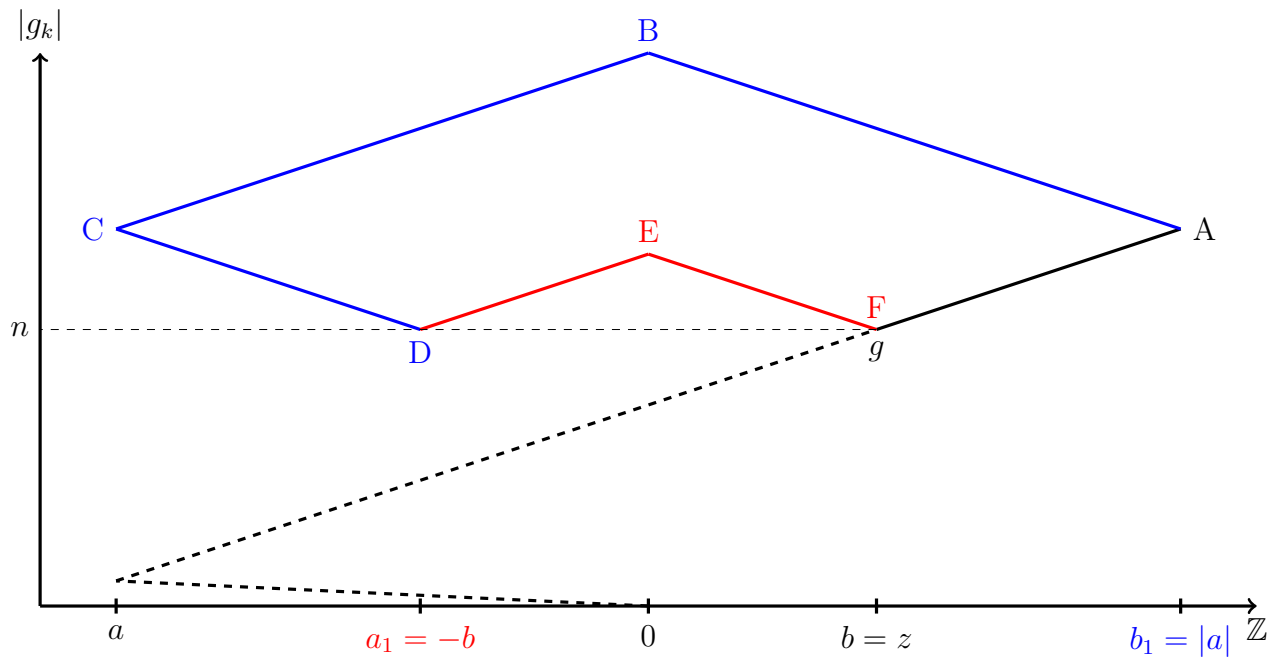
\begin{figure}
\begin{centering}
\begin{tikzpicture}
\draw[very thick,->] (-8,0) -- (8,0) node[below]{$\mathbb{Z}$};
\draw[very thick,->] (-8,0) -- (-8,22/3) node[above]{$|g_k|$};

\draw[very thick](0,1/10)--(0,-1/10) node[below]{$0$};
\draw[very thick](3,1/10)--(3,-1/10) node[below]{$b=z$};
\draw[very thick](7,1/10)--(7,-1/10) node[below,blue]{$b_1=|a|$};
\draw[very thick](-7,1/10)--(-7,-1/10) node[below]{$a$};
\draw[very thick](-3,1/10)--(-3,-1/10) node[below,red]{$a_1=-b$};

\draw[very thick,dashed] (0,0)--(-7,1/3);
\draw[very thick,dashed] (-7,1/3)--(3,11/3) node[below]{$g$};
\draw[very thick] (3,11/3)--(7,15/3) node[right]{A};
\draw[very thick,blue] (7,15/3)--(0,22/3) node[above]{B};
\draw[very thick,blue] (0,22/3)--(-7,15/3) node[left]{C};
\draw[very thick,blue] (-7,15/3)--(-3,11/3) node[below]{D};
\draw[very thick,red] (-3,11/3)--(0,14/3) node[above]{E};
\draw[very thick,red] (0,14/3)--(3,11/3) node[above]{F};
\draw[very thin,dashed] (3,11/3)--(-8,11/3) node[left]{$n$};

\end{tikzpicture}
\end{centering}
\caption{Illustration of case (ii) in the proof of Proposition \ref{llconn}.}\label{casii}
\end{figure}

We have proved the first main step, that all elements of $S(n)^\infty$ with $z \geq 0$ belong to the same connected component of $S(n,n+2)^\infty$. By symmetry, the same is true for elements with $z \leq0$. There remains to show a path from one to the other. If $n$ is a multiple of $3$, take $g$ with $a=\frac{-n}{3}$ and $b=z=\frac{n}{3}$. When the lamplighter moves left to position $a'=a=z'=-z$, keeping $b'=b$, the norm is bounded $|g_k|\in [n,\frac{4n}{3}]$. This is the required path (which can easily be adapted when $n$ is not a multiple of $3$).

To finish the proof of the proposition, 
we observe that the paths we constructed connecting elements were always in $S(n,n+1)^\infty$, except in the odd case of case (i) where it could happen that $n+b+3 = 2n+2$. However, this happens exactly when $b=n-1$ and implies $a=0$ and $z = n-2$. Iterating ``case (i)'' leads necessarily to this situation since $b$ increases by $1$ at every step (and $b=n-1 \implies a=0$ and $z =n-2$). It is straightforward to check that the lamp configuration on $[0,n-2]$ can be arbitrarily modified staying in $S(n,n+1)^\infty$. This shows that there are at most 4 (by symmetry) connected components of $S(n,n+1)^\infty$. 

To show that there are 3 components, it is only left to check that the element $g$ with $a =-n+1, z = -n+2, b=0$ is connected to $g'$ with $a'=0,z'= n-2$ and $b'=n-1$.  To do so, consider again an element $g''$ with $-a'' = b'' = z'' = \tfrac{n}{3}$. Applying the ``case (i)'' strategy to $g''$, one will show it is connected to $g'$. On the other hand, $g''$ is easily connected (as above) to its symmetric element (with a negative lamplighter position). Applying the strategy symmetric to ``case (i)'' will give a path to $g$. This shows that there is a path between $g$ and $g'$. Together with the first paragraph of this proof, this shows that $S(n,n+1)^\infty$ has exactly $3$ connected components.
\end{proof}
In case the reader wonders what happens for $n=1$, we mention that $S(1,2)$ is connected.

\subsection{Most of the sphere is in $S(n)^\infty$ .}

\begin{lemma}\label{blambda}
For $\l \in [0,1]$, there exists $c>0$ such that:
$$ |\{g \in S(n)|0 \leq z \leq b \leq \l n \}| \leq cn^2|L|^{\frac{1+\l}{2}n}.$$
\end{lemma}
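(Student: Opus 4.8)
The plan is to count these elements directly, exploiting the explicit length formula from the previous lemma. Since we restrict to $z \geq 0$ and may discard the finitely many elements of $S_L$ (which have length $1$ and so are irrelevant for $n \geq 2$), every $g = (z,f)$ under consideration satisfies $|g| = 2b + 2|a| - z$. Imposing $|g| = n$ then yields the crucial identity
\[ b + |a| = \tfrac{n+z}{2}. \]
This is the heart of the argument: although $b$ and $|a|$ individually roam over a range of size $O(n)$ (subject only to $0 \leq z \leq b \leq \l n$), their \emph{sum} is pinned down by $z$ alone.

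Next I would bound the number of admissible lamp functions. For fixed values of $z$ and $b$ (whence $|a|$ is determined by the identity above), the support of $f$ is contained in the integer interval $[a,b]$, which contains $b - a + 1 = b + |a| + 1 = \tfrac{n+z}{2} + 1$ sites. Hence there are at most $|L|^{(n+z)/2 + 1}$ such functions. Using $z \leq b \leq \l n$, the exponent is at most $\tfrac{1+\l}{2} n + 1$, so each pair $(z,b)$ contributes at most $|L| \cdot |L|^{\frac{1+\l}{2} n}$ elements.

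Finally I would sum over the at most $(\l n + 1)^2 \leq (n+1)^2$ choices of the pair $(z,b)$ with $0 \leq z \leq b \leq \l n$. This gives a total bounded by $(n+1)^2\, |L| \cdot |L|^{\frac{1+\l}{2}n} \leq c\, n^2 |L|^{\frac{1+\l}{2}n}$ for a suitable constant $c$ depending only on $|L|$, as claimed.

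There is no serious obstacle here: the only genuine input is the length formula, and once the identity $b + |a| = \tfrac{n+z}{2}$ is in hand everything reduces to a crude count. The one point deserving a little care is the polynomial prefactor — the factor $n^2$ arises precisely because there are two free parameters ($z$ and $b$) each ranging over an interval of length $O(n)$, whereas the exponential factor $|L|^{(n+z)/2}$ is controlled \emph{uniformly} in $b$ thanks to the identity, and maximised at $z = \l n$.
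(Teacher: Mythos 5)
Your proof is correct and follows essentially the same route as the paper's: both count directly from the length formula $|g|=2b+2|a|-z$, bounding the lamp configurations on $[a,b]$ by $|L|^{\frac{1+\l}{2}n+O(1)}$ and paying a factor $O(n^2)$ for two free integer parameters. The only (immaterial) difference is that you take $(z,b)$ as the free pair and use the identity $b+|a|=\tfrac{n+z}{2}$ with $z\leq \l n$, whereas the paper takes $(a,z)$ as free and derives the same exponent bound from $2|a|+b\leq n$ and $b\leq \l n$.
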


\begin{proof}
We have $2|a|+b \leq n$ since $z \geq 0$ and $b \leq \l n$, so $|a|+b \leq \frac{1+\l}{2}n$. An element $g$ as required is described by $a \in [-n,0]$, $z \in [a,b]$ and at most $\frac{1+\l}{2}n$ lamps in $L$ in the interval $[a,b]$.
\end{proof}

\begin{lemma}\label{sizeofsphere}
In $\Z \wr L$ with $L$ finite of size $\ell=|L|$ endowed with the switch-walk-switch generating set, one has:
$$  |S(n)| =2\left(\ell+1\right)^2\ell^{n-1}+o\left(\ell^n\right).$$
\end{lemma}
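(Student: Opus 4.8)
The plan is to count the elements $g=(z,f)$ of $S(n)$ directly, using the length formula $|g| = 2b + 2|a| - |z|$ from the previous lemma. For $n \geq 2$ this formula applies to every element of $S(n)$, since the elements of $S_L$ and the identity have norm at most $1$; so it suffices to count pairs $(z,f)$ with $2b+2|a|-|z| = n$. Exploiting the reflection symmetry $z \mapsto -z$, $f(\cdot) \mapsto f(-\cdot)$, I would count $N_+ := |\{g \in S(n) : z \geq 0\}|$ and recover $|S(n)| = 2N_+ - N_0$, where $N_0 := |\{g \in S(n): z = 0\}|$ is counted once on each side; a crude bound gives $N_0 = O(n\,\ell^{n/2}) = o(\ell^n)$, so it is discarded.

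For $z \geq 0$ the length formula reads $n = 2b - 2a - z$, forcing $2(b-a) = n+z$ (in particular $z \equiv n \bmod 2$) together with $a \leq 0 \leq z \leq b$. I would group the elements with $z\geq 0$ by the triple $(a,b,z)$ and, for each admissible triple, count the lamp configurations $f$ supported in $[a,b]$ whose extremal lit positions, together with $0$ and $z$, are exactly $a$ and $b$. The interior positions of $[a,b]$ are free ($\ell$ choices each), while the endpoint $a$ must carry a lit lamp unless $a=0$ and the endpoint $b$ must carry a lit lamp unless $b=z$; each genuine constraint replaces a factor $\ell$ by $\ell-1$. Writing $m = b-a = (n+z)/2$ and letting $a$ range over $\{z-m,\dots,0\}$, I obtain for fixed $z<n$
\[
T(z) = 2(\ell-1)\ell^{(n+z)/2} + \tfrac{n-z-2}{2}(\ell-1)^2\,\ell^{(n+z)/2 - 1},
\]
where the first term collects the two triples with a single free endpoint ($a=0$ or $b=z$) and the second collects the interior triples with both endpoints constrained; the case $z=n$ is special, since then $a=0$ and $b=z$ coincide, all $n+1$ lamps are free, and $T(n) = \ell^{n+1}$.

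The decisive step is the summation over $z$. No single triple contributes an $\ell^n$ term (the top stratum $z=n$ gives $\ell^{n+1}$ and the next one $z=n-2$ only $\ell^{n-1}$), so one must \emph{not} estimate term by term: the $\ell^n$ term arises from accumulating the whole geometric tail of $\Theta(n)$ strata. Setting $z = n-2j$, so that $(n+z)/2 = n-j$ and $\tfrac{n-z-2}{2} = j-1$, I would evaluate
\[
\sum_{j\geq 1} T(n-2j) = 2(\ell-1)\sum_{j\geq1}\ell^{n-j} + (\ell-1)^2\sum_{j\geq1}(j-1)\,\ell^{n-j-1},
\]
using $\sum_{j\geq1}\ell^{n-j} = \tfrac{\ell^n}{\ell-1}$ and $\sum_{j\geq1}(j-1)\ell^{n-j-1} = \tfrac{\ell^{n-1}}{(\ell-1)^2}$ (truncating at $j=\lfloor n/2\rfloor$ costs only $O(\ell^{n/2})$). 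This gives $\sum_{j\geq1}T(n-2j) = 2\ell^n + O(\ell^{n-1})$, hence $N_+ = \ell^{n+1} + 2\ell^n + o(\ell^n)$, and finally $|S(n)| = 2N_+ + o(\ell^n) = 2\ell^{n+1} + 4\ell^n + o(\ell^n) = 2(\ell+1)^2\ell^{n-1} + o(\ell^n)$.

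The main obstacle is precisely this last point: recognizing that the coefficient of $\ell^n$ is a global effect of summing geometrically decaying contributions rather than the content of any single dominant stratum, and bookkeeping the endpoint constraints (the $\ell-1$ versus $\ell$ factors), the parity condition, and the coincidence at $z=n$ carefully enough to pin down the constant $4$ (equivalently, the clean closed form $2(\ell+1)^2$). The only remaining care is to confirm that the discarded quantities — $N_0$, the geometric truncation, and all terms of order $\ell^{n-1}$ — are genuinely $o(\ell^n)$.
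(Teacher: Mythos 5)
Your counting argument is sound and, in substance, it is the paper's own proof with the double sum transposed: the paper fixes $a$ and sums over $b$ (getting $(\ell+1)\ell^n$ for the stratum $a=0$ and $(\ell-1)(\ell+1)\ell^{n-|a|-1}$ for each fixed $a<0$, then sums over $a$, truncating at $\varepsilon n$ and controlling the tails via Lemma \ref{blambda}), whereas you fix $z=n-2j$ and sum over the placement of $[a,b]$. Your strata $T(z)$, the endpoint constraints ($\ell-1$ versus $\ell$ factors), the parity condition, the special stratum $z=n$, the reflection trick $|S(n)|=2N_+-N_0$, and the tail estimates ($N_0=O(n\ell^{n/2})$, truncation at $j=\lfloor n/2\rfloor$) all check out; both proofs ultimately rest on the same length formula and the same geometric series.

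The one genuine flaw is in your final Landau bookkeeping, and your closing sentence shows it is a misconception rather than a typo: since $\ell=|L|\geq 2$ is a \emph{fixed} constant, $\ell^{n-1}=\ell^n/\ell$ is \emph{not} $o(\ell^n)$. In particular the lemma's statement actually pins down the coefficient of $\ell^{n-1}$, because $2(\ell+1)^2\ell^{n-1}$ and $2\ell^{n+1}+4\ell^n$ differ by $2\ell^{n-1}=(2/\ell)\,\ell^n$. Consequently your two asserted equalities ``$N_+=\ell^{n+1}+2\ell^n+o(\ell^n)$'' and ``$2\ell^{n+1}+4\ell^n+o(\ell^n)=2(\ell+1)^2\ell^{n-1}+o(\ell^n)$'' are each false as written; you reach the correct conclusion only because the two errors cancel. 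Fortunately your own computation already supplies the missing term exactly: $(\ell-1)^2\sum_{j\geq 1}(j-1)\ell^{n-j-1}=\ell^{n-1}$, so in fact $N_+=\ell^{n+1}+2\ell^n+\ell^{n-1}+O(n\ell^{n/2})$ and hence $|S(n)|=2N_+-N_0=2\ell^{n+1}+4\ell^n+2\ell^{n-1}+O(n\ell^{n/2})=2(\ell+1)^2\ell^{n-1}+O(n\ell^{n/2})$, which is even sharper than the stated $o(\ell^n)$. The repair is therefore one line: carry the $\ell^{n-1}$ term through to the end instead of discarding it, exactly as the paper does by keeping the closed form $(\ell+1)^2\ell^{n-1}$ unexpanded.
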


Recall the Landau notation that $o(\ell^n)$ means a sequence tending to zero times $\ell^n$.

\begin{proof}
We first estimate the number of elements in $S(n)$ satisfying $z >0$. We count first the elements that satisfy $a=0$ and discuss on $b$, which enforces the value of $z$. When $b=z=n$, there are $\ell^{n+1}$ possibilities for the lamp configuration. When $b=n-i$ with $0<i$ small, which forces $z=n-2i$, there are $(\ell-1)\ell^{n-i}$ possibilities for the lamp configuration. Note that the lamp in $b$ cannot be trivial. So the number of elements in $S(n)$ with $z>0$ and $a=0$ is
\[
\ell^{n+1}+\sum_{i=1}^{\e n} (\ell-1)\ell^{n-i}+E_1 =(\ell+1)\ell^n+E_2
\]
where $E_i$ are error terms satisfying $|E_i|\leq c n^2 \ell^{\left(1-\frac{\e}{2}\right)n}$ by Lemma \ref{blambda}.

Now for a fixed $a<0$, again discuss on $b$. If $b=z=n-2|a|$, there are $(\ell-1)\ell^{n-|a|}$ possibilities for the lamp configuration, the lamp in $a$ being non-trivial. For $b=n-2|a|-i$ with $i>0$, which forces $z=n-2|a|-2i$, there are $(\ell-1)^2\ell^{n-|a|-i-1}$ possibilities for the lamp configuration. So the number of elements in $S(n)$  with $z>0$ and a given $a<0$ is
$$(\ell-1)\ell^{n-|a|}+\sum_{i=1}^{\e n}(\ell-1)^2\ell^{n-|a|-i-1}+E_3=(\ell-1)(\ell+1)\ell^{n-|a|-1}+E_4.$$

In total, we sum for $0 \leq |a| \leq \e n$ and get
\[
|S(n)\cap\{z>0\}|=(\ell+1)\ell^n+\sum_{|a|=1}^{\e n}(\ell-1)(\ell+1)\ell^{n-|a|-1}\pm cn^3 \ell^{\left(1-\frac{\e}{2}\right)n}=\ell^{n-1}(\ell+1)^2+o(\ell^n).
\]

Finally, the result has to be doubled to take into account elements with $z<0$, whereas those with $z=0$ are negligible.
\end{proof}

We are now able to prove that almost all the sphere $S(n)$ is in the infinite component of $B(n-1)^c$. Namely:

\begin{proposition}\label{relsize}
There exists $\a<1$ such that:
$$ \frac{|S(n)\setminus S(n)^\infty|}{|S(n)|} \leq \a^n.$$
\end{proposition}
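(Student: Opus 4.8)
The plan is to read off $S(n)\setminus S(n)^\infty$ explicitly from Lemma \ref{infcomp}, bound its cardinality with Lemma \ref{blambda}, and compare with the asymptotics of $|S(n)|$ given by Lemma \ref{sizeofsphere}. Since the reflection $z\mapsto -z$ is a graph automorphism of the Cayley graph fixing $e$, it preserves every sphere together with its infinite part $S(n)^\infty$; hence it suffices to estimate the number of elements of $S(n)\setminus S(n)^\infty$ with $z\ge 0$ and then double the result (this over-counts the slice $z=0$, which only strengthens the resulting upper bound).

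So fix $g=(z,f)$ with $z\ge 0$. As $b\ge z$ always holds, the three cases of Lemma \ref{infcomp} are exhaustive, and negating them shows that $g\notin S(n)^\infty$ precisely when $a<0$, $z<b$ and $z<|a|$. From $z<|a|$ we get $2|a|-z>|a|\ge 1$, so the length formula $n=2b+2|a|-z$ forces $2b<n$, i.e. $b<\tfrac n2$. Consequently every such $g$ lies in the set $\{g\in S(n)\mid 0\le z\le b\le \tfrac n2\}$, and Lemma \ref{blambda} with $\lambda=\tfrac12$ bounds the number of these elements by $cn^2\ell^{3n/4}$. Doubling,
\[
|S(n)\setminus S(n)^\infty|\le 2c\,n^2\,\ell^{3n/4}.
\]

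Finally, Lemma \ref{sizeofsphere} gives $|S(n)|=2(\ell+1)^2\ell^{n-1}+o(\ell^n)$, so $|S(n)|\ge c'\ell^n$ for $n$ large (here $\ell=|L|\ge 2$, as trivial $L$ yields the two-ended group $\Z$). Dividing, the ratio is at most $\tfrac{2c}{c'}\,n^2\,\ell^{-n/4}$. Choosing any $\alpha$ with $\ell^{-1/4}<\alpha<1$, the polynomial factor $n^2$ is eventually dominated by $(\alpha\ell^{1/4})^n$, so the stated bound holds for all large $n$; for the remaining finitely many $n$ the ratio is strictly below $1$, so enlarging $\alpha$ (keeping $\alpha<1$) covers them as well. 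The only real content here is correctly negating Lemma \ref{infcomp} and spotting the inequality $b<\tfrac n2$ that makes Lemma \ref{blambda} applicable; the main nuisance is absorbing the polynomial factor uniformly in $n$ together with the bookkeeping of the $z\mapsto -z$ symmetry.
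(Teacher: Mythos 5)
Your proof is correct and follows essentially the same route as the paper: negate Lemma \ref{infcomp} to identify $S(n)\setminus S(n)^\infty$, bound the exponent via Lemma \ref{blambda}, and compare with Lemma \ref{sizeofsphere}. In fact your direct deduction $b<\tfrac n2$ from $z<|a|$ and $n=2b+2|a|-z$ is slightly cleaner and sharper than the paper's case analysis, which only establishes $b\leq \tfrac{5}{8}n$ and hence uses $\lambda=\tfrac58$ (exponent $\tfrac{13}{16}n$) instead of your $\lambda=\tfrac12$.
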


\begin{proof}
Let $g$ belong to $S(n)\setminus S(n)^\infty$ with $z \geq0$. We claim that $b \leq \frac{5}{8}n$. Indeed, if $b \geq \frac{n}{2}$, then $|a| \leq \frac{n}{4}$, so by Lemma \ref{infcomp} (3), $z \leq |a| \leq \frac{n}{4}$, which implies $2b-\frac{n}{4} \leq 2|a|+2b-z =n$ and $b \leq \frac{5}{8}n$. By Lemma \ref{blambda}, $|S(n)\setminus S(n)^\infty|\leq c n^2 |L|^{\frac{13}{16}n}$, and the conclusion follows by Lemma \ref{sizeofsphere}.
\end{proof}

\begin{remark}\label{straightconnection}
Let us say that $g \in S(n)$ is straightly connected to infinity if there exists a geodesic $g_k$ such that $g_0=g$ and $|g_k|=n+k$ for all $k$. Denote $S(n)^{s\infty}$ the set of such $g$'s. The proof of Lemma \ref{infcomp} essentially shows that $g \in S(n)^{s\infty}$ exactly in cases (1) or (2). Similarly to Lemma \ref{sizeofsphere}, we can compute precisely the number of elements of length $n$ straightly connected to infinity.

Let us count the number of elements in $S(n)^{s\infty}$ with $z>0$. Under the conditions $a=0$ and $b=n-i$, which enforce the value $z=n-2i$, there are $\ell^{n+1}$ possible lamp configurations when $i=0$ and only $\ell^{n-i}(\ell-1)$ when $i>0$ for the lamp at $b$ cannot be trivial. Under the condition $a<0$, straight connection to infinity forces $z=b=n-2|a|$ and there are $(\ell-1)\ell^{n-2|a|}$ possible lamp configuration. All in all
\begin{eqnarray*}
|S(n)^{s\infty}|&=&2\left(\ell^{n+1}+\sum_{i=1}^\infty (\ell-1)\ell^{n-i}+\sum_{|a|=1}^{\infty} (\ell-1)\ell^{n-2|a|}  \right)+o(\ell^n)\\ &=&2\left(\ell+1+\frac{1}{\ell+1}\right)\ell^n+o(\ell^n).
\end{eqnarray*}

Comparing with Lemma \ref{sizeofsphere}, it follows that the sequence $\frac{|S(n)^{s\infty}|}{|S(n)|}$ converges to a constant in $(0,1)$ and this constant is $\frac{20}{27}$ for $L=\Z_2$ and tends to $1$ when $\ell\rightarrow \infty$. This means that a positive fraction of the sphere is not straightly connected to infinity (and a larger positive fraction is).

We wonder if there are examples of finitely generated groups (or of vertex transitive graphs) where this sequence or a subsequence of it could take arbitrary small values ? See Question \ref{qustra}.
\end{remark}

\subsection{Normalised entropy of partitions}\label{entropy}

Let $\tilde{\Pi}(n,r)$ denote the partition of $S(n,r)$ into connected components, and $\Pi(n,r)$  (resp. $\Pi(n,r)^\infty$) the inherited partition of $S(n)$ (resp. $S(n)^\infty$). Denote $\pi(g)\in \Pi(n,r)$ the component of $g$ in the partition. We have $\pi(g)=\pi(g')$ if and only if there is a path from $g$ to $g'$ in $S(n,r)$.

The Shannon entropy of a partition $\Pi$ of a space endowed with a probability $\mu$ is 
$$H(\Pi)=-\sum_{\pi \in \Pi} \mu(\pi)\log \mu(\pi). $$
We consider $H(\Pi(n,r))$ where $S(n)$ is equipped with the normalised counting measure.

It is well-known that $0\leq H(\Pi) \leq \log |\Pi|$ where $|\Pi|$ is the number of components in the partition $\Pi$. Moreover $H(\Pi)=0$ if and only if one component weights the full mass, whereas $H(\Pi)=\log|\Pi|$ if and only if all set in the partition have the same cardinality, see \cite{Sha}. This motivates the definition of normalised entropy by $h(\Pi)=H(\Pi)/\log|\Pi| $.

The normalised entropy of spheres of a fixed thickness $r$ in $\Z \wr L$ is essentially maximal. More precisely:

\begin{proposition}\label{entmax}
The normalised entropy of spheres of $\Z\wr L$ with switch-walk-switch generating set satisfies
$$h(\Pi(n,r)) \sim h(\Pi(n,r)^\infty) \underset{n\rightarrow \infty}\longrightarrow 1, $$
while $r \leq \e n$ for some $\e < \frac{1}{8}$.
\end{proposition}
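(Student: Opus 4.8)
The plan is to show that outside an exponentially small fraction of $S(n)$ the partition into connected components is a partition into pieces of a single fixed size, so that its normalised entropy is squeezed to $1$. Write $\ell=|L|$ and $N=|S(n)|$, so $N\asymp\ell^{n}$ by Lemma~\ref{sizeofsphere}. Call $g=(z,f)\in S(n)$ \emph{generic} if $|z|>r$. For such $g$ (say $z>r$), Lemma~\ref{connectedcomp}(1) pins down its component in $S(n,r)$: it is obtained by fixing $a,b,z$ together with the values of $f$ outside the interval $[z-r,z]$, and letting the $r+1$ lamps inside range freely over $L$. All $\ell^{r+1}$ such fillings keep $a$, $b$ and $z$ — hence the length $2b+2|a|-|z|=n$ — unchanged (immediate from $a<z-r$ and $b\ge z$), so each generic component has exactly $\ell^{r+1}$ elements and contains only generic elements. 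In particular no component mixes generic with non-generic elements, and the generic elements split into $N_G:=|\{\,|z|>r\,\}|/\ell^{r+1}$ components of equal size.

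Next I would bound the non-generic part. A non-generic $g$ has $|z|\le r\le\e n$, and its length relation $2(b+|a|)=n+|z|$ gives $b+|a|\le\frac{1+\e}{2}n$. Counting as in Lemma~\ref{blambda} (at most $\ell^{b+|a|+1}$ lamp configurations and polynomially many triples $(a,b,z)$) shows there are at most $\mathrm{poly}(n)\,\ell^{(1+\e)n/2}$ of them. Hence the non-generic mass is $\delta_n\le\mathrm{poly}(n)\,\ell^{-(1-\e)n/2}\to0$, and the number $N_B$ of non-generic components, being at most the number of non-generic elements, satisfies $N_B\le\mathrm{poly}(n)\,\ell^{(1+\e)n/2}$. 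As $N_G\asymp\ell^{n-r}\ge\ell^{(1-\e)n}/\mathrm{poly}(n)$, the inequality $\frac{1+\e}{2}<1-\e$, i.e. $\e<\frac13$ (a fortiori $\e<\frac18$), yields $N_B=o(N_G)$; therefore $\log|\Pi(n,r)|=\log(N_G+N_B)=\log N_G+o(1)$, with $\log N_G\ge(1-\e)n\log\ell-O(1)\to\infty$.

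I would then group the parts of $\Pi=\Pi(n,r)$ into the family $G$ of generic components and the rest $B$, of masses $p=1-\delta_n\to1$ and $1-p$; writing $H_2(p)=-p\log p-(1-p)\log(1-p)$, the grouping identity for Shannon entropy gives
\[
H(\Pi)=H_2(p)+p\,H(\Pi\mid G)+(1-p)\,H(\Pi\mid B)\ \ge\ p\,H(\Pi\mid G).
\]
Since all generic components are equal-sized, the conditional law on $G$ is uniform and $H(\Pi\mid G)=\log N_G$. Combining with the previous paragraph,
\[
h(\Pi)=\frac{H(\Pi)}{\log|\Pi|}\ \ge\ \frac{p\log N_G}{\log N_G+o(1)}\ =\ p\,(1+o(1))\ \longrightarrow\ 1,
\]
which, with the general bound $h(\Pi)\le1$, gives $h(\Pi(n,r))\to1$. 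The same argument applies to $S(n)^\infty$: by Lemma~\ref{infcomp} membership in $S(n)^\infty$ depends only on $(a,b,z)$, so each generic component lies wholly inside or wholly outside $S(n)^\infty$; thus $\Pi(n,r)^\infty$ again consists of equal-size generic parts together with parts of exponentially small total mass and cardinality, and since $|S(n)^\infty|\asymp N$ by Proposition~\ref{relsize} the identical estimate yields $h(\Pi(n,r)^\infty)\to1$. Finally, as both normalised entropies converge to the nonzero limit $1$, their ratio converges to $1$, which is the asserted equivalence $h(\Pi(n,r))\sim h(\Pi(n,r)^\infty)$.

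The main obstacle is not the mass of the non-generic part, which is trivially exponentially small, but the \emph{number} of non-generic components: replacing $\log|\Pi|$ by the crude bound $\log N$ would only give $h\gtrsim1-\e$ instead of $h\to1$. The whole argument therefore rests on the comparison $N_B=o(N_G)$ of the second paragraph, which is exactly where the hypothesis $\e<\frac13$ (hence the stated $\e<\frac18$) is used.
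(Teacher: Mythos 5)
Your proof is correct and follows essentially the same route as the paper: Lemma \ref{connectedcomp} gives equal-size classes of cardinality $|L|^{r+1}$ on the bulk $\{|z|>r\}$, a Lemma \ref{blambda}-style count shows the remaining elements are exponentially negligible both in mass and in number of classes (this is the content of Lemma \ref{partitionsize}), and the entropy bound you derive from the grouping identity, $H(\Pi)\geq p\,H(\Pi\mid G)=p\log N_G$, is exactly the paper's lower bound obtained by merging $S_2$ into a single class. The only (harmless) differences are that your direct count $b+|a|\leq\frac{1+\e}{2}n$ gives the slightly weaker-looking but sufficient threshold $\e<\frac{1}{3}$, which covers the paper's stated $\e<\frac{1}{8}$, and that your genericity condition $|z|>r$ treats the boundary case $z=r$ a bit more carefully than the set $S_1=\{z-r\geq 0\}$ used in Lemma \ref{partitionsize}.
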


Note that in a free group with free generating set, one has precisely $h(\Pi(n,r))=1$ for all $n,r\geq 0$.

\begin{lemma}\label{partitionsize}
The numbers of components of the partitions satisfy:
$$|\Pi(n,r)| \asymp |\Pi(n,r)^\infty| \asymp |L|^{n-r}, $$
while $r \leq \e n$ for some $\e < \frac{1}{8}$.
\end{lemma}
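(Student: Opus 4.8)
The plan is to sandwich both quantities between constant multiples of $|L|^{n-r}$. The lower bound is almost immediate, so everything will reduce to the upper bound $|\Pi(n,r)| \leq C\,|L|^{n-r}$ for a constant $C=C(|L|)$, valid while $r \leq \e n$ with $\e < \tfrac18$. First I would record that the partition $\Pi(n,r)^\infty$ of $S(n)^\infty$ is in bijection with the connected components of $S(n,r)^\infty$ and that $|\Pi(n,r)^\infty| \leq |\Pi(n,r)|$; granting this, Proposition \ref{llcomp} gives $|\Pi(n,r)| \geq |\Pi(n,r)^\infty| \geq 2|L|^{n-r}$ for all $r \leq n-1$.

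For the comparison of the two partitions, note that a path inside $S(n,r)=B(n+r)\setminus B(n-1)$ stays in $B(n-1)^{\comp}$, so if its endpoints lie in the infinite component $B(n-1)^{\comp,\infty}$ the whole path does too; hence two elements of $S(n)^\infty$ are connected in $S(n,r)$ if and only if they are connected in $S(n,r)^\infty$. Moreover, by Lemma \ref{monot} (with $\ell=0$) every element of $S(n,r)^\infty$ is joined inside $S(n,r)^\infty$ to an element of $S(n)^\infty$, so each component of $S(n,r)^\infty$ meets $S(n)^\infty$. These two facts identify the pieces of $\Pi(n,r)^\infty$ with the components of $S(n,r)^\infty$, and the map sending a piece $C\cap S(n)^\infty$ to $C\cap S(n)$ (for $C$ a component of $S(n,r)$ meeting $S(n)^\infty$) is injective, giving $|\Pi(n,r)^\infty| \leq |\Pi(n,r)|$.

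The substance is the upper bound, which I would obtain by counting equivalence classes of $S(n)$ via Lemma \ref{connectedcomp}; by symmetry I treat $z\geq 0$. The elements with $0\leq z\leq r$ are negligible: the length relation $2b+2|a|-z=n$ forces $|a|+b=\tfrac{n+z}{2}\leq\tfrac{n+r}{2}$, so as in Lemma \ref{blambda} there are at most $c\,n^2|L|^{(n+r)/2}$ of them, which is $o(|L|^{n-r})$ exactly because $\e<\tfrac18$ ensures $\tfrac{n+r}{2}<n-r$. This disposes at once of case (2) of Lemma \ref{connectedcomp} and of the elements that lemma does not describe. For $z>r$, case (1) applies: the component of $g$ is determined by $(a,b,z)$ together with the values of $f$ outside the window $[z-r,z]$, so for fixed $(a,b,z)$ there are at most $|L|^{(b-a+1)-(r+1)}=|L|^{\,b-a-r}$ classes. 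Writing $\alpha=|a|$, $\beta=b$ and $z=2\alpha+2\beta-n$, the constraints $z>r$ and $0\leq z\leq b$ become $\beta+2\alpha\leq n$ and $2(\alpha+\beta)>n+r$, and summing over $s=\alpha+\beta$ (for which there are $n-s+1$ admissible pairs) gives
\[
\sum_{z>r,\ \text{case }(1)} |L|^{\,b-a-r} \;=\; |L|^{-r}\sum_{s}(n-s+1)\,|L|^{s} \;=\; |L|^{\,n-r}\sum_{t\geq 0}(t+1)\,|L|^{-t} \;\leq\; \frac{|L|^{2}}{(|L|-1)^{2}}\,|L|^{\,n-r}.
\]

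The decisive point, and the only place requiring care, is this last estimate. A naive bound would retain a spurious factor $n$ from the $O(n)$ choices of $(a,b,z)$ realizing a given value of $b-a$; the substitution $t=n-s$ turns the count into $\sum_{t\geq 0}(t+1)|L|^{-t}$, where the linear prefactor is absorbed by the geometric decay and produces a genuine constant rather than something growing in $n$. Combining the two regimes and doubling for $z\leq 0$ yields $|\Pi(n,r)|\leq 2\tfrac{|L|^2}{(|L|-1)^2}|L|^{n-r}+o(|L|^{n-r})$, and together with the lower bound of the first paragraph this gives $|\Pi(n,r)^\infty|\asymp|\Pi(n,r)|\asymp|L|^{n-r}$ throughout the range $r\leq\e n$, $\e<\tfrac18$.
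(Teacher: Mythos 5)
Your proof is correct, and while it shares the paper's skeleton, the accounting is genuinely different. Both arguments rest on the same two inputs: Lemma \ref{connectedcomp}(1), which shows that for $|z|>r$ the class of $g$ in $\Pi(n,r)$ is exactly determined by the invariants $(a,b,z)$ and $f$ outside the window $[z-r,z]$, and a Lemma-\ref{blambda}-type count to discard the region $|z|\leq r$. But the paper exploits the invariants in the opposite direction: each class inside $S_1=\{g \in S(n) : z-r\geq 0\}$ has \emph{exactly} $|L|^{r+1}$ elements, so $|\Pi(n,r)|$ is sandwiched between $|S_1|/|L|^{r+1}$ and $|S_1|/|L|^{r+1}+|S_2|$, and the sharp sphere asymptotics of Lemma \ref{sizeofsphere} give $|S_1|\asymp |L|^n$; the statement for $\Pi(n,r)^\infty$ is then deduced from Proposition \ref{relsize} (the dead-end part of the sphere is an exponentially small fraction). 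You instead enumerate the class invariants directly, the key point being the one you flag: since $z=2(\alpha+\beta)-n$ is determined by $s=\alpha+\beta$, the $O(n)$ triples $(a,b,z)$ realizing a given $b-a$ are absorbed by the geometric decay in $\sum_{t\geq 0}(t+1)|L|^{-t}$, yielding a genuine constant; and you obtain the lower bound from Proposition \ref{llcomp} via the injection $\Pi(n,r)^\infty\hookrightarrow\Pi(n,r)$ together with the observation (the first step of the proof of Lemma \ref{monot}, with $\ell=0$) that every component of $S(n,r)^\infty$ meets $S(n)^\infty$. The net effect is that your route avoids Lemma \ref{sizeofsphere} and Proposition \ref{relsize} entirely, at the cost of redoing a count the paper gets for free from $|S(n)|$; as a small bonus, your treatment of the middle region is sharper (you use $|a|+b=\tfrac{n+z}{2}\leq\tfrac{n+r}{2}$ exactly, so the argument works for any $\e<\tfrac13$, the threshold being $\tfrac{n+r}{2}<n-r$, whereas the paper's bound via Lemma \ref{blambda} with $\l=\tfrac{1+\e}{2}$ needs $\e<\tfrac15$; both cover the stated range $\e<\tfrac18$). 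Two cosmetic points: in your display the first two signs should be inequalities rather than equalities (you extend the sum over $s$ to all of $[0,n]$ and then over $t$ to all of $\N$, and the per-triple count $|L|^{b-a-r}$ ignores the nontriviality constraints on the lamps at $a$ and $b$ --- all in the correct direction for an upper bound), and the geometric-series evaluation tacitly uses $|L|\geq 2$, which is harmless since for $|L|=1$ the group is $\Z$ and the statement is vacuous.
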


\begin{proof}[Proof of Lemma \ref{partitionsize}]
Set $S_1=\{g \in S(n)|z-r \geq 0\}$ and $S_2=S(n)\setminus S_1$. By Lemma \ref{connectedcomp}, we have $|\pi(g)|=|L|^{r+1}$ for $g$ in $S_1$. On the other hand, for $g$ in $S_2$, we have $z \leq \e n$ so $b\leq \frac{1+\e}{2}n$. This implies $|S_2| \leq cn^2|L|^{\frac{3+\e}{4}n}$ by Lemma \ref{blambda} and that $|S_1|$ is of order $|L|^n$ by Lemma \ref{sizeofsphere}. We conclude by Lemma \ref{sizeofsphere} and the inequalities $\frac{|S_1|}{|L|^{r+1}} \leq |\Pi(n,r)| \leq \frac{|S_1|}{|L|^{r+1}}+|S_2| $, where the left-hand-side term is leading by the condition on $\e$ . The same is true for $\Pi(n,r)^\infty$ by Proposition~\ref{relsize}.
\end{proof}

\begin{proof}[Proof of Proposition \ref{entmax}]
It is well-known that the partition $\Pi'$ obtained from $\Pi$ by taking $\pi \in \Pi$ and replacing it by individual classes $\{g\}_{g \in \pi}$ has bigger entropy. Likewise merging classes of a partition reduces entropy. Hence, with the same notations, we have:
\[
\begin{array}{c}
\displaystyle -\frac{1}{|S(n)|}\left( \sum_{g \in S_1} \log \frac{|L|^{r+1}}{|S(n)|} + |S_2| \log \frac{|S_2|}{|S(n)|} \right) \\
\rule{.1pt}{1.5ex}\,\wedge \\
\displaystyle  H(\Pi(n,r)) \\
\rule{.1pt}{1.5ex}\,\wedge\\
\displaystyle -\frac{1}{|S(n)|}\left( \sum_{g \in S_1} \log \frac{|L|^{r+1}}{|S(n)|} +\sum_{g \in S_2} \log \frac{1}{|S(n)|} \right). 
\end{array}
\]
Again, the sum indexed by $S_1$ is the leading term, so $H(\Pi(n,r))\sim \log \frac{|S(n)|}{|L|^{r+1}}$. We conclude by Lemmas \ref{sizeofsphere} and \ref{partitionsize}. The same is true for $\Pi(n,r)^\infty$ by Proposition~\ref{relsize}.
\end{proof}

\section{The lamplighter on a tree with finite lamps}\label{slltree}

In this section, we consider lamplighter groups with finite lamp group $L$, and where the base group has a tree for Cayley graph. Our aim is to prove Theorem \ref{nonconnected-intro}.(2). It will result of the concatenation of Corollary \ref{cornoncon} which gives an estimate on the number of connected components in $S(n,r)^\infty$ and Proposition \ref{treeconnection} which bounds from above the connection thickness.

We write $T=T_d$ for a group with generating set $S=S_d$ such that the Cayley graph $Cay(T_d,S_d)$ is a $d$-regular tree, e.g. the free product of $d$ involutions, or the free group of rank $d/2$ for even $d$. We consider the lamplighter group $T \wr L$, with the ``switch-walk-switch'' generating set $LSL$. The case of a line $d=2$ was studied in the previous section, and we assume here that $d \geq 3$.

For an element $g=(\g,f)$, denote $C(g)$ the minimal subtree of $T$ containing $e$, $\g$ and the support of the lamp function $f:T \rightarrow L$.

\begin{lemma}\label{treewl}
The word length of an element $g=(\g,f)$ of $T \wr L$ is $|g|=2|C(g)|-d(e,\g)$, where $|C(g)|$ is the number of edges in $C(g)$.
\end{lemma}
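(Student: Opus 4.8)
The plan is to reduce the computation to the travelling salesman interpretation of Proposition \ref{propTSP} and then evaluate the resulting distance on the tree $T$. Writing $A=\{y \in T \mid f(y)\neq e_L\}$ for the set of lit lamps, Proposition \ref{propTSP} gives $|g|=d_{TS}(e,A,\g)$ for every $g=(\g,f)$ not of the exceptional form $(e,\d_s)$ (which has length $1$). So it suffices to show that the length of a shortest walk in $T$ starting at $e$, ending at $\g$, and meeting every vertex of $A$ equals $2|C(g)|-d(e,\g)$. Since $C(g)$ is by definition the minimal subtree of $\{e,\g\}\cup A$, such a walk may as well stay inside $C(g)$, so we are really computing a travelling salesman distance in the finite tree $C(g)$.

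For the lower bound I would argue edge by edge. Fix an edge $\eps$ of $C(g)$; deleting it disconnects $T$ into two subtrees, and by minimality of $C(g)$ each side meets $\{e,\g\}\cup A$. Any admissible walk must therefore cross $\eps$ at least once. Moreover the parity of the number of crossings of $\eps$ is governed by whether $e$ and $\g$ lie on the same side of $\eps$: the walk crosses $\eps$ an odd number of times precisely when $\eps$ separates $e$ from $\g$, that is, when $\eps$ lies on the geodesic $[e,\g]$, and an even number of times otherwise. Hence each of the $d(e,\g)$ edges on $[e,\g]$ is crossed at least once, and each of the remaining $|C(g)|-d(e,\g)$ edges of $C(g)$ is crossed at least twice. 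Summing over all edges of $C(g)$ yields length at least $d(e,\g)+2\bigl(|C(g)|-d(e,\g)\bigr)=2|C(g)|-d(e,\g)$.

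For the upper bound I would exhibit a walk attaining this value by a depth-first traversal of $C(g)$ rooted at $e$. At each vertex lying on the geodesic $[e,\g]$, the traversal first explores all adjacent subtrees \emph{not} containing $\g$ (entering and leaving each, so its edges are used twice), and only afterwards takes the single step closer to $\g$, never backtracking along $[e,\g]$. The resulting walk visits every vertex of $C(g)$, starts at $e$, ends at $\g$, traverses each edge of $[e,\g]$ exactly once and every other edge exactly twice, hence has length exactly $2|C(g)|-d(e,\g)$. Exactly as in the proof of Proposition \ref{propTSP}, the hypothesis $S_L=L$ lets us set each lamp to its prescribed value $f(y)$ when the lighter first stands over $y$, so this walk is realised by a word of the same length in $LSL$.

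The content is entirely standard tree combinatorics; the only point needing care is the parity bookkeeping in the lower bound (that the non-geodesic edges of $C(g)$ are forced to be crossed an even, hence positive even, number of times) and, dually, arranging the depth-first walk so that precisely the edges of $[e,\g]$ escape being doubled. As a sanity check, for the line $d=2$ one has $C(g)=[a,b]$ with $|C(g)|=b-a=b+|a|$ and $d(e,\g)=|z|$, which recovers the formula $|g|=2b+2|a|-|z|$ of the previous section.
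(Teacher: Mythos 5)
Your proof is correct and takes essentially the same route as the paper: reduce to the travelling salesman interpretation of Proposition \ref{propTSP}, then observe that in a tree every edge of $C(g)$ off the geodesic from $e$ to $\g$ must be crossed twice while the geodesic edges are crossed once. The paper asserts this tree fact in a single sentence; your edge-parity lower bound and depth-first upper bound merely supply the details it leaves implicit.
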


\begin{proof}
By Proposition \ref{propTSP}, the norm of $g$ is the solution of the travelling salesman problem from $e$ to $\g$ and visiting all vertices of $\mathrm{supp}(f)$. In a tree, the solution is such that all edges of $C(g)$ have to be crossed twice, except those on the geodesic from $e$ to $\g$.
\end{proof}

For a subset $A$ of a graph, denote $\partial A$ the set of boundary vertices of $A$ (those with a neighbour outside $A$), and $Int(A)=A \setminus \partial A$ the set of interior vertices. Also denote $B(c,r)$ the ball of centre $c$ and radius $r$.

\begin{lemma}\label{treedeadend}
Let $g$ belong to $S(n)$, then $g$ is in a dead-end component (i.e. $g \notin S(n)^\infty$) if and only if $B_T(e,d(e,\g)) \subset Int(C(g))$. In particular, if $\g$ belongs to $\partial C(g)$, then $g$ belongs to $S(n)^\infty$.
\end{lemma}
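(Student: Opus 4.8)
The plan is to prove both implications from the length formula $|h|=2|C(h)|-d(e,\eta)$ of Lemma~\ref{treewl}, writing $h=(\eta,f_h)$ for a generic element and abbreviating $m=d(e,\g)$; recall that since $T_d\wr L$ is one-ended, $g\notin S(n)^\infty$ means exactly that the component of $g$ in $B(n-1)^c$ is finite. I first record a reformulation: because $C(g)$ is connected and $e,\g\in B_T(e,m)$, the condition $B_T(e,m)\subset Int(C(g))$ is equivalent to the assertion that no $w\in\partial C(g)$ satisfies $d(e,w)\le m$. Indeed, if some $v$ with $d(e,v)\le m$ fails to be interior, then either $v\in\partial C(g)$, or $v\notin C(g)$, in which case the last vertex of the geodesic $[e,v]$ lying in $C(g)$ is a boundary vertex at distance $<m$.

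\textbf{Escape (the ``$\Leftarrow$'' of the contrapositive).} Suppose $B_T(e,m)\not\subset Int(C(g))$, so there is $w\in\partial C(g)$ with $d(e,w)\le m$. First walk the lamplighter from $\g$ to $w$ keeping all lamps fixed: the geodesic $[\g,w]$ lies in the subtree $C(g)$ and its distance to $e$ decreases to the median of $e,\g,w$ and then increases up to $d(e,w)\le m$, so it never exceeds $m$; thus $C$ is unchanged and the norm equals $n+(m-d(e,\cdot))\ge n$ throughout. Then, as $w\in\partial C(g)$ has a neighbour $w'\notin C(g)$ with $d(e,w')=d(e,w)+1$, push the lamplighter straight outward along fresh leaves: each step adds one edge to $C$ and increases $d(e,\cdot)$ by one, so the norm rises by $1$. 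This produces a path leaving every ball while staying in $B(n-1)^c$, whence $g\in S(n)^\infty$. The ``in particular'' clause is the special case $w=\g$.

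\textbf{Burial implies dead-end (the main point).} Assume $B_T(e,m)\subset Int(C(g))$. I confine the component $K$ of $g$ in $B(n-1)^c$ to the finite set
\[
K_0=\{\,h=(\eta,f_h):\ d(e,\eta)\le m,\ \mathrm{supp}(f_h)\subseteq C(g),\ f_h=f\ \text{outside}\ B_T(e,m)\,\}.
\]
This set is finite (finitely many positions in $B_T(e,m)$ and finitely many lamp configurations in the finite tree $C(g)$ agreeing with $f$ off $B_T(e,m)$), and $g\in K_0$. For $h\in K_0$ one has $C(h)=C(g)$: the leaves of $C(g)$ are terminals, hence lie in $\{e,\g\}\cup\mathrm{supp}(f)$, but $e,\g\in B_T(e,m)\subset Int(C(g))$ are interior, so every leaf is a support point of $f$ at distance $>m$, hence unchanged in $f_h$; thus $\mathrm{supp}(f_h)$ still spans these leaves and $C(h)=C(g)$. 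Consequently $|h|=2|C(g)|-d(e,\eta)=n+(m-d(e,\eta))\ge n$, so $K_0\subseteq B(n-1)^c$. Finally, no edge leaves $K_0$ without leaving $B(n-1)^c$: for a neighbour $h'=(\eta',f')$ of $h\in K_0$, interiority of $\eta$ forces $\eta'\in C(g)$, and $f'$ differs from $f_h$ only at $\eta,\eta'$, so $\mathrm{supp}(f')\subseteq C(g)$ and $C(h')\subseteq C(g)$. If $d(e,\eta')\le m$ then $f'=f$ off $B_T(e,m)$ still, so $h'\in K_0$; the only remaining possibility is $d(e,\eta')=m+1$ (forcing $d(e,\eta)=m$), and then $|h'|=2|C(h')|-(m+1)\le 2|C(g)|-(m+1)=n-1$, so $h'\notin B(n-1)^c$. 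Hence $K\subseteq K_0$ is finite and $g\notin S(n)^\infty$.

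The main obstacle is exactly this last direction: an escape to infinity need not be monotone in the norm (already visible in the line case of Lemma~\ref{infcomp}(3)), so tracking a single straight ray is insufficient. The device resolving this is the invariant region $K_0$, whose defining clauses are preserved by every move that stays at norm $\ge n$; the inequality $|h'|\le 2|C(g)|-d(e,\eta')$ coming from $C(h')\subseteq C(g)$ makes precise that the only way to push the lamplighter past radius $m$ drops the norm below $n$.
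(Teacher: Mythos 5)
Your ``burial'' direction is correct and complete: the finite invariant set $K_0$, the identity $C(h)=C(g)$ (via the observation that every leaf of $C(g)$ is a support point of $f$ lying outside $B_T(e,m)$, hence frozen), and the norm drop to $n-1$ forced by $C(h')\subseteq C(g)$ when the walker crosses to radius $m+1$, together give a full proof of the harder implication --- indeed with more detail than the paper's own sketch, which merely asserts that the lamplighter ``cannot leave the ball without entering $S(n-1)$''. The genuine gap is in your escape step. The claim ``thus $C$ is unchanged and the norm equals $n+(m-d(e,\cdot))\ge n$ throughout'' is false for an arbitrary choice of $w\in\partial C(g)$ with $d(e,w)\le m$: if $\g$ does not lie in the subtree $C_0$ spanned by $e$ and $\mathrm{supp}(f)$ (equivalently, $\g$ is a dangling leaf of $C(g)$), then walking back toward $w$ prunes the dangling branch, so $C(h)\subsetneq C(g)$ and the norm strictly \emph{decreases}. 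Concretely, in the $d$-regular tree with $d\ge 3$, put a single lamp on at a neighbour $v_1$ of $e$ and place the lamplighter at distance $m\ge 2$ along a different branch; then $n=m+2$, every vertex of $C(g)$ lies in $\partial C(g)$, and $w=v_1$ satisfies your hypothesis, yet the very first step of the geodesic $[\g,w]$ yields an element of norm $2m-(m-1)=n-1$, so your path exits $B(n-1)^{\comp}$ immediately. (That $g$ nonetheless belongs to $S(n)^\infty$ is seen by walking \emph{outward} from $\g$, not toward $w$.)

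The repair is exactly the case split with which the paper's proof opens. If $\g\in\partial C(g)$, take $w=\g$ and escape straight outward --- this is your ``in particular'' argument, which is fine. If $\g\in Int(C(g))$, then $\g$ is not a leaf of $C(g)$, which forces $\g\in C_0$ and hence $C(g)=C_0$; now along any lamps-fixed walk inside $C(g)$ one really does have $C(h)=C(g)$ (the tree can only grow, never shrink, since it already equals the hull of $e$ and $\mathrm{supp}(f)$), and your computation $|h|=n+\bigl(m-d(e,\eta)\bigr)\ge n$ along the geodesic $[\g,w]$, followed by the outward push past $w$, goes through verbatim. As written, however, your escape picks an arbitrary admissible $w$ and asserts a norm identity that fails whenever the lamplighter's position hangs off the support tree, so this direction of the equivalence needs the above case distinction before the proof is complete.
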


\begin{proof}
If $\g$ belongs to $\partial C(g)$ and the lamplighter moves outside of $C(g)$, then $|g|$ increases. So $g$ belongs to $S(n)^\infty$. If $\g$ belongs to $Int(C(g))$, then $|g|$ increases by one when the lamplighter steps in the direction of $e$, and decreases by one when he steps away. If the ball  $B_T(e,d(e,\g))$ is in the interior of $C(g)$, then the lamplighter cannot leave it without $g$ entering $S(n-1)$. Otherwise, he can reach a boundary position.
\end{proof}

This argument shows that an element $g$ is a dead-end of depth $\geq r$ if and only if $\g=e$ and $B_T(e,r) \subset Int(C(g))$.

Let $v$ be a vertex of the tree $T$. The subtree at $v$, denoted $T_v$, has for vertex set all $w$ such that $v$ belongs to the geodesic between $e$ and $w$. 

\begin{proposition}\label{lowlog}
Let $T'$ denote a subtree at a neighbour of $e$. Let $r<R$. Take $g$ such that $\g=e$, $C(g)$ does not intersect $T'$ and $B_T(e,R)\setminus T' \subset C(g)$. Then all elements $g'$ in the connected component of $g$ in $S(n,r)^\infty$ satisfy $f'(v)=f(v)$ for all $v$ in $C(g)$ with $d(e,v) \geq r+1$.
\end{proposition}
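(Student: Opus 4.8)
The plan is to prove the stronger statement that along \emph{any} path in $S(n,r)^\infty$ issued from $g$ the lamplighter never stands on a vertex of $C:=C(g)$ at depth $\geq r+1$; since changing the lamp at a vertex $v$ requires the lamplighter to be at $v$, this at once yields $f'(v)=f(v)$ for every $v\in C$ with $d(e,v)\geq r+1$. For a point $g_k=(\g_k,f_k)$ of the path write $C_k=C(g_k)$. Since $\g=e$, Lemma \ref{treewl} gives $|g|=2|C|$, so $n=2|C|$, and the whole argument rests on the identity $|g_k|=2|C_k|-d(e,\g_k)$, which I rewrite as
\[
\big(|g_k|-n\big)+d(e,\g_k)=2\big(|C_k|-|C|\big).
\]
Note that $g\in S(n)^\infty$ by Lemma \ref{treedeadend}, since the neighbour of $e$ inside $T'$ lies outside $C$, so $e\in\partial C$.

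First I record the geometry forced by $B_T(e,R)\setminus T'\subseteq C$, $C\cap T'=\emptyset$ and $r<R$: every vertex of $C$ at depth $<R$ lies outside $T'$ and has all its neighbours in $C$, so inside $B_T(e,R)$ the only edge leaving $C$ is the one from $e$ into $T'$. Hence a \emph{new} edge of $C_k$ (one not in $C$) can be created only inside $T'$, or else at depth $\geq R$ in some other direction. Moreover, since the leaves of the spanning tree $C$ are lit lamps of $g$ and $C$ reaches depth $\geq R$ in every non-$T'$ direction, below each edge of $C$ at depth $\leq r$ there is a lit lamp of $g$ at depth $\geq R\geq r+1$ (and below a deeper edge there is trivially a lamp at depth $>r$).

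Now suppose for contradiction the lamplighter reaches depth $\geq r+1$ inside $C$, and let $k^*$ be the first such step. As consecutive positions differ in depth by one, $v_*:=\g_{k^*}$ has depth exactly $r+1$, and $\g_{k^*-1}\in C$ has depth $r$. Let $s<k^*$ be the last step with $\g_s=e$ (it exists since $\g_0=e$); by maximality of $s$ and the tree structure, after $s$ the lamplighter moves to the child $u$ of $e$ on the geodesic $[e,v_*]$ and stays in the subtree $T_u$ until $k^*$. The crux, which I expect to be the main obstacle, is to show that no edge of $C$ is abandoned before $k^*$, i.e. $C\subseteq C_k$ for all $k\leq k^*$: abandoning a $C$-edge forces some lamp of $g$ lying below it to be switched off, and by the previous paragraph that lamp sits at depth $\geq r+1$ in $C$, so the lamplighter would have had to visit depth $\geq r+1$ in $C$ earlier, contradicting the minimality of $k^*$. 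Granting $C\subseteq C_k$, we have $|C_k|-|C|=m_k$, the number of new edges, all of which lie in $T'$; during $(s,k^*]$ the lamplighter is confined to $T_u$ and so cannot touch $T'$, while a new edge inside $T_u$ would sit at depth $\geq R>r$ and is excluded. Hence $m_k$ is constant, equal to $m_s$, on $[s,k^*]$.

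Finally I combine this with the displayed identity. At step $s$ we have $d(e,\g_s)=0$, so $|g_s|-n=2m_s$, and $|g_s|\leq n+r$ gives $2m_s\leq r$. At $k^*$, using $d(e,\g_{k^*})=r+1$ and $|C_{k^*}|-|C|=m_{k^*}=m_s$, the identity reads
\[
\big(|g_{k^*}|-n\big)+(r+1)=2m_s\leq r,
\]
whence $|g_{k^*}|-n\leq -1$, i.e. $g_{k^*}\notin S(n,r)^\infty$, a contradiction. This closes the proof; the only delicate points are the no-abandonment claim and the confinement to $T_u$, both of which hinge on the hypothesis that all of $B_T(e,R)\setminus T'$ already belongs to $C$ together with $r<R$.
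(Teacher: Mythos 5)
Your proof is correct and takes essentially the same route as the paper: both rest on the identity $|g_k|=2|C(g_k)|-d(e,\gamma_k)$ together with the observation that norm excess above $n$ can only be created by lighting lamps in $T'$ (at most $\lfloor r/2\rfloor$ new edges), so the lamplighter cannot reach depth $r+1$ in $C(g)$ without the norm dropping below $n$. The paper's own proof is an informal sketch of this same budget argument; your bookkeeping via the first bad time $k^*$, the last visit $s$ to $e$, the confinement to $T_u$, and the no-abandonment claim $C\subseteq C_k$ rigorously fills in exactly the points the paper glosses over.
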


\begin{proof}
The element $g$ belongs to $S(n)^\infty$ by Lemma \ref{treedeadend}. By Lemma \ref{treewl}, the lamplighter must first step into $T'$, otherwise entering $S(n-1)$. He can visit $s$ sites in $T'$ and come back to $e$. There $|g_k|=n+2s \leq n+r$, staying in $S(n,r)$. He can then move into $C(g)$, with $|g_k|=n+2s-d(e,\g_k)$. Since $2s\leq r <R$, the lamplighter cannot reach a vertex $v$ not in $T'$ with $d(e,v)\geq r+1$.
\end{proof}

\begin{corollary}\label{cornoncon}
There exists positive constants $c_1,c_2$ depending only on $d$ such that $S(n,r)$ has at least $|L|^{c_1n}$ connected components for $r \leq \log_{d-1}(n)-c_2$.
\end{corollary}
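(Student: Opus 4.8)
The plan is to produce, for each large $n$ and each $r \le \log_{d-1}(n) - c_2$, a family of roughly $|L|^{c_1 n}$ elements of $S(n)^\infty$ that are pairwise separated inside $S(n,r)^\infty$ by Proposition \ref{lowlog}, and then to pass to $S(n,r)$. The passage is free: any path in $S(n,r)=B(n+r)\setminus B(n-1)$ joining two elements of $S(n,r)^\infty$ stays in $B(n-1)^{\comp}$ and starts in the infinite component $B(n-1)^{\comp,\infty}$, hence stays in $S(n,r)^\infty$; thus distinct components of $S(n,r)^\infty$ lie in distinct components of $S(n,r)$, and it suffices to bound the number of components of $S(n,r)^\infty$ from below.

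First I would fix the escape direction: choose a neighbour $v_0$ of $e$ and set $T'=T_{v_0}$. Then I would choose $R$ maximal with $|B_T(e,R)\setminus T'|\le \lfloor n/2\rfloor$, where $|\cdot|$ counts edges. Since in the $d$-regular tree consecutive spheres grow by a factor $(d-1)$, one has $(d-1)^R\asymp n$, so $R=\log_{d-1}(n)+O_d(1)$. Consequently, for $c_2$ large enough depending only on $d$, the hypothesis $r\le \log_{d-1}(n)-c_2$ forces $R\ge r+2$, i.e. $R-1\ge r+1$. I then fix once and for all a subtree $C\subset T\setminus T'$ with exactly $\lfloor n/2\rfloor$ edges and $B_T(e,R)\setminus T'\subseteq C$ (padding with a few pendant edges at distance $>R$ if needed); note that every vertex of $B_T(e,R)\setminus T'$ at distance $<R$ still has all its children, so the leaves of $C$ sit at distance $\ge R$.

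Next I would build the separating family. Put a fixed nonzero lamp (an \emph{anchor}) on every leaf of $C$, let the lamps on the set $W$ of vertices of $B_T(e,R)\setminus T'$ at distance exactly $R-1$ vary freely over $L$, switch off all other lamps, and place the lamplighter at $\gamma=e$. For each $\phi\in L^{W}$ the resulting element $g_\phi$ satisfies $C(g_\phi)=C$: the anchors make the support span all of $C$, while $C$ contains the support and $e$. Hence $|g_\phi|=2|C|=n$ by Lemma \ref{treewl}, and $g_\phi\in S(n)^\infty$ by Lemma \ref{treedeadend} because $\gamma=e\in\partial C(g_\phi)$ (as $v_0\notin C$). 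Moreover $g_\phi$ meets the hypotheses of Proposition \ref{lowlog} with this $R$ and $r<R$. Since each $v\in W$ lies in $C(g_\phi)$ with $d(e,v)=R-1\ge r+1$, Proposition \ref{lowlog} forces any $g'$ in the component of $g_\phi$ to agree with $g_\phi$ on $W$; therefore $g_\phi$ and $g_{\phi'}$ with $\phi\ne\phi'$ lie in distinct components of $S(n,r)^\infty$. This gives at least $|L|^{|W|}$ components.

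Finally I would count $W$: it is the $(R-1)$-sphere of $B_T(e,R)\setminus T'$, so $|W|=d(d-1)^{R-2}-(d-1)^{R-2}=(d-1)^{R-1}\asymp n/(d-1)$, whence $|W|\ge c_1 n$ for some $c_1=c_1(d)>0$, yielding $|L|^{c_1 n}$ components and, after the passage above, the same bound for $S(n,r)$. The main thing to get right is the bookkeeping tying $R$ to both $n$ and $r$, so that $W$ genuinely sits in the frozen region $\{d(e,\cdot)\ge r+1\}$ of Proposition \ref{lowlog} while still containing $\asymp n$ vertices; the only genuinely separate point is parity, treated for odd $n$ by placing the lamplighter at a neighbour of $e$ outside $T'$ and invoking the evident analogue of Proposition \ref{lowlog}.
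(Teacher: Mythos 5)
Your proposal is correct and takes essentially the same route as the paper: the paper's proof of Corollary \ref{cornoncon} likewise takes $C(g)$ to be (essentially) $B_T(e,R)\setminus T'$ with the lamplighter at $e$ and invokes Proposition \ref{lowlog} to freeze the $\asymp n$ lamps at distance $\geq r+1$, yielding $|L|^{c_1 n}$ components. Your write-up is in fact more careful than the paper's two-line argument --- notably the anchor lamps at the leaves, which keep $C(g_\phi)$ fixed while the lamps on $W$ range over all of $L$, and the explicit observation that distinct components of $S(n,r)^\infty$ lie in distinct components of $S(n,r)$ --- while your handling of non-generic $n$ (parity via the ``evident analogue'' of Proposition \ref{lowlog}, padding) is exactly as terse as the paper's ``a generic $n$ is treated similarly.''
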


\begin{proof}
Assume there exists an integer $r\geq 0$ such that
\[
n = |B_T(e,r) \setminus T'| = \frac{ (d-1)^r-1}{d-2}.
\]
Let $g$ be such that $C(g) = B_T(e,r) \setminus T'$ and $\g = e$. 
By Proposition \ref{lowlog}, there are at least $|L|^{(d-1)^{r+1}}$ connected components in $S(n,r)$ containing elements of this form. A generic $n$ is treated similarly.
\end{proof}

\begin{proposition}\label{treeconnection}
There exists a constant $c$ such that the annuli $S(n,\log_{d-1}(n)+c)^\infty$ are connected, so $\textrm{th}_{T\wr L, LSL}(n) \leq \log_{d-1}(n)+c$.
\end{proposition}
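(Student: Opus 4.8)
The plan is to show that any element $g \in S(n)^\infty$ can be connected, within the annulus $S(n,r)^\infty$ for $r = \log_{d-1}(n)+c$, to a canonical ``reference'' element $g^\star$ whose subtree $C(g^\star)$ is as balanced as possible around $e$; connectedness then follows from transitivity. The key quantitative input is that on a $d$-regular tree the ball $B_T(e,\rho)$ contains $\asymp (d-1)^\rho$ vertices, so a thickness of $\rho = \log_{d-1}(n)+c$ gives the lamplighter a ``budget'' of order $n$ extra steps — exactly enough to shuffle an entire subtree's worth of lamps. This is the tree analogue of the line argument in Theorem \ref{llconn}, but the exponential volume growth is what turns the linear thickness $n+2$ into the logarithmic bound.

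First I would fix the reference configuration. Given $g=(\g,f)\in S(n)^\infty$ with $|C(g)|=m$ (so $n = 2m - d(e,\g)$ by Lemma \ref{treewl}), I would take $g^\star$ to be the element with lamplighter at $e$ whose support fills out a subtree $C^\star$ that is a ``greedily balanced'' subtree of size $m$ centred at $e$ — concretely, fill $B_T(e,\rho_0)$ completely and distribute the remaining lamps on one partial outer shell, where $\rho_0 \approx \log_{d-1}(m)$. The point of choosing $C^\star$ balanced is that its radius is $\asymp \log_{d-1}(n)$, which will bound the detour the lamplighter must make. I would first reduce $g$ to have lamplighter at $e$: by Lemma \ref{treedeadend}, since $g \in S(n)^\infty$ the lamplighter can walk out to a boundary position of $C(g)$ and back staying in the annulus, so without loss of generality the moves I describe keep $\g$ free to return to $e$.

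The heart of the argument is a single ``reconfiguration move''. Given that $C(g)$ has radius $\rho(g)$, I want to push $C(g)$ one step closer to being balanced — e.g. remove a lamp from a deepest leaf of $C(g)$ and deposit a lamp on an unfilled site nearer $e$, decreasing the radius or the outer-shell deficit — while never leaving $S(n,r)^\infty$. Executing such a move costs the lamplighter a round trip out to depth $\approx \rho(g)$ and possibly into a neighbouring subtree (as in Proposition \ref{lowlog}, stepping into $T'$ to gain norm); the excursion raises $|g_k|$ by at most $2\rho(g)$, and we must certify $|g_k| \ge n$ throughout and $|g_k| \le n+r$. The lower bound $|g_k|\ge n$ is the delicate constraint: moving a lamp inward shrinks $C$ and hence lowers the norm, so I would interleave each inward deposit with first visiting a fresh outer site (inflating $C$ temporarily) to keep the norm from dropping below $n$, exactly the bookkeeping device used in cases (i) and (ii) of Theorem \ref{llconn}. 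Since $\rho(g) \le \rho(g^\star)+O(1) = \log_{d-1}(n)+O(1)$ can be assumed throughout (the radius only decreases under the move), the excursion depth $2\rho(g)$ stays $\le \log_{d-1}(n)+c$, giving the claimed thickness.

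I expect the main obstacle to be the simultaneous control of the upper and lower norm bounds during a single reconfiguration move: shrinking $C(g)$ inward is precisely what threatens to drop the norm below $n$ and knock the path out of $S(n,r)^\infty$, while the compensating outer excursions threaten the upper bound $n+r$. The resolution is to bound the radius of \emph{every} intermediate subtree by $\log_{d-1}(n)+O(1)$ — which requires choosing the greedily balanced target and verifying that no intermediate configuration ever needs a deeper excursion than the current radius — and to order the moves so that an outer site is always occupied before an inner lamp is switched off, so the net norm change per elementary step is $\pm 1$ and stays in the window $[n, n+r]$. Once the move is established, finitely many iterations (at most $m = O(n)$ of them, but this does not affect the thickness, only the path length) bring any $g$ to $g^\star$, and comparing with Corollary \ref{cornoncon} shows the bound is sharp up to the additive constant.
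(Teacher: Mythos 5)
Your overall skeleton (a canonical balanced target, an iterated ``light-before-extinguish'' reconfiguration move, transitivity) matches the spirit of the paper's proof, but there is a genuine gap in the invariant you lean on: you assert that ``$\rho(g) \le \rho(g^\star)+O(1) = \log_{d-1}(n)+O(1)$ can be assumed throughout (the radius only decreases under the move)''. This fails at the very start: an arbitrary $g\in S(n)^\infty$ can have $C(g)$ of radius linear in $n$ --- for instance lamps lit along a geodesic segment of length about $n/2$ with the lamplighter back at $e$, which by Lemma \ref{treewl} has norm $n$. For such a configuration your reconfiguration move is unexecutable inside the annulus: since $|g|=2|C(g)|-d(e,\g)$, walking from $e$ out to the deepest leaf within $C(g)$ decreases the norm by $1$ per step, so the lamplighter reaches the leaf at norm about $n/2$, far below the annulus floor; compensating by lighting fresh lamps on the way out rewrites the configuration wholesale, which is exactly the problem you were trying to solve. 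Reducing the radius from $\sim n/2$ down to $\sim\log_{d-1}n$ is the bulk of the work, and it must be done by moves executable while the radius is still large. (Your norm accounting is also internally inconsistent: even granting $\rho\approx\log_{d-1}n$, an excursion whose cost is $2\rho$ would force thickness $\approx 2\log_{d-1}n$, which is not of the form $\log_{d-1}(n)+c$; what saves the day is that a climb of height $h$ raises the norm by only $h$, and climbs can be capped.)

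The idea missing from your sketch, and the heart of the paper's proof, is \emph{locality}. After pushing the lamplighter to height $\ge R$ (where $n=2|B_T(e,R)|-R$), every move is confined to the $(R+2)$-neighbourhood above the lamplighter's current position: the counting Lemma \ref{loff} guarantees, purely from $|g|=n$, that there is a lamp \emph{off} within distance $R+1$ of the $(R+2)$ ancestor $\d_k$ of $\g_k$; the lamplighter lights that lamp first (your ordering, protecting the lower bound $n$) and only then extinguishes and steps up at its own position. Since climbs are capped at $R+2$, the norm never exceeds $n+R+4$ \emph{regardless of the radius of the configuration}, which is how the logarithmic thickness survives configurations of linear radius. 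Termination is then established not by a decreasing radius but by a finer potential: the excess functions $\e_g(\d)$ and the counts $N^{\max}_g(\d)$ of highest vertices, over all vertices $\d$, which the four cases of the paper's algorithm make decrease (with Case 2a controlled separately by the height gap $h^{\max}_{g_k}(\d_k)-h(\g_k)$). If you replace your global ``deepest leaf first'' move by such a local move backed by the pigeonhole guarantee of a nearby off lamp, your plan essentially becomes the paper's proof.
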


To ease notations, we set $h(\g)=d_T(e,\g)$ and call height the distance to the origin in the tree. A vertex $\g\neq e$ has exactly one neighbour of smaller height, which we call its parent, and $d-1$ of bigger height, its children. 
Two vertices are said to be siblings if they have the same parent.
The origin $e$ has $d$ children and, by convention, is its own parent. The $k$ ancestor is the parent of the $(k-1)$ ancestor (and the $0$ ancestor is the current vertex). In our representations of trees, the parents are above the children.

\begin{proof}[Proof of Proposition \ref{treeconnection}]
We will prove this proposition when $n=2|B_T(e,R)|-R$ for some $R$, where absolute value denotes the number of edges. We prove that $S(n,R)^\infty$ is connected, and clearly $R \leq \log_{d-1}(n)+c$. 

(When $n$ is not of the chosen form, we should replace $B_T(e,R)$ by an appropriate subset containing $B_T(e,R-1)$ in the definition of $\bar g$. The proof goes the same way, with more tedious details.)

The reader should keep in mind that for such an $n$, if $g$ belongs to $S(n)$, then $h(\g)$, $R$ and $n$ have the same parity.

For $\ell_0 \in L$ not trivial, set $\bar g = (e,\bar f)$ with $\bar f(v)=\ell_0$ for $v$ in $B_T(e,R)$ and $\bar f (v)=e$ otherwise. Then $\bar g$ belongs to $S(n+R)$. We take an arbitrary $g$ in $S(n)^\infty$, and construct a path from $g$ to $\bar g$ in $S(n,R+4)$.

An elementary situation is when $C(g)=B_T(e,R)$. The choice of $n$ forces $h(\g)=R$. Now as long as the lamplighter stays in the ball $B_T(e,R)$ and does not turn off lamps in its boundary $S_T(e,R)$, we have $|g_k|=n+R-h(\g_k)$ belongs to $[n,n+R]$, and so there is a path to $\bar g$.

We will often use the following lemma. Say there is a lamp off (resp. on) at vertex $v$ if $f(v)=e$ (resp. $\neq e$).
\begin{lemma}\label{loff}
Let $g=(\g,f)$ belong to $S(n)$, and let $\d$ denote the $(R+2)$ ancestor of $\g$.
\begin{enumerate}
\item If $h(\g)=R+2$, then $\d=e$ and there is a lamp off in $B_T(e,R)$.
\item If $h(\g)>R+2$, then there is a lamp off in $T_\d \cap B_T(\d,R+1)$.
\item If $h(\g)=R$ and there is a lamp on outside $B_T(e,R)$, then there is a lamp off in $B_T(e,R)$.
\item If $h(\g) \leq R-2$, then there is a lamp on outside $B_T(e,R-1)$.
\end{enumerate}
\end{lemma}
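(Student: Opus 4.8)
The four statements share a single mechanism, which I would isolate at the outset. Since $g\in S(n)$ with $n=2|B_T(e,R)|-R$, Lemma~\ref{treewl} gives $n=2|C(g)|-h(\g)$, hence the exact edge count
$$|C(g)|=|B_T(e,R)|+\tfrac{h(\g)-R}{2},$$
which is an integer because $h(\g)$ and $R$ have the same parity. Each item then follows by assuming the opposite lamp configuration and deriving a contradiction: the forced configuration of $C(g)$ would contain either too many or too few edges to match this value. Throughout I would use the two elementary facts that if every vertex of a subtree $A$ carries a lamp that is on then $C(g)\supseteq A$ (as $C(g)$ is the minimal subtree containing $\mathrm{supp}(f)$), and conversely that if $e$, $\g$ and all on-lamps lie in a subtree $A$ then $C(g)\subseteq A$, so $|C(g)|\leq|A|$.

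For (1), the $(R+2)$ ancestor of a vertex of height $R+2$ is $e$, so $\d=e$ is immediate. For the lamp claim I would suppose every lamp in $B_T(e,R)$ is on, so $C(g)\supseteq B_T(e,R)$; since $\g$ has height $R+2$, the geodesic from $e$ to $\g$ adds the two edges above $S_T(e,R)$, giving $|C(g)|\geq|B_T(e,R)|+2$, whereas the displayed count is $|B_T(e,R)|+1$. Item (3) is the same idea with $h(\g)=R$: the exact count is $|B_T(e,R)|$, while a fully-on ball together with an on-lamp outside $B_T(e,R)$ forces at least one further edge, hence $|C(g)|\geq|B_T(e,R)|+1$. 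Item (4) is dual: if no lamp sits outside $B_T(e,R-1)$, then $e$, $\g$ (of height $\leq R-2$) and all on-lamps lie in $B_T(e,R-1)$, so $|C(g)|\leq|B_T(e,R-1)|$; but the exact count exceeds this, because $|B_T(e,R)|-|B_T(e,R-1)|$ equals the top-shell edge count $d(d-1)^{R-1}$, which for $d\geq3$ dominates the deficit $\tfrac{R-h(\g)}{2}\leq\tfrac{R}{2}$.

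The genuinely delicate case, and the one I expect to be the main obstacle, is (2). I would write $D=T_\d\cap B_T(\d,R+1)$ for the downward cone of depth $R+1$ rooted at $\d$ (note $\d\neq e$ since $h(\g)>R+2$). Assuming every lamp of $D$ is on forces $C(g)\supseteq D$; adding the geodesic $e\to\d\to\g$, whose edges are disjoint from $D$ except for its first $R+1$ steps already lying inside the cone, contributes $h(\d)+1=h(\g)-R-1$ further edges, so $|C(g)|\geq|D|+h(\g)-R-1$. Comparing with the exact count and using $h(\g)>R+2$ reduces the contradiction to the purely combinatorial inequality $|D|>|B_T(e,R)|$. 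I would verify this by computing both edge counts as geometric sums, obtaining
$$|D|-|B_T(e,R)|=\frac{(d-1)^R\bigl((d-1)^2-d\bigr)+1}{d-2},$$
which is strictly positive precisely because $(d-1)^2-d\geq1$ for $d\geq3$. This is exactly where the standing hypothesis $d\geq3$ is indispensable — the inequality fails on the line $d=2$ — and where the careful bookkeeping of which geodesic edges are already counted inside the cone has to be carried out; that accounting is the part most prone to off-by-one errors and is where I would spend the most care.
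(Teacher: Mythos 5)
Your proof is correct and follows exactly the paper's own mechanism: the paper disposes of case (1) by noting that all lamps on in $B_T(e,R)$ would force $|g|>2|B_T(e,R)|-R=n$ via Lemma \ref{treewl}, and declares the other cases ``similar'' with the parenthetical warning about radius $R+1$ at $\d$, which is precisely the edge-count comparison $|D|>|B_T(e,R)|$ you verify explicitly. You have merely filled in the bookkeeping (the exact count $|C(g)|=|B_T(e,R)|+\tfrac{h(\g)-R}{2}$, the $h(\d)+1$ extra geodesic edges, and the role of $d\geq 3$) that the paper leaves implicit, and all of it checks out.
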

\begin{proof}
In the first case, if all the lamps are on, then $C(g) \supset B_T(e,R)$ and so $|g| > 2|B_T(e,R)|-R=n$, contradiction. The second case is similar, (mind radius $R+1$ because the vertex $\d$ has one less children than $e$) as well as the third and fourth.
\end{proof}

Given $g=g_0 \in S(n)^\infty$, our aim is to construct a sequence $(g_k)$ in $S(n)$ reaching an elementary situation such that $g_k$ and $g_{k+1}$ are connected by a path in $S(n,R+4)$ for each $k$. Our strategy is to construct $g_{k+1}$ from $g_k$ by an algorithm given below.

\begin{claimn} 
We may assume that $h(\g_0)\geq R$.
\end{claimn}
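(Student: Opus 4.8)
The claim asserts that we may reduce to the case where $h(\gamma_0) \geq R$. The plan is to show that any element $g_0 \in S(n)^\infty$ with $h(\gamma_0) < R$ can be connected, by a path staying inside $S(n,R+4)^\infty$, to an element whose lamplighter position has height at least $R$. Since connectedness to $\bar g$ is the ultimate goal and the path-connection relation is an equivalence relation, establishing such a reduction justifies assuming $h(\gamma_0) \geq R$ from the outset.

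First I would use Lemma \ref{loff}(4): when $h(\gamma_0) \leq R-2$, there is a lamp on outside $B_T(e,R-1)$, hence the subtree $C(g_0)$ must reach height at least $R$ somewhere, so some leaf $v$ of $C(g_0)$ satisfies $h(v) \geq R$. The idea is to walk the lamplighter from $\gamma_0$ out along $C(g_0)$ toward such a deep vertex. Along the way, by Lemma \ref{treewl} the norm is $|g_k| = 2|C(g_0)| - h(\gamma_k)$ (provided we switch no lamps off and stay inside $C(g_0)$, so that $C(g_k) = C(g_0)$ is unchanged), which is maximised when $h(\gamma_k)$ is minimal. Since $h(\gamma_0) \geq 0$ and the lamplighter only moves to increase its height toward $v$, I expect the norm to stay between $n$ and $n + h(\gamma_0) \leq n + R$, comfortably inside $S(n,R+4)$. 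This gives a path to an element $g_0'$ with the same lamp configuration and subtree but with $h(\gamma_0') = h(v) \geq R$.

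The mild subtlety, and the place I would be most careful, is the boundary parity/edge cases: the intermediate case $h(\gamma_0) = R-1$ is excluded by the parity remark (since $h(\gamma)$, $R$ and $n$ share parity, $h(\gamma_0) = R-1$ cannot occur), so only $h(\gamma_0) \leq R-2$ genuinely needs treatment, and there the norm bound is cleanest. I would also verify that moving along $C(g_0)$ without modifying lamps keeps $g_k$ in $S(n)^\infty$ rather than slipping into a dead-end component: because we are increasing the distance of the lamplighter from the root while keeping $C(g_k)$ fixed, and the terminal position $v \in \partial C(g_0)$, Lemma \ref{treedeadend} guarantees the endpoint lies in $S(n)^\infty$, and the monotone norm profile keeps the whole path in the infinite component.

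Overall I do not expect a serious obstacle here; this is a normalisation step. The only genuine content is identifying a deep leaf of $C(g_0)$ (supplied by Lemma \ref{loff}(4)) and checking the norm stays within the allowed annulus, which follows directly from the word-length formula of Lemma \ref{treewl}. The harder analysis is deferred to the main algorithm that follows the claim, where one must actually push the subtree $C(g)$ toward the full ball $B_T(e,R)$; the present claim merely secures the convenient starting configuration $h(\gamma_0) \geq R$ for that algorithm.
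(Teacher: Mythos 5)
There is a genuine gap, and it is fatal to the route you chose: the monotonicity of the word length is the opposite of what you assert. By Lemma \ref{treewl}, $|g_k|=2|C(g_k)|-h(\gamma_k)$, so if the lamplighter moves \emph{deeper} into the tree (increasing $h(\gamma_k)$) while keeping the lamp configuration, hence $C(g_k)=C(g_0)$, fixed, the norm strictly \emph{decreases}. Starting from $|g_0|=n$ and walking to a vertex $v$ with $h(v)\geq R$ ends at norm $2|C(g_0)|-h(v)=n+h(\gamma_0)-h(v)\leq n-2$, i.e.\ the path exits the annulus through the \emph{inner} boundary $B(n-1)$, which is exactly what is forbidden. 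Your claim that the norm stays between $n$ and $n+h(\gamma_0)$ contradicts your own formula (the norm exceeds $n$ only while the lamplighter moves \emph{toward} the root). Indeed, no path with frozen lamps can raise the lamplighter's height while staying in $S(n,R+4)$: the terminal norm is forced by the formula, independently of the route. The identification of a deep vertex via Lemma \ref{loff}.(4) and the parity remark excluding $h(\gamma_0)=R-1$ are both fine, but they are not where the content lies.

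The missing idea is that the height gain must be \emph{paid for} by enlarging $C(g)$, and this is precisely where the hypothesis $g_0\in S(n)^\infty$ enters. The paper's argument runs as follows: since $g_0 \in S(n)^\infty$, Lemma \ref{treedeadend} gives that $B_T(e,h(\gamma_0))$ is not contained in $Int(C(g_0))$, so there is a boundary vertex $v\in\partial C(g_0)$ with $h(v)\leq h(\gamma_0)\leq R-2$ (this bound keeps the norm $\geq n$ when the lamplighter reaches $v$). The lamplighter walks to $v$, lights one lamp just outside $C(g_0)$ (adding one edge to $C$, hence $+2$ to the travelling-salesman length), and then moves toward the deep vertex $v'$ furnished by Lemma \ref{loff}.(4), stopping at height $h(\gamma_0)+2$; the new norm is $2(|C(g_0)|+1)-(h(\gamma_0)+2)=n$, and along the way the norm never exceeds $n+h(\gamma_0)+2\leq n+R$. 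One then \emph{iterates}, gaining height $2$ per round (consistently with the parity constraint), rather than reaching height $R$ in a single sweep. Note also that without the $S(n)^\infty$ hypothesis no such boundary vertex is available cheaply — that is the content of Lemma \ref{treedeadend} — so your normalisation step genuinely uses connectivity to infinity, not just the word-length formula.
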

\begin{proof}[Proof of claim:]
Otherwise, $h(\g_0) \leq R-2$. As $g \in S(n)^\infty$, by Lemma \ref{treedeadend}, the ball $B_T(e,h(\g))$ is not included in $Int(C(g_0))$. Consequently, the set $\partial C(g_0) \cap B_T(e,R-2)$ is non-empty, hence contains a vertex $v$. By Lemma \ref{loff}.(4), there is $v'$ in $C(g_0)$ with $h(v')\geq R$. Then the lamplighter goes to $v$, lights up a lamp just outside $C(g_0)$, and then moves towards $v'$, stopping at height $h(\g_0)+2$. This configuration is in $S(n)$. On the way, $g$ remained in $S(n,R)$, because moving closer to $e$ by at most $R-2$ and adding only one extra edge to $C(g_0)$. Repeating this argument until reaching height $R$ justifies the claim. 
\end{proof}

From now on, we construct a sequence $(g_k)$ satisfying $h(\g_k) \geq R$ by the following algorithm. 

In order to measure the progress made by successive iterations of the algorithm, we define the following quantities for any vertex $\d$ of the tree. The maximal height of the lamp configuration (associated to $g= (\g,f)$) in $T_\d$ is denoted $h^{\max}_g(\d)$. The number of highest vertices is denoted $N^{\max}_g(\d)$. Finally, set 
\[
\e_g(\d) :=
\left\{\begin{array}{ll}
(h^{\max}_g(\d)-R-1)_+ & \text{when } \d \neq e,\\
(h^{\max}_g(\d)-R)_+ & \text{when } \d = e.
\end{array}\right.
\]
where $(x)_+$ is $x$ if $x \geq 0$ and $0$ otherwise.
This measures the excess of height of the configuration in the tree $T_\d$. We also denote by $\d_k$ the $(R+2)$ ancestor of $\g_k$.

Observe that if $h(\g_k)=h^{\max}_{g_k}(e)=R$, or equivalently if $\e(\d)=0$ for all $\d$ and $h(\g_k)= R$, then $g_k$ is an elementary configuration and we are done. We now describe the construction of $g_{k+1}$ from $g_k$.

The rough idea is to consider the intersection of $C(g_k)$ with the tree attached to the $(R+2)$ ancestor of the position $\g_k$. If $\g_k$ is maximal in this tree, we aim to reduce the height of this tree, if not, we aim to move the position higher. At times, the $(R+2)$ ancestor may change and the tree we consider may be enlarged, we will then reduce the height of a larger portion of $C(g_k)$. Eventually, getting a global minimal height enables us to reach an elementary configuration. Precisely, we distinguish two cases : 
\begin{itemize}
 \item[Case 1:] $R \leq h(\g_k) = h^{\max}_{g_k}(\d_k)$;
 \item[Case 2:] $R \leq h(\g_k) < h^{\max}_{g_k}(\d_k)$.
\end{itemize}

{\it Case 1:} if $h(\g_k)=h^{\max}_{g_k}(\d_k) \geq R$. If there is equality, this is an elementary configuration by the previous observation. So assume $h(\g_k)=h^{\max}_{g_k}(\d_k) \geq R+2$. By Lemma \ref{loff}.(1)-(2), there is a lamp off at $v$ in $T_{\d_k}\cap B_T(\d_k,R+1)$ (in fact in $B_T(e,R)$ if $h(\g_k)=R+2$). Keeping the lamp at $\g_k$ on, the lamplighter moves to light the lamp at $v$ and comes back to position $\g_k$ (he stays in $S(n,R+2)$ on the forward way and $S(n,R+4)$ on the way back, and at the end the norm is $n+2$). Then he steps up to his parent, turning off the lamp at $\g_k$. The norm is $n+1$. There are two possibilities. 

{\it Case 1a:} if some sibling of $\g_k$ is in $C(g_k)$, the lamplighter moves down to this sibling. This is $g_{k+1}$ in $S(n)$. Note that $\e(\d)$ has not changed for any $\d$ in the tree and $\d_{k+1}=\d_k$, but $N^{\max}_{g_{k+1}}(\d_k) < N^{\max}_{g_k}(\d_k)$.

{\it Case 1b:} if no sibling belongs to $C(\g_k)$, the lamplighter steps up to the grandparent of $\g_k$, turning off the lamp. This is $g_{k+1}$ in $S(n)$. Note that  $N^{\max}_{g_{k+1}}(\d_k) < N^{\max}_{g_k}(\d_k)$, except if $\g_k$ was unique among maximal vertices, in which case $\e_{g_{k+1}}(\d_k) < \e_{g_k}(\d_k)$. Mind that $h(\g_{k+1})=h(\g_k)-2$, and that $\d_{k+1}$ is different form $\d_k$ (except if both are $e$).

{\it In case 1a and 1b:} note that the excess at all other vertices has not increased: \linebreak $\forall \d, \e_{g_{k+1}}(\d) \leq \e_{g_k}(\d)$.

\begin{figure}

\begin{centering}

\begin{tikzpicture}
\draw (0,4)node[below]{$\dots$}--(0.5,5) node[right]{$e$};
\draw (0.5,5)--(1,4);
\draw (1,4)--(0.5,3);
\draw (1,4)--(1.5,3) node[right]{$\color{blue}{\delta_k} \color{black}{=}\color{red}{\delta_{k+1}}$};
\draw (1.5,3) node{$\bullet$};
\draw (1.5,3)--(1,2);
\draw[red, very thick] (1,2)--(0.5,1);
\draw (1.5,3)--(2,2);
\draw (2,2)--(1.5,1);
\draw (2,2)--(2.5,1);
\draw (1.5,1)--(2,0);
\draw[red] (2,0) node[below]{$\gamma_{k+1}$};
\draw (1,0) node{$\color{blue}\bullet$};
\draw (2,0) node{$\color{red}\bullet$};
\draw[blue, very thick] (1.5,1)--(1,0) node[below]{$\gamma_k$};
\draw (1.25,-2.5) node{Case 1a.};

\draw (4,5) node[right]{$e=\color{red} \delta_{k+1}$}--(3.5,4) node[below]{$\dots$};
\draw (4,5) node{$\color{red}\bullet$}--(4.5,4);
\draw (4.5,4)--(4,3);
\draw (4.5,4)--(5,3)node[right]{$\color{blue} \delta_k$};
\draw (5,3) node{$\color{blue}\bullet$};
\draw (5,3)--(4.5,2);
\draw[red, very thick] (4,1)--(4.5,2);
\draw (5,3)--(5.5,2) node[right]{$\color{red} \gamma_{k+1}$};
\draw (5.5,2) node{$\color{red}\bullet$};
\draw[blue, very thick] (5.5,2)--(5,1);
\draw[blue, very thick] (5.5,0) node[below]{$\gamma_k$}--(5,1);
\draw (5.5,0) node{$\color{blue}\bullet$};
\draw (5.5,2)--(6,1);
\draw (4.75,-2.5) node{Case 1b.};

\draw (7.5,5) node[right]{$e$} -- (7,4)node[below]{$\dots$};
\draw (7.5,5)--(8,4);
\draw (8,4)--(7.5,3);
\draw (8,4)--(8.5,3) node[right]{$\color{blue} \delta_k$};
\draw (8.5,3) node{$\color{blue}\bullet$};
\draw (8,2)--(8.5,3);
\draw (9,2)--(8.5,3);
\draw[red, very thick] (7.5,1)--(8,2);
\draw (9,2)--(8.5,1) node[right]{$\color{red} \delta_{k+1}$};
\draw (8.5,1) node{$\color{red}\bullet$};
\draw (9,2)--(9.5,1);
\draw (9.5,1)--(10,0)node[right]{$\color{blue} \gamma_k$};
\draw (10,0) node{$\color{blue}\bullet$};
\draw (8.5,1)--(8,0);
\draw (8.5,1)--(9,0);
\draw (8.5,-1)--(9,0);
\draw (9.5,-1)--(9,0);
\draw (9.5,-1)--(9,-2);
\draw (9.5,-1)--(10,-2) node[right]{$\color{red} \gamma_{k+1}$};
\draw (10,-2) node{$\color{red}\bullet$};
\draw (8.75,-2.5) node{Case 2a.};

\draw (11.5,5)--(11,4) node[below]{$\dots$};
\draw (11.5,5) node[right]{$e$}--(12,4);
\draw (12,4)--(11.5,3);
\draw (12,4)--(12.5,3)node{$\bullet$};
\draw (12.5,3) node[right]{$\color{blue}{\delta_k} \color{black}{=}\color{red}{\delta_{k+1}}$};
\draw (12.5,3)--(12,2);
\draw[red, very thick] (11.5,1)--(12,2);
\draw (12.5,3)--(13,2);
\draw (12.5,1)--(13,2);
\draw (13.5,1)--(13,2);
\draw (13.5,1)--(14,0)node[right]{$\color{blue} \gamma_k$};
\draw (14,0) node{$\color{blue}\bullet$};
\draw (12.5,1)--(12,0);
\draw (12.5,1)--(13,0)node{$\color{red}\bullet$};
\draw (13,0) node[right]{$\color{red} \gamma_{k+1}$};
\draw (13,0)--(13.5,-1);
\draw[blue, very thick] (13,0)--(12.5,-1);
\draw (12.5,-2.5) node{Case 2b.};
\end{tikzpicture}
\end{centering}

\caption{Illustration of the proof of Proposition \ref{treeconnection}. The black part is common to steps $k$ and $k+1$, the blue part concerns step $k$, the red part step $k+1$.}
\end{figure}
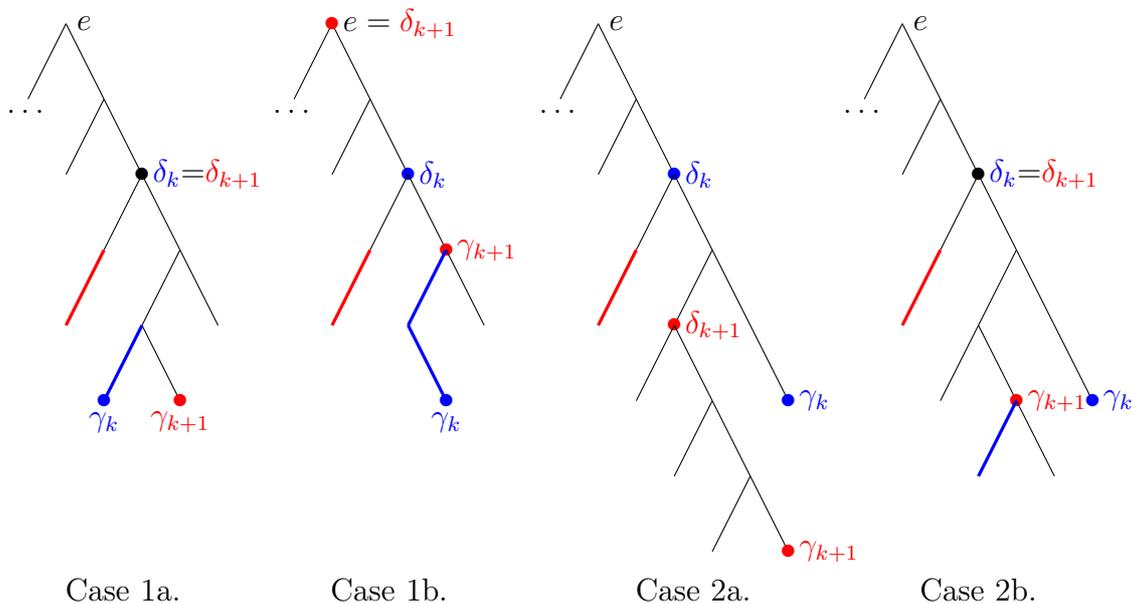

{\it Case 2:} if $R \leq h(\g_k) < h^{\max}_{g_k}(\d_k)$. Again, by Lemma \ref{loff}.(1)-(3) and excluding an elementary configuration, there is a lamp off above in $T_{\d_k}$. Keeping the lamp on at $\g_k$, the lamplighter moves to this lamp, turns it on, and goes down again, this time in the direction of a vertex of the configuration of maximal height in $T_{\d_k}$ (he can do this staying in $S(n,R+4)$). At height $h(\g_k)$, the norm is $n+2$. There are two possibilities.

{\it Case 2a:} if $h^{\max}_{g_k}(\d_k) \geq h(\g_k)+2$, the lamplighter stops on the way down at height $h(\g_k)+2$. This is $g_{k+1}$ in $S(n)$. Note that none of $h^{\max}_{g_k}(\d) = h^{max}_{g_{k_1}}(\d)$ and $\e_{g_k}(\d) = \e_{g_{k+1}}(\d)$. However, we have $h^{\max}_{g_{k+1}}(\d_{k+1})-h(\g_{k+1}) < h^{\max}_{g_k}(\d_{k})-h(\g_k)$ (observe that $\d_{k+1} \neq \d_k$ but both have the same maximal height). If the left-side is zero, the next step of the algorithm is case 1. It is case 2 otherwise.

{\it Case 2b:} if $h^{\max}_{g_k}(\d_k) = h(\g_k)+1$, the lamplighter goes to a maximum vertex $v$ in $T_{\d_k}$ (\ie $h(v) = h^{\max}_{g_k}(\d_k)$), where the norm is $n+1$. He steps up turning off the lamp behind him. This is $g_{k+1}$ in $S(n)$. Note that $N_{\max}(\d_k)$ decreased, except if the maximum vertex was unique, in which case $\e(\d_k)$ decreased (because $h(\g_k)+1 \geq R+1$ and in the equality case, $\d_k = e$).

In each of the four cases of the algorithm, at least one quantity decreases among all $N_{\max}(\d)$ and $\e(\d)$ (which is decreased each time some $N_{\max}(\d)$ "reaches zero"), except in case 2a. However, the difference of heights decreases at each step 2a, so the algorithm cannot get stuck there (it will end up in a ``case 1''). Eventually, the algorithm will reach a configuration with $\e(\d)=0$ for all $\d$, which is elementary.
\end{proof}

We use $\e(\d)$ in the above proof rather than $h_{\max}(\d)$ alone, because the configuration in the $R$ or $(R+1)$ levels of $T_{\d_k}$ is modified at each run of the algorithm. Therefore, the maximal height is not decreasing along time, contrary to the excess.

\section{The lamplighter on the ladder with finite lamps}\label{sec:ladder}

This section describes the case of the lamplighter on the ladder, namely the ``base'' group is $\Gamma = \Z \times \Z_2$, where $\Z_2=\Z/2\Z$. The aim is to prove Theorem \ref{teo-echelle-intro}, namely to show that there is a generating set with connected spheres and bounded retreat depth while there is another one with neither properties.

As before, $L$ denotes the ``lamp'' group. Elements of $G = \Gamma \wr L$ will be denoted by $(z,\e,f)$ where $z \in \Z$, $\e \in \Z_2$ and $f \in \oplus_\Gamma L$. The trivial elements in these groups are denoted by $0$ and the non-trivial element of $\Z_2$ is denoted by $1$. 

A generating set for $G = \Gamma \wr L$ which has neither connected spheres nor admits a bound on retreat depth is given by
\[
S_1 =\{ (0,0,l \delta_{(0,0)} + l' \delta_{(0,1)}  )\cdot (\pm 1, \e ,0) \cdot (0,0,l \delta_{(0,0)} + l' \delta_{(0,1)}  ) \text{ where } l,l' \in L  \text{ and } \e \in \Z_2 \}.
\]
In other words, one allows to do a ``switch-walk-switch'' but (and this is crucial) one may switch the two lamps that share the same $\Z$-coordinate simultaneously. The walk movement can be done simultaneously in $\Z_2$ and $\Z$.

Note that $\Z \wr (L^2)$ sits as an index $2$ subgroup of $G$. With the exception of the lone element $(0,1,0)$, the length of an element in $G$ can be computed by forgetting the $\Z_2$ coordinate on the ``base'' component. The value of the lamp is remembered because the ``lamp'' group is now $L^2$ and looking at the length of the corresponding element in $\Z \wr (L^2)$. From there, the arguments and statements of \S{}\ref{llline}, in particular Proposition \ref{llcomp} and Theorem \ref{llconn}, apply straightforwardly to the Cayley graph of $G$ with respect to $S_1$ and show it has neither connected spheres nor bounded retreat depth.

A generating set for $G = \Gamma \wr L$ which has connected spheres and admits a bound on retreat depth is the ``switch-walk-switch'' where the walk movement is for the summed generating set of $\Z \times \Z_2$ (\ie $\{(\pm 1,0), (0,1)\}$).

\begin{theorem}\label{techelle}
The group $G = (\Z \times \Z_2) \wr L$ for $L$ finite with the ``switch-walk-switch'' generating set $L\{(\pm 1,0), (0,1)\}L$ has connected spheres (the connection thickness is $\leq 24$) and bounded retreat depth (by $5$).
\end{theorem}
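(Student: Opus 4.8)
The group is $G = (\Z \times \Z_2) \wr L$ with the ``switch-walk-switch'' generating set where the base moves are the summed generators $\{(\pm 1, 0), (0,1)\}$ of the ladder $\Z \times \Z_2$. The Cayley graph of the ladder is the two-sided infinite ladder: two copies of $\Z$ (the ``rails'', indexed by $\e \in \Z_2$) joined by ``rungs'' at each integer $z$. By Proposition \ref{propTSP}, the word length of an element $(z,\e,f)$ equals the travelling-salesman distance in the ladder from the origin, visiting every lit lamp site, ending at $(z,\e)$. So the plan is to first understand this TSP distance on the ladder explicitly, then exploit the extra room the ladder provides (compared to the line) to connect spheres with bounded thickness.

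\textbf{Step 1: A length formula on the ladder.} First I would derive a closed form for $|(z,\e,f)|$ analogous to the formula $2b + 2|a| - |z|$ on the line. Let $A = \operatorname{supp}(f) \cup \{(0,0),(z,\e)\}$ projected to $\Z$, and let $a = \min$, $b = \max$ of this projection. The key difference from the line is that on the ladder the lamplighter can traverse a horizontal stretch on one rail and return on the other rail, so a rung need not be crossed twice to service lamps on both rails at the same $z$-coordinate. The TSP distance is essentially $2b + 2|a| - (\text{saving from the endpoint position})$ plus a bounded correction ($O(1)$, accounting for rungs that must be crossed to switch rails and service both-rail lamp sites). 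I expect $|(z,\e,f)| = 2b + 2|a| - |z| + \theta$ where $\theta$ is a bounded correction term depending on the rail-switching pattern; the essential point for the theorem is that this differs from the line formula only by a bounded amount.

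\textbf{Step 2: Bounding the retreat depth by $5$.} Using the length formula, I would show every element $g = (z,\e,f) \in S(n)$ lies in the infinite component of $B(n-d-1)^c$ for some $d \leq 5$. The mechanism: from any configuration, the lamplighter can reach the boundary vertex $b$ (the rightmost serviced $z$-coordinate) or $a$ on one rail, and then escape to infinity along that rail on the \emph{opposite} rail from any returning lamps, or by lighting a fresh lamp just beyond $b$. Because the ladder has two rails, escaping to infinity costs only a bounded detour (crossing at most a constant number of rungs), so the retreat depth is uniformly bounded. Tracking the worst case carefully should yield the constant $5$.

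\textbf{Step 3: Connecting the spheres with bounded thickness.} This is the main obstacle. On the line (Theorem \ref{llconn}), connection required thickness $n+2$ precisely because the lamplighter, when retracting to modify an interior lamp, was forced to drag a ``marker'' lamp all the way out to distance $\sim 2n$. On the ladder, the extra rail provides a ``parking lane'': I would exploit this to show that the lamplighter can modify any lamp on one rail while keeping the norm change bounded, by using the other rail to hold the configuration's extremal lamp fixed and shuttling back and forth within a \emph{bounded} height window. Concretely, I would fix a canonical target configuration (e.g.\ all lit lamps pushed to one extremal rail, lamplighter at $b$) and, imitating the inductive ``case (i)/(ii)'' strategy of the proof of Theorem \ref{llconn}, construct a path from an arbitrary $g \in S(n)^\infty$ to this target staying inside $S(n,r)^\infty$ for a \emph{constant} $r$. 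The bound $24$ would come from summing the worst-case norm excursions: each elementary move (switch a lamp, cross a rung, reposition) perturbs the norm by a bounded amount, and the two-rail structure ensures these perturbations do not accumulate linearly as they do on the line. By the analogue of Lemma \ref{monot}, it suffices to connect elements of a single fixed sphere $S(n+\ell)$, which I would reduce to connecting each to the canonical target.

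The hardest part is Step 3: making precise how the second rail removes the linear lower bound, i.e.\ exhibiting for each $g$ an explicit bounded-excursion path and verifying the norm stays in $[n, n+24]$ throughout. I expect the argument to proceed by a careful case analysis on the position $(z,\e)$ relative to $a$ and $b$ and on which rails carry the extremal lamps, mirroring the line case but with all excursions confined to a window of bounded width rather than width $\sim n$.
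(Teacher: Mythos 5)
Your overall architecture (ladder length $=$ line formula up to a bounded correction; normalise to a canonical configuration; control norm excursions) matches the paper's, and Step 1 is essentially correct: it corresponds to Lemmas \ref{3edge} and \ref{obs1}. But Steps 2 and 3 contain a genuine gap: you never identify the mechanism that actually keeps the norm in a bounded band, and the substitutes you propose do not work. With the lamps left as they are, walking from $z$ to the extremity $b$ on the ladder decreases the norm by one per step exactly as on the line (by Lemma \ref{3edge} the projected path covers $[z,b]$ twice, and returning on the opposite rail saves only an additive $O(1)$), so ``reach $b$ and escape along the other rail'' produces a dip of depth $\asymp b-z$, not $5$. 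Likewise ``hold the configuration's extremal lamp fixed while shuttling'' is precisely the line strategy of Theorem \ref{llconn}, and it accumulates norm linearly over a trip of length $\sim n$. Your closing claim that all excursions can be ``confined to a window of bounded width'' also cannot be right if read spatially: to connect two elements of $S(n)^\infty$ whose configurations differ at distance $\sim n$ from the lamplighter, the path must physically travel there; what is bounded is the norm excursion, not the spatial one.

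The missing idea, which is the heart of the paper's proof, is the following consequence of having two rails: if every column $\{x\}\times\Z_2$ of an interval carries two non-trivial lamps, then the travelling-salesman length of that doubled strip is $|F|\pm 1$ \emph{independently of where in the strip the path starts and ends} (Lemma \ref{obs2}), whereas on the line the cost of a lit segment depends linearly on the endpoint. Hence the lamplighter can light both lamps of each column he traverses (Lemma \ref{banane}), turning the traversed region into a doubled block inside which his position is norm-irrelevant up to $O(1)$; this is what gives retreat depth $\leq 5$, since he reaches $b$ having dipped by at most $5$ and then marches straight to infinity. For connectedness, the paper then uses the block as a norm reservoir (Lemma \ref{tank}): pushing it left by one column costs $+1$, and whenever the norm reaches the ceiling the lamplighter travels, norm-neutrally again by Lemma \ref{obs2}, to the right end and trims a column, releasing $2$. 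This absorb/release bookkeeping across linear distances, not a bounded spatial window, is what replaces the line's linear excursions and yields the constants $5$ and $24$; without Lemma \ref{obs2} and the filling/trimming induction, your Steps 2 and 3 remain unproved.
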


The remainder of this section is devoted to the proof of this theorem.

Take $g$ an element of $G=(\Z \times \Z_2) \wr L$. In the course of the travelling salesman path $c$ given in Proposition \ref{propTSP}, we may only remember the displacements which are made on the $\Z$ component. More precisely, let $\pi:\Z \times \Z_2 \twoheadrightarrow \Z$ be the projection. Call $\pi c$ the projected path. 

Let $s\mapsto \bar{c}(s) = \pi c (t_s)$ be the path obtained by removing all the pauses (\ie $\{t_s\}_{s \in \N}$ are the integers so that $\pi c(t_s+1) = \pi c(t_s) \pm 1$). We will describe completely the possible paths $\bar c$ obtained. A path $\bar{c}$ is said to backtrack whenever the direction of movement changes.

We call {\it lower line} the subset of $\G$ the elements of which have coordinates of the form $(z,0)$, and {\it upper line}, the subset of those with coordinates of the form $(z,1)$.

\begin{lemma}\label{3edge}
For any $g = (z,\e, f) \in (\Z \times \Z_2) \wr L$, there is a reduced representative word of $g$ so that the projected path $\bar{c}$ never uses an edge $3$ times or more. 

More precisely, the path $c$ solution of the travelling salesman problem on $\Z \times \Z_2$ associated to $g$ can be chosen so that the path $\bar c$ is a solution to the projected travelling salesman problem on $\Z$.
\end{lemma}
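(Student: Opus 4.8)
The statement asserts that among all optimal travelling salesman solutions for $g$ on the ladder $\Z \times \Z_2$, we may select one whose $\Z$-projection $\bar c$ is itself an optimal travelling salesman solution on $\Z$, and in particular uses no edge three or more times. The plan is to argue by a direct exchange/surgery argument on an arbitrary optimal path $c$, showing that any edge traversed at least three times can be ``uncrossed'' without increasing the total length, while still visiting every required vertex and ending at the prescribed endpoint. First I would fix notation: let $c$ be an optimal solution to the travelling salesman problem on $\Z \times \Z_2$ realizing $|g|$ (by Proposition~\ref{propTSP}), starting at $(0,0)$, covering the support of $f$, and ending at $(z,\e)$. The projected walk $\pi c$ on $\Z$ visits exactly the $\Z$-coordinates that $c$ must visit, namely the projections of $\mathrm{supp}(f)$ together with $0$ and $z$.

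\textbf{Key steps.} The main step is the reduction on $\bar c$, which I would carry out as follows. Over the edge of $\Z$ between consecutive integers $i$ and $i+1$, lying above the ladder rung connecting the two copies $(i,0)\!-\!(i+1,0)$ and $(i,1)\!-\!(i+1,1)$, suppose $\bar c$ crosses three times or more. Since every crossing of this $\Z$-edge corresponds to crossing one of the two ladder edges above it, and since the set of vertices that must be visited to the right of $i$ is a fixed finite set, I would argue that any solution crossing this cut an odd number of times $2m+1$ can be rearranged to cross it exactly once (if the endpoints $0,z$ lie on opposite sides) and any even number $2m$ can be rearranged to cross exactly twice at most --- more precisely, the path needs to cross the cut at $i$ only as many times as forced by the parity of whether the start and end lie on the same side. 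The clean way to formalize this is to observe that on $\Z$, the optimal travelling salesman value $d_{TS}(0,\pi(\mathrm{supp} f),z)$ is $2b + 2|a| - |z|$ in the notation of \S\ref{sec:description}, and this is realized by a path that crosses each $\Z$-edge either once or twice. I would then show that any $c$ whose projection crosses some edge three or more times has length strictly greater than $2|C(g)|-d(e,\g)$ projected appropriately, contradicting optimality; hence an optimal $c$ exists with $\bar c$ crossing each edge at most twice, which is exactly a solution to the projected TSP on $\Z$.

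\textbf{Realizing the projected solution by a ladder path.} Having produced a candidate projection $\bar c$ that is TSP-optimal on $\Z$, the second part of the statement requires lifting it back: I would exhibit an actual path $c$ on the ladder whose projection is $\bar c$ and which still visits all of $\mathrm{supp}(f)$ and realizes $|g|$. The point is that each time $\bar c$ sits at a given $\Z$-coordinate, the lifted path $c$ has freedom to switch between the lower and upper line ``for free'' in length (movements along a rung cost the same whether projected or not only insofar as they do not add $\Z$-displacement); one must check that the rung-moves needed to visit both $(i,0)$ and $(i,1)$ when both carry lamps do not force extra $\Z$-edge crossings. This is where I expect the main obstacle: reconciling the requirement that both ladder vertices over a given $\Z$-coordinate be visited with the constraint that $\bar c$ crosses each $\Z$-edge at most twice. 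The resolution is that a visit to both rung-endpoints at coordinate $i$ can always be inserted as a local detour $(i,0)\!\to\!(i,1)\!\to\!(i,0)$ (or absorbed into a pass-through) without altering the projection $\bar c$ at all, since these moves are ``pauses'' in the sense of the $t_s$ construction and are removed when passing to $\bar c$.

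\textbf{Main obstacle.} The hardest point is the bookkeeping in the lift: one must verify that the reduction making $\bar c$ cross each edge at most twice is compatible with visiting \emph{both} rows, i.e. that reducing triple-crossings on the projection never strands a required upper- or lower-line vertex on the wrong side of a cut. I would handle this by processing the edges of $\Z$ from the outermost (near $a$ and $b$) inward, maintaining the invariant that all required vertices strictly beyond the current cut have already been scheduled into the part of $c$ on that side; the parity argument above then guarantees that at most two crossings of each cut suffice, and the free rung-moves supply the within-coordinate visits. The lone exceptional element $(0,1,0)$, which has length $1$ and no TSP interpretation of the stated form, is excluded exactly as in Proposition~\ref{propTSP}.
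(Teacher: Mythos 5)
Your overall plan---surgery on an optimal path to remove triple crossings of projected edges---is the same idea as the paper's proof, but two of your central claims are wrong as stated, and they are exactly where the difficulty of the lemma lives. First, your proposed contradiction (``any $c$ whose projection crosses some edge three or more times has length strictly greater \dots contradicting optimality'') is false: there exist \emph{optimal} travelling salesman paths on the ladder whose projection uses an edge three times. This is precisely the equality case in the paper's Case 3 (when the maximal triple-crossed interval $[x_1,x_2]$ has $x_2=x_1+1$ and both endpoints of $c$ lie outside it, see Figure \ref{3ed2}): the replacement path there is only \emph{no longer} than the original, not strictly shorter. So the lemma cannot be proved by deriving a contradiction from optimality; one must instead exhibit a length-non-increasing replacement and conclude that a suitable optimal representative \emph{can be chosen}, which is what the paper does via an explicit case analysis (endpoints both inside $]x_1,x_2[$, one inside, or neither) with zig-zag and U-shaped substitute paths and exact cost bookkeeping.

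Second, your assertion that switching lines is ``for free,'' and that a detour $(i,0)\to(i,1)\to(i,0)$ can be ``inserted \dots without altering the projection,'' conflates length in $\bar c$ with length in $c$. Rung moves are invisible in the projection but each costs $1$ in the actual word length; this is the whole trade-off the lemma has to control. A projection crossing each edge at most twice may require many vertical edges to visit both rows, while a triple-crossing projection may save them, so the naive parity/cut argument valid for the TSP on $\Z$ alone does not transfer to the ladder without quantitative comparison. Concretely, the paper's Case 1 replaces a segment of projected cost $3\ell_1$ by a zig-zag of total cost $2\ell_1$ (horizontal plus vertical edges counted together), and strictness holds only because $\ell_1\geq 2$ there; in Case 3 the comparison yields $2(x_2-x_1)+2$ versus $3(x_2-x_1)+1$, which ties when $x_2-x_1=1$. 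Your ``process cuts from outermost inward'' invariant is never instantiated and would, once made precise, have to reproduce exactly this accounting. Your final parity observation (an at-most-twice-crossing projection from $0$ to $z$ covering $[a,b]$ must cross edges of $[0,z]$ once and of $[a,0]\cup[z,b]$ twice, hence is TSP-optimal on $\Z$) is correct and matches the paper's concluding remark, but it rests on the unproved and, in your strict form, false reduction step.
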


Following \S{}\ref{llline}, we set $a=\min\left(\pi (\textrm{supp}f) \cup \{0,z\}\right)$ and $b=\max\left(\pi (\textrm{supp}f) \cup \{0,z\}\right)$. When $z \geq 0$, which we can always assume by symmetry, the path $\bar c$ obtained in this lemma covers exactly once the edges in $[0,z]$ and exactly twice the edges in $[a,0]\cup [z,b]$.

\begin{proof}
Let $\bar{c}_0$ denote the projected path associated to a geodesic representative of $g$. 

By contradiction, we assume that there is at least one edge used at least three times. We consider $x_1,x_2 \in \Z$ such that $[x_1,x_2]$ is a maximal intervalle on which all edges are used at least three times. We show that we can replace the path $\bar c_0$ by a path of smaller or equal length using edges in $[x_1,x_2]$ at most twice. This suffices to show the first part of the lemma.

The path $\bar c_0$ is necessarily contained in $[a,b]$ and visits both $a$ and $b$.

Recall that our path $\bar c_0$ starts in $0$ and ends in some coordinate $z$. By symmetry, we may assume $0 \leq z$. There are 3 cases:
\begin{enumerate}
\item both the end and the start lie in $]x_1,x_2[$, 
\item either the start or the end lies in $]x_1,x_2[$ but not both, 
\item the start and end points are not in $]x_1,x_2[$.
\end{enumerate}
We illustrate in Figure \ref{3ed} the path $\bar c$ constructed in each case. 

{\it Case 1:} then $a \leq x_1 < 0 \leq z < x_2 \leq b$. Set $\ell_1=x_2-x_1\geq 2$, $\ell_2=b-x_2$ and $\ell_3=x_1-a$. The length of $\bar c_0$ is $\geq 3\ell_1+2\ell_2+2\ell_3+1$. The last $+1$ comes from the fact that there is at least one change of line.

We replace $\bar c_0$ by the following path $\bar c$. From the initial position one goes to the left until the $\Z$-coordinate is $a$, changes line, and moves to the right until the $\Z$-coordinate is $0$. Then one moves further to the right along zig-zag movement (\ie moves one step to the right then change line, and repeat these two moves over and over again) until the $\Z$-coordinate is $z$. Then move to the right until $b$, change line and move back to $z$. If required, change once more the line to be at the endpoint.

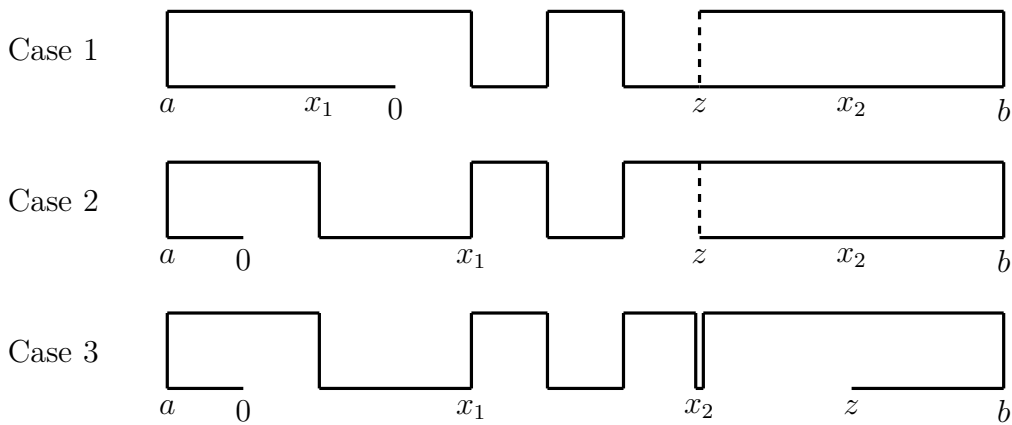
\begin{figure}
\begin{centering}
\begin{tikzpicture}
\draw (-4.5,0.5) node{Case 1};
\draw[very thick] (-3,0) -- (0,0) node[below]{$0$};
\draw (-1,0) node[below]{$x_1$};
\draw[very thick] (-3,1) -- (-3,0) node[below]{$a$};
\draw[very thick] (-3,1)--(1,1);
\draw[very thick] (1,1)--(1,0);
\draw[very thick] (1,0)--(2,0);
\draw[very thick] (2,0)--(2,1);
\draw[very thick] (2,1)--(3,1);
\draw[very thick] (3,1)--(3,0);
\draw[very thick] (3,0)--(4,0) node[below]{$z$};
\draw (6,0) node[below]{$x_2$};
\draw[very thick] (4,0) -- (8,0) node[below]{$b$};
\draw[very thick] (8,0)--(8,1);
\draw[very thick] (8,1)--(4,1);
\draw[very thick,dashed] (4,1)--(4,0);

\draw (-4.5,-1.5) node{Case 2};
\draw[very thick] (-2,-2)node[below]{$0$}--(-3,-2)node[below]{$a$};
\draw[very thick] (-3,-2)--(-3,-1);
\draw[very thick] (-3,-1)--(-1,-1);
\draw[very thick] (-1,-1)--(-1,-2);
\draw[very thick] (-1,-2)--(1,-2)node[below]{$x_1$};
\draw[very thick] (1,-2)--(1,-1);
\draw[very thick] (1,-1)--(2,-1);
\draw[very thick] (2,-1)--(2,-2);
\draw[very thick] (2,-2)--(3,-2);
\draw[very thick] (3,-2)--(3,-1);
\draw[very thick] (3,-1)--(8,-1);
\draw[very thick] (8,-1)--(8,-2)node[below]{$b$};
\draw[very thick] (8,-2)--(4,-2)node[below]{$z$};
\draw[very thick,dashed] (4,-1)--(4,-2);
\draw (6,-2)node[below]{$x_2$};

\draw (-4.5,-3.5) node{Case 3};
\draw[very thick] (-2,-4)node[below]{$0$}--(-3,-4)node[below]{$a$};
\draw[very thick] (-3,-4)--(-3,-3);
\draw[very thick] (-3,-3)--(-1,-3);
\draw[very thick] (-1,-3)--(-1,-4);
\draw[very thick] (-1,-4)--(1,-4)node[below]{$x_1$};
\draw[very thick] (1,-4)--(1,-3);
\draw[very thick] (1,-3)--(2,-3);
\draw[very thick] (2,-3)--(2,-4);
\draw[very thick] (2,-4)--(3,-4);
\draw[very thick] (3,-4)--(3,-3);
\draw[very thick] (3,-3)--(3.95,-3);
\draw[very thick] (3.95,-3)--(3.95,-4);
\draw[very thick] (3.95,-4)--(4.05,-4);
\draw (4,-4) node[below]{$x_2$};
\draw[very thick] (4.05,-4)--(4.05,-3);
\draw[very thick] (4.05,-3)--(8,-3);
\draw[very thick] (8,-3)--(8,-4)node[below]{$b$};
\draw[very thick] (8,-4)--(6,-4)node[below]{$z$};

\end{tikzpicture}
\end{centering}
\caption{Illustrations of the proof of Lemma \ref{3edge}.}\label{3ed}
\end{figure}

This path $\bar c$ visits all the vertices whose $\Z$-coordinate belong to $[a,b]$ so the lamp state can be set just like they are using the path $c_0$. Its length is $2(b-a)+2$ plus possibly $1$ for the change of line. As $\ell_1\geq 2$, this is strictly shorter than $\bar c_0$.

{\it Case 2:} By time reversal symmetry we may assume that the endpoint  $z \in ]x_1,x_2[$ but the  start-point $0 \leq x_1$. Thus we are lead to consider $a \leq 0 \leq x_1<z<x_2 \leq b$. 

By maximality of the intervalle $[x_1,x_2]$the edge between $x_1-1$ and $x_1$ is crossed only once in the increasing direction, say at time $t_1$. We call $c_1$ the restriction of the path $c_0$ to the time intervalle $[0,t_1]$. Necessarily, the length of $c_0$ is $\geq \ell(c_1)+3(x_2-x_1)+2(b-x_2)+1$, the final $+1$ corresponding to a change of line.

We replace the path $c_0$ by a path $c$ that coincides with $c_0$ until time $t_1$, then goes in zig-zag from $x_1$ to $z$ and finally makes a U-shape move from $z$ to $b$ and back as in Figure \ref{3ed}. The length of $c$ is equal to $\ell(c_1)+2(b-x_1)+1$ plus possibly $1$ for an final change of line. As $x_2-x_1 \geq 2$, this is strictly shorter than $c_0$.

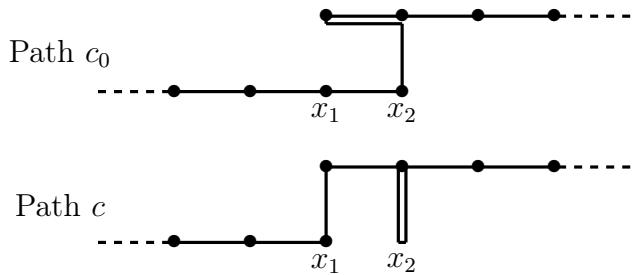
\begin{figure}
\begin{centering}
\begin{tikzpicture}
\draw (-1.5,0.5) node{Path $c$};
\draw[very thick, dashed] (-1,0)--(0,0);
\draw[very thick] (0,0)node{$\bullet$}--(1,0)node{$\bullet$};
\draw[very thick] (2,0)node[below]{$x_1$}--(1,0);
\draw[very thick] (2,0)node{$\bullet$}--(2,1)node{$\bullet$};
\draw[very thick] (2,1)--(2.95,1);
\draw (3,1)node{$\bullet$};
\draw[very thick] (2.95,1)--(2.95,0);
\draw[very thick] (2.95,0)--(3.05,0);
\draw[very thick] (3.05,1)--(3.05,0);
\draw (3,0)node[below]{$x_2$};
\draw[very thick] (3.05,1)--(4,1)node{$\bullet$};
\draw[very thick] (4,1)--(5,1)node{$\bullet$};
\draw[very thick, dashed] (6,1)--(5,1);

\draw (-1.5,2.5) node{Path $c_0$};
\draw[very thick, dashed] (-1,2)--(0,2);
\draw[very thick] (0,2)node{$\bullet$}--(1,2)node{$\bullet$};
\draw[very thick] (2,2)node[below]{$x_1$}--(1,2);
\draw[very thick] (2,2)node{$\bullet$}--(3,2)node{$\bullet$};
\draw (3,3) node{$\bullet$};
\draw[very thick] (3,2.9)--(3,2)node[below]{$x_2$};
\draw[very thick] (3,2.9)--(2,2.9);
\draw[very thick] (2,3)node{$\bullet$}--(2,2.9);
\draw[very thick] (2,3)--(3,3);
\draw[very thick] (3,3)--(4,3);
\draw[very thick] (4,3)node{$\bullet$}--(5,3)node{$\bullet$};
\draw[very thick, dashed] (6,3)--(5,3);

\end{tikzpicture}
\end{centering}
\caption{Illustration of the proof of Lemma \ref{3edge} : equality in Case 3. Note that the edge crossed twice over $x_2$ in the path $c$ can be replaced by an extra zag whenever $x_2<b$.}\label{3ed2}
\end{figure}

{\it Case 3:} In the last case, we have $a\leq 0 \leq x_1 <x_2 \leq z\leq b$. 

By maximality the edge between $x_1-1$ and $x_1$ is crossed only once in the increasing direction at a time $t_1$ and we call $c_1$ the restriction of $c_0$ to $[0,t_1]$ (if $a=x_1$, this edge is never crossed and $c_1$ is empty), and the edge between $x_2$ and $x_2+1$ is crossed only once   at a time $t_2$ and we call $c_2$ the restriction of $c_0$ to times $\geq t_2$ (if $b=x_2$, this edge is never crossed and $c_2$ is empty). The length of $c_0$ is $\geq \ell(c_1)+\ell(c_2)+3(x_2-x_1)+1$, where the last $+1$ is due to a change of line.

We replace $c_0$ between times $t_1$ and $t_2$ by a zig-zag path as in Figure \ref{3ed} to get a new path  $c$ of length $\leq \ell(c_1)+\ell(c_2)+2(x_2-x_1)+2$. As $x_2-x_1 \geq 1$, this new path is at most as long as $c_0$. There is equality if and only if $x_1+1=x_2$ (see Figure \ref{3ed2}).

In any case, we can replace $c_0$ by a no-longer path $c$ with no edge of $\Z$ used three times. Moreover, after applyng this procedures to all maximal intervalles $[x_1,x_2]$ with edges covered at least three times, we obtain a path $c$ such that $\bar c$ covers exactly twice the edges in $[a,0]\cup [z,b]$ and once those in $[0,z]$. Thus $\bar c$ is a solution to the projected travelling salesmen problem on $\Z$. 
\end{proof}

\begin{lemma}\label{obs1} Let $g,g' \in (\Z\times \Z_2)\wr L$.  We assume that the elements $g$ and $g'$ have the same projected paths $\bar{c} = \bar{c}'$. With the notations above, this implies in particular that $a=a'$, $b=b'$ and $z=z'$. We assume moreover that their lamp configurations are identical on vertices projecting to $]0,z[$, i.e. $f(x,\e)=f'(x,\e)$ for any $x \in ]0,z[$ and $\e \in \Z_2$.
 
Then the word length of $g$ and $g'$ differ by at most $4$.
\end{lemma}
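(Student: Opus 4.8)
The plan is to reduce the statement to a count of \emph{vertical moves} (changes of $\Z_2$-line) in the travelling salesman path, and then show that these can differ only near the two ends of the walk. By Proposition \ref{propTSP} and Lemma \ref{3edge}, I may represent $g$ and $g'$ by geodesic words whose projected paths are exactly the common walk $\bar c = \bar c'$ on $\Z$. For such a representative the word length splits as
\[
|g| = |\bar c| + \mathrm{vert}(g),
\]
where $|\bar c|$ is the number of horizontal steps, determined by $a,b,z$ and hence equal for $g$ and $g'$, and $\mathrm{vert}(g)$ is the least number of line changes among lifts of $\bar c$ covering $\mathrm{supp}(f)$. So it suffices to prove $|\mathrm{vert}(g)-\mathrm{vert}(g')|\le 4$.

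Assuming $z\ge 0$ by symmetry, Lemma \ref{3edge} tells us $\bar c$ covers the edges of $[a,0]$ and $[z,b]$ twice and those of $[0,z]$ once, so any lift visits every column of $[a,0]\cup[z,b]$ \emph{twice} and every column of $]0,z[$ \emph{once}. The crucial observation concerns a twice-covered interval traversed ``there and back'': going out along one line and returning along the other visits every ladder-vertex over that interval with a \emph{single} line change at the turning point, and inserting at most one further change lets one prescribe the line on which the path enters, resp. leaves, the interval. Hence over $[a,0]$ and over $[z,b]$ \emph{any} lamp configuration can be realized with at most two line changes, while fixing the interface line across the edge $(0,1)$, resp. $(z-1,z)$, to an arbitrary value.

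Now I run an exchange argument. Take a geodesic lift $c$ of $\bar c$ for $g$ and keep its restriction to the once-covered block $]0,z[$ untouched: this portion covers $\mathrm{supp}(f)\cap\,]0,z[\,=\mathrm{supp}(f')\cap\,]0,z[$, entering column $1$ on some line $\ell_{\mathrm{in}}$ and leaving column $z-1$ on some line $\ell_{\mathrm{out}}$. Using the observation above I replace $c$ over the columns $x\le 0$ and over the columns $x\ge z$ by there-and-back detours that cover $\mathrm{supp}(f')$ there, match the interface lines $\ell_{\mathrm{in}},\ell_{\mathrm{out}}$, and terminate at $(z,\e')$; each side costs at most two line changes. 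The result is a genuine lift of $\bar c$ covering $\mathrm{supp}(f')$ with the same interior block, so it witnesses $|g'|\le |g|+4$. Exchanging the roles of $g$ and $g'$ gives the reverse inequality, whence $|\mathrm{vert}(g)-\mathrm{vert}(g')|\le 4$.

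The step to get right is precisely this coordination: simultaneously covering an arbitrary configuration on a twice-covered interval, exiting on a prescribed line, and finishing at the correct endpoint $(z,\e')$, all within a bounded number of line changes, is what produces the constant $4$. One should also dispose of the degenerate shapes ($z=0$, or $a=0$, or $z=b$), where one or both side-blocks are absent and the bound only improves, and remark that the argument is symmetric with respect to the left-first versus right-first shape of $\bar c$.
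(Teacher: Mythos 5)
Your proof is correct and follows essentially the same route as the paper: your ``there-and-back detours over $[a,0]$ and $[z,b]$ matching the interface lines'' are exactly the paper's replacement path $c''$, which goes $(0,0)\to(a,0)\to(a,1)\to(0,1)\to(0,\e_1)$, coincides with $c$ through the once-covered block, and finishes $(z,\e_2)\to(b,\e_2)\to(b,\e_2+1)\to(z,\e_2+1)\to(z,\e)$, costing at most the four vertical edges over $a,0,b,z$. Your preliminary decomposition $|g|=|\bar c|+\mathrm{vert}(g)$ is a harmless repackaging (provided one insists, as you implicitly do, that the lifts end at $(z,\e)$), and the paper likewise handles $z=0$ separately and concludes by symmetry.
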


\begin{proof} By symmetry assume $z \geq0$
Let $c$ be a path obtained by Lemma \ref{3edge} for $g$. When $z>0$, the path $c$ visits exacty one edge $\left((0,\e_1),(1,\e_1)\right)$ with $\e_1\in \Z_2$ and exacty one edge $\left((z-1,\e_2),(z,\e_2)\right)$ with $\e_2\in \Z_2$ (and they co\"{i}ncide when $z=1$).

Consider the path $c''$ defined by piecewise straight (possibly empty) lines from $(0,0)$ to $(a,0)$ to $(a,1)$ to $(0,1)$ to $(0,\e_1)$ then coincides with $c$  until $(z,\e_2)$ then piecewise straight to $(b,\e_2)$ to $(b,\e_2+1)$ to $(z,\e_2+1)$ to $(z,\e)$. This path goes from $0$ to $(z,\e)$ and allows to turn on appropritely the lamp configuration $f'$. Its length is the length of $c$ increased by at most $4$ (the vertical edges over $a,0,b,z$).

Thus $|g'| \leq |g|+4$. The same argument shows that $|g'|\leq |g|+2$ when $z=0$. The lemma follows by symmetry. 
\end{proof}

\begin{lemma}\label{obs2} Let $a_1 \leq a_2$ be two integers.
Let $F\subset \Z \times \Z_2$ denote the set of vertices with $\Z$-coordinates in $[a_1,a_2]$. The length of a travelling salesman path which visits all vertices of $F$, ends in $F$ and begins in $F$ is between $|F|-1$ and $|F|+1$.
\end{lemma}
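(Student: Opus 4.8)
The plan is to regard $F$ as a $2\times w$ ladder, where $w=a_2-a_1+1$, so that $|F|=2w$; after a translation I may assume $a_1=0$. Colour each vertex $(z,\e)$ by the parity of $z+\e$: the two colour classes have $w$ elements each, and every edge joins vertices of opposite colours. The lower bound is then immediate, since a path of length $\ell$ meets at most $\ell+1$ vertices, so a path meeting all $|F|$ of them has $\ell\geq |F|-1$. All the content lies in the upper bound, namely that for \emph{any} prescribed endpoints $x,y\in F$ there is a covering path of length at most $|F|+1$.

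For the upper bound I would write $x=(z_1,\e_1)$ and $y=(z_2,\e_2)$ and, reversing the path if necessary, assume $z_1\leq z_2$. First suppose $z_1<z_2$. I cover the left block (columns $0,\dots,z_1$) by the boustrophedon path that starts at $x$, runs left along row $\e_1$ to $(0,\e_1)$, switches rows, and returns along row $1-\e_1$ to $(z_1,1-\e_1)$; this is a Hamiltonian path of the left block, of length $2(z_1+1)-1$, ending at the corner $(z_1,1-\e_1)$. One further step enters the right block (columns $z_1+1,\dots,w-1$) at its corner $(z_1+1,1-\e_1)$, and it remains to cover the right block from this corner to $y$.

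The crucial point is that one endpoint of this remaining problem is a \emph{corner} of a ladder. I would isolate the sub-claim that from a corner of a ladder there is a Hamiltonian path to every vertex of the opposite colour, and a covering walk of length equal to the number of cells (i.e.\ with a single backtrack) to every vertex of the same colour; both are produced by steerable snake paths. Granting this, covering the right block of width $W=w-1-z_1$ from the corner $(z_1,1-\e_1)$ to $y$ (including the entering step) costs at most $2W+1$ edges, so the total length is at most $[2(z_1+1)-1]+(2W+1)=2w=|F|$. The degenerate case $z_1=z_2$ must be treated separately, since the left sweep would then already reach $y$ and leave the rest of $F$ uncovered: if $\e_1=\e_2$ one traverses the Hamiltonian cycle of the ladder back to $x=y$ (length $2w$), and if $\e_1\neq\e_2$ the explicit walk that runs from $x$ to the right end, back along the other row to the left end, then right again to the column of $x$ and finally across to $y$ covers everything in length exactly $2w+1=|F|+1$.

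The main obstacle is the bipartite (colour) obstruction: between two same-coloured vertices, and between certain opposite-coloured \emph{interior} vertices, no Hamiltonian path exists, so a genuine backtrack is forced and the naive value $|F|-1$ cannot always be attained. The substance of the statement is precisely that one, respectively two, extra edges always suffice, the bound $|F|+1$ being realised by the same-column opposite-row configuration above. Accordingly the real work is the corner sub-claim, which confines the waste to a single backtrack in the swept case, together with the explicit constructions in the degenerate column; everything else is routine bookkeeping of edges against covered cells.
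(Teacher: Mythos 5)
Your proof is correct and follows essentially the same route as the paper's: the trivial lower bound by counting vertices along a path, and the upper bound by explicit U-shape and zig-zag (snake) covering paths --- exactly the shape the paper asserts in its one-line ``simple (and slightly tedious) case by case argument'' with solutions of the form U-shape--zig-zag--U-shape. Your write-up is in fact more complete than the paper's, since the corner sub-claim and the bipartite parity obstruction explain both why the bound $|F|+1$ holds for arbitrary prescribed endpoints and why it is sharp (the same-column, opposite-row case).
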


\begin{proof}
A path which visits $k$ vertices has length at least $k-1$, which gives the lower bound. The upper bound is a simple (and slightly tedious) case by case argument. The solutions are paths of the form U-shape-zig-zag moves-U-shape as in Case 1 of Figure~\ref{3ed}.
\end{proof}

\begin{lemma}\label{banane}
Assume $g=(z,\e,f)\in (\Z\times \Z_2)\wr L$ is such that $z \geq 0$. For $\ell_0 \in L$, define the function $f'$ by 
\[
\forall \e' \in \Z_2, f'(x,\e')=\left\{\begin{array}{ll}
\ell_0 & \forall x \in [z,b], \\
f(x,\e') & \forall x\notin[z,b].
\end{array} \right.
\]

Then for any $(x,\e') \in [z,b]\times \Z_2$ there is a path $\{g_k\}_{k=0}^n$ such that $g_0=g$ and $g_n=(x,\e',f')$ and
\[
\forall 0\leq k\leq n, |g|-5 \leq |g_k| \leq |g|+3. 
\]

In particular, this is true for $(x,\e')=(z,\e)$ and for $(x,\e')=(b,0)$.
\end{lemma}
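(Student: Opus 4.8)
The plan is to move the lamplighter on a single U-shaped (``banana'') excursion over the interval $[z,b]$, sweeping both lines and setting every lamp to $\ell_0$, choosing the parameters so that it terminates at the prescribed vertex $(x,\e')$. Concretely, starting from $(z,\e)$ I would first align the lamplighter on one line, move right along that line to column $b$ setting its lamps to $\ell_0$, cross to the other line at column $b$, and return left setting the lamps of the second line to $\ell_0$, stopping at column $x$; a bounded final adjustment of the column and of the $\Z_2$-coordinate then reaches $(x,\e')$. Since $f'$ prescribes $\ell_0$ on \emph{both} lines over $[z,b]$, the lamplighter must visit every vertex of $[z,b]\times\Z_2$, so a double traversal of the interval (the projected path covering the edges of $[z,b]$ twice) is forced, and this is exactly what the banana does. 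Throughout the excursion the lamplighter stays inside $[z,b]\times\Z_2$; in particular it never enters $[a,z)$, so the configuration on $[a,0]\cup[0,z]$ and the cost of servicing it are left untouched.

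The norm bound is then obtained by a two-sided estimate on the travelling salesman length of each intermediate element $g_k=(p,\delta,f_k)$, split according to the intervals $[a,z]$ (left/middle) and $[z,b]$ (right). The left/middle configuration is never modified and the lamplighter always sits to the right of $z$, so the contribution of $[a,z]$ — covering its lit vertices and connecting $0$ to column $z$ — is the same fixed quantity as in $|g|$, up to the bounded number of line-changes needed to splice it to the right part; this is precisely the slack quantified in Lemma \ref{obs1}. For the right part the unavoidable cost is $\approx 2(b-z)$ in every case: for $g$ it comes from leaving column $z$, reaching the rightmost column $b$, and returning to $z$; for $g_k$ it comes from reaching $b$ and servicing the lit vertices on both lines. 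The upper bound on the right part of $g_k$ follows from Lemma \ref{obs2} applied to the full segment $[z,b]\times\Z_2$, which dominates any partial configuration and costs at most $2(b-z+1)+1$, plus $O(1)$ for the connections; the lower bound follows from the mere requirement of reaching both $a$ and $b$ and visiting every lit vertex. Hence $|g_k|=|g|+O(1)$ uniformly in $p$, and tracking the handful of line-changes together with the $\pm1$ of Lemma \ref{obs2} yields the explicit window $|g|-5\le |g_k|\le |g|+3$.

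I expect the main obstacle to be the middle of the sweep, where $f_k$ on $[z,b]$ is only partially updated (equal to $\ell_0$ on the already-swept columns and to $f$ on the rest) and the lamplighter sits at an interior column $p$: there the travelling salesman value genuinely depends on $f_k$ and on $p$, so no closed formula is available. The point of the two-sided estimate above is exactly to bypass this: the lower bound needs only that $a$ and $b$ must still be reached and the lit vertices serviced, while the upper bound comes from the explicit banana word together with the $|F|+1$ bound of Lemma \ref{obs2}; since both are within a fixed constant of $|g|$ regardless of how much of $[z,b]$ has been swept, the norm stays in the required window. The two distinguished endpoints $(x,\e')=(z,\e)$ and $(x,\e')=(b,0)$ are then simply the cases where the return leg is run all the way back to column $z$, or stopped at column $b$, respectively.
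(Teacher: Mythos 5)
There is a genuine gap, and it sits exactly at the point you flagged as the main obstacle: your two-sided estimate does not deliver the lower bound for the intermediate elements of a single U-shaped sweep. On the outward leg only \emph{one} line of the swept columns has been set to $\ell_0$, so servicing the swept region costs one edge per column (a straight horizontal traversal suffices), while at the same time the lamplighter's position has migrated to the right end of the lit set, so the travelling salesman path for the intermediate element no longer needs a return leg. Concretely, take $L=\Z_2$, $\ell_0=1$, $g=(0,0,f)$ with $f$ lit only at $(b,0)$, so that $z=0$, $a=0$ and $|g|=2b$. When your forward sweep sits at column $p$, the element has position $(p,0)$ and lamps lit exactly on line $0$ over $[0,p]$ together with $(b,0)$; its norm is $b+(b-p)=|g|-p$ (straight to $b$, back to $p$), which leaves the window $[|g|-5,|g|+3]$ as soon as $p>5$, and at $p=b$ the norm has dropped to $|g|-b$. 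Your asserted lower bound ``reaching both $a$ and $b$ and visiting every lit vertex'' is computed from the \emph{current} position: once that position is near $b$, reaching $b$ is essentially free and the swept line is serviced one-way, so the right-hand contribution can be as small as $2(b-z)-(p-z)$, an unbounded deficit. The annulus is exited downwards, into the ball.

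The paper's proof is organised precisely to avoid this: the lamps are updated on \emph{both} lines simultaneously, column by column. The successive endpoints $E_k$ enumerate all of $F=\pi^{-1}([z,b])$ along a path following neighbours in $\Z\times\Z_2$ (a zig-zag), and each intermediate norm is controlled by a fresh path $c_k$ which coincides with the geodesic representative of $g$ up to the time $t_1$ of Lemma \ref{3edge} and then solves the salesman problem on the whole block $F$; Lemma \ref{obs2} pins the cost of that block within $\pm 1$ of $2(b-z)$ and Lemma \ref{obs1} converts path lengths into norms up to an additive $2$. The key point is that with both lines of the swept columns lit (this is where $\ell_0\neq e$ is implicitly needed --- compare Lemma \ref{tank} and the remark after the statement about keeping a lamp on in $\{b\}\times\Z_2$), the swept block costs two edges per column from below as well as from above, no matter where inside it the path ends, while the still-untouched lamp at column $b$ (present since $b\in\pi(\mathrm{supp}\,f)$ whenever $b>z$) keeps forcing the excursion out to $b$. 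Your treatment of the left part $[a,z]$ via Lemma \ref{obs1} and your upper bound via Lemma \ref{obs2} are fine; the repair is to change the sweep from line-by-line to column-by-column.
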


This lemma essentially asserts that starting from $g$, the lamp configuration on $[z,b]\times \Z_2$ can be turned to any chosen value (provided we keep a lamp on in $\{b\}\times \Z_2$) staying in an annulus of thickness $\leq 8$.

\begin{proof}
By Lemma \ref{3edge} there is a unique time $t_1$ where the path $c=c_0$ crosses the edge $(z-1,z)$ in the increasing direction, and we have 
\[
2(b-z) \leq \ell(c_{\{t\geq t_1\}}) \leq 2(b-z)+2.
\]
We construct a sequence of paths $c_k$ such that $\forall k \geq 1, c_{k|[0,t_1]}=c_{0|[0,t_1]}$ and $c_{k\{t\geq t_1\}}$ is a travelling salesmen problem visiting all vertices in $F=\pi^{-1}([z,b])$, starting in $c_0(t_1)$ and ending in $F$. By Lemma \ref{obs2}, we have $2(b-z)-1\leq\ell(c_{k\{t\geq t_1\}})\leq 2(b-z)+1$, so that 
\[
\ell(c_0)-3 \leq \ell(c_k) \leq \ell(c_0)+1.
\] 

Moreover, we can choose our sequence of paths $c_k$ to have as successive endpoints $E_k$ all the vertices in $F=\pi^{-1}([z,b])$ in some given order (following neighbours in the Cayley graph of $\Z \times \Z_2$). This enables us to construct successively an associated sequence of elements $\{g_k\}_{k=0}^n$ obtained by following the paths $c_k$ and switching lamps on appropriately according to $g_{k-1}$ on all points of $\Z \times \Z_2$ except at $E_k$ where the lamp is switched on to $\ell_0$. This yields eventually an element $g_m$ with lamp configuration $f_m=f'$. There remains to follow a similar path not changing the lamp configuration but bringing the endpoint to $(x,\e')$.

The argument in the proof of Lemma \ref{obs1} ensures
\[
\forall k \geq 1, \left| |g_k|-\ell(c_k)\right|\leq 2.
\]
Combining the two last inequalities gives the lemma.
\end{proof}

The previous lemma is already sufficient for the proof of bounded retreat depth, but let us first move on to the other main ingredient in the proof of Theorem \ref{techelle}.

\begin{lemma}\label{tank}
Assume $g=(z,\e,f) \in (\Z \times \Z_2)\wr L$ is such that $z\geq 0$. Then, there exists an integer $B\geq 0$ and a path $\{g_k\}_{k=0}^n$ such that $g_0=g$ and $g_n=(0,0,f')$, where the function $f'$ is defined for some $\ell_0 \in L\setminus \{e\}$ by
\[
\forall \e' \in \Z_2, f'(x,\e')=\left\{\begin{array}{ll}
\ell_0 & \forall x \in [0,B], \\
e & \forall x >B, \\
f(x,\e') & \forall x<0.
\end{array} \right.
\]
satisfying that 
\[
\forall 0 \leq k \leq n, |g|-5\leq |g_k| \leq|g|+5.
\]
\end{lemma}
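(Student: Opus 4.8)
\emph{The plan.} The aim is to walk the lamplighter home to the origin and, at the same time, normalise the positive part of the configuration into the solid strip $[0,B]\times\Z_2$ filled with $\ell_0$, the integer $B$ being chosen so that the two endpoints have essentially equal length. By Proposition \ref{propTSP} together with Lemma \ref{obs2}, when the lamplighter sits at the origin the word length of a configuration whose rightmost lit column is $B$ is $2B+2$ plus the (unchanged) cost of the negative part, up to a bounded error: covering the solid strip $[0,B]\times\Z_2$ from and back to the origin is a round trip visiting every vertex. Since the part $x<0$ is never touched, I would pick $B$ to be the integer for which $2B+2$ matches the positive cost of $g$; in the sparse case this is $B\approx b-\tfrac{z}{2}$, and in general $2B+2$ differs from $|g|$ by at most a constant. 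As $z\ge 0$ and $b\ge z$ one gets $0\le B\le b$, so $B\ge 0$ and the target $g_n$ already lies within the allowed band of $|g|$.

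\emph{Setting up the two controls.} First I would apply Lemma \ref{banane} to fill $[z,b]\times\Z_2$ into a solid block of $\ell_0$'s and to move the lamplighter to $(b,0)$, staying in $[\,|g|-5,|g|+3\,]$. This solid block serves as a cost-free ``highway'': by Lemma \ref{obs2} the word length is, up to a bounded error, insensitive to the lamplighter's position inside a maximal solid block, so the lamplighter may cross $[z,b]\times\Z_2$ without changing the length by more than a constant. The second ($\Z_2$) line provides the complementary tool. By the bookkeeping behind Lemma \ref{obs1}, switching a lamp of the upper line in a column that the travelling salesman path visits only once alters the length by at most $2$ (it forces, or removes, one rung detour). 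I thus have one control of size $O(1)$ for repositioning the lamplighter along the highway and one of size at most $2$ for fine tuning, exactly as in the proofs of Lemmas \ref{obs1} and \ref{obs2}.

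\emph{The sweep and the main obstacle.} The heart of the argument is a leftward sweep carrying the lamplighter from $(b,0)$ to $(0,0)$ while (i) solidifying the still-sparse columns $[0,z)$ to build the left end of the target block and (ii) retracting the right end from $b$ down to $B$ by switching off the columns $(B,b]$. The delicate point, and the main obstacle, is that operation (i) raises the length (a once-visited column is absorbed into the round trip) while operation (ii) lowers it, and the two take place at opposite ends of the configuration, so they cannot be performed simultaneously in place. I would therefore interleave them: use the solid highway to shuttle between the two ends at no length cost, and use the upper line as ballast to absorb the residual $\pm1$ fluctuations produced by the lamplighter's own steps, so that each elementary cycle of the sweep (extend on the left, retreat on the right, reposition) is length-neutral up to a constant. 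Tracking the running length through Lemma \ref{obs1} (line changes, $\pm4$) and Lemma \ref{obs2} (solid strips, $\pm1$) then shows it never leaves $[\,|g|-5,|g|+5\,]$, the choice of $B$ guaranteeing that the accumulated effect brings it back to $2B+2$. Once the lamplighter reaches the origin the whole positive part is a round-trip region, so a final use of Lemma \ref{obs2} clears any leftover ballast and makes the block exactly $[0,B]\times\Z_2$ without leaving the band, producing $g_n$.
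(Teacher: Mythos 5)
Your proposal is correct and matches the paper's proof in all essentials: solidify $[z,b]\times\Z_2$ via Lemma \ref{banane}, then sweep the lamplighter leftward to the origin while interleaving left-end extensions of the solid block (cost $+1$ each) with right-end retractions (cost $-2$ each), using Lemma \ref{obs2} to move through the solid block at bounded cost. The only (minor) difference is bookkeeping: the paper does not fix $B$ in advance but lets it emerge from an adaptive rule --- retract the right end exactly when the running length hits $|g|+3$ --- which is what certifies the stated $\pm 5$ band, whereas your a priori choice of $B$ with a fixed ``length-neutral cycle'' schedule would need the same adaptivity (rather than the extra ballast mechanism) to nail those precise constants.
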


This lemma asserts that starting from some $g$, we can replace the positive lamp configuration and the position $z$ by a configuration with fixed value $\ell_0$ and position $0$, staying in an annulus of thickness $\leq 10$. Of course a similar statement holds for the negative configuration by symmetry.

\begin{proof}
We construct by induction a sequence $\{h_j\}_{j=0}^z$ such that $h_j=(z-j,0,f_j)$ where 
\[
\forall \e' \in \Z_2, f_j(x,\e')=\left\{\begin{array}{ll}
\ell_0 & \forall x \in [z-j,B_j], \\
e & \forall x >B_j, \\
f(x,\e') & \forall x<z-j.
\end{array} \right.
\]
satisfies $|g|-5 \leq |h_j| \leq |g|+3$. The first term $h_0$ exists by Lemma \ref{banane}. To do the induction, assume $h_j$ is given. We construct $h_{j+1}$ as follows.

{\it Case 1:} If $|h_j|<|g|+3$, then move the lamplighter to $(z-j-1,0)$ turn the lamp on to $\ell_0$, move to $(z-j-1,1)$, turn on to $\ell_0$ move back to $(z-j-1,0)$. This is $h_{j+1}$ with $B_{j+1}=B_j$. The length may have been increased by $1$.

{\it Case 2:} If $|h_j|=|g|+3$. According to Lemma \ref{banane}, move the lamplighter to $(B_j,1)$, turn off the lamp there, move to $(B_j,0)$ turn off, move back to $(z-j,0)$. At this stage we reached an element $h'_j$ with $|h'_j|=|h_j|-2$. This new element qualifies for Case 1 with $B_j-1=B_{j+1}$. In the path connecting $h_j$ to $h'_j$, the length may be increased by at most $2$ according to Lemma \ref{obs2}.

For $j=z$, the element $h_z$ is our desired $g_n$. Observe that when we connect $h_j$ to $h_{j+1}$ the length may be increased by at most 1 in Case 1 and by at most 2 in Case 2.
\end{proof}
We are now ready to tackle the main result of this section.

\begin{proof}[Proof of Theorem \ref{techelle}]
Let us start with bounded retreat depth. Let $g=(z,\e,f)\in (\Z\times \Z_2)\wr L$ be an element of word length $n$. By symmetry, we may assume that $z \geq 0$.  By Lemma \ref{banane}, there is a path from $g$ to $g'=(b,\e,f')$ where $f'(x,\e')=e$ for all $x>b$ avoiding the ball of radius $n-6$. From $g'$, there is a path $\{g'_k\}_{\k \geq0}$ straightly connected to infinity given by $g'_k=(b+k,\e,f')$ of norm $|g'_k|=|g'|+k$. In particular, dead-ends may have at most retreat depth $5$.

Let $g \in S(n)^{\infty}$ be an element in the intersection of the sphere of radius $n$ and the infinite component of the complement of the ball of radius $n-1$. This implies that there is a path to some element $g_0$ of norm $|g_0|=m=n+12$. We claim that there exists two integers $A \leq 0 \leq B$ and a path from $g_0$ to $G=(0,0,F)$ where 
\[
\forall \e' \in \Z_2, F(x,\e')=\left\{\begin{array}{ll}
\ell_0 & \forall x \in [A,B], \\
e & \forall x \notin[A,B]. \\
\end{array} \right.
\]
Indead, we may assume $z\geq0$, apply Lemma \ref{tank} to reach $(0,0,f')$ and Lemma \ref{tank} again this time symmetrically in the case $z\leq0$. This gives a path from $g$ to $G$ in the annulus $S(m-10,m+10)$.

There remains to show that any two elements of the form $G$ for some $A$ and $B$ in $S(m-10,m+10)$ can be connected in $S(m-12,m+12)=S(n,n+24)$. This is done similarly to the proof of Lemma \ref{tank} using the key observation Lemma \ref{obs2}.
\end{proof}

In the proof of the boundedness of dead-ends, a thorough case checking indicates that the distance to identity does not decrease by more than $2$.
The sharp bound on dead-end depth seems to be $1$. Examples of such dead-ends are given by $(0,1,f_k)$ where 
\[
\forall \e' \in \Z_2, f_k(x,\e')=\left\{\begin{array}{ll}
\ell_0 & \forall x \in [-k,k], \\
e & \forall x \notin[-k,k]. \\
\end{array} \right.
\]
By looking at Lemma \ref{obs2}, one may check that one has to go through a element of length $|F|$ while the length of this element is $|F|+1$.

As for the connection thickness, the value is certainly much smaller than $24$. A careful look at the proof gives that $10$ is enough. The actual value is probably even lower.

\section{Lamplighter groups with infinite lamps}\label{sec:distortion}

In this section we consider a wreath product $\G \wr L$ with both $\G$ and $L$ infinite, and we assume that the group $L$ has no dead-end elements. We endow this group with the "switch or walk" generating set $S_\G \cup S_L$. The word distance is also related to a traveling salesman problem, by the 

\begin{proposition}\label{TSP2}
Let $g=(\g,f)\in \G \wr L$. The word norm for the generating set $S_\G \cup S_L$ is
\[
|g|=\sum_{x\in \G} |f(x)|_{S_L} +\textrm{length}(c),
\]
where $c$ is a solution to the travelling salesman problem in the Cayley graph of $(\G,S_\G)$ that starts in $e_\G$, visits all vertices in the support of $f$ and ends in $\g$.
\end{proposition}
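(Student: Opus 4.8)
The plan is to mirror the proof of Proposition \ref{propTSP}, decomposing any word representing $g$ into its ``walk'' and ``switch'' letters and bounding the two contributions independently. Let $w$ be any word in $S_\G \cup S_L$ representing $g=(\g,f)$. Reading the letters of $w$ from left to right, the walk letters (those from $S_\G$) trace a path $c_w$ in the Cayley graph of $(\G,S_\G)$ starting at $e_\G$ and ending at $\g$, while each switch letter (from $S_L$) modifies the lamp at the lamplighter's current position by right-multiplication by a single generator of $L$. I would show that the walk letters give at least $\textrm{length}(c)$ and the switch letters give at least $\sum_{x}|f(x)|_{S_L}$, and that an explicit word achieves both simultaneously.

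For the lower bound, first observe that a lamp at a site $x$ can be altered only while the lamplighter stands at $x$; since $f(x)\neq e_L$ for every $x$ in $\mathrm{supp}(f)$, the path $c_w$ must visit every vertex of $\mathrm{supp}(f)$. As $c_w$ runs from $e_\G$ to $\g$ and passes through $\mathrm{supp}(f)$, its length is at least that of an optimal travelling salesman path, i.e. the number of walk letters is at least $\textrm{length}(c)=d_{TS}(e_\G,\mathrm{supp}(f),\g)$. Second, the switch letters occurring while the lamplighter sits at a fixed site $x$, read in temporal order, form a word in $S_L$ whose product equals $f(x)$, because the lamp at $x$ starts at $e_L$, ends at $f(x)$, and no letter of $w$ affects the coordinate at $x$ except these. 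Hence there are at least $|f(x)|_{S_L}$ such letters, and summing over $x$ bounds the total number of switch letters below by $\sum_{x\in\G}|f(x)|_{S_L}$. Since $|w|$ is the sum of its numbers of walk and switch letters, taking $w$ geodesic yields $|g|\geq \sum_{x}|f(x)|_{S_L}+\textrm{length}(c)$.

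For the matching upper bound, I would build an explicit word of exactly this length. Traverse an optimal travelling salesman path $c$ using walk letters, and at the first time the lamplighter reaches each $x\in\mathrm{supp}(f)$, insert a geodesic word for $f(x)$ in $(L,S_L)$, of length precisely $|f(x)|_{S_L}$. Because $c$ visits every support vertex and terminates at $\g$, the resulting word represents $g$, and its length is $\textrm{length}(c)+\sum_{x}|f(x)|_{S_L}$. Combining the two inequalities gives the claimed equality.

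The only genuinely delicate point is the independence used in the lower bound: that switch letters at distinct sites may be counted separately, and that at each site they compose exactly to the target lamp value. This rests solely on the fact that a switch letter acts on the single coordinate of $f$ indexed by the current lamplighter position, which is the same observation underlying Proposition \ref{propTSP}; the only new feature here is that reaching $f(x)$ from $e_L$ costs $|f(x)|_{S_L}$ rather than a single move, since $S_L$ need not be all of $L$. I do not expect the standing hypothesis that $L$ has no dead-ends to enter this argument at all: it is required only for the later connectedness results, not for the length formula itself.
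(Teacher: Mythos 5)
Your proof is correct and takes essentially the same approach as the paper's: the lower bound rests on the observation that a letter of $S_L$ alters only the lamp at the lamplighter's current position while a letter of $S_\G$ moves the lighter by one (so the walk letters trace a path through $\mathrm{supp}(f)$ of length at least that of the travelling salesman solution, and the switch letters at each site compose to $f(x)$), and the upper bound comes from inserting geodesic $S_L$-words for each $f(x)$ along an optimal travelling salesman path. Your version simply spells out in detail what the paper states in two sentences, and your closing remark is also accurate: the no-dead-end hypothesis on $L$ is not used for the length formula, only for the later connectedness results.
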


\begin{proof}
Such a length is necessary because only the lamp at the position of the lighter is modified when we multiply by an element of $S_L$ and multiplying by an element of $S_\G$ moves the lighter by distance one only. Moreover given such a path, we can describe it by a word in $S_\G$ and we can insert the representative words of $f(x)$ in $S_L$ at the corresponding locations.
\end{proof}

\begin{theorem}\label{distortion}
Assume $\G$ and $L$ are infinite groups generated by finite sets $S_\G$ and $S_L$ respectively. Assume moreover that $(L,S_L)$ has no dead-end elements. Then in the group $\G \wr L$ with generating set $S_\G \cup S_L$, the connection thickness of spheres is $\leq 2$.
\end{theorem}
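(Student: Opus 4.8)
The plan is to show that any two elements of $S(n,2)^\infty$ can be connected by a path staying in $S(n,2)^\infty$, by steering both towards a common normal form. Since $\G$ and $L$ are infinite and $(L,S_L)$ has no dead-ends, every lamp value $f(x)$ can be increased in length by one generator while remaining ``extendable'', and the lamplighter can always walk further away in $\G$. The key idea is that a thickness of $2$ gives exactly enough room: one unit to modify a single lamp (cost $\pm 1$) and one extra unit of slack to absorb the fact that switching a lamp at the current position changes $|g|$ by at most one without the ability to come back for free.

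\textbf{Step 1: reduction to a canonical element.} Given $g=(\g,f)\in S(n,2)^\infty$, I would first use Lemma \ref{monot} to reduce to connecting elements of $S(n)\cap B(n-1)^{\comp,\infty}$ by paths in $S(n,2)^\infty$. Using Proposition \ref{TSP2}, $|g|$ decomposes as $\sum_x|f(x)|_{S_L}+\mathrm{length}(c)$. Because $g$ lies in the infinite component, there is a path to infinity; I would follow it so that the lamplighter position moves to a ``frontier'' vertex of the travelling-salesman path, where the lamplighter can step outward in $\G$ increasing $|g|$ by one at each step. The no-dead-end hypothesis on $L$ enters here: at any position the lamp there can be changed to a value of strictly larger $S_L$-length, so the lighter is never trapped.

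\textbf{Step 2: the two-step exchange move.} The heart of the argument is a local move showing that from any $g$ of length $n$ lying in the infinite component one can reach a neighbouring configuration without leaving $S(n,2)^\infty$. The point is: to alter a lamp or a position by one generator costs $\pm1$ in length, so a single such move stays in $S(n-1,\dots)$ at worst by one; but if a move would push below $n$, the no-dead-end property lets us first lengthen a lamp (going up to $n+1$), perform the adjustment (back to $n$ or $n+2$), and descend again — all within thickness $2$. I would package this as: any two elements of $S(n)\cap B(n-1)^{\comp,\infty}$ agreeing outside a single $\G$-coordinate are joined in $S(n,2)^\infty$, and then iterate coordinate by coordinate to reach a fixed canonical target (\eg all lamps trivial except a single long lamp placed far out along a fixed geodesic ray in $\G$, with the lighter at its tip).

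\textbf{Main obstacle.} The delicate point is verifying that the intermediate elements never drop into $B(n-1)$ \emph{and} never climb above $B(n+2)$ simultaneously, \ie that the two ``budget'' constraints can always be met at once. The danger is a configuration where decreasing one quantity forces another to increase past the allowed window; I expect this to require a careful ordering of the elementary moves (always lengthen before shortening, and always operate at the outermost active lamp so that the travelling-salesman length changes predictably by $\pm1$). The hypothesis that $\G$ is infinite guarantees there is always an unused outward direction to park excess length, and the hypothesis that $L$ has no dead-ends guarantees lamp values can be inflated on demand; combining these two degrees of freedom is what keeps the whole path inside the annulus of thickness $2$. Once the single-coordinate exchange move is established within the window $[n,n+2]$, connectivity of $S(n,2)^\infty$ follows by transitivity towards the common canonical element.
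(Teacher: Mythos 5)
Your overall strategy --- normalise within the annulus, transfer lamp mass towards a canonical element, and use the no-dead-end hypothesis on $L$ to inflate lamps so as to stay inside a window of width $2$ --- is the same as the paper's, but the elementary move you package it with has a genuine gap. You claim that any two elements of $S(n)\cap B(n-1)^{\comp,\infty}$ agreeing outside a single $\G$-coordinate can be joined in $S(n,2)^\infty$, and that iterating this coordinate by coordinate reaches the canonical form. There are two problems. First, if $g$ and $g'$ agree outside the site $x$ and have equal norm, then $|f(x)|_{S_L}=|f'(x)|_{S_L}$, and any path between them that only alters the lamp at $x$ forces that lamp value to travel from $f(x)$ to $f'(x)$ while its $S_L$-length stays in a window of width about $2$ (detours of the lighter only eat into this budget, since they lengthen the travelling-salesman part of the norm in Proposition \ref{TSP2}). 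That is precisely asking for connectivity of thin annuli in $(L,S_L)$ --- a property not implied by the absence of dead-ends and not among the hypotheses; the theorem is meant to apply, say, with $L=\Z\wr\Z$, where such connectivity is the kind of statement being proved, not assumed. Second, the iteration itself cannot be carried out by norm-preserving single-coordinate changes: turning a lamp off at one site drops the norm by $|f(x)|_{S_L}$ (plus possibly a travelling-salesman contribution), so to stay in $S(n,2)$ you must compensate \emph{simultaneously} at another site; the correct elementary move necessarily involves two coordinates at once.

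This two-coordinate transfer is exactly what the paper does, and it is the missing idea. Normalising to $|g|=n+1$, take a solution $c$ of the travelling salesman problem and transfer the lamp at the endpoint $c(t)$ to the penultimate vertex $c(t-1)$: the lamp at $c(t)$ is shrunk one generator at a time along the \emph{prefixes of a fixed reduced word} (so it only ever descends a geodesic in $L$ --- no annulus-connectivity in $L$ is needed), while the lamp at $c(t-1)$ is grown along a strictly increasing path $\{\l_s\}$, which exists because $L$ has no dead-ends; each deleted edge of $c$ is paid for by one extra generator in the lamp, keeping the norm constantly equal to $n+1$. The remaining care is the order of the four local moves (switch at $c(t)$, walk, switch at $c(t-1)$, walk back), which must be chosen according to whether the modified travelling-salesman path ending at $c(t-1)$ is longer, equal, or shorter by one than the original --- your heuristic ``always lengthen before shortening'' handles only the first case, and in the third case one must in fact walk first and lengthen second. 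Finally, your canonical form (a long lamp far out on a ray with the lighter at its tip) leaves you facing the same unsolved problem of joining two such elements whose lamp values differ; the paper instead connects its canonical element $(e_\G,\ell_0)$ to one \emph{fixed} target supported at a single neighbour $\g_1$ of $e_\G$, by the same prefix-decrease/increase transfer between the two adjacent sites, so that two arbitrary lamp values of equal length never need to be connected directly.
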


The idea of the proof is quite simple. We imagine the word norms of elements $f(x)$ are heigths of piles of bricks, and we consider the path solution to the associated travelling salesman problem. Then we move one by one the bricks at the endpoint of the path to the penultimate point. When this is done, we add an extra brick to allow us to shorten the path. By induction, we reduce the path to nothing and obtain a big pile of bricks at the origin. More precisely, here is a detailed proof.

\begin{proof}
Let $g=(\g,f)$ belong to $S(n,2)$, i.e. $n \leq |g| \leq n+2$. We can assume that $|g|=n+1$. Indeed, an element of length $n+2$ is always connected to one of length $n+1$, and if $|g|=n$, as $L$ has no dead end, we can multiply by an element in $S_L$ to increase the word norm of $f(\g)$ by one.

Now let $c=\{c(t)\}_{t=0}^T$ be a path solution to the travelling salesmen problem of Proposition \ref{TSP2}. In particular, $c(T)=\g$ and $c(0)=e_\G$.

We construct by induction a sequence $\{g_t\}_{t=T}^0$ such that $g_T=g$ and $\forall  0\leq t \leq T$, $g_{t-1}=\left(c(t-1),f_{t-1} \right)$ where
\[
f_{t-1}(x)=\left\{\begin{array}{ll}
f_t(x) & \textrm{if } x \notin \{c(t-1),c(t)\} \\
e_\G & \textrm{if } x =c(t), \\
\ell_{t-1} & \textrm{if } x =c(t-1), 
\end{array} \right.
\]
and $|\ell_{t-1}|=|f_t(c(t))|+|f_t(c(t-1))|+1$. We deduce from Proposition \ref{TSP2} that $\forall  0\leq t \leq T$, $|g_{t-1}|=|g_t|=n+1$ as at each step the length of the path decreases by one whereas the sum of word norms increases by one. Eventually $g_0=(e_\G, f_0)$ where the support of $f_0$ is reduced to the identity, and the value there $\ell_0$ has norm $n+1$.

We show that there is a path from $g_t$ to $g_{t-1}$ in the annulus $S(n,2)$. For this, we construct a sequence $\{h^t_s\}_{s=0}^S$ as follows. 

Let $w$ be  a reduced representative word for $f_t(c(t))$. We denote $w_s$ its prefix of length $|w|-s$. Moreover, as $L$ has no dead-ends, there exists a sequence $\{\l_s\}_{s\geq 0}$ in $L$ such that $\l_0=f(c(t-1))$ and $|\l_s|=|\l_0|+s$. We consider $h^t_s=(c(t),\f_s)$ where
\[
\f_s(x)=\left\{\begin{array}{ll}
f_t(x) & \textrm{if } x \notin \{c(t-1),c(t)\} \\
w_s & \textrm{if } x =c(t), \\
\l_s & \textrm{if } x =c(t-1), 
\end{array} \right.
\]
Clearly $|h^t_s|=n+1$ for all $t,s$. For $S=|w|$, we have $w_S=e_\G$. From $h^t_S$, move to $c(t-1)$ (the length decreases by one, as we delete the endpoint in a travelling salesman problem) and switch to $\l_{S+1}$ (the length increases by one). We reached $g_{t-1}$.

There remains to show that we can move from $h^t_s$ to $h^t_{s+1}$ in $S(n,2)$. While $s<S=|w|$, this is done as follows, depending on the relationship between the solutions of the travelling salesman problem for $g_t=(c(t),f_t)$ (the same as for $h^t_s$ for all $s$) and the modified problem starting in $e_\G$, visiting all points in $\textrm{supp}(f_t)\cup\{c(t)\}$ and ending in $c(t-1)$. As $c(t-1)$ and $c(t)$ are neighbours, the length of the solutions differ by at most one. We discuss the three possibilities.

{\it Case 1:} If the path modified is longer by one, switch the lamp at $c(t)$ to $w_{s+1}$ (length decreases by one), move to $c(t-1)$ (length increases by one), switch the lamp at $c(t-1)$ to $\l_{s+1}$ (length increases by one), move back to $c(t)$ (length decreases by one).

{\it Case 2:} If the path modified has the same length, switch the lamp at $c(t)$ to $w_{s+1}$ (length decreases by one), move to $c(t-1)$ (length remains equal), switch the lamp at $c(t-1)$ to $\l_{s+1}$ (length increases by one), move back to $c(t)$ (length remains equal).

{\it Case 3:} If the path modified is shorter by one, move to $c(t-1)$ (length decreases by one), switch the lamp at $c(t-1)$ to $\l_{s+1}$ (length increases by one), move back to $c(t)$ (length increases by one), switch the lamp at $c(t)$ to $w_{s+1}$ (length decreases by one).

This finishes the main part of the proof, connecting an arbitrary element of $S(n,2)$ to an element of the form $g_0=(e_\G,f_0)$ with $f_0$ supported on the identity taking value there $\ell_0$ of norm $n+1$. 

There remains to show that any two such elements are connected in $S(n,2)$. To do so, we fix an element $\ell_1 \in L$ of norm $n$ and an element $\g_1\in \G$ of norm $1$ and show that $g_0$ is connected to $(\g_1,f_1)$ where $f_1$ is supported on $\g_1$ taking value $\ell_1$ there. This is done as above, moving back and forth between $e_\G$ and $\g_1$ and switching to decreasing length prefixes of $\ell_0$ and increasing length prefixes of $\ell_1$.
\end{proof}

Note that the annulus $S(n,1)$ in the group $\Z \wr \Z$ with usual "switch or walk" generated set is not connected. 

\begin{proposition}[Theorem \ref{ZwrZ-intro}]
For the group $\Z\wr\Z$, endowed with the usual "switch or walk" generating set, there exists two constants $c_1,c_2>0$ such that the diameter of the graph $S(n,2)$ satisfies:
\[
\forall n \geq 0, c_1n^2 \leq \mathrm{diam}S(n,2) \leq c_2n^2.
\]
\end{proposition}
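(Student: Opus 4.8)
The plan is to treat $\Z\wr\Z$ with the ``switch or walk'' set through the travelling salesman description of Proposition \ref{TSP2}: an element $g=(\g,f)$ has $|g|=\sum_{x\in\Z}|f(x)|+\mathrm{length}(c)$, where $c$ is an optimal tour on $\Z$ from $0$ through $\mathrm{supp}(f)$ to $\g$. I think of $|f(x)|$ as the height of a pile of bricks at $x$, and write $P(g)=\sum_x|f(x)|$ for the total number of bricks and $T(g)=\mathrm{length}(c)$ for the tour, so that $|g|=P(g)+T(g)$ and staying inside $S(n,2)$ amounts to keeping $P+T\in[n,n+2]$. Connectedness of $S(n,2)$ is already granted by Theorem \ref{distortion} (here $\G=L=\Z$ and $\Z$ has no dead ends), so only the two diameter estimates remain.

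For the upper bound I would run the reduction of Theorem \ref{distortion} and simply bookkeep its cost. That argument connects any $g\in S(n,2)$, inside $S(n,2)$, to a standard element $g_0=(0,\ell_0\delta_0)$ carrying a single pile of height $n+1$ at the origin, by sweeping the accumulated pile backwards along the tour $c$ one brick at a time. Crossing a single edge of $c$ costs a number of generators proportional to the height of the pile being carried; this height never exceeds $P\le n+2$, while $c$ has $T\le n+2$ edges, so the whole reduction costs $O(n^2)$ steps. Two standard elements are then joined in $O(n)$ steps by ferrying the $\asymp n$ bricks across one edge one unit at a time. Therefore $\mathrm{diam}\,S(n,2)\le c_2n^2$.

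For the lower bound I would exhibit two elements that are genuinely quadratically far, for instance $A=(0,n\delta_0)$ (all bricks piled at the origin) and a spread element $g^+=(0,f^+)$ with $f^+(x)=1$ for $1\le x\le k$, $k\asymp n$, both of norm $\asymp n$. The natural monovariant is the first moment $\Phi(g)=\sum_{x}|f(x)|\,x$, for which $\Phi(A)=0$ while $\Phi(g^+)=\tfrac12 k(k+1)\asymp n^2$. A walk leaves $\Phi$ unchanged and a switch at position $\g$ changes it by $\pm\g$; the point is that the window $P+T\in[n,n+2]$ forbids decreasing $\Phi$ quickly. Turning off a brick at the far end $b=\max\mathrm{supp}(f)$ lowers both $P$ and $T$, so to stay above level $n$ one is forced to re-create a brick near $b$; the genuine decrease of $\Phi$ thus comes only from migrating bricks inward, and the window lets this migration proceed essentially one unit of distance at a time. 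Summing the distances each brick must travel to reach the origin gives $\Omega(n^2)$ steps, whence $\mathrm{diam}\,S(n,2)\ge c_1n^2$.

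The main obstacle is exactly this lower bound, and it cannot be met by any single bounded-increment potential: since $S(n,2)$ has bounded degree, $d_{S(n,2)}(\,\cdot\,,g_0)$ is $1$-Lipschitz and realises the diameter, yet any function that is simultaneously $O(1)$-Lipschitz in $\g$ and in each coordinate $|f(x)|$ differs by only $O(n)$ between $A$ and $g^+$; so the quadratic gap must be extracted from the restricted edge set of the induced subgraph, not from $\Phi$ in isolation. Concretely I would make the ``inward migration is serial'' heuristic rigorous by running the moment $\Phi$ together with the tour length $T$, bounding, over any block of consecutive moves, the decrease of $\Phi$ by a constant multiple of the number of walk steps in that block, using that a net decrease of $\Phi$ near the right end is throttled by the obligation to keep $P+T\ge n$. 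Handling the two-sided configurations (where $a=\min\mathrm{supp}(f)<0$) and the bookkeeping of these blocks is the technical heart of the argument; the sprawl estimate announced after the theorem should then follow by applying the same moment computation to a uniformly random pair.
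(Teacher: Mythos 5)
Your reduction of connectedness to Theorem \ref{distortion} and your upper bound are correct and are exactly the paper's: the paper likewise bookkeeps the cost of the reduction in Theorem \ref{distortion} (each back-and-forth $\{h^t_s\}_{s=0}^{S}$ has length at most $4(|\ell_t|+1)$, and summing over $t$ gives $\mathrm{diam}\, S(n,2) \leq 6n^2+\tfrac{n}{2}$ up to the final connection between two standard elements). The genuine gap is in the lower bound, exactly at the step you defer as ``the technical heart'': the block inequality you propose --- that the decrease of $\Phi(g)=\sum_x |f(x)|\,x$ over a block of moves is at most a constant multiple of the number of walk steps in it, with the tour \emph{length} $T$ as the only correction --- is false. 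Take $g=(b,f)$ with $f\equiv 2$ on $[0,b]$, no negative lamps, $z=b$ and $b$ tuned so that $|g|=3b+2=n+1$, and iterate the two moves: switch one brick off at the current position $z=b-j$ (this lowers $P$ by $1$ and leaves the tour $0\to b\to z$ unchanged, so the norm drops to $n$, which is allowed), then walk one step left (the tour lengthens by one, restoring norm $n+1$). After $k\asymp n$ iterations you have stayed inside $S(n,2)$ throughout, used only $2k\asymp n$ moves of which $k$ are walk steps, yet $\Phi$ has dropped by $\sum_{j=0}^{k-1}(b-j)\asymp n^2$ while $T$ grew only by $k$. So any potential of the form $\Phi+CT$ with $C$ constant also collapses quadratically along linearly many edges of $S(n,2)$, and your heuristic that turning off a far brick ``forces one to re-create a brick near $b$'' overlooks the other way the window allows to stay above level $n$: lengthening the tour.

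The paper closes precisely this hole by tracking the \emph{spatial profile} of the tour rather than its length: it defines the mass $\mu_x(g)$ as $|f(x)|$ plus the number of times the optimal salesman path visits $x$ (the negative half-line being lumped into $\mu_0$), so that $|g|=\sum_{x\geq 0}\mu_x(g)$. Since $S(n,2)$ is bipartite between $S(n+1)$ and $S(n)\cup S(n+2)$, one samples a path between the two extreme elements at even times, where the total mass is constantly $n+1$; the key observation is that between consecutive even times $\mu$ either is unchanged or moves exactly one unit of mass to an adjacent site. Hence the first moment of $\mu$ changes by at most $1$ per pair of moves --- in the counterexample above, the lamp moment drops by $b-j$ but the visit-count moment rises by $b-j-1$ --- and since the endpoints (the paper takes $(n+1,Id)$ and $(0,\delta_0^{n+1})$; your $A$ and $g^+$ would serve as well) have first moments differing by $\asymp n^2$, the bound $\mathrm{diam}\,S(n,2)\geq\tfrac{n^2}{2}$ follows. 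So your meta-observation is right that the quadratic gap must be extracted from the restricted edge set and cannot come from a potential that is Lipschitz in each coordinate; but the conserved, transported quantity must be the full distribution $\mu$ (lamps plus per-site visit counts), not the pair $(\Phi,T)$, and this is the idea your proposal is missing.
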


As follows from the proof below, the upper bound applies to any group $\G \wr L$ with $\G,L$ infinite and $L$ without dead-ends. We believe the lower bound is also valid in this more general context.

\begin{proof}
The upper bound is given by the proof of Theorem \ref{distortion}. Indeed, the path $\{h^t_s\}_{s=0}^S$ has length less than $4(|\ell_t|+1)$, where 
\[
|\ell_t|=\sum_{t'=t}^T |f(c(t'))|+T-t.
\]
As $T\leq n$ and $\sum_{t=0}^T |f(c(t))|\leq n$, we immediately deduce that the distance between $g$ and an element of type $g_0$ is less than $6n^2+\frac{n}{2}$. A similar distance connects $g_0$ to $(\g_1,f_1)$.

In order to prove the lower bound, we introduce the following notion.  The mass distribution associated to an element $g=(z,f)$ of $\Z \wr \Z$ is the sequence $\m(g)=(\m_x(g))_{x \geq 0}$ given by 
\[
\begin{array}{ll} 
\textrm{For } x \geq 1, & \m_x(g)= \mathds{1}_{[0,a]}(x)+\mathds{1}_{[z,a)}(x)+\left|f(x)\right|,\\
\textrm{For } x=0, & \m_x(g)= 2b-z_-+\sum_{x\leq 0}\left|f(x)\right|,
\end{array}
\]
where as in \S{}\ref{llline} we write $a=\max\left(\textrm{supp}(f)\cup\{0,z\} \right)$, $b=\min\left(\textrm{supp}(f)\cup\{0,z\} \right)$ and $z_-=\min\{z,0\}$. In other words, $\m_x(g)$ which we call the mass of $g$ at $x$ is the length of $f(x)$ plus the number of times the path solution to the associated travelling salesman problem visits $x$. By construction $|g|=\sum_{x\geq 0} \m_x(g)$.

Denote $g_{\textrm{end}}=(0,\d_0^{n+1})$ where $\d_0^{n+1}(x)=0$ for $n \neq 0$ and $\d_0^{n+1}(0)=n+1$ and $g_{\textrm{start}}=(n+1,Id)$, where $Id(x)=0$ for all $x \in \Z$. Both elements belong to $S(n+1)$ and we have $\m_x(g_{\textrm{start}})=(0,1,\dots,1,0,0\dots)$ and $\m_x(g_{\textrm{end}})=(n+1,0,\dots)$.

The graph $S(n,2)$ is bipartite between $S(n+1)$ and $S(n)\cup S(n+2)$ (this is inherited from $\Z$ and would not hold in arbitrary groups $\G$ and $L$). Therefore we may assume that at even times, a path between $g_{\textrm{start}}$ and $g_{\textrm{end}}$ takes values in $S(n+1)$. Let $g_t=(z_t,f_t)$ denote the $t^{\textrm{th}}$ even position of such a path. We observe that either $\m(g_t)$ and $\m(g_{t+1})$ co\"{i}ncide or the second is obtained from the first by lowering by one the mass at $x$ and increasing by one the mass at $x-1$ or at $x+1$. It follows that the length of a path in $S(n,2)$ between $g_{\textrm{start}}$ and $g_{\textrm{end}}$ has length at least $\frac{n^2}{2}$.
\end{proof}

\begin{remark}
One can check that for the group $(\Z \times \Z_2)\wr L$ considered in \S{}\ref{sec:ladder}, the diameter of $S(n,24)^\infty$ is also comparable with $n^2$.
\end{remark}

\begin{remark}\label{disthyp}
It is a fairly standard fact that, given two geodesic rays $\g_i: \N \to \Gamma$ in a hyperbolic group, then the length of the path (if it exists!) between $\gamma_1(t)$ and $\gamma_2(t)$ which avoids $B_{t-2}$ grows exponentially in $t$ (see, for example, \cite{Ger}). If the group is one-ended and finitely presented, this implies that most elements of $S(n,r)^\infty$ will be at distance $\geq K e^{Ln}$ for some $K,L>0$. On the other hand, since these spheres contain at most exponentially many elements, one has that the diameter is $\leq K' e^{L'n}$ (for some $K',L'>0$). By considering Cayley graphs which are triangulations of the hyperbolic plan, this is as sharp as possible: the sphere will be a cycle so the diameter is half the number of vertices.
\end{remark}

\section{General observations about spheres and dead-ends}\label{sgl}

\subsection{Direct products, spheres and retreat depth}\label{sdirprod}

If $G_1$ and $G_2$ are two finitely generated groups, then there are two ``natural'' generating sets for their direct product: $S_1 \perp S_2 := (S_1 \times \{e_2\}) \cup (\{e_1\} \times S_2 )$ and $S_1 \vee S_2 := (S_1 \cup \{e_1\} )\times (S_2 \cup \{e_2\} )$. The former will be referred to as the ``summed'' set and the latter as the ``product''.

With the summed generating set, one has $|(g_1,g_2)|_{S_\perp} = |g_1|_{S_1} + |g|_{S_2}$.  

With the product generating set, one has $|(g_1,g_2)|_{S_\vee} = \max( |g_1|_{S_1}, |g|_{S_2})$

For example, if $G_1=G_2 = \Z$ and $S_1 = S_2 = \{\pm 1\}$ then the summed generating set is the usual generating set of $\Z^2$ whereas the product generating set gives the ``king's move'' generating set.

\begin{lemma}\label{tconnprodsomm-l}
Assume $G_1$ and $G_2$ are infinite finitely generated groups. Then, for the summed generating set, $S_{G_1 \times G_2}(n,1)^\infty$ is connected.
\end{lemma}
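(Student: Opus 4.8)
The plan is to use Lemma \ref{monot} to reduce the statement to connecting length-$n$ elements, build a skeleton of reference points lying on fixed geodesic rays, and then transport an arbitrary element of $S(n)^\infty$ onto that skeleton while never leaving the thin annulus $S(n,1)^\infty$. Since $G_1$ and $G_2$ are both infinite, $G=G_1\times G_2$ is one-ended, so $S(n,1)^\infty$ is well defined. Using K\"onig's lemma, fix geodesic rays $e=x_0,x_1,x_2,\dots$ in $G_1$ and $e=y_0,y_1,y_2,\dots$ in $G_2$ with $|x_k|=|y_k|=k$, and set $P_k=(x_k,y_{n-k})$ for $0\le k\le n$. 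Each $P_k$ lies in $S(n)^\infty$, since one escapes to infinity along the first ray, $(x_{k+s},y_{n-k})$ having length $n+s$. First I would connect consecutive references: the two-step path $P_k=(x_k,y_{n-k})\to(x_{k+1},y_{n-k})\to(x_{k+1},y_{n-k-1})=P_{k+1}$ uses only lengths $n$ and $n+1$ and stays connected to infinity, so all $P_k$ sit in one component of $S(n,1)^\infty$. By Lemma \ref{monot} (with $\ell=0$) it then suffices to connect an arbitrary length-$n$ element $g=(g_1,g_2)\in S(n)^\infty$, with $|g_1|=n_1$, $|g_2|=n_2$ and $n_1+n_2=n$, to some $P_k$; note a length-$(n+1)$ element of $S(n,1)^\infty$ is in any case adjacent in $S(n,1)^\infty$ to a length-$n$ one via a geodesic step towards $e$.

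The heart of the argument is a transport (``pouring'') step which drains one coordinate onto the ray of the other. In the representative case where the second coordinate escapes at its own level, fix a geodesic $g_1=v_0,v_1,\dots,v_{n_1}=e$ in $G_1$ (so $|v_i|=n_1-i$) together with an escape path $g_2=w_0,w_1,\dots$ in $G_2$ with $|w_j|\ge n_2$, $|w_j|\to\infty$. I would walk along this escape and couple the first coordinate to it, keeping, whenever the escape is at $w_j$ with $|w_j|\le n$, the position $(v_{i_j},w_j)$ with $i_j=|w_j|-n_2\in[0,n_1]$; this holds the total length identically equal to $n$. Passing from $(v_{i_j},w_j)$ to $(v_{i_{j+1}},w_{j+1})$ is performed in two elementary moves whose order is chosen \emph{adaptively}: move the escaping coordinate up first when $|w_{j+1}|>|w_j|$, and move the first coordinate up first when $|w_{j+1}|<|w_j|$, so that the intermediate vertex has length $n+1$ rather than $n-1$, keeping the whole path inside $\{n,n+1\}$. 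At the first time $j^\star$ with $|w_{j^\star}|=n$ one reaches the fully concentrated element $(e,w_{j^\star})$ of length $n$, which I would then connect to the skeleton by the down--up zig-zag that lowers the second coordinate along a geodesic to $e$ while raising the first along $\rho_1$, landing on $P_n=(x_n,e)$.

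The step I expect to be the main obstacle is keeping this transport honest, on two counts. One must check that the adaptive ordering really confines every vertex of the path to lengths $n$ and $n+1$, which is a direct verification; and one must check that every intermediate vertex genuinely lies in $S(n)^\infty$, i.e. remains connected to infinity. The latter is the delicate point, because a coordinate of $g$ need \emph{not} escape at its own level: the retreat depth $\rd$ of a product element can be strictly smaller than that of either factor, so one cannot in general assume the clean ``second coordinate escapes'' hypothesis used above. The way around this is that the \emph{same} coupling, continued for $j>j^\star$ with the first coordinate frozen at $e$, is itself a ray staying at length $\ge n$ and tending to infinity, hence certifies that each visited vertex belongs to $S(n)^\infty$; in the general case one runs the coupling against a product escape path of $g$ rather than a single-coordinate escape, and the genuinely careful bookkeeping lies in organizing this escape together with its membership certificate uniformly along the thin band. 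Once the band confinement and this membership check are secured, combining the transport with the already-established interconnection of the $P_k$ yields the connectedness of $S(n,1)^\infty$.
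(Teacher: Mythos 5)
Your outline coincides in strategy with the paper's proof --- your skeleton $P_k$ plays the role of the paper's ``canonical elements'' $(h_1,e_{G_2})$, $(e_{G_1},h_2)$, and your pouring step is its sliding step --- but you have actually proved the lemma only under the extra hypothesis that one coordinate escapes at its own level (say $g_2\in S_{G_2}(n_2)^\infty$), and, as you yourself observe, this can fail for $g=(g_1,g_2)\in S(n)^\infty$. The part you defer as ``careful bookkeeping'' is precisely the content of the lemma. Concretely: in the general case the only available datum is a product escape path $(g_1^{(k)},g_2^{(k)})_{k\geq 0}$ that moves \emph{both} coordinates, so there is no fixed geodesic $v_0,\dots,v_{n_1}$ in $G_1$ and no fixed escape $w_0,w_1,\dots$ in $G_2$ to couple against; the shadow of coordinate $i$ has to live on a geodesic $p_{g_i^{(k)}}$ from $e$ to the \emph{current} point $g_i^{(k)}$, and when the escape path takes a step in coordinate $i$ the new geodesic $p_{g_i^{(k+1)}}$ bears no relation to the old one, so ``run the coupling against the product escape path'' is not yet a defined procedure. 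The paper supplies the missing mechanism: before the escape path is allowed to step in coordinate $i$, slide the shadow $h_i$ all the way up $p_{g_i^{(k)}}$ until it coincides with $g_i^{(k)}$ itself, compensating each up-step by a down-step of the other shadow along its geodesic so that the total length stays in $\{n,n+1\}$; the escape step is then a single generator applied directly to the shadow, which lands at the endpoint of the new geodesic. If the compensating shadow ever hits the identity one is at a canonical element, and this is forced to happen by the time $\min\bigl(|g_1^{(k)}|,|g_2^{(k)}|\bigr)\geq n+2$, which escape to infinity guarantees. Without this device your argument covers only the special case, and the special case does not imply the general one (as you note, the retreat depth of a product element can be strictly smaller than that of each factor).

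There is also a concrete error in your membership certificate for the special case: the claim that the coupling ``continued for $j>j^\star$ with the first coordinate frozen at $e$'' stays at length $\geq n$ is false as stated, since for $j>j^\star$ you only know $|w_j|\geq n_2$, so $(e,w_j)$ may re-enter $B(n-1)$. This is repairable --- either keep the coupling alive with $i_j=\min(|w_j|-n_2,\,n_1)$, which pins the length at $\max(n,|w_j|)$ and still tends to infinity, or certify each visited vertex by continuing inside the annulus to $(e,w_{j^\star})$, across your zig-zag to $P_n=(x_n,e)$, and out along the ray via $(x_{n+s},e)$ of length $n+s$ --- but as written the certificate does not hold. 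Note that in the paper's version no separate certificate is needed at all: the entire constructed path has lengths in $\{n,n+1\}$ and terminates at a canonical element from which escape along a single-coordinate ray is immediate, so every vertex of the path automatically lies in $B(n-1)^{\comp,\infty}$.
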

\begin{proof}
First, note that elements of the form $(e_{G_1}, h_2)$ or $(h_1, e_{G_2})$ (where $|h_i|_{S_i} \in [n,n+1]$) are all connected together. Let $(g_1,g_2)$ and be an element of $S_{G_1 \times G_2}(n,1)^\infty$. We will show it is connected to an element of the former type (call these ``canonical elements'').

To do so, fix for each element $g_i \in G_i$ a geodesic $p_{g_i}$ from $e_{G_i}$ to $g_i$. By assumption there is a path from $(g_1,g_2)$ to infinity, say $(g_1^{(k)},g_2^{(k)})_{k=0}^\infty$. We will use this path to define another path $(h_1^{(k)},h_2^{(k)})_{k=1}^N$ which ends in a ``canonical'' element. For convenience, we will only produce a sequence $(h_1^{(k)},h_2^{(k)})_{k=1}^N$ of elements so that there is a path in $S_{G_1 \times G_2}(n,1)^\infty$ between two successive elements of the sequence.

The algorithm is as follows. Start at $(g_1,g_2) = (h_1^{(0)},h_2^{(0)})$. Each $h_i^{(k)}$ will belong to the geodesic $p_{g_i^{(k)}}$. 
Given $(h_1^{(k)},h_2^{(k)})$, go to $(h_1^{(k+1)},h_2^{(k+1)})$ as follows:
\begin{enumerate}
 \item find the $i$ such that $g_i^{(k)} \neq g_i^{(k+1)})$ (there is only one coordinate which changes). Without loss of generality, assume $i=1$.
 \item If $h_1^{(k)} = g_1^{(k)}$, go directly to (3). If $h_1^{(k)} \neq g_1^{(k)}$, slide the first coordinate ``up'' the geodesic $p_{g_1^{(k)}}$ while sliding the second coordinate ``down'' the geodesic $p_{g_2^{(k)}}$ (in order to keep the sum of the length of coordinates in $[n,n+1]$). Depending on whether $|g_1^{(k)}| \geq |g_1^{(k+1)})|$ or $|g_1^{(k)}| < |g_1^{(k+1)}|$, the total sum of the coordinates at the end of this step should be $n+1$ or $n$. If at any moment, it is no longer possible to slide the second coordinate down, then we are at a canonical element.
 \item Move the first coordinate to $g_1^{(k+1)}$. This is the element $(h_1^{(k+1)},h_2^{(k+1)})$.
\end{enumerate}
Note that it is impossible to continue this algorithm forever: the impossibility to slide down will occur at the latest when $\min( |g_1^{(k)}|, |g_2^{(k)}| ) \geq n+2$ (which is bound to happen since this is a path to infinity). In fact, by sliding up a coordinate and down the other, one can reach a canonical element as soon as $\max( |g_1^{(k)}|, |g_2^{(k)}| ) \geq n+1$.
\end{proof}

\begin{lemma}\label{tconnprodprod-l}
For the product generating set, $S_{G_1 \times G_2}(n,0) = S_{G_1}(n) \times B_{G_2}(n) \cup B_{G_1}(n) \times S_{G_2}(n)$. Furthermore, this is connected as soon as both $G_1$ and $G_2$ are infinite.
\end{lemma}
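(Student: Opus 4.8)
The plan is to dispatch the set identity first and then reduce the connectedness to a single ``corner-swapping'' maneuver. For the identity, I would simply invoke the formula $|(g_1,g_2)|_{S_\vee}=\max(|g_1|_{S_1},|g_2|_{S_2})$ recorded just above: an element lies on $S_{G_1\times G_2}(n,0)$ exactly when this maximum equals $n$, which happens precisely when either $|g_1|_{S_1}=n$ and $|g_2|_{S_2}\le n$, or $|g_1|_{S_1}\le n$ and $|g_2|_{S_2}=n$. These two conditions describe $S_{G_1}(n)\times B_{G_2}(n)$ and $B_{G_1}(n)\times S_{G_2}(n)$ respectively, giving the stated decomposition.

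The key geometric observation for connectedness is that each factor contributes connected slices that sit inside the sphere. Recall that the balls $B_{G_i}(n)$ are connected subgraphs of $Cay(G_i,S_i)$, since any geodesic issued from $e_i$ stays inside the ball. Now if $|g_1|_{S_1}=n$, then for every $q\in B_{G_2}(n)$ one has $|(g_1,q)|_{S_\vee}=\max(n,|q|_{S_2})=n$, so the whole horizontal slice $\{g_1\}\times B_{G_2}(n)$ is contained in $S_{G_1\times G_2}(n,0)$; moreover it is connected, because the moves $(e_1,s_2)$ with $s_2\in S_2$ are genuine $S_\vee$-edges and turn this slice into a copy of the connected graph $B_{G_2}(n)$. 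Symmetrically, when $|g_2|_{S_2}=n$ the vertical slice $B_{G_1}(n)\times\{g_2\}$ is a connected subgraph of the sphere.

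With these slices in hand, I would first show that all elements with $|g_1|_{S_1}=n$ lie in one connected component. Given such a $(g_1,g_2)$, slide inside its horizontal slice to $(g_1,e_2)$; this is the point where the infiniteness of $G_2$ enters, as it guarantees a witness $w\in G_2$ with $|w|_{S_2}=n$. Sliding further inside the same horizontal slice to $(g_1,w)$ lands us on the vertical slice $B_{G_1}(n)\times\{w\}$, along which the first coordinate may now be moved freely; in particular we reach $(g_1',w)$ for any prescribed $g_1'\in S_{G_1}(n)$, and may then slide down the horizontal slice of $g_1'$ to any $g_2'\in B_{G_2}(n)$. This connects $(g_1,g_2)$ to an arbitrary element of $S_{G_1}(n)\times B_{G_2}(n)$. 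By the symmetric argument all elements with $|g_2|_{S_2}=n$ form a single component as well, and a corner element $(g_1,w)$ with $|g_1|_{S_1}=|w|_{S_2}=n$ (which exists precisely because both groups are infinite) belongs to both families, so the two components coincide and the sphere is connected.

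The main obstacle to keep in mind is that the individual spheres $S_{G_i}(n)$ need not be connected, so one cannot move within a single factor's sphere directly. The whole point of routing through a full-length element $w$ in the opposite coordinate is to temporarily gain access to an entire connected ball in the first factor, and this detour is available exactly when the opposite group is infinite, which is where the hypothesis is used.
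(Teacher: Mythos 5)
Your proof is correct and takes essentially the same route as the paper: the set identity follows from $|(g_1,g_2)|_{S_\vee}=\max\bigl(|g_1|_{S_1},|g_2|_{S_2}\bigr)$, and connectedness is obtained by sliding one coordinate along a geodesic while the other coordinate, of length exactly $n$, pins the maximum at $n$, with infiniteness of both factors supplying the length-$n$ witnesses. The only cosmetic difference is that you route through corner elements $(g_1',w)$ with both coordinates of length $n$, whereas the paper's hubs are the axis elements $(h_1,e_{G_2})$ and $(e_{G_1},h_2)$; the underlying moves are identical.
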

\begin{proof}
Recall that $(g_1,g_2) \in S_{G_1 \times G_2}(n,0)$ if and only if $\max \big( |g_i|_{S_i} \big) =n$. First, note, as in Lemma \ref{tconnprodsomm-l}, that elements of the form $(e_{G_1}, h_2)$ or $(h_1, e_{G_2})$ (where $|h_i|_{S_i} =n$) are all connected together. If $(g_1,g_2) \in S_{G_1 \times G_2}(n,0)$, then one of the $g_i$ is of length $n$. Thus, it is connected (inside $S_{G_1 \times G_2}(n,0)$) to one of the elements of the previous form by deleting the other factor.
\end{proof}

Recall that $\rd(G,S)$ is the retreat depth of $G$ for $S$. 
\begin{lemma}\label{tretdeprod-l}
Let $G_1$ and $G_2$ be infinite groups generated by the finite generating sets $S_1$ and $S_2$ respectively. Then
\[
\rd(G_1 \times G_2, S_1 \perp S_2) = \min\big(\rd(G_1,S_1), \rd(G_2,S_2) \big) \text{ and } \rd(G_1 \times G_2, S_1 \vee S_2) =0.
\]
\end{lemma}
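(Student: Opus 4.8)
The plan is to treat the two generating sets separately: for the product set the statement is elementary, for the summed set the upper bound is a short escape argument, and the lower bound is the real content.

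For $S_1\vee S_2$, recall $|(g_1,g_2)|_{S_\vee}=\max(|g_1|,|g_2|)$. Given $(g_1,g_2)$ I would assume without loss of generality that $|g_1|=\max(|g_1|,|g_2|)=n$, keep the first coordinate fixed, and let the second run along any path in $G_2$ going to infinity (such a path exists since $G_2$ is infinite, hence its Cayley graph is infinite, connected and locally finite). Every vertex $(g_1,g_2^{(k)})$ visited has norm $\max(n,|g_2^{(k)}|)\ge n$, so the path stays in $B(n-1)^{\comp}$ and leaves every ball; hence $(g_1,g_2)$ lies in the infinite component of $B(n-1)^{\comp}$ and $\rd((g_1,g_2))=0$, giving $\rd(G_1\times G_2,S_1\vee S_2)=0$.

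For $S_1\perp S_2$, recall $|(g_1,g_2)|_{S_\perp}=|g_1|+|g_2|$. For the upper bound, given any $(g_1,g_2)$ of norm $n$ I would fix $g_2$ and move $g_1$ along a path realising $\rd(g_1)$, i.e. escaping to infinity while staying at norm $\ge |g_1|-\rd(g_1)$ (such a ray exists by definition of retreat depth and local finiteness). Then the vertices $(g_1^{(k)},g_2)$ stay at norm $\ge n-\rd(g_1)$ and escape, so $\rd((g_1,g_2))\le \rd(g_1)$; symmetrically $\rd((g_1,g_2))\le \rd(g_2)$. Hence $\rd((g_1,g_2))\le\min(\rd(g_1),\rd(g_2))$ and, taking the supremum over all $(g_1,g_2)$, $\rd(G_1\times G_2,S_1\perp S_2)\le\min(\rd(G_1,S_1),\rd(G_2,S_2))$.

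The lower bound is where the difficulty lies. Set $d=\min(\rd(G_1,S_1),\rd(G_2,S_2))$ and assume first $d<\infty$. The naive choice of deep elements $g_i$ with $\rd(g_i)\ge d$ does \emph{not} work, and this is the main obstacle: while one coordinate retreats, the other may rise so as to keep the total norm high, producing an escape of depth $<d$. To rule this out I would choose \emph{summit} elements. For $G_2$, pick $h_2$ with $\rd(h_2)\ge d$, let $K$ be the component of $h_2$ in $B(|h_2|-d)^{\comp}$ (finite, since $\rd(h_2)\ge d$), and let $g_2\in K$ be of maximal norm $m=|g_2|\ge|h_2|$. Then $g_2$ has the key property that any path in $G_2$ from $g_2$ to a vertex of norm $>m$ must pass through a vertex of norm $\le m-d$; otherwise such a path would remain inside $B(m-d)^{\comp}\subseteq B(|h_2|-d)^{\comp}$ and hence inside $K$, contradicting maximality of $m$. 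I would choose $g_1\in G_1$ of norm the analogous summit value in the same way.

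Finally I would prove $\rd((g_1,g_2))\ge d$ for $g=(g_1,g_2)$, $n=|g_1|+|g_2|$, by contradiction. Suppose a path escapes to infinity while staying in $B(n-d)^{\comp}$, and write $m_i(k)$ for the norm of the $i$-th coordinate along it; each step changes exactly one coordinate. Since the total norm is unbounded, one coordinate exceeds its initial norm; after relabelling, let $k_1$ be the first time $m_1(k)=|g_1|+1$ and $k_2$ the first time $m_2(k)=|g_2|+1$, with $k_1\le k_2$. By the summit property of $g_1$, the first coordinate reached norm $\le|g_1|-d$ at some earlier time $k_1'<k_1$; staying in $B(n-d)^{\comp}$ then forces $m_2(k_1')\ge n-d+1-(|g_1|-d)=|g_2|+1$, which is impossible since $k_1'<k_1\le k_2$ implies $m_2(k_1')\le|g_2|$. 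This contradiction gives $\rd((g_1,g_2))\ge d$, hence $\rd(G_1\times G_2,S_1\perp S_2)\ge d$, and together with the upper bound yields equality. The case $d=\infty$ follows by running the same summit construction with $d$ replaced by an arbitrarily large $D$.
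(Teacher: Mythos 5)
Your proposal is correct, and on three of the four assertions it runs parallel to the paper: for $S_1\vee S_2$ the paper deduces $\rd=0$ from the connectivity argument of Lemma \ref{tconnprodprod-l} (move inside the sphere until hitting a geodesic to infinity), while you simply freeze the coordinate of maximal norm and send the other to infinity, which is more direct; and your upper bound for $S_1\perp S_2$ (freeze one coordinate, escape in the other) is exactly the paper's ``by ignoring the second coordinate''. Where you genuinely diverge is the lower bound for $S_1\perp S_2$, and there your extra care is warranted: the paper's proof \emph{is} the naive argument you reject. It takes $g_i$ a dead-end of depth $\rd(G_i,S_i)$, observes that along an escape path of $(g_1,g_2)$ at least one projected path goes to infinity, and asserts that ``the depth achieved by the projected path is less than that of the original path''. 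Writing $m_i(k)$ for the coordinate norms and $N(k)=m_1(k)+m_2(k)$, one has $|g_1|-m_1(k)=\bigl(n-N(k)\bigr)+\bigl(m_2(k)-|g_2|\bigr)$, so this assertion holds only if the other coordinate never climbs above its initial norm; maximality of $\rd(g_2)$ alone does not forbid a climb, since $g_2$ could sit below a tall \emph{finite} component of $B(|g_2|-d)^\comp$ reachable after a shallow dip, and such a ``balloon'' in one factor would subsidize a deep retreat in the other while keeping the total norm high. Your summit choice --- a maximal-norm vertex of the finite component of a $d$-deep element --- supplies exactly the missing property (any rise above the initial norm in either coordinate must be preceded by a dip of depth $d$ in that coordinate), and your first-crossing bookkeeping $k_1'<k_1\le k_2$ then closes the argument cleanly; the case where only one coordinate ever exceeds its initial norm is covered by taking $k_2=\infty$. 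In short, your route is the paper's route made rigorous: it costs the summit construction but removes an unjustified step. One small simplification available to you: since retreat depths are integers, a finite supremum is attained, so the approximation argument is really only needed when $d=\infty$, as you indeed use it.
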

\begin{proof}
The proof for the generating set $S_1 \vee S_2$ is actually contained in the proof of Lemma \ref{tconnprodprod-l}: the spheres of thickness $0$ are connected, since there is a geodesic going to infinity, any element $g$ can be moved inside $S_{|g|}$ until it intersects this geodesic and then go to infinity.

For $S_1 \perp S_2$, suppose, without loss of generality, that $\rd(G_1,S_1) \leq \rd(G_2,S_2)$. Let $g=(g_1,g_2)$ be a dead-end. By ignoring the second coordinate, one sees that the retreat depth of $g$ is at most that of its first coordinate. On the other hand, let $g_1$ be a dead-end of depth $\rd(G_1,S_1)$ and $g_2$ be a dead-end of depth $\rd(G_2,S_2)$. Let $g = (g_1,g_2)$. By looking at a path from $g$ to infinity, at least one of the projected path on a coordinate also goes to infinity. The depth achieved by the projected path is less than that of the original path. This shows the depth of $g$ is $\rd(G_1,S_1)$.
\end{proof}

This gives a very easy proof of
\begin{corollary}
Bounded retreat depth is not invariant under change of generating set.
\end{corollary}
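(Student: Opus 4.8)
The plan is to exhibit a single group carrying two finite generating sets, one of bounded and one of unbounded retreat depth, and to read off both values directly from Lemma~\ref{tretdeprod-l}. No new estimate is needed; the whole content is already packaged in that lemma, so the only thing to supply is one building block whose retreat depth is infinite.

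First I would choose the building block $(G_0,S_0)$ to be the lamplighter on a line $G_0 = \Z \wr L$, with $L$ a nontrivial finite group, equipped with the switch-walk-switch generating set $S_0 = L\{\pm 1\}L$. The required input is already available in the proof of Lemma~\ref{connectedcomp}: in case (2) it exhibits, for every $n$, the element $g_k$ of norm $n+z$ obtained when the lamplighter sits at position $0$, and observes that this is a dead-end of retreat depth $\min(b,-a)$. Letting $\min(b,-a)\to\infty$ shows that $\rd(\Z\wr L, L\{\pm1\}L) = \infty$, i.e.\ this generating set has unbounded retreat depth.

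Then I would set $G = G_0 \times G_0 = (\Z \wr L)\times(\Z \wr L)$ and apply Lemma~\ref{tretdeprod-l} with $S_1 = S_2 = S_0$. For the summed generating set one gets $\rd(G, S_0\perp S_0) = \min\big(\rd(G_0,S_0),\rd(G_0,S_0)\big) = \infty$, so the retreat depth is unbounded; for the product generating set one gets $\rd(G, S_0\vee S_0) = 0$, so the retreat depth is bounded. Since $S_0\perp S_0$ and $S_0\vee S_0$ are two finite generating sets of the \emph{same} group $G$, this proves that bounded retreat depth is not invariant under change of generating set. (This is also the retreat-depth half of Proposition~\ref{prop-simple-intro}.)

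There is essentially no obstacle: all the work sits in Lemma~\ref{tretdeprod-l}, and the only external ingredient is the existence of one generating set with infinite retreat depth. The one point to keep honest is that one needs a genuinely infinite-\emph{retreat-depth} example, not merely infinite width; the lamplighter on a line supplies this, as recorded in the proof of Lemma~\ref{connectedcomp}.
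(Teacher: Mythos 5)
Your proof is correct and is essentially the paper's own argument: the paper also takes the direct product of two lamplighters on a line ($G_1=G_2=\Z\wr\Z_2$) and applies Lemma~\ref{tretdeprod-l} to the summed versus product generating sets. The only cosmetic difference is that you source the unbounded retreat depth of $\Z\wr L$ explicitly from case (2) of the proof of Lemma~\ref{connectedcomp}, where the paper simply asserts $\rd(G_i)=+\infty$; your care in distinguishing retreat depth from width here is a welcome precision, not a deviation.
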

\begin{proof}
Take $G_1 \times G_2$ where $G_1=G_2= \Z \wr \Z_2$ so that $\rd(G_1) = \rd(G_2) = + \infty$. By Lemma \ref{tretdeprod-l}, for $S_1 \perp S_2$ the retreat depth is infinite, whereas for $S_1 \vee S_2$ it is $0$.
\end{proof}
This corollary is also implied by \cite[Theorem 2.5]{War}, although we believe the above example to be much simpler. Also, if there is a finitely presented group with unbounded retreat depth, then one could answer Question (ii) [is bounded retreat depth independent of the generating set for finitely presented groups?] of \cite{Gou} in the negative. 

The same example gives:
\begin{corollary}
There is a group with connected spheres but unbounded retreat depth.
\end{corollary}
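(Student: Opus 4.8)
The plan is to take exactly the same group and generating set as in the corollary just proved, namely $G = G_1 \times G_2$ with $G_1 = G_2 = \Z \wr \Z_2$, equipped with the summed generating set $S_1 \perp S_2$. No new construction is needed: the two properties to be verified have already been established separately, and it only remains to read them off for this specific choice.

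First I would establish connected spheres. Since $G_1$ and $G_2$ are infinite, Lemma \ref{tconnprodsomm-l} applies and gives that $S_{G_1 \times G_2}(n,1)^\infty$ is connected for every $n$. In the language of Definition \ref{defth} this means $\mathrm{th}_{G, S_1 \perp S_2}(n) \leq 1$ for all $n$; the connection thickness is therefore bounded, which is precisely the assertion that $G$ has connected spheres.

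Next I would establish unbounded retreat depth. Here Lemma \ref{tretdeprod-l} gives $\rd(G, S_1 \perp S_2) = \min\big(\rd(G_1, S_1), \rd(G_2, S_2)\big)$, so it suffices to know that the single factor $\Z \wr \Z_2$ has infinite retreat depth. This is the Cleary--Taback phenomenon already invoked in the previous corollary and recalled in \S\ref{ssdef}: lamplighter groups on $\Z$ with finite lamps possess dead-ends of arbitrarily large retreat depth \cite{CT}. Consequently both $\rd(G_i, S_i) = \infty$, their minimum is $\infty$, and the retreat depth of $G$ is unbounded. There is no genuine obstacle in this argument, since the two lemmas do all the work; the only point deserving care is to use the retreat-depth (and not merely the width) unboundedness of $\Z \wr \Z_2$, which is exactly what \cite{CT} provides.
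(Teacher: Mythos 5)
Your proposal is correct and follows exactly the paper's own proof: the same group $(\Z \wr \Z_2) \times (\Z \wr \Z_2)$ with the summed generating set $S_1 \perp S_2$, with connected spheres from Lemma \ref{tconnprodsomm-l} and infinite retreat depth from Lemma \ref{tretdeprod-l} combined with the Cleary--Taback unbounded retreat depth of $\Z \wr \Z_2$. Your added remark about needing retreat depth (not merely width) of the factor is a sound point of care, and it is consistent with what the paper attributes to \cite{CT}.
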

\begin{proof}
Take $G_1 \times G_2$ where $G_1=G_2= \Z \wr \Z_2$ with the generating set $S_1 \perp S_2$. By Lemma \ref{tretdeprod-l}, The retreat depth is infinite (since $\rd(G_1) = \rd(G_2) = + \infty$), but the spheres are connected by Lemma \ref{tconnprodsomm-l}.
\end{proof}

Note that $\rd(L) \neq 0$ if $L$ is finite. If $\Gamma$ is infinite and $\rd(L) =0$, then $\rd(\Gamma \wr L) =0$ as one can always go to infinity in the lamps. Actually, if $L$ is infinite, then $\rd(\Gamma \wr L) \leq \min( \rd(L), \rd(\Gamma) )$ by trying to go to infinity either in the lamps or in the base space. It seems likely that this inequality is sometimes strict.

Let us mention a remark about spheres in products of trees. They can be easily described and depend greatly on the chosen generating set. This is not surprising since we proved in \S{} \ref{sec:ladder} that spheres in a group can even be connected for a generating set and not-connected for another. However products of trees are much more elementary.

\begin{remark}
For $k \geq 2$, let $T_k$ be the free product of $k$ copies of $\Z_2$ endowed with the standard generating set (one letter for each factor). Its Cayley graph is then the infinite $k$-regular tree. Let $\Gamma = T_k \times T_\ell$. 
Let $S_\perp(n,1)$ be the spheres of radius $n$ and thickness $1$ for the summed generating set and $S_\vee(n)$ be the spheres of radius $n$ for the product generating set. These spheres are connected according to Lemmas \ref{tconnprodsomm-l} and \ref{tconnprodprod-l}. However, they tend to very different objects. 

The spheres $S_\perp(n,1)$ resemble greatly Diestel-Leader graphs, or more precisely their tetraedra in the sense of \cite{BW}. 
On the other hand, the spheres $S_\vee(n)$ can be described by the following picture. Consider $k^n \times \ell^n$ points on a horizontal plane in a rectangular grid shape (but with no edge in this planar picture) each of the $k^n$ rows of length $\ell^n$ are the leafs of an independent $\ell$-regular rooted tree drawn above the plane, and symmetrically, each of the $\ell^n$ columns of length $k^n$ are the leafs of an independent $k$-regular rooted tree drawn below the plane.The graph obtained by this collection of trees (connected by the points on the grid) is the sphere $S_\vee(n)$.

These two objects are very different. For instance one can prove that there is no uniform family of quasi-isometries $\phi_n : S_\perp(n,1) \to S_\vee(n)$.
\end{remark}

\begin{remark}\label{remquo}
Bounded retreat depth and connected sphere are not preserved under taking quotients. Indeed, let $G = \Z \wr \Z$. With the usual generators this group has no dead-ends and the spheres are connected. Now $\Z \wr \Z$ can be seen as $\Z[t,t^{-1}] \rtimes \Z$ where the automorphism is multiplication by $t$. The group $\Z \wr \Z_2 \simeq \Z_2[t,t^{-1}] \rtimes \Z$ (same automorphism) is a quotient of $G$, mapping usual generating set to usual generating set, and has neither connected spheres nor bounded retreat depth.
\end{remark}

\subsection{Rarity of dead-ends}\label{sec:rarde}

The aim here is to show that the dead-ends do not make an important part of the group.

There are some variations in the degree of not being a dead-end. We recall two subsets of the sphere of radius $n$ which are of interest:
\[
\begin{array}{lll}
S(n)^\infty &= \{ g \in S(n)  \mid \text{there is a path to } \infty \text{ avoiding } B(n-1) \} \\
S(n)^{s\infty} &= \{ g \in S(n)  \mid \text{there is a strictly increasing path to } \infty\} \\ 
\end{array}
\]
By a strictly increasing path from $g$ to $\infty$, we mean a 1-Lipschitz map $\pi:\N\rightarrow G$ with $|\pi(k)|=|g|+k$ for all $k$, see also Remark \ref{straightconnection}. Note that $S(n)^\infty  \supset S(n)^{s\infty}$.

\begin{lemma}
Assume $(G,S)$ has retreat depth bounded by $k$. Then,
\[
 |\cup_{i=1}^n S(n)^\infty | \geq \dfrac{1}{|S(k)|} |B(n)| - \frac{|B(k-1)|}{|S(k)|}.
\]
In other words, in any ball of radius $\geq k$, there is a positive fraction of elements whose retreat depth is $0$.
\end{lemma}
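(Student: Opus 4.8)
The plan is to read the left-hand side as the set $D$ of elements of retreat depth $0$ and norm at most $n$ (these are exactly the elements lying in some $S(i)^\infty$), and to produce a map $\Phi$ from the annulus $A = \{g \in G : k \le |g| \le n\}$ into $D$ whose fibers all have cardinality at most $|S(k)|$. Once such a $\Phi$ is built, a counting argument gives $|D| \ge \tfrac{1}{|S(k)|}|A| = \tfrac{1}{|S(k)|}\bigl(|B(n)| - |B(k-1)|\bigr)$, which is the claimed inequality.

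First I would record the reformulation of the hypothesis: for any $g$, the condition $\rd(g) \le k$ is equivalent to $g$ lying in an \emph{infinite} connected component of $B(|g|-k-1)^{\comp}$. Indeed, as the radius $|g|-d-1$ decreases with $d$, the complement grows and its component containing $g$ only enlarges, so the least-$d$ definition of $\rd$ is equivalent to the statement for $d=k$. Thus $\rd(G,S)\le k$ means every $g$ can escape to infinity without ever descending below norm $|g|-k$.

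The construction of $\Phi$ is the heart of the matter. Fix once and for all, say by a shortlex normal form, a geodesic $e = x_0, x_1, \dots, x_m = g$ from $e$ to each $g$, where $m=|g|$, and set $\Phi(g) := x_{m-k}$. Two observations make this work. (i) The tail $x_{m-k}, x_{m-k+1},\dots, x_m$ has all norms $\ge m-k$, so it connects $x_{m-k}$ to $g$ \emph{inside} $B(m-k-1)^{\comp}$; since by hypothesis $g$ sits in an infinite component of this complement, so does $x_{m-k}$, and as $|x_{m-k}| = m-k$ this says exactly that $\rd(x_{m-k})=0$, i.e. $\Phi(g)\in D$. (ii) Because $x_{m-k}$ lies on a geodesic to $g$ at distance exactly $k$, we have $g = \Phi(g)\,s$ with $s = \Phi(g)^{-1}g \in S(k)$.

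For the fiber bound, fix $h\in D$. By (ii) every $g\in\Phi^{-1}(h)$ satisfies $g = h s$ with $s = h^{-1}g\in S(k)$, and the assignment $g \mapsto h^{-1}g$ is injective, so $\Phi^{-1}(h)$ injects into $S(k)$ and $|\Phi^{-1}(h)|\le |S(k)|$. Summing over $h\in D$ yields the inequality. The only real idea is observation (i) — that \emph{any} geodesic from $e$ to $g$ automatically furnishes a retreat-depth-$0$ element at distance exactly $k$, the bounded retreat depth being used precisely to guarantee that the relevant component of $B(m-k-1)^{\comp}$ is infinite; everything else is bookkeeping. The points merely to watch are the harmless boundary case $m=k$, where $\Phi(g)=e$ has retreat depth $0$ but norm $0$ (so the count is cleanest when the identity is permitted in the union), and the cosmetic fact that the image actually lands in norms $\le n-k$, which only strengthens the conclusion.
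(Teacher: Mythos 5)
Your proof is correct and takes essentially the same route as the paper: the paper's proof likewise observes that bounded retreat depth guarantees, on any geodesic from $e$ to an element $x \in S(n)$, a point $y \in S(n-k)^\infty$ at distance exactly $k$ from $x$, deduces $|S(n)| \leq |S(n-k)^\infty|\cdot|S(k)|$, and sums over spheres from $k$ to $n$. Your map $\Phi$ with fibers of size at most $|S(k)|$ on the annulus $B(n)\setminus B(k-1)$ is just a repackaging of that same count, and your boundary remark about $m=k$ is harmless since the sets $S(i)^\infty$ are pairwise disjoint and the union in the statement (whose index should of course read $\bigcup_i S(i)^\infty$) absorbs it.
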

In particular, there is a positive fraction of elements which are not dead-end elements.
\begin{proof}
If there is a bound on the depth of dead-ends, then for any $x \in S(n)$ (with $n \geq k$) there is a $y \in S(n-k)^\infty$ so that $y$ lies on a geodesic between $e_G$ and $x$. This implies:
\[
|S(n)| \leq |S(n-k)^\infty| \cdot | S(k)| 
\]
Summing one gets
\[
|B(n)| \leq |B(k-1)| + \big( \sum_{i=k}^n |S(i-k)^\infty| \big) | S(k)|.
\]
Dividing by $|B(n)| \cdot |S(k)|$ and noting that $|B(n-k)| \leq |B(n)|$ gives
\[
 \frac{|B(n-k)|}{|S(k)|} \leq \frac{|B(k-1)|}{|S(k)|} + \big( \sum_{i=0}^{n-k} |S(i)^\infty| \big). \qedhere
\]
\end{proof}

Let us write down the following useful proposition from Funar, Giannoudovardi \& Otera \cite[Proposition 5]{FGO}:
\begin{proposition}\label{propfgo}
Let $\Gamma$ be a group and fix its Cayley graph for some finite $S$. Let $x \in G$ and let $C \subset G$ be a set of vertices so that $x$ is in a finite component of $\Gamma \setminus C$. Then $d(x,C) \leq \tfrac{1}{2} \diam C$.
\end{proposition}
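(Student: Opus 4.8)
The plan is to reduce the inequality $d(x,C) \le \tfrac{1}{2}\diam C$ to producing two points of $C$ that lie at distance at least $2\,d(x,C)$ from one another. Write $r = d(x,C)$ and let $F$ denote the finite connected component of $\Gamma \setminus C$ containing $x$. The crux is to exhibit a \emph{single} geodesic through $x$ that meets $C$ on both sides of $x$. Indeed, suppose $\gamma\colon \Z \to \Gamma$ is a bi-infinite geodesic with $\gamma(0)=x$, and suppose $\gamma(t_-),\gamma(t_+) \in C$ with $t_- < 0 < t_+$. Since $\gamma$ is geodesic, $d(\gamma(t_-),\gamma(t_+)) = t_+ - t_-$; moreover $t_+ \ge r$ and $-t_- \ge r$, because every point of $C$ is at distance at least $r$ from $x$. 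Hence $\diam C \ge t_+ - t_- \ge 2r$, which is exactly the claim.

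First I would establish the existence of a bi-infinite geodesic through $x$. As $\Gamma$ is an infinite, connected, locally finite graph, its balls are finite and distances from $x$ are unbounded, so geodesic segments of every length exist. Using that a Cayley graph is vertex-transitive (left translations act by isometries), each geodesic segment of length $2n$ can be translated so that its midpoint sits at $x$; this produces, for every $n$, a geodesic $\gamma_n\colon \{-n,\dots,n\}\to\Gamma$ with $\gamma_n(0)=x$. By local finiteness there are finitely many choices for $\gamma_n(\pm 1)$, then for $\gamma_n(\pm 2)$, and so on, so a standard König/diagonal argument extracts a subsequence converging pointwise to a map $\gamma\colon \Z \to \Gamma$ with $\gamma(0)=x$; since each finite restriction agrees with a genuine geodesic for all large indices, $\gamma$ is itself a bi-infinite geodesic.

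Next I would check that $\gamma$ meets $C$ on each side of $x$. The forward ray $\{\gamma(t) : t \ge 0\}$ consists of pairwise distinct vertices, hence is infinite; were it disjoint from $C$, it would be a connected infinite subset of $\Gamma \setminus C$ containing $x$, thus contained in the finite component $F$, a contradiction. Therefore there is $t_+ > 0$ with $\gamma(t_+) \in C$, and $t_+ = d(x,\gamma(t_+)) \ge r$ by definition of $r$. The same argument applied to the backward ray yields $t_- < 0$ with $\gamma(t_-) \in C$ and $-t_- \ge r$. Combined with the geodesic additivity noted above, this gives $\diam C \ge 2r$ and completes the argument.

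I expect the only genuine obstacle to be the first step, namely the construction of a bi-infinite geodesic through the prescribed vertex $x$; once vertex-transitivity is used to centre arbitrarily long geodesic segments at $x$ and compactness is invoked, everything else is immediate. Note that infiniteness of $\Gamma$ is used essentially, both for the existence of the geodesic and for the finiteness of $F$ to carry force; this is harmless, since in all applications (dead-ends and retreat depth) the group is infinite.
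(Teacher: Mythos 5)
Your proof is correct and is essentially the paper's own argument: take a bi-infinite geodesic through $x$, observe that finiteness of the component of $x$ forces it to meet $C$ at some $\gamma(t_-)$ and $\gamma(t_+)$ on either side, and conclude $\diam C \geq t_+ - t_- \geq 2\,d(x,C)$. The only difference is that you spell out the existence of the bi-infinite geodesic through a prescribed vertex (translation of long geodesic segments plus a K\H{o}nig-type compactness argument), a standard fact the paper simply takes for granted with ``take any bi-infinite geodesic through $x$''.
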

\begin{proof}
Take any bi-infinite geodesic $p:\Z \to G$ going through $x$ with $p(0)=x$. Since $p$ is infinite and the component of $x$ finite, there are $n,m \in  \N$ such that $p(-n),p(m) \in C$. Hence $n+m \leq \diam C$. The conclusion follows by seeing that $d(x,C) \leq \min(m,n)$.
\end{proof}
Here is the corollary on retreat depth. 
\begin{corollary}\label{corrd}
The retreat depth of $x$ is $\leq \frac{|x|}{2}$ (\ie $\leq \lfloor \frac{|x|}{2} \rfloor$). 

In particular, 
\[
|S(2n)| \leq |S(n)^\infty| \cdot | S(n)| \text{ and } |S(2n+1)| \leq |S(n+1)^\infty| \cdot | S(n)|.
\]
\end{corollary}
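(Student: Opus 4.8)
The plan is to deduce both parts from the just-stated Proposition \ref{propfgo}. For the retreat-depth bound, write $n=|x|$ and $d=\rd(x)$, and assume $d\geq 1$ (the case $d=0$ being trivial). First I would unwind the definition of retreat depth: since $d$ is the \emph{least} integer for which $x$ lies in an infinite component of $B(n-d-1)^c$, the value $d-1$ fails, so $x$ lies in a \emph{finite} component of $B(n-d)^c=\G\setminus B(n-d)$. (One checks \emph{en passant} that $d\leq n$, since for $d=n$ the ball $B(-1)=\emptyset$ makes the whole connected Cayley graph the relevant complement, in which $x$ trivially sits in an infinite component.) This places us exactly in the hypothesis of Proposition \ref{propfgo} with $C=B(n-d)$.

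Applying the proposition gives $d\bigl(x,B(n-d)\bigr)\leq\tfrac12\diam B(n-d)$. Both sides are elementary: the left-hand side equals $|x|-(n-d)=d$ (realised by a geodesic from $e$ to $x$), while $\diam B(n-d)\leq 2(n-d)$. Combining these yields $d\leq n-d$, i.e. $d\leq n/2$, and integrality gives $\rd(x)\leq\lfloor |x|/2\rfloor$.

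For the two cardinality inequalities I would first record a monotonicity remark: if $\rd(x)=d_0$ and $d_1\geq d_0$, then $x$ still lies in an infinite component of $B(|x|-d_1-1)^c$, because shrinking the ball only enlarges the component of $x$, and an infinite component stays infinite. Then, given $x\in S(2n)$, the bound $\rd(x)\leq n$ together with this remark places $x$ in an infinite component of $B(n-1)^c$. Fixing a geodesic $e=p_0,\dots,p_{2n}=x$ and setting $y=p_n$, the tail $p_n,\dots,p_{2n}$ stays outside $B(n-1)$ (each $p_t$ with $t\geq n$ has $|p_t|=t\geq n$), so $y$ is joined to $x$ inside $B(n-1)^c$ and hence $y\in S(n)^\infty$; moreover $y^{-1}x\in S(n)$ since $d(y,x)=n$. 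The map $x\mapsto(y,y^{-1}x)\in S(n)^\infty\times S(n)$ is injective, as $x=y\cdot(y^{-1}x)$ is recovered from its image, giving $|S(2n)|\leq|S(n)^\infty|\cdot|S(n)|$. The odd case is identical, taking the ``midpoint'' at $y=p_{n+1}\in S(n+1)^\infty$ and noting $y^{-1}x\in S(n)$.

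I do not expect a genuine obstacle: the substance is entirely in Proposition \ref{propfgo}, and everything else is bookkeeping. The only points requiring care are the off-by-one in the definition of retreat depth (so that the correct set to cut along is $B(n-d)$, not $B(n-d-1)$) and verifying that the midpoint $y$ genuinely inherits membership in the infinite component, which is precisely where the monotonicity remark and the ``geodesic tail stays outside the ball'' observation are used.
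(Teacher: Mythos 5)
Your proposal is correct and takes essentially the same route as the paper: the retreat-depth bound is obtained by applying Proposition~\ref{propfgo} with $C=B(|x|-d)$ (using that minimality of $d$ puts $x$ in a \emph{finite} component of $B(|x|-d)^{\comp}$) to get $d=d(x,C)\leq\tfrac12\diam C\leq |x|-d$, and the cardinality inequalities follow by sending $x$ to the midpoint $y\in S(\lceil |x|/2\rceil)^\infty$ of a geodesic from $e$ to $x$ together with $y^{-1}x$. The only difference is that you spell out details the paper leaves implicit --- the off-by-one in the definition of retreat depth, and the verification (via monotonicity and the fact that the geodesic tail avoids the relevant ball) that the midpoint genuinely lies in the infinite component --- and these checks are all correct.
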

\begin{proof}
Let $n = |x|$ (\ie $x \in S(n)$) and assume $x$ has retreat depth $k$. Apply \cite[Proposition 5]{FGO} (rewritten as Proposition \ref{propfgo} above) with $C = B(n-k)$: $x$ is in a finite component of $\Gamma \setminus C$. But $\diam C \leq 2n-2k$ and so $d(x,C)\leq n-k$. However $k = d(x,C)$ so $2k \leq n$.

For the inequality on spheres, note that, for any $x \in S(n)$, there is a $y \in S(\lceil n/2 \rceil)^\infty$ so that $y$ lies on a geodesic between $e_G$ and $x$.
\end{proof}
This seems to indicate that the dead-end elements do not make most of the group. The following lemma makes this a bit more precise for groups with exponential growth.

\begin{lemma}\label{lemcrpin}
Assume $G$ has exponential growth, \ie $|S(n)| \geq K \mathrm{exp}( L n)$ for some $K,L>0$. 
To any $n$ we associate a sequence $\{n_i\}_{i=0}^s$ with $s=\lfloor \log_2 n\rfloor+1$,  $n_s=\lceil\frac{n}{2}\rceil$ and $n_{i-1}=\lceil\frac{n_i}{2}\rceil$ (so that $n_0=1$).
Then, 
$\prod_{i=0}^n |S(n_i)^\infty|/ \mathrm{exp}(Ln_i) \geq K$. 

In particular, along any infinite sequence $\{n_i\}_{i \geq 0}$ satisfying $n_{i-1} =  \lceil n_i /2 \rceil$ and $n_0=1$, there are infinitely many $i$ so that $ |S(n_i)^\infty| \geq \mathrm{exp}(Ln_i)$.
\end{lemma}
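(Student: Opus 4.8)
The plan is to derive both assertions from the single recursive inequality packaged in Corollary \ref{corrd}. Writing $b_m = |S(m)|$ and $a_m = |S(m)^\infty|$, the two cases of that corollary combine into the uniform estimate
\[
b_m \le a_{\lceil m/2\rceil}\, b_{\lfloor m/2\rfloor}\qquad (m\ge 1),
\]
together with $b_0=1$. Everything reduces to iterating this inequality and inserting the growth hypothesis $b_m\ge K\,e^{Lm}$.

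For the product statement I would iterate from $m=n$. Setting $m_0=n$ and $m_{k+1}=\lfloor m_k/2\rfloor$ gives $m_k=\lfloor n/2^k\rfloor$, so after $s=\lfloor\log_2 n\rfloor+1$ steps the index reaches $0$. Peeling off one factor $a_{\lceil m_k/2\rceil}$ at each stage and using $b_0=1$ yields $b_n\le \prod_{k=0}^{s-1} a_{\lceil m_k/2\rceil}$. The arithmetic heart of the argument is the identity $\lceil m/2\rceil = m-\lfloor m/2\rfloor = m_k-m_{k+1}$, which telescopes to $\sum_{k=0}^{s-1}\lceil m_k/2\rceil = m_0-m_s = n$. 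Thus the recorded indices sum \emph{exactly} to $n$; reindexing them in increasing order as $n_0\le n_1\le\cdots$ (the sequence obtained by repeated rounded halving, which up to the floor/ceiling convention is the one in the statement), we get $\sum_i n_i=n$ and hence
\[
\prod_i a_{n_i}\ \ge\ b_n\ \ge\ K\,e^{Ln}\ =\ K\,e^{L\sum_i n_i},
\]
which is precisely $\prod_i a_{n_i}/e^{Ln_i}\ge K$.

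For the ``in particular'' statement I would run the previous argument along the given infinite sequence. Since $n_{i-1}=\lceil n_i/2\rceil$, iterating the recursion starting from $m=n_i$ records (up to the odd-step rounding drift discussed below) the indices $n_{i-1},n_{i-2},\dots,n_0$, so the sequence associated to $n_i$ is the truncation $\{n_0,\dots,n_{i-1}\}$ and the product statement gives $\prod_{j<i} a_{n_j}/e^{Ln_j}\ge K$ for every $i$. I would argue by contradiction: if the conclusion failed, the ratios $a_{n_j}/e^{Ln_j}$ would all be $<1$ beyond some index, and the task is to show this forces the partial products to fall below $K$. Taking logarithms, $\sum_{j<i}\log a_{n_j}\ge \log K + L\,\Sigma_i$ with $\Sigma_i=\sum_{j<i}n_j\to\infty$; if moreover $\log a_{n_j}\le (L-\delta)n_j$ for a fixed $\delta>0$ and all large $j$, the left side is at most $L\Sigma_i-\delta\Sigma_i+O(1)$, which contradicts the lower bound once $\Sigma_i$ is large. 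This yields infinitely many $i$ with $\log a_{n_i}\ge (L-\delta)n_i$ for every $\delta$, i.e. $\limsup_i (\log a_{n_i})/n_i\ge L$.

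The step I expect to be the main obstacle is exactly the passage from this growth-rate conclusion to the sharp pointwise bound $a_{n_i}\ge e^{Ln_i}$ claimed in the statement: the product estimate only prevents the ratios from being \emph{uniformly} bounded away from $1$, and one must rule out that all late factors hover just below $1$ (for the all-doubling sequence $n_j=2^j$ one computes $\Sigma_i=n_i-1$, leaving no room to spare). Pinning down the sharp exponent therefore requires either interpreting $L$ as a lower growth constant and absorbing the loss, or a sharper control of the ratios than the recursion alone provides. The secondary technical point is the floor/ceiling bookkeeping: the recursion attaches $\lceil m/2\rceil$ to the ``infinite'' factor while carrying $\lfloor m/2\rfloor$ forward, so one must verify that the recorded indices really match the ceiling-halving sequence of the statement and that the odd-step drift spoils neither the identity $\sum_i n_i=n$ nor the identification of the truncated sequence. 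Once these are settled, both assertions follow formally from the recursion and the exponential lower bound.
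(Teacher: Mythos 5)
Your proposal is correct and is essentially the paper's own proof: the paper likewise merges the two inequalities of Corollary \ref{corrd} into the halving recursion $|S(m)| \leq |S(\lceil m/2\rceil)^\infty|\cdot|S(\lfloor m/2\rfloor)|$, iterates it down to radius $1$, compares $\prod_i |S(n_i)^\infty|$ with $|S(n)| \geq K\exp(Ln)$ via the identity $\sum_i n_i = n$, and obtains the second assertion by the same ``the factors cannot stay uniformly below $1$'' contradiction. Moreover, the two difficulties you flag are genuinely present --- in the paper, not in your argument. Your floor-carried telescoping, with recorded indices $\lceil m_k/2\rceil = m_k - m_{k+1}$ summing \emph{exactly} to $n$, is the correct bookkeeping: for the ceiling-halving sequence as literally printed in the statement one gets, e.g.\ for $n=5$, the sequence $1,1,2,3$ with sum $7$, so the paper's ``Note that $\sum_{i=0}^{s} n_i = n$'' only holds for your version of the indices, and the statement should be read accordingly. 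As for the ``in particular'', the paper's proof ends by deducing only $\limsup_i |S(n_i)^\infty|/\exp(Ln_i) \geq 1$ --- precisely the weaker conclusion you observe is all the product estimate yields --- which gives, for every $\epsilon>0$, infinitely many $i$ with $|S(n_i)^\infty| \geq (1-\epsilon)\exp(Ln_i)$, but not the printed pointwise bound $|S(n_i)^\infty| \geq \exp(Ln_i)$. Your suggested remedy is the right repair: replacing $L$ by any $L'<L$ (up to the same rounding bookkeeping along truncations), the partial products satisfy $\prod_{j\leq i} |S(n_j)^\infty|/\exp(L'n_j) \geq K\exp\bigl((L-L')\textstyle\sum_{j\leq i} n_j\bigr)\to\infty$, so infinitely many factors must exceed $1$, and the lemma holds in the slightly weakened but equally useful form $|S(n_i)^\infty| \geq \exp(L'n_i)$ infinitely often.
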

\begin{proof}
Note that $\sum_{i=0}^{s} n_i = n$. 
Using Corollary \ref{corrd}, for any $n$,
\[
|S(n)| \leq \prod_{i=0}^{s} |S( n_i)^\infty|.
\]
Since $\prod_{i=0}^{s}  \mathrm{exp}(Ln_i) = \mathrm{exp}(Ln)$,
\[
|S(n)| \leq \mathrm{exp}(Ln) \prod_{i=0}^{s} |S(n_i)^\infty|/ \mathrm{exp}(Ln_i)
\]
or 
\[
1 \leq \frac{|S(n)|}{K \mathrm{exp}(Ln)} \leq  K^{-1}  \prod_{i=0}^{s} |S(n_i)^\infty|/ \mathrm{exp}(Ln_i).
\]
This implies $\limsup |S(n_i)^\infty|/ \mathrm{exp}(Ln_i) \geq 1$ and finishes the proof.
\end{proof} 

Say that a group has pinched exponential growth if there are constants $K,K'$ and $L$ such that for all $n$, $K \mathrm{exp}( L n) \leq |S(n)| \leq K' \mathrm{exp}( L n)$. Groups with pinched growth include solvable Baumslag-Solitar groups (see Collins, Edjvet \& Gill \cite{CEG}) and wreath products with base group $\Z$, see Johnson \cite{Joh} or Lemma \ref{sizeofsphere}. 

It is easy to see that this is not an invariant of generating set. Indeed, if $(G_1,S_1)$ has pinched exponential growth and $(G_2,S_2) = (\Z, \{\pm 1\})$, then $(G_1 \times G_2, S_\perp)$ has pinched exponential growth but $(G_1 \times G_2, S_\vee)$ does not. If instead we chose $(G_2,S_2) = (G_1,S_1)$, then $(G_1 \times G_2, S_\vee)$ has pinched exponential growth but $(G_1 \times G_2, S_\perp)$ does not.

Note that Lemma \ref{lemcrpin} implies that if $(G,S)$ has pinched exponential growth, then the sub-indices $i$ such that $|S(n_i)^\infty| \geq \tfrac{1}{K'} |S(n_i)|$ have a positive density. However this still leaves a startling gap between the result of Lemma \ref{lemcrpin} and Proposition \ref{relsize}.

Here are some further amusing general results. Let us start with another simple corollary of \cite[Proposition 5]{FGO} on the width of dead-ends. 
Recall that $\wid(x) := d\big( x, B_{|x|}^\comp \big)$ is the width of the dead-end element $x$.
Note that \emph{a priori} a trivial bound on $\wid(x)$ is $2|x|+1$ (\ie go to the identity and from there, go to some element of $S(|x|+1)$). 
\begin{corollary}
$\wid(x) \leq |x|+1$.
\end{corollary}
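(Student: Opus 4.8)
The plan is to invoke Proposition \ref{propfgo} once more, this time with the separating set $C = S(n+1)$, where $n = |x|$. First I would record that $S(n+1)$ is nonempty (the group is infinite, so every sphere is nonempty) and that deleting it from the Cayley graph separates the ball $B(n)$ from the set $\{g : |g| \geq n+2\}$. Indeed, along any path in the Cayley graph the word norm changes by at most one at each step, so a path joining a vertex of norm $\leq n$ to a vertex of norm $\geq n+2$ must pass through a vertex of norm exactly $n+1$, that is, through $S(n+1)$. Consequently the connected component of $x$ in $\Gamma \setminus S(n+1)$ is contained in the finite ball $B(n)$, and in particular $x$ lies in a \emph{finite} component of the complement of $C$, as required to apply Proposition \ref{propfgo}.

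Next I would bound the diameter of $C$ in the ambient word metric: any two elements of $S(n+1)$ are joined by a path through $e$ of length $2(n+1)$, whence $\diam S(n+1) \leq 2(n+1)$. Proposition \ref{propfgo} then gives $d(x, S(n+1)) \leq \tfrac12 \diam S(n+1) \leq n+1$. Since $S(n+1) \subset B_{|x|}^\comp$, we conclude
\[
\wid(x) = d\big(x, B_{|x|}^\comp\big) \leq d\big(x, S(n+1)\big) \leq n+1,
\]
which is exactly the asserted bound.

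I do not expect a serious obstacle: the one point deserving a line of justification is the claim that $x$ sits in a finite component after removing $S(n+1)$, and this is precisely the observation that the norm varies by at most one along edges. It is worth remarking that the argument never uses that $x$ is a dead-end; it bounds $d(x, B_{|x|}^\comp)$ for \emph{every} element $x$, the dead-end hypothesis being merely what makes this distance interesting (namely larger than $1$). This also improves on the trivial bound $2|x|+1$ obtained by routing through the identity.
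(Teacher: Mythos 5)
Your proposal is correct and is essentially the paper's own proof: the paper applies Proposition \ref{propfgo} to the cutset $C = B(n+1)\setminus B(n)$, which is exactly your $S(n+1)$, with the same diameter bound $\diam C \leq 2n+2$ yielding $d(x,C)\leq n+1$. Your added justifications (that $x$ lies in a finite component because the word norm changes by at most one along edges, and that the dead-end hypothesis is never actually used) are correct elaborations of steps the paper leaves implicit.
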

\begin{proof}
Let $n = |x|$ (\ie $x \in S(n)$) and assume $x$ is a dead-end element. Apply \cite[Proposition 5]{FGO} (rewritten as Proposition \ref{propfgo} above) with $C = B(n+1) \setminus B(n)$. Then $x$ is in the finite component $B(n)$ and $\diam C \leq 2n+2$ so $d(x,C) \leq n+1$.
\end{proof}

These sets have some sub-multiplicative properties:
\begin{lemma} 
\[
\begin{array}{c@{\,\leq\,}c@{\,\cdot\,}c}
|S(n+m)^{s\infty}| & |S(n)^{s\infty}| & | S(m)^{s\infty}| \\
\end{array}
\]
\end{lemma}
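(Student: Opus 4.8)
The plan is to construct an injection
\[
\Phi\colon S(n+m)^{s\infty}\hookrightarrow S(n)^{s\infty}\times S(m)^{s\infty},
\]
from which the inequality on cardinalities is immediate. Given $g\in S(n+m)^{s\infty}$, I would fix once and for all a strictly increasing ray $\pi\colon\N\to G$ with $\pi(0)=g$ and $|\pi(j)|=n+m+j$, together with a geodesic from $e$ to $g$. Let $h$ be the vertex of this geodesic lying at distance $n$ from $e$, and set $k=h^{-1}g$. Since the geodesic has length $n+m$, its initial segment gives $|h|=n$ and its terminal segment gives $|k|=m$, while $g=hk$. I then define $\Phi(g)=(h,k)$.

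First I would check that $h\in S(n)^{s\infty}$. The portion of the fixed geodesic running from $h$ to $g$ is itself geodesic, so along it $|\cdot|$ increases by exactly one at each step, from $n$ up to $n+m$. Concatenating this segment with the ray $\pi$ produces a $1$-Lipschitz map $\N\to G$ starting at $h$ along which $|\cdot|$ takes successively the values $n,n+1,n+2,\dots$; hence $h$ is straightly connected to infinity.

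Next I would check that $k\in S(m)^{s\infty}$ by left-translating $\pi$. Left multiplication by $h^{-1}$ is a graph automorphism, so $j\mapsto h^{-1}\pi(j)$ is again a $1$-Lipschitz path; it starts at $h^{-1}g=k$, and by left-invariance of the metric $|h^{-1}\pi(j)|=d(h,\pi(j))$. The two-sided triangle inequality then pins this down:
\[
m+j=|\pi(j)|-|h|\le d(h,\pi(j))\le d(h,g)+d(g,\pi(j))=m+j,
\]
using $d(h,g)=m$ and $d(g,\pi(j))=j$ (the latter because $\pi$ is a strictly increasing ray from $g$). Thus $|h^{-1}\pi(j)|=m+j$ for all $j$, so $j\mapsto h^{-1}\pi(j)$ is a strictly increasing ray from $k$ and $k\in S(m)^{s\infty}$.

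Finally, $\Phi$ is injective: if $\Phi(g)=\Phi(g')=(h,k)$ then $g=hk=g'$. This yields $|S(n+m)^{s\infty}|\le|S(n)^{s\infty}|\cdot|S(m)^{s\infty}|$. The one delicate point is verifying that the translated ray $h^{-1}\pi$ remains strictly increasing, which is exactly the sandwich estimate above forcing $d(h,\pi(j))=m+j$; the rest is bookkeeping about geodesic segments and concatenation.
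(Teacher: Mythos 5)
Your proposal is correct and takes essentially the same route as the paper: both pick the point $h$ at distance $n$ along a geodesic from $e$ to $g$, verify $h\in S(n)^{s\infty}$ by concatenating the geodesic tail with the ray $\pi$, and verify $k=h^{-1}g\in S(m)^{s\infty}$ by showing $d(h,\pi(j))=m+j$ (the paper does this via a monotonicity argument using $d(h,\pi(j))\geq d\bigl(S(n),\pi(j)\bigr)$ together with the $\pm1$ step variation, which is your two-sided triangle-inequality sandwich in disguise). Your only departure is making the injection $\Phi(g)=(h,k)$ explicit, where the paper leaves the counting implicit.
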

\begin{proof}
Indeed, assume  
$S(n+m)^{s\infty}$, then for any $y$ on a geodesic from $e$ to $x$, $y  \in S(|y|)^{s\infty}$. Indeed concatenate the strictly increasing path from $y$ to $x$ (taken from that geodesic from $e$ to $x$) with the  strictly increasing path from $x$ to infinity. Note that for any $z \in B(|y|)^\comp$, $d(e,z) = |y| + d( S(|y|),z)$. Let $\pi:\N \to G$ be a strictly increasing path from $x$ to infinity (with $\pi(0) = x$). We need to prove that this path is still strictly increasing from $y$. Since $d\big(y,\pi(n)\big) \geq d\big(S(|y|),\pi(n)\big)$ and the right-hand side increases as $n$ does, $n \mapsto d\big(y,\pi(n)\big)$ is also strictly increasing (because the only allowed variations are $\pm 1$ and $0$). [This argument essentially only uses that strictly increasing paths are always geodesic.] 

This shows that the number of $x \in S(n+m)$ which have a strictly increasing path to infinity are at most the number of $y \in S(n)^{s\infty}$ times $|S(m)^{s\infty}|$.
\end{proof}

\begin{remark}
\begin{enumerate}
\item Another funny inequality is that, since any element $x \in S(n-k)^{s\infty}$ has an element $y \in S(n)^{s\infty}$ so that $x$ lies on a geodesic from $e_G$ to $y$, for any $k$,
\[
|S(n-k)^{s\infty}| \leq |S(n)^{s\infty}| \cdot | S(k)|.
\]
This is \emph{a priori} not true if one replaces $S(i)^{s\infty}$ by $S(i)$.

\item Recall (see Kellerhals, Monod \& R{\o}rdam  \cite{KMR}) that a group is not supramenable when it contains a bi-Lipschitz embedding of a binary tree. This implies, that if a group is not supramenable then $S(n)^{s\infty}$ has exponential growth. In general, $|S(n)^{s\infty}| \geq 2$ because there is always a bi-infinite geodesic line in a Cayley graph. 
 
Using Wilkie \& van der Dries \cite[(1.10) Corollary]{WvdD}, if $|S(n)^{s\infty}|$ is not bounded then it is $\geq n$.

\item If $|S^{s\infty}(n)|$ is bounded, note that there are only finitely many weakly geodesic rays (in the sense of Webster \& Winchester \cite[Definition 1.1]{WW}). However, a result of Rieffel (see \cite[Theorem 1.1]{WW} and references therein) would then imply that the ``metric boundary'' of the Cayley graph is finite. Though there are no results known to the authors, this seems to be a property which only holds for virtually cyclic groups.
\end{enumerate}
\end{remark}

\newcommand{\sd}[0]{\mathrm{sd}}
Lastly, in order to mimic Corollary \ref{corrd} (and thus hope to obtain the corresponding inequality for $S(n)^{s\infty}$), it seems convenient to introduce the depth of the dead-end's ``shadow'', \ie
\[
\sd(x) = \min \{ k \in \N \mid \exists y \in S(|x|-k)^{s\infty} \text{ such that } y \text{ is on a geodesic from } e_G \text{ to } x\}.
\]
In other words, it tells us how far one has to go back before one can take a strictly increasing path to infinity. The depth of the shadow can be much larger than the retreat depth: indeed,
\[
\wid(x) \leq 2 \sd(x) +1. 
\]
[Compare with $2\rd(x)+1 \leq \wid(x)$.] Since there are groups with bounded retreat depth but unbounded width, the depth of the shadow can be unbounded even if the retreat depth of dead-ends is bounded.
\begin{question}
Is $\sd(x) \leq \lceil |x|/2 \rceil$? 
\end{question}
A positive answer would allow to prove an analogue of Lemma \ref{lemcrpin} for $S(n)^{s\infty}$.

\section{Remarks on related works.}\label{other}

\subsection{Connected boundaries}\label{ssconnbnd}

Throughout this section, one may consider a graph $G$ instead of only a group. Let $d$ denote the graph distance. For a subset $K$ of vertices of the Cayley graph, let the $r$-neighbourhood be 
\[
N_r(K) =  \{ x \in G \mid d(x,K) \leq r \}.
\]
Let us define two notions of ``connected boundaries'' which is the natural extension of ``connected spheres''.
\begin{definition}
A set $F$ is said $\comp$-connected if $F$ is connected, its complement $F^\comp$ is connected and, for any end $\xi$, one cannot reach $\xi$ both from $F$ and $F^\comp$.
\end{definition}
As an example, note that in a one-ended graph, if $F$ is $\comp$-connected then only one of $F$ and $F^\comp$ is infinite. \emph{A contrario}, on a tree with strictly more than one end, if $F$ is $\comp$-connected then both $F$ and $F^\comp$ must be infinite.
\begin{definition}
Assume $r \in \N$, a graph has $CB_r$ if for any $\comp$-connected set $F$ one has that $N_{r+1}(F) \setminus F$ is connected. A graph has $CB'_r$ if for any $\comp$-connected set $F$, one has that $N_r (F) \cap N_r(F^\comp)$ is connected.
\end{definition}

Theorem \ref{casfinpres} can easily be extended (as is the aim and underlying result of \cite{Timar}) to show that this is true in all groups with finite presentation. In fact, if words have length at most $R$ then the group has $CB_{\lfloor R/2 \rfloor}$ and $CB'_{\lceil R/4 \rceil}$. 

Lastly let us recall the constant $C_G$ from \cite{BB}. Denote by $\hat{G}$ the end compactification of $G$. Call a cutset a set of edges\footnote{Normally, one allows edges and vertices. Allowing edges and vertices does not decrease the value of $C_G$ but may increase it by at most $2$.} $C$ such that $\hat{G} \setminus C$ is not connected. It is said minimal, if no strict subset of $C$ is also a cutset. Sets $\comp$-connected sets and minimal cutsets are in $2$ to $1$ correspondence: the boundary of a $\comp$-connected set is a minimal cutset; if $C$ is a minimal cutset, then $\hat{G}\setminus C$ has two connected components each of which is a $\comp$-connected set.

Say a cutset is $l$-close if for any partition $A_1 \sqcup A_2 = C$, $d(A_1,A_2) \leq l$. Then
\[
C_G = \sup \{ l \mid \text{there exists } C \text{ a cutset which is } l-\text{close} \}.
\]
Let us show that all these notions are equivalent.
\begin{lemma}\label{tequivbordconn-l}
Let $G$ be any graph.
\begin{enumerate} \renewcommand{\labelenumi}{ \rm (\alph{enumi})} 
\item $G$ has $CB_r \implies C_G \leq 2r+1$.
\item $C_G \leq r \implies G$ has $CB'_{\lceil r/2 \rceil}$.
\item $G$ has $CB'_r \implies G $ has $CB_{2r}$.
\end{enumerate}
\end{lemma}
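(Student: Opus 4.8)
The three implications rest on a single dictionary between a minimal cutset $C$ and the $\comp$-connected set $F$ it bounds. Writing $\partial_E F$ for the set of edges joining $F$ to $F^\comp$, every such edge runs between the inner boundary $\partial^{\mathrm{in}}F=\{u\in F:d(u,F^\comp)=1\}$ and the outer boundary $\partial^{\mathrm{out}}F=\{v\in F^\comp:d(v,F)=1\}$, and each endpoint of an edge of $C=\partial_E F$ lies in the thin slab $N_1(F)\cap N_1(F^\comp)$. The plan is to translate ``cutset connected at scale $l$'' (its $l$-adjacency graph, where two edges are joined when some endpoints lie within $l$, is connected --- this is exactly $l$-closeness) into ``boundary region connected'' and back, the only tool being the triangle inequality applied to nearest-point projections. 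I shall repeatedly use that a geodesic running from a vertex $x\notin F$ toward $F$ meets $F$ only at its last vertex, so that its final edge lies in $C$; this lets me project any vertex of a neighbourhood of $F$ onto an edge of the cutset.

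For (a), assume $CB_r$ and let $C=\partial_E F$ be a minimal cutset. Given two edges $e,e'\in C$ with outer endpoints $v,v'\in\partial^{\mathrm{out}}F\subseteq N_{r+1}(F)\setminus F$, use $CB_r$ to join them by a path $w_0,\dots,w_k$ inside $N_{r+1}(F)\setminus F$. Each $w_i$ lies in $F^\comp$ with $d(w_i,F)\le r+1$, so a geodesic from $w_i$ to $F$ ends in an edge $c_i\in C$ whose outer endpoint sits within $r$ of $w_i$; since $w_i\sim w_{i+1}$ the triangle inequality gives $d(c_i,c_{i+1})\le 2r+1$, while $c_0$ shares the vertex $v$ with $e$ and $c_k$ shares $v'$ with $e'$. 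Thus $e$ and $e'$ are linked in the $(2r+1)$-adjacency graph of $C$, every minimal cutset has connectivity scale $\le 2r+1$, and $C_G\le 2r+1$.

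For (b), assume $C_G\le r$, fix a $\comp$-connected $F$ with cutset $C=\partial_E F$, and set $s=\lceil r/2\rceil$; the goal is to connect the slab $S=N_s(F)\cap N_s(F^\comp)$. First, any $x\in S$ is joined inside $S$ to an endpoint of a cutset edge: the geodesic from $x$ toward $F$ (when $x\in F^\comp$) or toward $F^\comp$ (when $x\in F$) has length $\le s$, each of its vertices lies within $s$ of both $F$ and $F^\comp$ hence in $S$, and its last edge lies in $C$. Second, $C$ being $r$-close, any two cutset edges are joined by a chain of cutset edges with consecutive ones within $r$, and all their endpoints lie in $S$. It remains to connect the endpoints of two cutset edges at distance $\le r$ by a geodesic staying in $N_s(F)\cap N_s(F^\comp)$: the triangle inequality does this once one routes between an \emph{outer} endpoint of one edge and an \emph{inner} endpoint of the other, so that the two faces of the slab are reached from the two ends of the geodesic. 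Concatenation then yields a path in $S$ between any two of its vertices.

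For (c), assume $CB'_r$ and fix a $\comp$-connected $F$; I must connect $N_{2r+1}(F)\setminus F$. Every vertex at distance $\le 2r+1$ from $F$ descends, along a geodesic toward $F$, to the outer slab $N_r(F)\setminus F=S\cap F^\comp$ while remaining outside $F$, so it suffices to connect $S\cap F^\comp$ inside $N_{2r+1}(F)\setminus F$. Given two such vertices, take a path between them in the connected slab $S$ and project it to $F^\comp$ by nearest-point projection; consecutive projected vertices $u^\ast,v^\ast$ satisfy $d(u^\ast,v^\ast)\le 2r+1$ and both lie in $N_r(F)$, so a geodesic between them stays in $N_{2r}(F)$ by the triangle inequality. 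The one genuinely delicate point --- and the main obstacle --- is that such a connecting geodesic, while controlled in its distance to $F$, may dip into $F$. The remedy is the observation that any of its vertices $z$ lying in $F$ automatically has $d(z,F^\comp)\le r$, hence lies in the inner slab $N_r(F^\comp)\cap F\subseteq S$, so these shallow excursions can be re-projected back to $F^\comp$ while staying inside $N_{2r+1}(F)\setminus F$. The same inner/outer bookkeeping is what pins the precise constant $\lceil r/2\rceil$ in (b) and the parity verifications there, though routine, must be carried out with care.
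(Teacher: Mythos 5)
Your part (a) is correct and is essentially the paper's argument in an equivalent packaging: where the paper fixes a partition $A_1 \sqcup A_2$ of the cutset and tracks along a path in $N_{r+1}(F)\setminus F$ the first moment where $d(A_1,\cdot)\leq r$ fails, you project each path vertex to a cutset edge and conclude that the $(2r+1)$-adjacency graph of $C$ is connected; these are the same estimate. Part (b) is also the paper's argument, run directly rather than by contradiction (the paper partitions the slab, induces a partition of $C$, and derives a contradiction from the short path; you connect everything to cutset endpoints and chain along the cutset). One caveat: your ``route between an outer endpoint of one edge and an inner endpoint of the other'' does not really settle the even-$r$ parity problem, because $r$-closeness hands you \emph{some} pair of close endpoints with no control over whether it is an outer--inner pair, and switching to the other endpoint of an edge can lengthen the connecting geodesic by one, pushing its midpoint to distance $\lfloor r/2\rfloor+1$ from one side. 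But the paper's own proof of (b) is loose at exactly the same point (its short path between vertices incident with $A_1$ and $A_2$ only lies in $N_{\lfloor r/2\rfloor}(C)$, while $\del'_c F = N_{c-1}(C)$ with $c=\lceil r/2\rceil$), so I will not press this beyond noting that your ``routine parity verifications'' are in fact where the stated constant lives.

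Part (c), however, has a genuine gap. You invoke $CB'_r$ only for the set $F$ itself, obtain a path in the slab $S=N_r(F)\cap N_r(F^\comp)$, and then try to expel its excursions into $F$ by nearest-point projection onto $F^\comp$. The individual estimates you give are fine (consecutive projected points are at distance $\leq 2r+1$, their geodesics stay in $N_{2r}(F)$, and any vertex of such a geodesic lying in $F$ is within $r$ of $F^\comp$), but the final ``re-projection'' step is circular: replacing a dip vertex $z\in F$ by a nearest $z^\ast\in F^\comp$ produces a new pair of points of $N_r(F)\setminus F$ at mutual distance up to $2r+1$ that must again be joined inside $N_{2r+1}(F)\setminus F$ --- which is exactly the problem you started with, with no quantity decreasing, so the procedure never terminates. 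And it cannot be repaired by choosing geodesics more cleverly: two points of $F^\comp$ at distance $\leq 2r+1$ in $G$ may be arbitrarily far apart in the subgraph induced on $F^\comp$, which is precisely the phenomenon these boundary-connectivity properties are designed to control.

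The missing idea is the paper's use of $CB'_r$ on sets \emph{other than} $F$. Since $F$ is $\comp$-connected, $F^\comp$ is connected, so any two points of $\del_{2r}F = N_{2r+1}(F)\setminus F$ are joined by a path avoiding $F$ altogether; the only defect of such a path is that it may stray \emph{outward}, beyond $N_{2r+1}(F)$. To repair this, let $Y_i$ be the connected components of $N_r(F)^\comp$: each $Y_i$ is itself $\comp$-connected, so by the hypothesis $\del'_rY_i = N_r(Y_i)\cap N_r(Y_i^\comp)$ is connected, and it is contained in $\del_{2r}F$. An excursion of the path beyond $N_{2r+1}(F)$ stays inside a single component $Y_i$, hence exits and re-enters through the same connected set $\del'_rY_i$, where it can be shortcut. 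Your proof, by applying the hypothesis only to $F$, has no mechanism playing the role of the $\del'_rY_i$, and that is why it cannot close.
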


If these conditions hold for some $r$ in a graph $G$, we say that $G$ has connected boundaries. It obviously implies that $G$ also has connected spheres.

\begin{proof}
(a) Assume $C$ is a cutset and $A_1 \sqcup A_2$ a partition. Let $F$ be a $\comp$-connected set associated to $C$. By hypothesis $\del_rF := N_{r+1}(F) \setminus F$ is connected. For $i=1$ and $2$, pick $a_i$ a vertex which is incident with an edge of $A_i$. Let $\gamma: \N \cap [0,k] \to \del_r F$ be a path so that $\gamma(0) = a_1$ and $\gamma(k) = a_2$. Note that for any $t$ one has either $d(A_1,\gamma(t)) \leq r$ or $d(A_2,\gamma(t)) \leq r$. The first condition holds for $t=0$. If the second also does then $C_G \leq r$. Otherwise, let $t_0$ be the first $t$ where the first condition fails. Then $d(A_1,\gamma(t_0)) = r+1$ and $d(A_2,\gamma(t)) \leq r$. This implies $d(A_1,A_2) \leq 2r+1$.

(b) Assume $F$ is a $\comp$-connected set and let $C$ be the edges at its boundary. Note that $\del'_r F = N_r(F) \cap N_r(F^\comp) = N_{r-1}(C)$. Let $c = \lceil C_G/2 \rceil$. Assume that $\del_c' F$ is not connected. Then there is partition $X_1 \sqcup X_2 = \del'_c F$ so that $X_1$ and $X_2$ are not connected (each $X_i$ may have many connected components). Let $A_i = C \cap X_i$. By hypothesis, $d(A_1,A_2) \leq C_G$. Hence there is a path of length $\leq C_G$ from some vertex incident with $A_1$ to some vertex incident with $A_2$. This path is however contained in $\del'_c F$, a contradiction.

(c) Take a $\comp$-connected set $F$. We want to show that $\del_{2r} F := N_{2r+1}(F) \setminus F$ is connected. To do so notice that $N_{r}(F)$ is a connected set. Let $Y_i$ be the connected components of $N_r(F)^\comp$. Each $Y_i$ is a $\comp$-connected set, hence $\del_r' Y_i = N_r(Y_i) \cap N_r(Y_i^\comp)$ is connected. Also $\del_r' Y_i  \subset \del_{2r} F$. Because $F$ is $\comp$-connected, for any $x,x' \in \del_{2r} F$ there is a path from $x$ to $x'$ which stays in $F^\comp$ (because $F$ is $\comp$-connected). 

Assume this path $\gamma$ leaves $N_{2r+1}(F)$. Then there is a $i$ so that it leaves through $\del'_r Y_i$. If it enters through $\del'_r Y_j$, note that $Y_j$ and $Y_i$ are in the same connected component. Hence, $i=j$, or in words: the path enters through $\del'_r Y_i$ again. Let $t_0$ be the smallest $t$ so that $\gamma(t+1) \neq N_{2r+1}(F)$ and $t_1$ be the smallest $t$ so that $\gamma(t_1-1) \neq N_{2r+1}(F)$ but $\gamma(t_1) \in N_{2r}(F)$. Then, since the $\del'_r Y_i$ are connected, there is a path, lying inside $\del_r'Y_i$ (a subset of $\del_{2r}(F)$) between $\gamma(t_0)$ and $\gamma(t_1)$. Hence one can modify $\gamma$ so that it lies in $\del_{2r}F$.
\end{proof}
Note that by concatenation the previous lemma gives $CB_r \implies CB'_{r+1}$. This is a direct consequence that $\del_r F \subset \del'_r F$ and any vertex in $N_r(F^\comp) \cap F$ is connected to some element of $N_1(F)$.

Also note that the hypothesis on ends in the definition of $\comp$-connected and of $C_G$ does not play any role in the proof of Lemma \ref{tequivbordconn-l}.

\begin{remark} There is an unfortunate gap in the proof of \cite[Theorem 3.2]{Timar}; hence, it is not clear whether the property of connected boundaries is invariant under change of generating sets, bi-Lipschitz equivalences or quasi-isometries. 
\end{remark}

We do however note that the lamplighter on the ladder does not produce a counter-example to the invariance under change of generating set:
\begin{lemma}
The property of connected boundaries does not hold for the same group and generating set as in Theorem \ref{techelle}.
\end{lemma}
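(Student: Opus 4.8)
The statement to prove is that the group $G=(\Z\times\Z_2)\wr L$ with the ``switch-walk-switch'' generating set $L\{(\pm1,0),(0,1)\}L$ of Theorem~\ref{techelle} does \emph{not} have connected boundaries. By Lemma~\ref{tequivbordconn-l}, the property of connected boundaries is equivalent (up to a change of constant) to the existence of a uniform bound on $C_G$, the supremum of $l$ such that there is an $l$-close cutset. So the plan is to exhibit an explicit family of $\comp$-connected sets $F_n$ (equivalently, minimal cutsets $C_n=\partial F_n$) for which the associated boundary fails to connect at any fixed thickness: concretely, I would produce, for every $r$, a $\comp$-connected set $F$ whose cutset $C$ is $l$-close with $l$ arbitrarily large, so that $C_G=\infty$. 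This then shows $G$ fails $CB_r$ for every $r$ by Lemma~\ref{tequivbordconn-l}(a).

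\textbf{Choosing the separating sets.}
The natural candidate is to mimic the obstruction already present for connected spheres in the ladder picture. Recall from the proof of Theorem~\ref{techelle} that the hard part of connecting spheres was that moving the lamplighter or altering distant lamp configurations forces large detours in word length. I would take $F=B(n)$, the ball of radius $n$, whose boundary cutset $C$ consists of the edges joining $S(n)$ to $S(n+1)$. The key point is to find two elements $x,y$ incident to this cutset which sit in \emph{different} deep pieces of the annulus: for instance $x$ with lamp configuration entirely to the right (all lamps $\ell_0$ on $[0,n]$, lamplighter at the right end) and $y$ the mirror-image element with all lamps on the left. These two elements each lie on the boundary sphere, yet by the analysis of Theorem~\ref{techelle} (and the underlying $\Z\wr L^2$ structure inherited through the index-$2$ embedding) the only paths between them staying near $S(n)$ must pass through a configuration near the origin, travelling a distance growing with $n$; any attempt to connect them within a bounded neighbourhood of $C$ forces the path either to enter $B(n-O(1))$ or to exit $B(n+O(1))$. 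Thus for the partition of $C$ separating the ``left edges'' from the ``right edges,'' the two parts are at distance $\asymp n$, giving $l$-closeness with $l\asymp n$.

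\textbf{Carrying out the estimate.}
The technical core is the lower bound showing that within a fixed-thickness neighbourhood of $C$ one cannot join a left-heavy to a right-heavy boundary element. Here I would reuse the mass-distribution idea from the proof of Theorem~\ref{ZwrZ-intro}: assign to each element its profile of lamp-lengths plus traversal counts along the $\Z$-coordinate, observe that a single generator changes this profile only locally (shifting one unit of mass to a neighbouring coordinate), and note that the left-heavy and right-heavy profiles differ by total transport cost $\asymp n^2$—or at the very least the \emph{support} of the profile must sweep across the whole interval $[-n,n]$, so any path connecting them has a point whose configuration is ``balanced,'' which cannot be reached without decreasing the distance to $e$ below $n-O(1)$. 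Formalising that a balanced boundary configuration is necessarily deep (far inside $B(n)$) is exactly the content of Lemma~\ref{infcomp}-type reasoning transported to the ladder via the $\Z\wr L^2$ identification.

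\textbf{Main obstacle.}
The genuine difficulty is \emph{not} the construction of $F$ but verifying that $C=\partial B(n)$ is indeed $l$-close with $l$ large, i.e.\ that \emph{every} way of partitioning the cutset into two nonempty pieces forces small distance between them is \emph{false}—we only need \emph{one} bad partition, so I must exhibit a specific partition (left edges versus right edges) and prove the two parts are far apart \emph{within the appropriate boundary neighbourhood}, while simultaneously checking $B(n)$ is genuinely $\comp$-connected (its complement is connected and only one side reaches an end, which holds since $\Z\times\Z_2$ is one-ended so $G$ is one-ended). The subtle step is reconciling the $\comp$-connectedness bookkeeping with the quantitative separation, since $C_G$ measures closeness of arbitrary bipartitions of a \emph{single} cutset, whereas the sphere-disconnection statement of Theorem~\ref{techelle} was about $S_1$, not $S_2$. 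I expect the cleanest route is to transfer the whole question, via the index-$2$ subgroup $\Z\wr L^2\hookrightarrow G$, to the line case where Proposition~\ref{llcomp} already guarantees exponentially many deep components, and then argue that the representatives of two such components that are incident to $\partial B(n)$ realise an $l$-close bipartition with $l\to\infty$.
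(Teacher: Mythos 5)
Your overall strategy---exhibit $\comp$-connected sets whose boundary cutsets admit bipartitions with parts at unbounded distance, then conclude via Lemma \ref{tequivbordconn-l}---is indeed the paper's strategy, but your choice of witness $F=B(n)$ cannot work, for a structural rather than technical reason. Theorem \ref{techelle} itself bounds the spread of every bipartition of the cutset attached to a ball. First, to make the set $\comp$-connected one fills in the finite components of $B(n)^\comp$ (legitimate by the retreat-depth bound of Theorem \ref{techelle}); since distinct components of $B(n)^\comp$ are not adjacent, every edge of the resulting cutset then joins a vertex of $S(n)^\infty$ to a vertex of $S(n+1)$. Now given any bipartition $A_1\sqcup A_2$ of this cutset, connectedness of $S(n,24)^\infty$ provides a path inside the annulus between points incident to $A_1$ and to $A_2$; sliding each vertex of that path down a geodesic to the identity until it hits $S(n)$ (staying in $B(n-1)^{\comp,\infty}$ throughout) and looking at the first index where the projected point changes part produces two points in opposite parts at ambient distance at most $2\cdot 24+1$. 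So $d(A_1,A_2)$ is uniformly bounded for \emph{every} bipartition of $\partial B(n)$: spheres of this Cayley graph can never certify $C_G=\infty$, and this is precisely why the lemma shows connected boundaries to be strictly stronger than connected spheres. Your key claim that left-heavy and right-heavy boundary elements ``cannot be joined within a bounded neighbourhood of $C$'' conflates the metric induced on the thickened sphere (whose diameter is indeed $\asymp n^2$, cf.\ the remark after Theorem \ref{ZwrZ-intro}) with the ambient word metric, which is the one in which $l$-closeness of a cutset is measured; in the ambient metric the closest pair across your left/right partition is at bounded distance.

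The transfer you propose in the last step is also unavailable: the identification with $\Z\wr(L^2)$ and Proposition \ref{llcomp} pertain to the \emph{other} generating set $S_1$ of \S{}\ref{sec:ladder} (simultaneous switching of the two lamps on a rung), not to the switch-walk-switch generating set of Theorem \ref{techelle}, for which the two lamps on a rung are independently settable and the metric is governed by the ladder travelling-salesman analysis of Lemma \ref{3edge}; were Proposition \ref{llcomp} applicable here, it would contradict Theorem \ref{techelle} itself. The paper's actual proof replaces the ball by a strip: $F_n$ is the set of all elements $(z,\e,f)$ with $z\in[-n,n]$ and $\mathrm{supp}(f)$ projecting into $[-n,n]$. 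Every edge of $\del F_n$ has an endpoint where the lamplighter's $\Z$-coordinate is $\pm(n+1)$, and the obvious bipartition into left exits ($-n-1$) and right exits ($n+1$) has parts at ambient distance at least $2n$, simply because each generator changes the lamplighter's $\Z$-coordinate by at most one. That one-line estimate finishes the proof. Your intuition that ``the lamplighter must physically travel across'' is the correct mechanism, but it must be applied to the strip cutset, where the lamplighter's position alone separates the two parts in the ambient metric, rather than to the sphere, where Theorem \ref{techelle} forbids any such separation.
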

\begin{proof}
The idea is essentially as in \cite{Timar}. Consider the $F_n$ whose elements correspond to any lamp state on the vertices with $\Z$ coordinates in $[-n,n] \subset \Z$ and the walker having the $\Z$ coordinate also in $[-n,n]$. The edges in $\del F_n$ have an incident where the lamplighter has $\Z$ coordinate in absolute value equal to $n+1$. Consider the obvious partition of this set into left (the $\Z$ coordinate is $-n-1$) and right (where it is $n+1$). Any path between those two sets has length at least $2n$.
\end{proof}

\subsection{Almost-convexity}\label{ssalmcvx}

\begin{definition}
Let $r \in \Z_{\geq 1}$. A group is said to be $r$-almost-convex if there is an integer $N_r$ so that for any $g_1,g_2 \in G$ such that $d(g_1,g_2) \leq r$ there is a path of length $\leq N_r$ between $g_1$ and $g_2$ which, except for the last and first edge, lies inside $B(m-1)$ for $m= \min( |g_1|, |g_2|)$. 

A group is almost convex if it is $r$-almost-convex for every $r \in \Z_{\geq 1}$.
\end{definition}
Cannon \cite[Theorem 1.3]{Cannon} shows $2$-almost-convex implies $r$-almost-convex with $N_r \leq r N_2^{ 1+ r/2 }$. Obviously $\{N_i\}_{i=1}^\infty$ form an increasing sequence (if one admits $\infty$ as a value). Hence Cannon shows that $2$-almost-convex 
is equivalent to almost convex. This property depends on the generating set.

Thiel \cite{Thiel} showed that this property may depend on the generating set even for nilpotent groups. By \cite[Theorem 1.4]{Cannon} almost-convex groups are finitely presented, hence have connected spheres and connected boundaries for all generating sets. 

Almost-convexity, connected spheres and bounded retreat depth of dead-ends seem to address different aspects of the word metric. Connected spheres cannot imply almost convexity because, although it gives the existence of a path, there is no bound on its length. As is implicit in Lemma \ref{tequivbordconn-l} and Lemma \ref{monot}, connected spheres is only concerned about the possibility that there is a partition $A_1 \sqcup A_2$ of the $n_i^\text{th}$ sphere so that $d(A_1,A_2) \geq i$.

The lamplighter on the ladder or $\Z \wr \Z$ show that there are groups which are not almost-convex but have connected spheres and bounded retreat depth of dead-ends. The solvable Baumslag-Solitar groups (see \cite{MS}) are also not almost-convex and have connected spheres, being finitely presented. More examples can be constructed by using Lemmas \ref{tconnprodsomm-l}, \ref{tconnprodprod-l} and \ref{tretdeprod-l} as well as noting that $(G_1,S_1)$ and $(G_2,S_2)$ are almost convex if and only if $(G_1 \times G_2, S_1 \vee S_2)$ is almost-convex if and only if $(G_1 \times G_2, S_1 \perp S_2)$ is almost-convex. 

It is easy to deduce from \cite[Lemma 3]{Bog} that hyperbolic groups have a bound on the width (hence retreat depth) of dead-ends. More generally, a group with regular language of geodesics has dead-ends of bounded width \cite[Theorem 1.1]{War-deep}. Further, if the group is Abelian, there are only finitely many dead-ends \cite[Theorem 1]{Lehnert}. Any group with more than one end has a bound on the width of dead-ends by \cite[Theorem 2]{Lehnert}. 

L.~Ciobanu pointed to the second author that putting together \cite{War-deep} and \cite{War-Heis}, one sees the Heisenberg group does not have a regular language of geodesics for any generating set.

\section{Questions.}\label{qu}

We finally list some open questions. Some of them are folklore and some arose in the course of this paper. Their difficulty is probably diverse. In general, it would be interesting to understand spheres in more Cayley graphs.

\begin{question}\label{qnatur}
Is there a finitely generated one-ended group $G$ which, for some generating set $S$, does not have connected spheres and admits a bound on the retreat depth of dead-ends~?
\end{question}
This should be compared with the first statement of Proposition \ref{prop-simple-intro}. Note that dead-end elements are actually quite frequent in groups so that an answer to Question \ref{qnatur} with no dead-end elements at all seems daunting to construct.

According to \cite{War} and Theorem \ref{teo-echelle-intro}, bounded retreat depth and connected spheres are dependent on the generating set. It seems natural to ask whether one can always choose a generating set with bounded retreat depth and connected spheres.
\begin{question}
Is there a group which never has connected spheres~? never has bounded retreat depth~?
\end{question}
A likely candidate to answer both questions is $T_k \wr L$ with $k \geq 3$ and $L$ finite studied in~\S{}\ref{slltree}.

The ``universal'' bound on retreat depth from \cite{FGO} (see Corollary \ref{corrd}) is still far from known examples. Indeed, in the lamplighter on the line (or in Houghton's group $H_2$, see \cite{Lehnert}), the bound on retreat depth is $\rd(x) \leq |x|/4$. This raises the
\begin{question}
Can the ratio $\frac{\rd(x)}{|x|}$ be arbitrarily close to $\frac{1}{2}$~?
\end{question}

As pointed out in Remark \ref{remquo}, bounded retreat depth and connected spheres are not preserved under taking quotient, as for instance $\Z \wr \Z$ has both properties but its quotient $\Z \wr \Z_2$ has none. 

It would be interesting to know what happens to the following other quotients. 
Let $p,q \in \N$ be coprime and consider $G_{p,q} := \Z[\tfrac{p}{q},\tfrac{q}{p}] \rtimes \Z$ where $\Z$ acts on $\Z[\tfrac{p}{q},\tfrac{q}{p}]$ by multiplication by $p/q$.
The group $G_{p,q}$ is the metabelianisation of the Baumslag-Solitar group $BS(p,q)$ and in particular, $BS(1,q) \cong G_{1,q}$.
However, if $p$ and $q$ are coprime and $p\neq 1 \neq q$ then $G_{p,q}$ is not finitely presented, see \cite[Theorem.(iii) in \S{}1.4 on p.48]{BS}.

\begin{question}
Is it true that spheres are connected and the retreat depth is bounded in $G_{p,q}$ for the ``switch-walk-switch'' or ``walk or switch'' generating sets inherited from $\Z \wr \Z$~?
\end{question}

Note that the group $BS(1,m)$ has connected sphere (being finitely presented) but it is not known whether it has bounded retreat depth or not. For the generating set induced from ``walk or switch'', \cite[Theorem 2.3]{War} shows that the width of dead-ends is arbitrarily large. 

This leads us to wonder what happens for extensions. First, note that if $1 \to K \to G \to Q \to 1$ the extension $G$ may still be nice even though $Q$ or $K$ have unbounded retreat-depth and non-connected spheres. For instance a direct product $G = Q \times K$ always admits a generating set with connected spheres and bounded retreat depth - see Lemmas \ref{tconnprodsomm-l}, \ref{tconnprodprod-l} and \ref{tretdeprod-l}. However the following more precise question is still open. 
\begin{question}
Is there a group $G$ with finite generating set $S$ such that there is an exact sequence $1 \to K \to G \overset{\pi}{\to} Q \to 1$ where $Q$ has bounded retreat-depth and connected spheres with respect to the induced generating set $\pi(S)$ and $G$ has unbounded retreat-depth ? $G$ does not have connected spheres ? both~?
\end{question}
It could be that the above is not possible under the additional constraint that $K$ is finitely presented.
An interesting candidate is the third Houghton group $H_3$ as there is an exact sequence $1 \to \mathrm{Sym}_{fin} \to H_3 \to \Z^2 \to 1$ where $\mathrm{Sym}_{fin}$ is the set of permutations with finite support on some countable set. The arguments of \cite{Lehnert} for $H_2$ do not directly pass to $H_3$.

Theorem \ref{nonconnected-intro} provides some computations of connection thickness functions. It would be interesting to have more exemples of such functions.

\begin{question}\label{thgrowth}
What are the possible behaviors of connection thickness functions ? Are there one-ended groups with $\mathrm{th}_{\G,S}(n)$ growing faster than $n+2$ ?
\end{question}

The successive partitions $\Pi(n,r)$ and $\Pi(n,r)^{\infty}$ defined in \S{}\ref{entropy} associated to the annuli $S(n,r)$ for $0 \leq r \leq \mathrm{th}_{\G,S}(n)$ are refinement of one another when restricted to $S(n)$ or $S(n)^\infty$. The sphere is most disconnected by the partition $\Pi(n,0)$ and is connected in $\Pi(n,\mathrm{th}_{\G,S}(n))$. It would be interesting to understand how this connection phenomena occurs. This can be thought of as some kind of geometric percolation.

We introduced the normalised entropy of a partition in order to quantify this phenomena. Proposition \ref{entmax} essentially asserts that the disconnection of the partition $\Pi(n,r)$ in $(\Z \wr L, L\{\pm 1\}L)$ with $L$ finite is in a sense maximal while $0 \leq \frac{r}{\mathrm{th}_{\G,S}(n)} < \frac{1}{8}$. It would be interesting to understand what happens in the interval $[\frac{1}{8},1]$.

\begin{question}
How does the connection of spheres occur ? Is there emergence of a gigantic component ? If yes, around which value of $r$ ? How does the normalised entropy behave in terms of the limit of the ratio $\frac{r}{\mathrm{th}_{\G,S}(n)}$ in the intervalle $[0,1]$ ? 
\end{question}

An answer to these questions for the lamplighters on trees with finite lamps of \S{}\ref{slltree}  would already be interesting.

We pointed out in Remark \ref{straightconnection} the surprising fact that the proportion of points $x$ in the sphere of $\Z \wr \Z_2$ not straightly connected to infinity (meaning there exists a geodesic ray $g(t)$ defined for all $t \geq 0$ with $|g(t)|=|x|+t$) is asymptotically positive. 
\begin{question}\label{qustra}
Are there exemples of finitely generated groups where the proportion of points straightly connected to infinity is arbitrarily close to $0$ ?
\end{question}

We were lead to consider the lamplighter group on a ladder in our study of connection of spheres in the lamplighter group $\Z^2 \wr \Z_2$ on the plane. We formulate the

\begin{conjecture}
In $\Z^2 \wr \Z_2$ for the usual ``switch-walk-switch'' generating set, the spheres are connected.
\end{conjecture}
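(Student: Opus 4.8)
The plan is to transplant the strategy that succeeded for the ladder in Theorem \ref{techelle} from $\Z\times\Z_2$ to the genuinely two-dimensional base $\Z^2$. By Lemma \ref{monot} it suffices to exhibit an absolute constant $C$ such that any two elements of $S(n)^\infty$ can be joined by a path along which the word length stays in $[n,n+C]$. By Proposition \ref{propTSP} the word length of $g=(\gamma,f)$ is the travelling salesman length $d_{TS}(0,\mathrm{supp}(f),\gamma)$ in the grid $\Z^2$, and a single Cayley step amounts to moving the lighter to a grid-neighbour while freely toggling the ($\Z_2$-valued) lamps at the two endpoints of that step. The reason one expects connectedness here, in sharp contrast to the line (Theorem \ref{nonconnected-intro}(1)), is that the rigid formula $|g|=2b+2|a|-|z|$ of the one-dimensional case, which forbids the lighter from walking to a distant lamp without leaving the annulus, is replaced in two dimensions by tours whose cost is governed by the \emph{area} of a filled region rather than by a doubled one-dimensional excursion; there is now ``room to go around''. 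I would take as canonical family the configurations consisting of a filled $\ell^1$-ball (diamond) of some radius $\rho$ together with a partial shell of radius $\rho+1$, with the lighter placed on the outer boundary so that, as $\rho$ and the partial shell vary, every integer length $n$ is realised by an element of $S(n)^\infty$.

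The heart of the argument is a reshaping lemma: every $g\in S(n)^\infty$ can be joined, inside $S(n,C)^\infty$, to such a canonical diamond. The mechanism is the ``brick-moving'' of Theorem \ref{distortion}, but with a twist: there the lamp group was infinite, so the mass of a distant lamp could be stacked arbitrarily high at a single site, whereas here $L=\Z_2$ holds at most one brick per site, forcing the mass to spread over area. This is precisely what makes the two-dimensional base adequate: pulling a lamp at $\ell^1$-distance $D$ inward by one step saves about $2$ units of travel in the tour, and this is absorbed by enlarging the central blob by about two sites, which lengthens its snake tour by about two units. I would therefore prove the reshaping lemma by a sweeping algorithm analogous to Lemmas \ref{banane} and \ref{tank}, repeatedly pulling in the lamp farthest from the origin while growing the blob, interleaving the two operations so that the running TSP length oscillates within a band of bounded width. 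The combinatorial inputs are the two-dimensional analogues of Lemma \ref{obs2}: that a tour visiting every vertex of a solid rectangle (or diamond), entering and leaving through prescribed boundary vertices, has length equal to the number of vertices up to an additive constant, realised by a boustrophedon (``snake'') tour; and a two-dimensional ``banana move'' showing that reshaping the lamp pattern on a single boundary shell changes the length by $O(1)$ per lamp and is realised by a short Cayley path staying in the band.

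It then remains to connect two canonical diamonds of equal length to each other; this is done exactly as at the end of the proof of Theorem \ref{techelle}, by reshaping their outer (partial) shells through the snake-tour estimate, and poses no new difficulty. Throughout, one must stay in the infinite component, so I would first establish the two-dimensional counterparts of Lemmas \ref{treedeadend} and \ref{infcomp}: an element is a dead-end precisely when the lighter is enclosed by a full moat of lit lamps, and is straightly connected to infinity as soon as its position lies on the outer boundary of the occupied region, along which a radial ray then escapes. Placing the lighter of each canonical configuration on that outer boundary guarantees membership in $S(n)^\infty$, and the escape rays certify that every intermediate configuration of the sweep also lies in the infinite component.

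The main obstacle is that there is no clean two-dimensional replacement for the tour normal form of Lemma \ref{3edge}. On the line and on the ladder the optimal tour is essentially a doubled spanning tree together with one monotone traversal, which makes the length change under a local lamp move completely explicit; on $\Z^2$ the optimal tour interpolates between two genuinely different regimes---a doubled Steiner tree when the lamps are sparse and a near-Hamiltonian snake when they are dense---and the transition between them is delicate. Consequently the difficult step is the uniform length control inside the reshaping lemma: proving that the shell-by-shell sweep can always decrease or increase the TSP length one unit at a time without the intermediate walks dipping below $n$ or rising above $n+C$. This demands a robust estimate, valid across both regimes, on how much $d_{TS}(0,A,\gamma)$ moves when $A$ is altered at its frontier and when $\gamma$ is transported along the boundary---the two-dimensional strengthening of Lemmas \ref{obs1} and \ref{obs2} that is, in my view, the real content of the conjecture.
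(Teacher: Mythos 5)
This statement is a \emph{conjecture} in the paper, not a theorem: the authors give no proof, and they explicitly caution that it is ``not even obvious'' to adapt the ladder argument of \S{}\ref{sec:ladder} to show connectedness of spheres for $(\Z\times F)\wr \Z_2$ with $F$ finite --- a strictly easier intermediate case than the $\Z^2$ base. So there is no paper proof to compare yours against, and your proposal must stand on its own. It does not: it is a research program, not a proof. You yourself isolate the pivotal step --- a uniform two-dimensional strengthening of Lemmas \ref{obs1} and \ref{obs2} controlling how $d_{TS}(0,A,\gamma)$ moves when the support $A$ is altered at its frontier and the lighter is transported along the boundary, valid across both the sparse (doubled Steiner tree) and dense (near-Hamiltonian snake) regimes --- and then defer it, calling it ``the real content of the conjecture.'' A proof that defers precisely the hard step is a gap, and here the gap coincides with the whole difficulty: everything in \S{}\ref{llline}, \S{}\ref{sec:ladder} and \S{}\ref{sec:distortion} rests on an explicit normal form for optimal tours (the length formula $|g|=2b+2|a|-|z|$ on the line, Lemma \ref{3edge} on the ladder, the path $c$ of Proposition \ref{TSP2}), and no such normal form exists on $\Z^2$.

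Concretely, three load-bearing claims in your sketch are asserted without proof and are each nontrivial. First, the per-move accounting (``pulling a lamp at $\ell^1$-distance $D$ inward by one step saves about $2$ units'') fails as stated: in two dimensions the lamp may lie on the optimal route to other lamps (cost change $0$), or its displacement may trigger a global rerouting of the tour, so without a tour normal form you cannot certify that the sweep keeps the length in a band $[n,n+C]$ with $C$ independent of $n$ --- and a bound growing with $n$ proves nothing, since connection thickness $n+2$ is exactly what happens on the line. Second, your proposed characterization of dead-ends (``enclosed by a full moat of lit lamps'') is not established, and the paper itself warns that the naive moat picture for $\Z^2$ can fail (see the remark about king's-move generators at the end of \S{}\ref{qu}); note that even on the line the description of $S(n)^\infty$ in Lemma \ref{infcomp} involves inequalities between the lighter position and the extremes of the support, not mere boundary membership, so the two-dimensional description of $S(n)^\infty$ that your escape-ray argument needs is itself open. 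Third, the canonical diamond-plus-partial-shell family must be shown to realize every length $n$ inside $S(n)^\infty$ and to be mutually connectable within bounded thickness, which again rests on the unproved two-dimensional analogue of Lemma \ref{obs2}. Your outline is a reasonable route --- essentially the heuristic that led the authors to study the ladder in the first place --- but as written every key lemma is conjectural, so the conjecture remains open.
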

We also believe the retreat depth of dead-ends is bounded. However, we point out that it is not even obvious to adapt \S{}\ref{sec:ladder} to show that any group $(\Z\times F)\wr \Z_2$ for $F$ finite with ``switch-walk-switch'' generating set has connected spheres.

Observe that by analogy with the case of the line \S{}\ref{llline} or the tree \S{}\ref{slltree}  it is tempting to expect that  dead-end elements with large retreat depths in the lamplighter over the plane $\Z^2$ could be ``all lamps state in a ball around the identity in a dead-end position and the lamplighter back at the identity''. But this does not always work: for example, take $\Z^2$ with ``king's move'' generators (\ie $\{(a,b) \mid a,b = 0,1 $ or $-1\}$. 
It is not difficult to see that for any two distinct vertices in the ball of radius $n$ (which look like squares of side $2n+1$), there is a Hamiltonian path covering the whole ball which joins these vertices. This shows that the retreat depth of such elements is $1$.

When a group has connected spheres, one can compare the metric on $S(n)^\infty$ given by the graph metric in the ambient Cayley graph with the graph metric induced by the graph $S(n,r)^\infty$. A natural generalisation of the study of connectedness is that of  distortion between these two metrics. We ask the
\begin{question}
Is there a Cayley graph for which $S(n,r)^\infty$ (or a subsequence) is a family of expander graphs ? Is there a Cayley graph for which the diameter of $S(n,r)^\infty$ is comparable to a linear function of $\log |S(n)|$ ?
\end{question}

A positive answer to the first question would also give a positive answer to the second. In Theorem \ref{ZwrZ-intro}, we obtain that the diameter of $S(n,2)$ for $\Z \wr \Z$ with ``switch or walk" generating set is comparable to $n^2 \asymp (\log|S(n,2)|)^2$ - see also Remark \ref{disthyp}. Of course, the diameter of $S(n)$ for an infinite group
is always $2n$ with respect to the ambient metric. It would also be interesting to compare with the random metrics on the sphere introduced by Georgakopoulos \cite{Geo}.

Finally, we recall here a classical question about amenability and spheres. It is an easy exercise to show that $G$ has subexponential growth if and only if a density one subsequence of the sequence of balls is F{\o}lner, \ie $\frac{|S(n_k+1)|}{|B(n_k)|}\rightarrow 0$ with $\frac{1}{N}|\{n_k\leq N\}|\rightarrow 1$. However the following remains open:

\begin{question}
Let $G$ have subexponential growth, is it true that $\frac{|S(n+1)|}{|B(n)|}\rightarrow 0$ ?
\end{question}

This is known to be true for $G$ virtually nilpotent by Pansu \cite{Pan}, using the fact that the asymptotic cone is a nilpotent Lie group with the Carnot-Caratheodory metric.

Conversely, it is easily checked that if $G$ has exponential growth, a subsequence of the sequence of balls cannot be F{\o}lner unless it has density $0$, i.e. $\frac{|S(n_k+1)|}{|B(n_k)|}\rightarrow 0$ implies $\frac{|\{n_k\leq N\}}{N}\rightarrow 0$. This is the content of the proof of Lemma 2.2 in \cite{Pit}. The statement of Pittet's Lemma 2.2 is not proved and is still open :

\begin{question}
Let $G$ have exponential growth, does there exist $\e>0$ such that  $\frac{|S(n+1)|}{|B(n)|}\geq \e$ for all $n$ ?
\end{question}

Note that the answer is positive for groups of pinched exponential growth.

\textsc{\newline J\'er\'emie Brieussel} \newline Universit\'e de Montpellier 
\newline Institut Montpelli\'erain Alexander Grothendieck \newline 34095 Montpellier, France \newline
 jeremie.brieussel@umontpellier.fr

\textsc{\newline Antoine Gournay }
\newline Institut f\"ur Geometrie, Fachrichtung Mathematik \newline TU Dresden \newline 01062 Dresden, Germany \newline 
antoine.gournay@tu-dresden.de

\end{document}